\newif\iffull
\date{}
\title{Embeddability in the 3-sphere is decidable
}
\newif\ifcmts
\author[1,2,a,b]{Ji\v{r}\'{\i} Matou\v{s}ek}
\author[3,a]{Eric Sedgwick}
\author[1,4,5,a]{Martin Tancer}
\author[4,a,c]{Uli Wagner}
\affil[1]{Department
of Applied Mathematics,
Charles University, Malostransk\'{e} n\'{a}m.
25, 118~00~~Praha~1, Czech Republic.}
\affil[2]
{Department of Computer Science, ETH Z\"urich, 8092 Z\"urich,
Switzerland}
\affil[3]{School of Computing, DePaul University, 243 S.~Wabash Ave,
Chicago, IL 60604, USA}
\affil[4]{IST Austria, Am Campus 1, 3400 Klosterneuburg, Austria}
\affil[5]{Part of this work was done when the third author was affiliated with
  Institutionen f\"{o}r matematik, Kungliga Tekniska H\"{o}gskolan,
Linstedsv\"{a}gen 25, 100 44 Stockholm}
\affil[a]{Supported by the ERC Advanced Grant No.~267165.}
\affil[b]{Partially supported by Grant GRADR Eurogiga GIG/11/E023.}
\affil[c]{Supported by the Swiss National Science Foundation (Grants SNSF-PP00P2-138948 and SNSF-200020-138230).}
\newcommand{\A}{\mathcal A}
\newcommand{\C}{\mathcal C}
\newcommand{\T}{\mathcal T}
\newcommand{\E}{\mathcal E}
\newcommand{\F}{\mathcal F}
\newcommand{\BB}{\mathcal B}
\newcommand{\R}{{\mathbb{R}}}
\newcommand{\vv}{\vec{v}}
\newcommand{\hs}{~\widetilde +~}
\DeclareMathOperator{\compl}{cpx}
\DeclareMathOperator{\sd}{sd}
\DeclareMathOperator{\wt}{wt}
\newcommand\bd{\partial}
\newcommand\lmax{{\ell_{\rm max}}}
\theoremstyle{plain}
\newtheorem{theorem}{Theorem}[section]
\newtheorem{proposition}[theorem]{Proposition}
\newtheorem{lemma}[theorem]{Lemma}
\newtheorem{corollary}[theorem]{Corollary}
\newtheorem{definition}[theorem]{Definition}
\newtheorem{assume}[theorem]{Assumption}
\newtheorem{defn}[theorem]{Definition}
\newtheorem{obs}[theorem]{Observation}
\newcommand{\heading}[1]{\vspace{1ex}\par\noindent{\bf #1}}
\long\def\onefigure#1#2{
\begin{figure*}[tbp]
\begin{center}
#1
\end{center}
\caption{#2}
\end{figure*}
}
\def\immediateFigure#1{%
\smallskip\begin{center}#1\end{center}\smallskip }
\newcommand{\labfig}[2]  
{\onefigure{\mbox{\includegraphics{#1}}}{\label{f:#1} #2} }
\newcommand{\labfigw}[3]  
{\onefigure{\mbox{\includegraphics[width=#2]{#1}}}{\label{f:#1} #3}}
\newcommand{\immfig}[1]  
{\immediateFigure{\mbox{\includegraphics{#1}}}}
\newcommand{\immfigw}[2] 
{\immediateFigure{\mbox{\includegraphics[width=#2]{#1}}}}
\newcommand{\marrow}{\marginpar{\boldmath$\longleftarrow$}}
\newcommand{\jirka}[1]{\ifhmode\newline\fi\marrow \textsf{\textcolor{blue}{\bf JIRKA:} #1\newline}}
\newcommand{\uli}[1]{\ifhmode\newline\fi\marrow \textsf{\textcolor{magenta}{\bf ULI:}  #1\newline}}
\newcommand{\eric}[1]{\ifhmode\newline\fi\marrow \textsf{\textcolor{red}{\bf ERIC:} #1\newline}}
\newcommand{\martin}[1]{\ifhmode\newline\fi\marrow \textsf{\textcolor{green}{\bf
MARTIN:} #1\newline}}
\newcommand{\marrow}{}
\newcommand{\jirka}[1]{}
\newcommand{\eric}[1]{}
\newcommand{\uli}[1]{}
\newcommand{\martin}[1]
\newcommand\EMBED[2]{{\rm EMBED$_{#1\to#2}$}}
\newcommand\EMB{{\tt EMB}}
\newcommand{\twist}{\widetilde \times}
\begin{document}
\iffull\else\linenumbers\fi
\maketitle

\begin{abstract}
We show that the following algorithmic problem is decidable:
given a $2$-dimensional simplicial complex, can it be embedded
(topologically, or equivalently, piecewise linearly) in $\R^3$?
By a known reduction, it suffices to decide the embeddability
of a given triangulated 3-manifold $X$ into the 3-sphere $S^3$.
The main step, which allows us to simplify $X$ and recurse, is
in proving that if $X$ can be embedded in $S^3$, then
there is also an embedding in which $X$  has a
\emph{short meridian}, i.e., an essential curve in the boundary of $X$
bounding a disk in $S^3\setminus X$ with length bounded
by a computable function of the number of tetrahedra of~$X$.
\end{abstract}

\section{Introduction}

\heading{The embeddability problem. }
Let \EMBED kd be the following algorithmic
problem: given a finite simplicial complex $K$ of dimension
at most $k$, does there exist a (piecewise linear)
embedding of $K$ into $\R^d$?

A systematic investigation of the computational
complexity of this problem was initiated in
\cite{MatousekTancerWagner:HardnessEmbeddings-2011};
earlier it was known that \EMBED 12\ (graph planarity) is solvable
in linear time, so is \EMBED 22\ \cite{grossRosen},
and   for every $k\ge 3$ fixed,
\EMBED k{2k} can be decided in polynomial time
(this is based on the work of Van Kampen, Wu, and Shapiro;
see \cite{MatousekTancerWagner:HardnessEmbeddings-2011}).

For dimension $d\ge 4$, there is now a reasonably good understanding
of the computational complexity of \EMBED kd: for all $k$ with
$(2d-2)/3\le k\le d$ it is NP-hard (and even undecidable
if $k\ge d-1\ge 4$) \cite{MatousekTancerWagner:HardnessEmbeddings-2011},
while for $k< (2d-2)/3$ it is polynomial-time solvable, assuming $d$ fixed,
as was shown in a series of papers on computational homotopy theory
\cite{CKMSVW11,polypost,pKZ1,aslep}. (However, the cases with
$(2d-2)/3\le k$ known to be NP-hard but not
proved undecidable are still intriguing.)

Thus, the most significant gap up until now has been the
cases $d=3$ and $k=2,3$, and in particular, after graph planarity
(\EMBED 12), the problem \EMBED23 can be regarded as the most
intuitive and probably practically most relevant case.

\heading{Embeddability in $\R^3$. }
Here we close this gap, at least as far as decidability is concerned.

\begin{theorem}\label{t:main} The problem \EMBED 23\ is
algorithmically decidable. That is, there is an algorithm that,
given a $2$-dimensional simplicial complex $K$, decides whether $K$ can be
embedded (piecewise linearly, or equivalently, topologically)
in~$\R^3$.
\end{theorem}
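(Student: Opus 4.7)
The plan is to first reduce the original problem to the manifold setting and then set up a recursive algorithm driven by the short-meridian statement advertised in the abstract. Using the "known reduction" alluded to in the excerpt, I would replace a given 2-complex $K$ by a compact triangulated 3-manifold $X$ (essentially a regular neighborhood of $K$ in a potential embedding, together with control on how the pieces must be glued) such that $K$ embeds in $\R^3$ (equivalently $S^3$) if and only if $X$ embeds in $S^3$. From here the task is to decide, given a triangulated compact 3-manifold $X$ with boundary and at most $n$ tetrahedra, whether $X \hookrightarrow S^3$.

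The algorithm I would design is recursive on a suitable complexity measure $\compl(X)$ (for instance the lexicographic pair consisting of the genus of $\partial X$ and the number of tetrahedra). The base case is when $\partial X$ consists only of 2-spheres: cap them off by 3-balls to obtain a closed 3-manifold, and invoke 3-sphere recognition (Rubinstein--Thompson) to decide whether the result is $S^3$. For the recursive step, I would use the short-meridian statement announced in the abstract: if $X$ embeds in $S^3$ at all, then some embedding admits an essential simple closed curve $\gamma \subset \bd X$ of combinatorial length at most $f(n)$ (for a computable $f$) bounding a disk in $S^3\setminus X$. The algorithm enumerates all essential simple closed curves in $\bd X$ of length at most $f(n)$; for each such $\gamma$, it attaches a 2-handle to $X$ along $\gamma$ to form a new manifold $X_\gamma$, triangulates it, and recursively asks whether $X_\gamma$ embeds in $S^3$. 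The algorithm returns YES iff some $X_\gamma$ does.

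Correctness is two-sided. If $X_\gamma \hookrightarrow S^3$ for some $\gamma$, then $X\subset X_\gamma$ also embeds. Conversely, if $X\hookrightarrow S^3$, the short-meridian lemma gives an essential curve $\gamma$ of length $\le f(n)$ bounding a disk $D$ in $S^3\setminus X$; a regular neighborhood of $D$ provides exactly the 2-handle used to build $X_\gamma$, which therefore also embeds in $S^3$. For termination, attaching a 2-handle along an essential curve strictly decreases $\compl$: it either reduces the genus of a boundary component or splits a component and leaves a sphere factor, so after finitely many steps we reach the base case of spherical boundary.

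The main obstacle is clearly the short-meridian lemma itself, which is the technical heart of the paper. To prove it I would start from an arbitrary embedding $X \subset S^3$ and look at the complementary handlebody-like pieces $S^3 \setminus \mathrm{int}(X)$; using normal and almost-normal surface theory in $S^3 \setminus \mathrm{int}(X)$, together with combinatorial modifications of the embedding (isotopies, crushing, and possibly Pachner-type moves on $X$ with a parallel re-embedding of the complement), one wants to exchange an arbitrary meridian disk for one whose boundary is short in $\bd X$. Bounding this length by a computable function of $n$ (rather than, say, by a non-effective existence statement) is the delicate point, since the embedding of $X$ itself may be very complicated; the argument must produce the bound intrinsically from the triangulation of $X$, independently of how $X$ sits inside $S^3$. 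This is the step I expect to consume essentially all of the real work.
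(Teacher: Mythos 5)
Your high-level strategy matches the paper's: thicken the $2$-complex to a $3$-manifold via Neuwirth, then decide embeddability of a triangulated $X$ in $S^3$ recursively by enumerating short curves on $\partial X$, attaching $2$-handles, and recursing, with termination driven by boundary genus. But there is a genuine gap in the recursive step. The short-meridian statement (Theorem~\ref{t:short-meridian}) is proved only under strong hypotheses: $X$ irreducible, with incompressible boundary, neither a ball nor $S^3$, and equipped with a $0$-efficient triangulation. Your algorithm invokes it on an arbitrary $X$ as soon as $\partial X$ is not all spheres. Already for a solid torus (compressible boundary) or a reducible $X$ the theorem simply does not apply, and you have no guarantee that a meridian of bounded length exists, so your enumeration can miss the right curve. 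The paper handles this with preprocessing steps that you omit entirely: treating connected components separately (and rejecting if any is a closed $S^3$ when $X$ is disconnected), prime decomposition and rejection when a nonseparating sphere is present, a boundary-compression step that cuts $X$ along a compressing disk for $\partial X$ (with three sub-cases depending on where the scars land), and a $0$-efficient retriangulation. Only after these does it apply the short-meridian bound, and these extra moves also have to be folded into the termination argument (the paper uses the sorted genus vector $\vec g_\geq(X)$ ordered lexicographically, which strictly decreases under boundary compression as well as under $2$-handle attachment). In addition, boundary compression is \emph{cutting along} a disk inside $X$, which is not a special case of your $2$-handle attachment from outside, so the compressible-boundary case cannot be absorbed into your recursive step as written.

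Your sketch of the short-meridian lemma also diverges from the paper's. You propose running normal/almost-normal surface theory in the complement $S^3\setminus\mathrm{int}(X)$, combined with crushing and Pachner moves and re-embeddings. The paper instead uses Fox's re-embedding theorem to make the complement a union of handlebodies, applies Li's theorem to obtain a planar (almost) meridional surface $P$ properly embedded \emph{inside} $X$ that is essential or strongly irreducible and boundary strongly irreducible, normalizes $P$, and then bounds the average boundary length of $P$ via its fundamental normal summands, with several extra devices (the annulus curve $\alpha$, marked triangulations, and Dehn twists changing the embedding) to control annulus summands. You correctly identify this as the bulk of the work; note, though, that a bound ``intrinsic to the triangulation of $X$'' is exactly why the argument must live inside $X$: the complement's triangulation is not under your control, so your proposed route would not produce a computable bound without substantial additional ideas.
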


Let us remark that one can naturally consider (at least) three different
kinds of embeddings of a simplicial complex $K$ in $\R^d$,
illustrated in the next picture for a $1$-dimensional complex (graph):
\immfig{k4embs}
For \emph{linear embeddings}, also referred to as
 \emph{geometric realizations}, each simplex
of $K$ should be mapped affinely
to a (straight) geometric simplex in $\R^d$. This kind of embeddability
is decidable in PSPACE regardless of the dimensions, and it is
\emph{not} what we consider here.

For \emph{piecewise linear}, or \emph{PL}, embeddings,
one seeks a linear embedding of some (arbitrarily fine) subdivision
of~$K$. Finally, for a
\emph{topological embedding}, $K$ is
embedded by an arbitrary injective continuous map.

While topological and PL embeddability need not coincide for some
ranges of dimensions, for ambient dimension $d=3$, they do,\footnote{
For complexes of dimension $k=2$, this follows from\cite{Bing-3dmanifoldtriangulated,Papakyriakopoulos:NewProofInvarianceHomologyGroups-1943},
see also \cite{MatousekTancerWagner:HardnessEmbeddings-2011}
for more details and references\iffull; for complexes of dimension $k=3$, this follows from
the reduction in Section~\ref{s:emb33}\fi.}
 and this is the notion of embeddability
considered here.

An algorithm for \EMBED33\ can be obtained from Theorem~\ref{t:main}
by a simple reduction\iffull, given in Section~\ref{s:emb33}\else, which
is omitted in this extended abstract\fi.

\begin{corollary}\label{c:emb33} The problem \EMBED33 is decidable as well.
\end{corollary}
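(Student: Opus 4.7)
The plan is to reduce EMBED$_{3\to 3}$ to the problem of deciding whether a compact triangulated 3-manifold with boundary embeds in $S^3$, which is decidable by the main result underlying Theorem~\ref{t:main} (as noted in the abstract, the same 3-manifold subproblem is already what EMBED$_{2\to 3}$ reduces to).

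Given a 3-complex $K$, I will first verify necessary local combinatorial conditions for embeddability in $\R^3$:
(i) every triangle of $K$ is a face of at most two tetrahedra;
(ii) the link of each edge in $K$ is a disjoint union of arcs and cycles, i.e., embeds in $S^1$;
(iii) the link of each vertex is planar, i.e., embeds in $S^2$.
Each condition is effectively checkable, with (iii) reducing to graph planarity. If any fails, the algorithm halts and outputs ``no''.

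Assuming the local conditions hold, I would construct an abstract regular neighborhood $M=M(K)$ of $K$: a compact 3-manifold with non-empty boundary that contains $K$ as a spine (deformation retract). A concrete construction works in the second barycentric subdivision of $K$ and lets $M$ be the union, over vertices $v$ of the original $K$, of the closed star of $v$ in the subdivision. Under the local conditions, this union is a 3-manifold and collapses onto $K$, so $K\subset M$ is indeed a spine inclusion. Standard regular-neighborhood theory then yields the equivalence: $K$ embeds in $\R^3$ iff $M$ does. One direction comes from taking a regular neighborhood of $f(K)$ in $\R^3$ for any embedding $f\colon K\hookrightarrow \R^3$; the other follows immediately from $K\subset M$. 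To finish, I feed $M$ into the 3-manifold-in-$S^3$ algorithm provided by the proof of Theorem~\ref{t:main} and return its answer.

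The hard part will be verifying the thickening step: showing that under only the local conditions, the proposed $M$ really is a 3-manifold with $K$ as a spine, and that its PL-homeomorphism type is independent of the construction choices. This requires a careful case analysis of the ways lower-dimensional simplices of $K$—free vertices, free edges, triangles not lying in any tetrahedron—attach to the pure 3-dimensional part $K^{(3)}$ and are thickened within $M$, as well as checking that the boundary components of $K^{(3)}$ correctly meet the thickenings of the lower-dimensional strata. Once this is established, the reduction to the 3-manifold algorithm behind Theorem~\ref{t:main} is immediate.
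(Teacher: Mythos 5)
Your high-level strategy — thicken $K$ to a compact $3$-manifold with boundary and feed it to the $3$-manifold-in-$S^3$ algorithm — is not the paper's route, and as written it has several genuine gaps.

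First, the concrete construction of $M$ is not a thickening of $K$. The union, over original vertices $v$, of the closed stars of $v$ in $\sd^2 K$ is a regular neighborhood of the $0$-skeleton $K^{(0)}$ inside $|K|$, not of all of $K$; it collapses to $K^{(0)}$, not to $K$. More fundamentally, a regular neighborhood of $K$ must be taken \emph{in} some larger space, and a $3$-complex $K$ does not come with an ambient $3$-manifold; a ``thickening'' of $K$ (in the sense of Neuwirth) has to be built abstractly from combinatorial choices. Second, those choices matter: even for $2$-complexes, a thickening is not unique up to PL homeomorphism, which is exactly why Neuwirth's algorithm outputs a \emph{finite list} of candidate thickenings rather than a single one. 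The same phenomenon persists for a $3$-complex wherever a lower-dimensional stratum protrudes from the $3$-dimensional part, so the claim that the PL type of $M$ ``is independent of the construction choices'' is false; you would need to enumerate all thickenings and test each, and proving that such a finite enumeration exists and is computable for general $3$-complexes is a nontrivial task not addressed in the proposal. Third, the $S^3$ case is mishandled: if $K$ is a triangulated $S^3$, then the only candidate $M$ with $K$ as a spine is $K$ itself, which has empty boundary and trivially embeds in $S^3$, yet $K$ does \emph{not} embed in $\R^3$. So testing embeddability of $M$ in $S^3$ gives a wrong answer; you need to single out this case (or otherwise control the $\R^3$-versus-$S^3$ distinction for closed pieces), and your local conditions (i)--(iii) do not do so.

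The paper avoids all of this by going in the opposite direction: it first checks whether $K\cong S^3$ (and returns FALSE if so), checks that vertex links embed in $S^2$, splits at cut vertices, and then reduces to $\EMBED23$ by replacing $K$ with $K'=(\sd K)^{(2)}$, the $2$-skeleton of the barycentric subdivision. The key lemma is that for $K$ connected without cut vertices and with planar vertex links, $K$ embeds in $S^3$ iff $K'$ does; the nontrivial direction is proved using the PL Schoenflies theorem to show that an embedding of $K'$ can be filled in across each tetrahedron. This sidesteps the need to produce or classify thickenings of a $3$-complex and instead leans on the already-available $\EMBED23$ algorithm.
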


\heading{Thickening to 3-manifolds. } For a $2$-complex $K$,
(PL) embeddability in $\R^3$ is easily seen to be equivalent to
embeddability in $S^3$, and from now on,
we work with $S^3$ as the target.

The first step in our proof
of Theorem~\ref{t:main} is testing whether a given simplicial
$2$-complex $K$ embeds in any 3-dimensional manifold at all.

Let us suppose that there is an embedding $f\:K\to M$ for some
$3$-manifold $M$ (without boundary),
and take a sufficiently small closed neighborhood $X$
of the image $f(K)$ in $M$---the technical term here is a
 \emph{regular neighborhood}. Then $X$ is a $3$-manifold
with boundary, called a \emph{$3$-thickening} of~$K$.

There is an algorithm, due to Neuwirth~\cite{Neuwirth:An-algorithm-for-the-construction-of-3-manifolds-from-2-complexes-1968}
(see also \cite{Skopenkov:Thickening-1995} for an exposition) that,
given $K$, tests whether it has any $3$-thickening, and if yes,
produces a finite list of all possible $3$-thickenings, up to homeomorphism,
as triangulated 3-manifolds with boundary (\emph{without}
the knowledge of~$M$). Then $K$ embeds in $S^3$ iff one of its
3-thickenings does. Hence it suffices to prove the following.

\begin{theorem}\label{t:3m-in-S3}
There is an algorithm that, given
a triangulated $3$-manifold $X$ with boundary, decides whether $X$
can be embedded in~$S^3$.
\end{theorem}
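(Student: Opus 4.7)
The plan is a recursion on the combinatorial complexity of the boundary, measured by $-\chi(\bd X)$. The base case is when every component of $\bd X$ is a $2$-sphere: by the PL Schoenflies theorem in dimension three, $X$ embeds in $S^3$ if and only if the closed manifold $\widehat X$ obtained from $X$ by capping each boundary sphere with a $3$-ball is homeomorphic to $S^3$, and this is decidable by the Rubinstein--Thompson $3$-sphere recognition algorithm.

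For the inductive step, some component of $\bd X$ has positive genus, and I would rely on the main technical result advertised in the abstract: if $X$ embeds in $S^3$ at all, then there exists an embedding whose complement admits a compressing disk whose boundary $\gamma \subset \bd X$ has combinatorial length (say, the normal-coordinate weight on the triangulated surface $\bd X$) bounded by a computable function $L(n)$ of the number $n$ of tetrahedra of $X$. The algorithm enumerates all isotopy classes of essential simple closed normal curves in $\bd X$ of weight at most $L(n)$, which is a finite and effectively computable list; for each candidate $\gamma$, it forms the triangulated $3$-manifold $X_\gamma$ obtained from $X$ by attaching a $2$-handle along $\gamma$, and recurses. Correctness rests on the equivalence ``$X$ embeds in $S^3$ iff some $X_\gamma$ embeds in $S^3$'': the ``if'' direction is immediate since $X \subset X_\gamma$, and the ``only if'' direction is exactly the short-meridian theorem, together with the observation that a bicollar of the compressing disk may be added to the embedded copy of $X$ inside $S^3$. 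Since any essential compression strictly increases $\chi(\bd X)$ by $2$, the recursion terminates after at most $-\chi(\bd X)/2$ levels.

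The principal obstacle will be proving the short-meridian theorem itself. A compressing disk lives in the unknown complementary region of $X$ in $S^3$, and \emph{a priori} nothing prevents it from being arbitrarily complicated with respect to the fixed triangulation of $X$; in particular, normal surface theory inside $X$ alone will not suffice, because the complement is not part of the input. The approach I expect to pursue is to start from any PL embedding of $X$ in $S^3$, extend the given triangulation of $X$ to a PL triangulation of $S^3$, put a chosen meridian disk into normal form with respect to this extension, and then isotope the embedding (keeping $X$ fixed combinatorially, and altering only the extending triangulation) so as to simplify how the disk meets the tetrahedra outside $X$. The crucial and most delicate step will be to show that such simplifications can always be carried out until the boundary of the meridian disk, read on $\bd X$, has weight depending only on $n$, thereby furnishing the explicit computable bound $L(n)$ that the algorithm needs.
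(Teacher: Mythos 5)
Your plan correctly identifies the recursion engine (search for a short meridian, attach a 2-handle, recurse) that drives the final step of the paper's algorithm, and the observation that $\chi(\partial X)$ increases by $2$ under an essential 2-handle attachment is a fine termination bound for that step in isolation. But there is a genuine gap in invoking the short-meridian theorem where you do: what the paper actually proves (its Theorem~\ref{t:short-meridian}) is a statement for $X$ that is \emph{irreducible}, has \emph{incompressible boundary}, is neither a ball nor $S^3$, and is equipped with a \emph{$0$-efficient triangulation}. The abstract's one-sentence summary that you cite is not a theorem you can apply to an arbitrary $X$ with a positive-genus boundary component; at that stage $X$ may well be a solid torus or handlebody (compressible boundary), or may contain a non-separating $S^2$ or a reducing sphere, and the theorem then says nothing. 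The paper handles this by preprocessing before the meridian step: split into connected components and reject $S^3$ components, fill every spherical boundary component with a ball, take a prime decomposition (rejecting non-separating spheres), and iteratively boundary-compress $\partial X$ --- each of these needs its own correctness argument (for instance, showing that a boundary compression whose two scars land on different components of $\partial X'$ certifies non-embeddability). Only the irreducible, boundary-incompressible remnants, retriangulated $0$-efficiently, are fed into the short-meridian step. Without these reductions the equivalence ``$X$ embeds iff some $X_\gamma$ embeds'' is not supported by the theorem the paper proves, and once you add the decomposition steps the crude $\chi(\partial X)$ count no longer suffices for termination (the paper instead tracks the lexicographically ordered vector of boundary genera).

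A secondary misdirection concerns your proposed route to the short-meridian bound. You suggest extending the triangulation of $X$ to a triangulation of $S^3$, normalizing the compressing disk there, and simplifying the ambient triangulation, and you assert that ``normal surface theory inside $X$ alone will not suffice.'' The paper does the opposite: after Fox's reembedding theorem (which lets one assume the complement is a union of handlebodies), Li's thin-position theorem produces a planar (almost-)meridional surface $P$ lying \emph{inside} $X$, essential or strongly irreducible and boundary strongly irreducible; $P$ is then (almost-)normalized in $X$'s own $0$-efficient triangulation, written as a normal sum of fundamental surfaces, and an Euler-characteristic ``average length'' argument (with auxiliary annulus curves, marked normal coordinates, and Dehn twists to tame annulus summands) bounds $\ell(\partial P)$, hence the shortest meridian on $\partial P$. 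The complement's triangulation never enters. Also, your base case is stated for connected $X$ only; for disconnected $X$ one must split into components and additionally reject the case that some component is itself a closed $S^3$, as the paper's first step does.
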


\heading{Concerning the running time.} Our proof does provide an explicit
running time bound for the algorithm, but currently a rather high one,
certainly primitive recursive but even larger than an iterated
exponential tower.
Thus, we prefer to keep the bounds unspecified, in the interest
of simplicity of the presentation.

By refining our techniques, it might be possible to show the problem
to lie in the class NP. Going beyond that may be quite challenging:
indeed, as observed in \cite{MatousekTancerWagner:HardnessEmbeddings-2011},
\EMBED23 is at least as hard
as the problem of recognizing $S^3$  
(that is,
given a simplicial complex, decide whether it
is homeomorphic to $S^3$).
The latter problem is in NP \cite{Ivanov:ComputationalComplexityDecisionProblems3DimensionalTopology-2008,S3inNP}, but nothing more seems to be known
about its computational complexity (e.g., polynomiality or NP-completeness).

\heading{Related work. }
There is a vast amount of literature on computational problems
for $3$-manifolds and knots. Here we give just a sample;
further background and references can be found in
the sources cited below and in~\cite{agol-hass-thurston}.
A classical result is Haken's algorithm deciding
whether a given polygonal knot in $\R^3$ is trivial \cite{Haken:TheorieNormalflaechen-1961}. More recently, this problem was shown to lie in NP
\cite{hass-lagarias-pippenger}, and, assuming the Generalized Riemann
Hypothesis, in coNP as well \cite{kuperberg2011knottedness}.
The knot equivalence problem is also decidable
\cite{Haken:TheorieNormalflaechen-1961,Hemion:ClassificationHomeomorphisms2Manifolds-1979,Matveev:ClassificationSufficientlyLarge3Manifolds}, but
nothing seems to be known about its complexity status.

Closer in spirit to the problem investigated here
are algorithms for
deciding whether a given $3$-manifold is homeomorphic to $S^3$,
already mentioned above
\cite{Rubinstein:AlgorithmRecongnizing3Sphere-1995,Thompson:ThinPositionrecognitionProblemS3-1994,Ivanov:ComputationalComplexityDecisionProblems3DimensionalTopology-2008,S3inNP}.

An important special case of Theorem~\ref{t:3m-in-S3} is testing
embeddability into $S^3$ for an $X$ whose boundary is a single torus;
this amounts to recognizing knot complements and was solved in
\cite{Jaco:Decision-problems-in-the-space-of-Dehn-fillings-2003}.
Some of the ideas in that work are used in our proof, but most
of the argument is fairly different.

In a different direction, Tonkonog \cite{tonkonog}
provided an algorithm for deciding whether a given 3-manifold $X$ with
boundary embeds into \emph{some} homology 3-sphere\footnote{A $3$-manifold
whose homology groups are the same as those of~$S^3$.}
(which may depend on~$X$).
His methods are completely different from ours (except for
using a 3-thickening to pass from 2-dimensional complexes
to 3-manifolds), and it seems to be only loosely
related to the problems investigated here.

\heading{Future directions. } Besides the obvious questions of finding
a more efficient algorithm, say one in NP, and/or proving hardness results,
one may consider embeddability into other 3-manifolds $M$ besides $S^3$.
We believe that this may be within reach of the methods used here,
but definitely a number of issues would have to be settled.



\heading{The main technical contribution. } Our algorithm relies on a large
body of work in 3-dimensional topology.

When we talk about a \emph{surface} in $X$, unless explicitly stated
otherwise, we always mean a $2$-dimensional manifold
$F$ with boundary \emph{properly
embedded} in $X$, that is, with $\bd F\subset \bd X$.
Similarly, curves are considered properly embedded in a surface,
so a connected curve can be a loop in the interior of the surface or
an arc connecting two points of the boundary.
Two properly embedded surfaces $F$ and $F'$ are
\emph{isotopic} if they are embeddings of the same surface $F_0$
and there is a continuous family of proper embeddings $F_0\to X$
starting with $F$ and ending with~$F'$. An similar definition of isotopy applies to curves embedded in surfaces.

As in almost all algorithms working with 3-manifolds, we use
Haken's method of \emph{normal curves and surfaces}, actually
in a slightly extended form. Here we recall them very briefly;
we refer to
\cite{Hempel:3-manifolds-1976,Jaco:Algorithms-for-the-complete-decomposition-of-a-closed-3-manifold-1995}
for background\iffull, and in Section~\ref{s:normal-surfaces} below we will discuss
a variant\fi.


A \emph{normal curve} in a triangulated 2-dimensional surface $F$
intersects every triangle in finitely many disjoint pieces, which
we can think of as straight segments, as in the left picture:
\immfig{normal}
The main point is that such a curve is described, up to isotopy,
purely combinatorially: namely, for every triangle $T$, there are
just three types of segments of the curve inside, and it is enough
to specify the number of segments for each type, for each $T$.
In the picture, the numbers are $5,2,1$.

Similarly, a
\emph{normal surface} in a triangulated 3-manifold
intersects each tetrahedron in finitely many
of disjoint pieces, each of them a triangle or a quadrilateral,
as in the right picture above.
This time there are seven types of
pieces, four triangular and three quadrilateral, per tetrahedron
(although no two types of quadrilateral pieces may coexist in
a single tetrahedron, since they would have to intersect, which is
not allowed).
So a normal surface $F$ in a 3-manifold with $t$ tetrahedra can be described
by a vector of $7t$ nonnegative integers. This vector is
called the \emph{normal vector} of~$F$.

A \emph{normal isotopy} is an isotopy during which the intermediate
curve or surface stays normal; in particular, it may not cross any
vertex of the triangulation.


Going back to embeddings, we first simplify the situation
using a result of Fox \cite{Fox:On-the-imbedding-of-polyhedra-in-3-space-1948},
which allows us to assume that the complement of the supposed embedding of $X$
in $S^3$ is a union of handlebodies.\footnote{A \emph{handlebody}
is a ball with (solid) handles, or equivalently, a $3$-thickening
of a finite connected 1-dimensional complex (graph).}
(These handlebodies may be knotted or linked in~$S^3$, though, as in the
picture at the beginning of the next section.)
 This assumption is quite important and
nontrivial; for example, we note that if $X$ is a solid torus, it can
also be embedded in $S^3$ in a knotted way, so that the complement
is not homeomorphic to a solid torus.

Thus, now we ask if there is a way of ``filling'' each component
of $\bd X$ with a handlebody so that the resulting closed manifold
is homeomorphic to $S^3$. Spherical boundary components are easy,
since there is only one way, up to homeomorphism,
of filling a spherical boundary component with a ball.
However, already for a toroidal component there are infinitely
many nonequivalent ways of filling it with a solid torus.
Indeed, the filling can be done in such a way that
a circle $\alpha$ on the toroidal component of $\bd X$, as in the left picture,
\immfig{twisttorus}
is identified with a curve $\beta$ on the boundary of the solid
torus, shown in the right picture, where $\beta$ may wind
around the solid torus as many times as desired.
For boundary components of higher genus, there are also infinitely
many ways of filling, and their description is still more complicated.
For every specific way of filling the boundary components
of $X$ with handlebodies
we could test whether the resulting closed manifold is an $S^3$,
but we cannot test all of the infinitely many possibilities.
This is the main difficulty we have to overcome to get an algorithm.

Next, by more or less standard considerations, we can make sure
that there is no ``way of simplifying $X$ by cutting
along a sphere or disk''---in technical
terms, we may assume that $X$  is \emph{irreducible},
that is, every $2$-sphere embedded in $X$ bounds a ball in $X$,
and that $X$ has an \emph{incompressible boundary},
i.e., any curve in $\partial X$ bounding a disk in $X$
also bounds a disk in~$\partial X$.

For dealing with such an $X$, the following result is the key:

\begin{theorem}\label{t:short-meridian}
Let $X$ be an irreducible $3$-manifold, neither
a ball nor an $S^3$, with incompressible boundary and with a
$0$-efficient triangulation $\T$.  If $X$ embeds in $S^3$, then there
is also an embedding for which $X$ has a
\emph{short meridian} $\gamma$, i.e., an
essential\footnote{Meaning that $\gamma$ does not bound a disk in~$\bd X$.}
 normal curve
$\gamma\subset \bd X$ bounding a disk in $S^3\setminus X$
such that the length of $\gamma$, measured as the number
of intersections of $\gamma$ with the edges of $\T$, is bounded
 by a computable function of the number of tetrahedra in~$\T$.
\end{theorem}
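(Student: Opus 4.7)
My plan is to minimize the meridian weight over all valid embeddings and then bound this minimum via the Haken-sum theory of normal curves on $\partial X$.

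By Fox's theorem, I may start with an embedding $\phi\colon X\to S^3$ whose complement is a disjoint union of handlebodies. Since $X$ is neither a ball nor $S^3$, at least one complementary component $H$ is a handlebody of positive genus meeting a boundary component $\Sigma\subset\partial X$ of the same genus. Any essential disk $D\subset H$ has boundary $\gamma=\partial D$ an essential curve on $\Sigma$, hence a meridian of $X$ for $\phi$. Moreover, if $\gamma$ is replaced by its normal representative in its isotopy class on $\Sigma$, it still bounds a disk in the complement (the original disk can be pushed along an annular isotopy on $\partial X$), so I may arrange $\gamma$ to be a normal curve with respect to the induced triangulation $\T|_{\Sigma}$.

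Now I minimize: among all embeddings of $X$ into $S^3$ (with handlebody complement) and all normal meridians $\gamma$ they produce, fix $\gamma^{*}$ of minimum weight $|\gamma\cap\T^{(1)}|$; this minimum is attained since weights are nonnegative integers. The goal is to show $|\gamma^{*}|$ is bounded by a computable function of $t=|\T|$. The main tool is the Haken-sum theory of normal curves on the triangulated surface $\Sigma$: every normal curve expresses as an integer sum of finitely many \emph{fundamental} normal curves whose weights are computably bounded in terms of $t$. If $|\gamma^{*}|$ exceeds all fundamental weights, then $\gamma^{*}$ admits a nontrivial Haken sum $\gamma^{*}=\alpha+\beta$ with $|\alpha|,|\beta|<|\gamma^{*}|$. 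I then aim to show that at least one of $\alpha,\beta$ (or a connected component thereof) is itself a meridian for some embedding of $X$ into $S^3$, contradicting the minimality of $\gamma^{*}$.

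The realization step is the main obstacle. Geometrically, the Haken decomposition prescribes a system of arcs on $\Sigma$ along which to surger the meridian disk $D\subset H$, producing candidate disks whose boundaries are $\alpha$ and $\beta$. The delicate point is that such a surgery may alter the topology of the complement: it could change its genus, create sphere summands, or destroy the handlebody structure altogether. Controlling these effects is where the hypotheses enter. $0$-efficiency of $\T$ excludes stray normal spheres and disks that would otherwise spoil the argument; irreducibility allows any spherical summands to be filled by balls; and incompressibility of $\partial X$ ensures that the new boundary curves stay essential on $\Sigma$. Together these should guarantee that at least one of the two surgery outcomes is a genuine handlebody complement for a new embedding of $X$ in $S^3$ with a strictly shorter meridian. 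Making this rigorous is the technical heart of the proof, likely requiring almost-normal surface techniques and adapted versions of the arguments from Jaco's work on Dehn fillings~\cite{Jaco:Decision-problems-in-the-space-of-Dehn-fillings-2003}.
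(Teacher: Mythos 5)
The overall shape of your proposal—minimize, then derive a contradiction from a Haken-sum decomposition—matches the spirit of normal-surface arguments, and you correctly identify Fox's theorem and the need for re-embeddings. But there is a fundamental gap in the realization step, and it is not merely technical: your approach decomposes the wrong object.

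You decompose the \emph{boundary curve} $\gamma^*$ on $\Sigma=\partial X$ as a Haken sum $\gamma^*=\alpha+\beta$ of normal curves, and then claim that one of $\alpha,\beta$ should be a meridian for some embedding. Nothing in the construction supports this. The disk $D$ witnessing that $\gamma^*$ is a meridian lives in $H=S^3\setminus X$, which is \emph{not} triangulated by $\T$, so normal-surface theory gives no handle on $D$ at all. On the other side, the Haken sum $\gamma^*=\alpha+\beta$ is purely a combinatorial operation on $\Sigma$; the switches at the intersections of $\alpha\cup\beta$ have no geometric counterpart acting on $D$. In fact $\alpha$ and $\beta$ typically lie in entirely different homotopy classes than $\gamma^*$—already for $\Sigma$ a torus, the normal sum changes slope—so neither need be nullhomotopic in $H$, and there is no reason to expect that filling along $\alpha$ (or $\beta$) yields $S^3$. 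Your hope that ``surgery on $D$'' produces candidate disks with boundaries $\alpha$ and $\beta$ does not correspond to any operation that the hypotheses ($0$-efficiency, irreducibility, incompressibility) would rescue.

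The paper avoids this by working inside the triangulated manifold $X$, not in its complement. The key input is Li's theorem, which produces a planar surface $P\subset X$—either essential, or strongly irreducible and boundary strongly irreducible—whose boundary is meridional or almost meridional. After normalizing $P$, one decomposes the \emph{surface} $P$ (not just the curve $\partial P$) as a normal sum of fundamentals. The Euler characteristic, which is additive under normal sum and equals $2-|\partial P|$ for planar $P$, bounds the total coefficient of all summands with $\chi<0$, and the remaining problem summands are the $\chi=0$ ones (annuli and M\"obius bands). These are handled by constructing the auxiliary ``annulus curve'' $\alpha$ and collection $\Gamma$, working in a marked triangulation so that intersection with these fences is additive, and changing the embedding by Dehn twists along essential annuli when a coefficient gets too large. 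The conclusion is an \emph{average length} bound $\ell(\partial P)/(|\partial P|-1)\le 2L(t)$, which forces some meridian boundary component to be short. None of this machinery—Li's existence theorem, the surface-level (not curve-level) decomposition, the Euler-characteristic accounting, or the annulus-curve/marked-triangulation device—appears in your proposal, and without it the contradiction you seek does not materialize.
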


In this theorem, \emph{$0$-efficient triangulation}
is a technical term introduced in \cite{Jaco:0-efficient-triangulations-of-3-manifolds-2003}\iffull, whose definition will be recalled later
in Section~\ref{s:def-0-eff}\else;
we omit the definition in this extended abstract\fi. We are using
$0$-efficient triangulations
in order to exclude non-trivial normal disks and 2-spheres
in~$X$.

We should also mention that the triangulations
commonly used in 3-dimensional topology, and also here,
are not simplicial complexes in the usual
sense---%
they are still made by gluing (finitely many)
tetrahedra by their faces, but any set of gluings that produces a manifold is allowed, even those that identify
faces of the same tetrahedron.  As a result, a particular tetrahedron may not have four distinct faces, six distinct edges and four distinct vertices.   In particular,  0-efficient triangulations of  the manifolds we consider have a single vertex in each boundary component and none in the interior, all edges in the boundary form loops.  This is the necessary result of modifying a triangulation by collapsing simplices, a triangular
face to an edge or to a vertex, etc.;
see \cite[Sec.~2.1]{Jaco:0-efficient-triangulations-of-3-manifolds-2003} for a thorough discussion.   There is even a mind-boggling
one-tetrahedron one-vertex triangulation of the solid torus obtained by gluing a pair of faces of a single tetrahedron,
see~\cite{Jaco:Decision-problems-in-the-space-of-Dehn-fillings-2003}.


Let us remark that  $X$ as in the theorem need not have a short meridian
for every possible embedding, even
if we assume that the complement consists of handlebodies.
For example, if $X$ is a thickened torus (a torus times an interval),
we can embed it so that the curves bounding disks in $S^3\setminus X$
are arbitrarily long w.r.t.\ a given triangulation of~$X$. We must
sometimes change
the embedding to get short meridians.

It is also worth mentioning that this problem does not occur if
$\bd X$ is a single torus, i.e., the knot complement case.
Here a celebrated theorem of Gordon and Luecke
\cite{gordon-luecke-knotcompl}
makes sure that there is only
one embedding, up to a self-homeomorphism of $S^3$,
and the meridian is unique up to isotopy.
This is why the single-torus boundary case solved in
\cite{Jaco:Decision-problems-in-the-space-of-Dehn-fillings-2003}
is significantly easier than the general case.


\section{An outline  of the arguments }\label{s:outline}

Our algorithm for Theorem~\ref{t:3m-in-S3}, deciding the
embeddability of a given $3$-manifold $X$ in $S^3$, for the case of $X$
irreducible and with incompressible boundary, consists in testing
every possible normal curve $\gamma\subset\bd X$ of length
bounded as in Theorem~\ref{t:short-meridian}. For each such candidate
$\gamma$, we construct a new manifold $X'=X'(\gamma)$ by
\emph{adding a $2$-handle} to $X$ along $\gamma$, which means that
we glue a disk bounded by $\gamma$ to the outside of $X$
and thicken it slightly, as illustrated in the following picture:
\immfig{meridi}
Here $X$ is the complement of the union of two (linked)
handlebodies, a knotted solid 3-torus and a solid torus,
and for $X'$, the solid 3-torus in the complement
has been changed to a solid double torus.

Then we test the embeddability
of each $X'(\gamma)$ recursively, and $X$ is embeddable
iff at least one of the $X'(\gamma)$ is. It is not hard to show
that the algorithm terminates, using the vector of genera
of the boundary components of $X$; see Section~\ref{s:algo}.

The proof of Theorem~\ref{t:short-meridian} \iffull
occupies most of the paper and \fi
has many technical steps. In this \iffull section \else
extended abstract\fi we give an outline.

 We assume $X$ to be embedded in $S^3$, the complement being
a union of handlebodies,
and we apply a result of  Li \cite{Li:Thin-position-and-planar-surfaces-for-graphs-in-the-3-sphere-2010}
stating that there is a planar surface (i.e., a disk with holes) $P\subset X$
that is ``stuck'' in its position in a suitable sense
(namely, $P$ is either essential,\footnote{The precise definitions
of essential, strongly irreducible, and boundary strongly irreducible are
somewhat complicated and \iffull we postpone them to Section~\ref{s:irredu}\else we refer to the full version, or e.g., to \cite{Li:Thin-position-and-planar-surfaces-for-graphs-in-the-3-sphere-2010}\fi.
}
 or strongly irreducible and boundary
strongly irreducible) and
is meridional or almost meridional.

Here an essential curve $\gamma\subset \bd X$
is a \emph{meridian} in a given embedding of $X$ in $S^3$ if it
bounds a disk in $S^3\setminus X$. The surface $P$ is
\emph{meridional} if each component of $\bd P$ is a meridian,
and it is \emph{almost meridional} if all components of $\bd P$
but one are meridians. (Actually, Li has yet another case
in his statement, but as we will check, that case can be reduced
to the ones given above\iffull; see
Lemma~\ref{lemCompressAlmostMeridional}\fi.)
The next picture illustrates a meridional $P$
in the case where $X$ is embedded in $S^3$ as the complement of a solid torus neighborhood of the figure `8' knot:
\immfig{fig8Thick}


Next, by choosing $P$ as above with suitable minimality properties,
one can make sure that $P$ is normal or almost normal\footnote{An
\emph{almost normal} surface is like a normal surface
except that in at most one tetrahedron we also allow,
in addition to the triangular and quadrangular
pieces, one of two types of
\emph{exceptional pieces}, namely, a \emph{tube} or an
\emph{octagon}\iffull; see Section~\ref{s:almostnormal-expl}.
\else; see the full version or, e.g.,
\cite{Jaco:Decision-problems-in-the-space-of-Dehn-fillings-2003}.\fi
} for the given
triangulation. For the case of $P$ essential, this is an old
result going back to Haken and Schubert (and for our notion of
complexity of $P$, a proof is given in \iffull Section~\ref{sec:summands_of_annuli})\else in the full version\fi,
while for $P$ strongly irreducible and boundary strongly irreducible
this follows from \cite{bachmanDerbyTalbotSedgwick};
also see \cite{stocking-trams} for the case of a
strongly irreducible surface in a closed manifold.
It remains to show that, in this setting, at least one of the meridians in
$\partial P$ must be short.


Here we apply an \emph{average
length estimate}, which is an idea of Jaco and Rubinstein
appearing in
\cite{Jaco:Decision-problems-in-the-space-of-Dehn-fillings-2003,Jaco:Finding-planar-surfaces-in-knot--and-link-manifolds-2009}.

Let $\gamma_1,\ldots,\gamma_b$ be the components of $\bd P$, and let
$\ell(\bd P)=\sum_{i=1}^b \ell(\gamma_i)$ be the boundary length of $P$.
We know that all the $\gamma_i$ but at most one are meridians.
The length of the shortest meridian is bounded by
the average $\ell(P)/(b-1)$, and we want to bound this average
by a (computable) function of $t$, the number of tetrahedra in
the triangulation $\T$ of~$X$.

Now by the theory of normal surfaces, the (almost) normal surface $P$
can be written as a normal sum\footnote{For normal surfaces
$F,F_1,F_2$ in a triangulation $\T$ of $X$, $F$ is called
the \emph{normal sum} of $F_1$ and $F_2$ if $\vv(F)=\vv(F_1)+\vv(F_2)$,
where $\vv(F)$ denotes the normal vector of~$F$.
Similarly for almost normal surfaces, where we have extra coordinates
in $\vv(F)$ for the exceptional types of pieces; in this case,
at least one of $F_1,F_2$ has to be normal. \iffull Also see
Section~\ref{s:normal-surfaces}.\fi}
 of fundamental surfaces in $X$,
\begin{equation}\label{e:PsumF}
P=\sum_{i} k_i F_i,
\end{equation}
where the $k_i$ are positive integers
and the $F_i$ are surfaces from a finite collection;
their number, as well as $\lmax:=\max_i\ell(\bd F_i)$
can be bounded by a (computable) function of $t$ alone, and does not
depend on~$P$.

Since the boundary length is additive w.r.t.\ normal sum, we have
$\ell(\bd P)=\sum_i k_i\ell(\bd F_i)\le \lmax K$,
where $K:=\sum_i k_i$ is the number of fundamental summands
in the expression for $P$, and
so it suffices to show that $K\le Cb$, with some computable function $C=C(t)$.

The basic version of the average-length estimate uses the
Euler characteristic 
 $\chi$ as an accounting device. Since
$\chi$
is additive as well,  $\chi(P)=\sum_i k_i\chi(F_i)$.
Since $P$ is a planar surface with $b$ boundary components,
we have $\chi(P)=2-b$.

Now an ideal situation for the average-length estimate (which
we cannot guarantee in our setting) is when $\chi(F_i)\le -1$
for every $i$; in other words, none of the summands is a disk,
2-sphere, annulus, M\"obius band, or torus (or projective plane
or Klein bottle, but these cannot occur in $X$ embedded in $S^3$).
Then we get
$b-2=-\chi(P)=\sum_i k_i(-\chi(F_i))\ge K$, and we are done
(even with $C=1$).

In our actual setting, the summands with $\chi>0$,
i.e., spheres and disks, are excluded by the $0$-efficient
triangulation of $X$.
We also need not worry about torus summands, since they have
empty boundary and thus do not contribute to $\ell(\bd P)$.
The real problem are annuli (and M\"obius bands, but since twice
a M\"obius band, in the sense of normal sum, is an annulus,
M\"obius bands can be handled easily once we deal with annuli).

There are two kinds of annuli, which need very different treatment:
the essential ones, and the \emph{boundary parallel} ones.
Here an annulus $A\subset X$ is \emph{boundary parallel}
if it can be isotoped to an annulus $A'\subset\bd X$
with $\bd A'=\bd A$ while keeping the annulus boundary fixed.
Boundary parallel
annuli do not occur for $P$ essential, but they might occur
for the case of $P$ strongly irreducible and boundary
strongly irreducible.

To deal with the annulus summands, we first construct what
we call an \emph{annulus curve}
$\alpha\subset\bd X$. This is the boundary of a maximal collection $\A$
of essential annuli, maximal in the sense that each of the two
boundary curves of every other essential
annulus, after a suitable normalization, either intersects $\alpha$
or is normally isotopic to a component of $\alpha$.
We bound the length of $\alpha$ by a computable
function of $t$, and $|\alpha\cap P|$,
the number intersections of $\alpha$ with $P$,
by $C'b$, for some computable $C'=C'(t)$, again assuming $P$ minimal in a suitable sense.
For obtaining this bound we may need to change the embedding
of $X$, and we also
 use results about ``untangling'' a system of curves on a surface
by a boundary-fixing self-homeomorphism
from~\cite{matousekSedgwickTancerWagner}.



Similarly, we construct a collection $\Gamma$ of curves that helps to deal
with boundary parallel annuli: those that have minimal boundary
in a suitable sense either
intersect $\alpha$, or their boundaries are normally isotopic to components of
$\alpha$ or curves from~$\Gamma$.

Having constructed such an $\alpha$ and $\Gamma$, we work with normal curves and
surfaces in a  ``marked'' sense, which also takes into account the
position of the curves and surfaces w.r.t.~$\alpha$ and $\Gamma$.
This, in particular, makes the number of intersections with $\alpha$
additive w.r.t.\ the marked normal sum, which in turn
allows us to bound the number of annulus summands in (\ref{e:PsumF}), both
boundary parallel and essential, that intersect $\alpha$ by $C'b$.

Then we might have  boundary-parallel annulus summands
that avoid $\alpha$, but we show that
those do not occur at all, since they would contradict the minimality of~$P$.

Finally, there remain essential annuli that
have a boundary component parallel to a component of $\alpha$.
Here we show that if such an annulus had the coefficient
$k_i$ in (\ref{e:PsumF}) at least $|\alpha\cap \partial P|\le C'b$, then there
is a self-homeomorphism of $X$, namely,
a \emph{Dehn twist} in the annulus,
that makes $P$ simpler,
contradicting its supposed minimality. (Here we may again modify the
assumed embedding of $X$ in $S^3$ in order to get a short
meridian---and, as we have remarked, some such modification is necessary
in the proof, since some embeddings may not have short meridians.)
Hence for these essential annuli, too, the coefficients are bounded
by a linear function of~$b$. This concludes the proof.

\section{The algorithm}\label{s:algo}

If $X$ embeds in $S^3$, then it is orientable,
and orientability can easily be tested algorithmically
(e.g., by a search in the dual graph of the triangulation,
or by computing the relative homology group $H_3(X,\bd X)$).
So from now on, we assume $X$ orientable. In this situation,
the boundary of $X$ is a compact orientable
2-manifold, and thus each component is a 2-sphere with handles.

We describe a recursive procedure \EMB(X) that accepts a triangulated
orientable 3-manifold with boundary and returns TRUE or FALSE depending
on the embeddability of $X$ in $S^3$. (With some more effort, for the TRUE case,
we could also recover a particular embedding, but we prefer simplicity of
presentation.) The procedure works as follows.

\begin{enumerate}
\item\label{step:cpts} (Each component separately)
Let $X_1,\ldots,X_k$ be the connected components of $X$.
If $k>1$, test if
there  is an $S^3$ among the $X_i$ (several algorithms are available
for that \cite{Rubinstein:AlgorithmRecongnizing3Sphere-1995,Thompson:ThinPositionrecognitionProblemS3-1994,Ivanov:ComputationalComplexityDecisionProblems3DimensionalTopology-2008,S3inNP}), and if yes, return FALSE. Otherwise,
still for $k>1$, return the conjunction
$\EMB(X_1)\wedge\cdots\wedge\EMB(X_k)$.
\item\label{step:holes} (Fill spherical holes)
Now we have $X$ connected. If it is an $S^3$, return TRUE.
If there are components of $\bd X$ that are $S^2$'s, form $X'$
by attaching a 3-ball to each spherical component of $\bd X$, and
return $\EMB(X')$.
\item \label{step:redu}
(Connected sum) Form a decomposition $X=X_1\#\cdots\# X_k$ of $X$ into a
connected sum\footnote{For two $3$-manifolds
$X$ and $Y$, the \emph{connected sum} $X\# Y$ is obtained by
removing a small ball from the interior of $X$, another small ball
from the interior of $Y$, and identifying the boundaries of these
two balls.}
 of prime manifolds\footnote{A \emph{prime} 3-manifold is one
that has no decomposition as a connected sum $X\#Y$ with neither
$X$ nor $Y$ an $S^3$.} that are not $3$-spheres.\footnote{
The algorithm for prime decomposition goes back to
Schubert \cite{Schubert-primedecomposition},
for closed manifolds it is presented in detail in
\cite{Jaco:Algorithms-for-the-complete-decomposition-of-a-closed-3-manifold-1995}, and a version for manifolds with boundary is implicit in
\cite{Jaco:0-efficient-triangulations-of-3-manifolds-2003}.}
If $k>1$, i.e., $X$ is not prime, return
$\EMB(X_1)\wedge\cdots\wedge\EMB(X_k)$.
If $X$ is prime but not irreducible, i.e., contains an $S^2$ that
does not bound a ball, then return FALSE.
\item \label{step:compress}
(Boundary compression)
Test if there is a compressing disc $D$ for $\bd X$ (i.e., $\partial D\subset
\bd X$ does not bound a disk in $\partial X$).\footnote{The
idea of an algorithm is due to Haken, 
and the algorithm is implicit
in~\cite{Jaco:0-efficient-triangulations-of-3-manifolds-2003}. }
If yes, cut $X$ along~$D$, obtaining a new manifold~$X'$. Three
cases may occur:
\begin{enumerate}
\item\label{step:a} If $X'$ has two components, $X'_1$ and $X'_2$, return
$\EMB(X'_1)\wedge \EMB(X'_2)$. This case may occur, for example, for $X$
a  handlebody with two handles (a ``thickened 8'')
when $D$ separates the two handles.
\item\label{step:b}
 If $X'$ is connected and the two ``scars'' after cutting along $D$
lie in the same component of $\bd X'$, return \EMB$(X')$.
This case may occur, e.g., for $X$ a solid torus.
\item\label{step:c}
 If neither of the previous two cases occur, then $X'$ is connected
but the scars lie in different components of $\partial X'$.
Return FALSE.
To get an example of $X$ fitting this case, we can start
with a thickened torus (i.e., torus times $[-1,1]$) and connect
the two boundary components by a $1$-handle---which cannot be done
in $\R^3$, but it does give a 3-manifold (with double torus boundary).
\end{enumerate}
\item (Short meridian)\label{step:meridi}
Now $X$ is irreducible and with incompressible boundary.
Using \cite[Thm.~5.20]{Jaco:0-efficient-triangulations-of-3-manifolds-2003}, retriangulate
$X$ with a $0$-efficient triangulation.
Then proceed as described at the beginning of Section~\ref{s:outline}:
let $\gamma_1,\ldots,\gamma_n$ be a list of all closed essential normal
curves in $\bd X$ up to the length bound as in Theorem~\ref{t:short-meridian},
for each $i$ form $X'(\gamma_i)$ by attaching a $2$-handle along $\gamma_i$,
and return the disjunction
$\EMB(X'(\gamma_1))\vee\cdots\vee \EMB(X'(\gamma_n))$.
\end{enumerate}

\begin{lemma} The above procedure always terminates
and returns a correct answer, assuming the validity
of Theorem~\ref{t:short-meridian}.
\end{lemma}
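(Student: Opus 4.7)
The plan is to prove correctness and termination jointly by strong induction on a well-founded measure. To each triangulated compact orientable $3$-manifold $X$ attach the pair $\mu(X) = (\sigma(X), t(X))$, ordered lexicographically, where $t(X)$ is the number of tetrahedra and $\sigma(X)$ is the decreasing-sorted tuple of genera of the connected components of $\bd X$ (spheres counted as genus $0$), compared under the lex order that declares a proper prefix smaller; equivalently, $\sigma(X)$ is the ordinal $\omega^{g_1} + \cdots + \omega^{g_n}$ with $g_1 \geq \cdots \geq g_n$. This order is well-founded, so any chain of recursive calls with strictly decreasing $\mu$ terminates.

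\textbf{Termination.} For each step I would verify $\mu$ strictly decreases. In Step~\ref{step:cpts} each component inherits a sub-multiset of boundary components and strictly fewer tetrahedra, since $k \geq 2$. In Step~\ref{step:holes} each filled spherical boundary removes a $0$-entry from $\sigma$, so $\sigma$ drops. In Step~\ref{step:redu} a non-trivial prime decomposition distributes boundary components without introducing new ones, giving $\sigma(X_i) \leq \sigma(X)$ and $t(X_i) < t(X)$ after appropriate retriangulation. The decisive cases are Steps~\ref{step:compress} and~\ref{step:meridi}: both cutting along a compressing disk $D$ and attaching a $2$-handle along an essential $\gamma \subset \bd X$ perform the same $\bd$-surgery on a boundary surface $F$ of genus $g$. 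If the surgery curve is non-separating, $F$ becomes a surface of genus $g-1$; if separating, $F$ splits into two surfaces of genera $g_a, g_b$ with $g_a + g_b = g$ and $g_a, g_b \geq 1$ (essentiality rules out a disk piece). In either case $\sigma$ strictly drops.

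\textbf{Correctness.} I verify this by induction on $\mu$. Step~\ref{step:cpts} uses that any closed connected $3$-submanifold of $S^3$ equals $S^3$ (invariance of domain and compactness), so any $S^3$ component alongside others precludes embedability and any closed non-$S^3$ component is correctly rejected by the recursive call (it will exhaust Steps~\ref{step:holes}--\ref{step:compress} and hit Step~\ref{step:meridi} with an empty disjunction). Step~\ref{step:holes} is Alexander's theorem. Step~\ref{step:redu} uses that $X_1 \# X_2$ embeds in $S^3$ iff both $X_i$ do, and that the only prime non-irreducible orientable $3$-manifold (necessarily closed) is $S^1 \times S^2$, which does not embed in $S^3$. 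For Step~\ref{step:compress}, the compressing disk $D$ persists in any embedding: cases (a), (b) follow from the direct cut-and-reglue equivalence between embeddings of $X$ and of $X'$, and I would exclude case (c) via Fox's theorem, arranging the complement as a union of handlebodies and capping $D$ in the adjacent handlebody to obtain a $2$-sphere whose two sides must each contain one of the two local sides of $\partial D$ on $F$, contradicting the connectedness of $X \setminus D$. Finally, Step~\ref{step:meridi} is immediate from Theorem~\ref{t:short-meridian}: $X \subset X'(\gamma)$ for every $\gamma$, so embedability of any $X'(\gamma)$ yields that of $X$; conversely, the theorem produces a short meridian $\gamma$ for some embedding of $X$, and capping the corresponding disk in $S^3 \setminus X$ extends the embedding to $X'(\gamma)$. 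The main obstacle is the case-(c) exclusion in Step~\ref{step:compress}, which requires Fox's normalisation and a careful separation analysis; the other verifications are routine bookkeeping.
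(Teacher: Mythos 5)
Your termination argument is essentially the paper's, repackaged: the decreasing genus tuple under lexicographic order (with padding) is the same as your ordinal $\omega^{g_1}+\cdots+\omega^{g_n}$, and your case analysis of the $\partial$-surgery in Steps~4--5 (non-separating curve drops a genus by one; separating curve splits $g$ into $g_a+g_b$ with both $\geq 1$ by essentiality) is exactly the paper's. Appending $t(X)$ as a secondary coordinate is a reasonable way to make Steps~1--3 strictly decreasing, though your claim that prime decomposition strictly drops the tetrahedron count ``after appropriate retriangulation'' is asserted without justification; the paper instead observes that $\vec g_\ge$ does not increase in Steps~1--3 and that a branch can pass through them only finitely often before reaching Step~4 or~5, which removes the need for any bound on $t$ under retriangulation.

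The genuine problem is your exclusion of case~(c) in Step~\ref{step:compress}. You propose, after Fox normalization, to ``cap $D$ in the adjacent handlebody'' to form a $2$-sphere. But $D\subset X$ is a compressing disk for $\partial X$, and $\partial D$ is an essential curve on the boundary of the neighboring handlebody $H\subset S^3\setminus X$; there is no reason for $\partial D$ to bound a disk in $H$. For instance $\partial D$ could be a longitude on a solid-torus handlebody, in which case no such cap exists and no $2$-sphere is produced. So the construction does not get off the ground. The paper's argument is different and avoids this: since the two scars of $D$ lie in distinct components $C_1\ne C_2$ of $\partial X'$ and $D$ does not separate $X$, one can choose a loop $\delta\subset X$ meeting $D$ transversally in a single point; then $\delta$ crosses the closed surface $C_1$ (the union of a piece of $\partial X$ with one scar of $D$) exactly once, which is impossible because every closed embedded surface in $S^3$ separates. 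This needs no re-embedding, no Fox normalization, and no disk in the complement. You should replace the capping construction with this intersection-parity argument (or some other mechanism that does not presuppose that $\partial D$ is a meridian of the complementary handlebody).
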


\begin{proof} First we show that the algorithm always terminates.
Let $C_1,\ldots,C_k$ be the components of $\bd X$
numbered so that $g(C_1)\ge \cdots\ge g(C_k)$, where $g(.)$ stands
for the genus, and let $\vec g_{\ge}(X)$ be the vector
$(g(C_1),\ldots,g(C_k))$. We consider these vectors ordered
lexicographically (if two vectors have a different length,
we pad the shorter one with zeros on the right).

Let us think of the computation of the algorithm as a tree, with
nodes corresponding to recursive calls. The branching degree
is finite, so it suffices to check that every branch is finite.

It is easy to see that $\vec g_{\ge}(X)$ cannot increase
by passing to a connected component or to a prime summand,
and that it decreases strictly by a boundary compression
and also by the short meridian step. Indeed, we observe that in the boundary
compression step or the short meridian step,
exactly one of the boundary components $C_i$ is affected, and it
is either split into two components
$C'$ and $C''$ of nonzero genus and with $g(C_i)=g(C')+g(C'')$,
or it remains in one piece but the genus decreases by one.
Since after steps~\ref{step:cpts}--\ref{step:redu} we have
a connected irreducible manifold without spherical boundary components,
for which the next step either finishes the computation
or reduces $\vec g_{\ge}(X)$ strictly,  every branch is finite as needed.


It remains to show that the returned answer is correct.
For Step~\ref{step:holes}, we need that there is a unique way of filling
a spherical hole; this is well known and
can be inferred, for example, from the fact
that there is only one orientation-preserving self-homeomorphism of $S^2$
up to isotopy \cite[Sec.~2.2]{farbMargalit}.



For Step~\ref{step:redu}, it is easily checked
that a connected sum embeds iff the summands do.
Moreover, every $S^2$ embedded in $S^3$ separates it,
and hence if $X$ contains a non-separating $S^2$, then it is not embeddable.

For Step~\ref{step:compress}, it is clear that if $X$ is embeddable,
then so is $X'$.

If, in case~(\ref{step:a}), $X'_1$ and $X'_2$ are both embedded,
then it is easy to construct an embedding of $X$:
Denote $D$'s scars by $D_1$ and $D_2$.
Then a regular neighborhood of $D_i$ is a ball $B_i$ with boundary
$S_i = \partial B_i$, and that meets both $X'_i$ and $S^3\setminus X'_i$
in balls.  Think of each $X'_i$ as embedded in its own copy of $S^3$,
and take a connected sum of these two $S^3$'s so that
$S^3 = S^3 \# S^3 \supset  X'_1 \#_{D_1 = D_2} X'_2=X$.
Similarly, if $X'$ is embedded in case~(\ref{step:b}), then we can connect the
scars by a thin handle in $S^3\setminus X'$ and obtain an embedding
of~$X$.


In case~(\ref{step:c}), let $C_1\ne C_2$ be the components
of $\bd X'$ containing the scars. Since the disk $D$ does not
separate $X$, we can choose a loop $\delta\subset $X
meeting $D$ in a single point, and such that $\delta$
also meets $C_1$ in a single point. But then, if $X$
were embedded in $S^3$, $C_1$ would yield a nonseparating
surface in $S^3$---a contradiction.

Finally, if one of the $X'(\gamma_i)$ is embeddable in Step~\ref{step:meridi},
then so is $X$ (since in $X'$ we have $2$-handle that was added to $X$, and we
can just assign it to the complement of $X$),
and if $X$ is embeddable, then at least one of the $X'(\gamma_i)$ is by
Theorem~\ref{t:short-meridian}.
\end{proof}

\iffull
\section{Intersections of curves and surfaces}\label{s:irredu}

In this section we collect terminology,  definitions and basic results concerning properly embedded curves in surfaces and properly embedded surfaces in 3-manifolds.    In particular, for latter sections we need that any pair of properly embedded surfaces, each either essential, or, strongly irreducible and boundary strongly irreducible, can be isotoped to intersect essentially.  There are few new results in this section.   The reader is referred to Hempel
\cite{Hempel:3-manifolds-1976} and Jaco \cite{Jaco:Lectures-on-three-manifold-topology-1980} for more background.

We assume throughout that all curves and surfaces have been isotoped
to have transverse intersection.

\subsection{Curves}

A \emph{curve} is a properly embedded $1$-dimensional
 manifold in a surface $F$, each component either a \emph{loop}, which is closed,  or an \emph{arc}, which has two endpoints in $\bd F$.

A loop is \emph{trivial} if it bounds a disk in $F$ and an arc is \emph{trivial} if it co-bounds a disk in $F$ with some arc in $\bd F$.
A curve is \emph{essential} if no component is trivial.

Pairs of curves are assumed to intersect transversally. If $\alpha$ and $\beta$ are a pair of curves, then their  \emph{geometric intersection number} $i(\alpha,\beta) = \min( |\alpha' \cap \beta'|)$  taken over all pairs of curves $(\alpha',\beta')$ where $\alpha'$ and $\beta'$ are isotopic to $\alpha$
and $\beta$ within $F$, respectively. (The isotopies are also
allowed to move endpoints of arcs within the boundary.)

We say that $\alpha$ and $\beta$ bound a \emph{bigon} if there is a disk bounded by a pair of sub-arcs, one from each curve;  see Figure~\ref{figBigons}
in Section~\ref{s:tightsnug} below.    We say that they bound a
\emph{half-bigon} if there is a disk bounded by a pair of sub-arcs, one from each curve, along with an arc in $\partial F$.   If $\alpha$ and $\beta$ bound a bigon or half-bigon, then they can be isotoped to reduce their intersection.

We need this converse, a mild generalization of Farb and Margalit's \emph{bigon criterion}:

\begin{lemma}[Bigon criterion \cite{farbMargalit}]
A pair of curves $\alpha$ and $\beta$ realize their geometric intersection number if and only if they do not bound a bigon or half-bigon. \end{lemma}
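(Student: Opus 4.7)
The plan is as follows. The forward direction is the observation already made just before the statement: if $\alpha\cup\beta$ bounds a bigon $B$, an isotopy of $\alpha$ supported in a small neighborhood of $B$ pushes the $\alpha$-side of $\bd B$ across $B$, removing its two corner crossings; the same move across a half-bigon $H$ uses an isotopy that additionally slides the involved arc endpoints within $\bd F$, removing the single interior crossing. Either way we produce an isotopic pair with strictly fewer intersections, so $|\alpha\cap\beta|>i(\alpha,\beta)$.

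For the reverse direction, I will reduce to Farb--Margalit's original bigon criterion for simple multicurves on a closed surface by doubling. Let $DF := F\cup_{\bd F} F$, equipped with the orientation-reversing involution $\sigma$ fixing $\bd F$ pointwise, and for each curve $\gamma\subset F$ set $D\gamma := \gamma\cup\sigma(\gamma)$. Every arc in $F$ doubles to a single simple closed curve in $DF$ (its two endpoints on $\bd F$ are $\sigma$-fixed and glue the two copies), and every loop doubles to a disjoint pair of parallel loops, one on each side of $\bd F$; so $D\alpha$ and $D\beta$ are transverse simple closed multicurves in $DF$ with $|D\alpha\cap D\beta| = 2|\alpha\cap\beta|$.

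The key point is that bigons behave well under doubling. A bigon in $F$ between $\alpha$ and $\beta$ is still a bigon in $DF$ between $D\alpha$ and $D\beta$, and a half-bigon $H\subset F$ with edge $c\subset\bd F$ unions with its mirror $\sigma(H)$ along $c$ to form an honest bigon $H\cup\sigma(H)\subset DF$. Hence if $\alpha$ and $\beta$ bound no bigon or half-bigon in $F$, then $D\alpha$ and $D\beta$ bound no bigon in $DF$, and the closed-surface bigon criterion \cite{farbMargalit} gives $|D\alpha\cap D\beta| = i(D\alpha, D\beta)$.

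Finally, any pair $(\alpha',\beta')$ isotopic to $(\alpha,\beta)$ in $F$ (with the isotopies free to slide arc endpoints along $\bd F$) doubles to a pair $(D\alpha', D\beta')$ isotopic to $(D\alpha, D\beta)$ in $DF$, giving
\[
2|\alpha\cap\beta| \;=\; i(D\alpha, D\beta) \;\le\; |D\alpha'\cap D\beta'| \;=\; 2|\alpha'\cap\beta'|,
\]
so $|\alpha\cap\beta|\le|\alpha'\cap\beta'|$ for every such pair, as required. I expect the main technical point to be the doubling of isotopies: a boundary-sliding isotopy of an arc in $F$ must be checked to glue with its $\sigma$-reflection into a genuine smooth ambient isotopy of $DF$, which needs a little care with the smooth structure on $DF$ near the equator $\bd F$ but is standard; everything else is bookkeeping.
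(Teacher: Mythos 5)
Your approach is essentially the same as the paper's: double the surface along its boundary and invoke Farb--Margalit's closed-surface bigon criterion. The forward direction is fine, and the idea of doubling isotopies at the end is correct (and matches what the paper implicitly relies on).

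However, there is a genuine logical gap in the key step of the reverse direction. You correctly observe that a bigon in $F$ remains a bigon in $DF$ and that a half-bigon $H\subset F$ unions with its mirror $\sigma(H)$ to form a bigon in $DF$. But this is the implication ``(half-)bigon in $F$ $\Rightarrow$ bigon in $DF$,'' and you then write ``Hence if $\alpha$ and $\beta$ bound no bigon or half-bigon in $F$, then $D\alpha$ and $D\beta$ bound no bigon in $DF$.'' That inference is the inverse, not the contrapositive, and is not valid. What your argument actually requires is the converse implication: \emph{every} bigon $B\subset DF$ between $D\alpha$ and $D\beta$ must be traceable to a bigon or half-bigon of $\alpha,\beta$ in $F$. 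This is true but needs an argument---for instance, pass to an innermost bigon $B$; if $B\cap\partial F=\emptyset$ then $B$ lies in one copy of $F$ and is an honest bigon there, and if $B$ meets the equator one must argue (using the $\sigma$-equivariance of $D\alpha, D\beta$ and of the intersection pattern) that an appropriate bigon or half-bigon survives in $F$. The paper states the needed implication in the correct direction (bigon in the double $\Rightarrow$ half-bigon in the original) without detailed proof; your write-up instead proves a different implication and then draws the conclusion as if it were the converse, so the step must be reworked before the argument is complete.
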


\begin{proof} Farb and Margalit show that any pair of connected loops that intersect non-minimally form a bigon.  They also note that this extends to disconnected curves consisting of loops.

If either curve has an arc component, then the doubled curves are properly embedded closed curves in the double\footnote{Meaning that we glue two copies
of $F$ by identifying their boundaries.} of the surface.    If they intersect non-minimally in the original, they intersect non-minimally in the double and hence bound a bigon there.   Thus, they bound a half-bigon in the original.
\end{proof}

\subsection{Essential surfaces}
We will assume that our surfaces are properly embedded in a 3-manifold $X$ that is \emph{irreducible}, i.e., every sphere embedded in $X$ bounds a ball in $X$, and \emph{boundary incompressible}, i.e., any curve in $\partial X$ bounding a disk in $X$ also bounds a disk in $\partial X$ (is trivial).

Let $F$ be a surface properly embedded in $X$.
A \emph{compressing disk} for a $F$ is an embedded disk $D \subset X$  whose interior is disjoint from $F$ and whose boundary is an essential loop in $F$.
 A \emph{boundary compressing disk} is an embedded disk $D \subset X$
whose boundary, $\partial D = f \cup x$, is the union of $f = \partial D \cap F = D \cap F$, an essential arc properly embedded in $F$, and $x = \partial D \cap \partial X  = D \cap F$, an arc properly embedded  in $\partial X$.
Here is an illustration:
\immfig{compressingDiscs}

A surface $F$ is \emph{compressible} if it has a compressing disk, \emph{boundary compressible} if it has a  boundary compressing disk, and \emph{incompressible} and \emph{boundary incompressible} if not, respectively.   A surface is \emph{essential} if it is incompressible, boundary incompressible, and not a sphere bounding a ball, or a disk co-bounding a ball with a disk in $\partial X$.   

We establish some basic facts about surfaces in $X$.

\begin{proposition}
\label{propSurfaceFacts}
The following statements hold for properly embedded surfaces in $X$, an irreducible, orientable $3$-manifold with non-empty incompressible boundary:
\begin{enumerate}
\item[\rm(i)] Every disk co-bounds a ball with a disk in~$\partial X$.
\item[\rm(ii)] Every connected surface with an inessential boundary curve is either compressible or a disk.
\item[\rm(iii)] The boundary curve of every compressible annulus is trivial in $\partial X$boundary.
\item[\rm(iv)] Every boundary compressible annulus is boundary parallel (parallel to an annulus in $\partial X$).
\item[\rm(v)] No surface is a projective plane.
\item[\rm(vi)] Every M\"obius band is essential.
\end{enumerate}
\end{proposition}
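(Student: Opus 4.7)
The approach is to prove (i) first, then leverage it for (ii)--(iv), and use orientability of $X$ to handle the non-orientable cases (v) and (vi). For (i), given a properly embedded disk $D \subset X$, its boundary $\partial D \subset \partial X$ bounds the disk $D$ in $X$; incompressibility of $\partial X$ then gives a disk $D' \subset \partial X$ with $\partial D' = \partial D$, and the sphere $D \cup D'$ bounds a ball by irreducibility of $X$. For (ii), let $c$ be a boundary component of $F$ that is inessential in $\partial X$ and fix an innermost disk $D \subset \partial X$ with $\partial D = c$; push $D$ slightly into $X$ to obtain a properly embedded disk $D_0$. A standard innermost-loop reduction on $D_0 \cap F$ either produces a subdisk of $D_0$ that compresses $F$ (when the innermost loop is essential in $F$), or lets us isotope the intersection away using irreducibility (when the loop bounds a disk in $F$, the two disks cobound a sphere which bounds a ball). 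After finitely many reductions either a compressing disk is found or $D_0 \cap F = c$; in the latter case $D_0$ compresses $F$ unless $c$ bounds a disk in $F$, which, since $c$ is a full boundary component of the connected surface $F$, can happen only if $F$ is that disk.

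For (iii) and (iv), I rely on the classification of essential curves on an annulus $A$: every essential simple closed curve is parallel to the core, and every essential arc joins the two boundary components. Compressing $A$ along $D$ in (iii) splits $A$ into two disks, each bounded by a component of $\partial A$; applying (i) to each shows both components are trivial in $\partial X$. For (iv), boundary-compressing $A$ along $D$ converts $A$ into a single embedded disk $D''$, which by (i) cobounds a ball $B$ with a disk $D^* \subset \partial X$; reversing the boundary surgery inside $B$ together with a regular neighborhood of the arc $x \subset \partial X$ then exhibits $A$ as parallel to an annulus in $\partial X$.

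For (v), a regular neighborhood of an embedded $\mathbb{RP}^2$ is a twisted $I$-bundle over $\mathbb{RP}^2$ and hence non-orientable, contradicting orientability of $X$. For (vi), let $M$ be a properly embedded M\"obius band; its essential simple closed curves are, up to isotopy, the one-sided core and the two-sided curves parallel to $\partial M$. The boundary of a compressing disk must be two-sided, so (if present) is parallel to $\partial M$; but then cutting off the annular collar of $\partial M$ in $M$ and capping the resulting curve by the compressing disk yields an embedded $\mathbb{RP}^2$ in $X$, contradicting (v). For boundary-incompressibility, the lateral boundary of a regular neighborhood of $M$ is an annulus---the orientation double cover of $M$---and a boundary compressing disk for $M$ lifts to one for this annulus; by (iv) the annulus would be boundary parallel, which together with orientability of $\partial X$ contradicts $M$ being non-orientable. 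Since $M$ is neither a sphere nor a disk, $M$ is essential.

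The main technical care is in (ii), which requires an iterative innermost-loop argument with repeated appeal to irreducibility; the remaining items reduce essentially to (i) via cut-and-paste and the elementary curve topology of disks, annuli, and M\"obius bands.
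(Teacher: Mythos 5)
Parts (i)--(iv) are essentially the paper's argument (the (ii) innermost-loop reduction is a slightly more careful version of the paper's ``innermost disk in $\partial X$'' shortcut, and (iv) is the regular-neighborhood-of-$A\cup B$ argument phrased slightly differently; both are fine). The genuine problem is (v), and it propagates into (vi).

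Your argument for (v) is wrong: the twisted $I$-bundle over $\mathbb{RP}^2$ \emph{is} orientable. It is $\mathbb{RP}^3$ minus an open ball (equivalently, $S^2\times[-1,1]$ quotiented by $(x,t)\mapsto(-x,-t)$, a free involution that is the composition of two orientation-reversing maps, hence orientation-preserving), and has boundary $S^2$. So orientability of $X$ is perfectly compatible with an embedded projective plane, and you must instead use irreducibility together with the non-empty boundary hypothesis, as the paper does: the $S^2$ boundary of the twisted $I$-bundle $N(P)$ cannot bound a ball on the $N(P)$ side (since $\pi_1(N(P))=\mathbb{Z}/2$), and cannot bound a ball on the other side because that side contains $\partial X\neq\emptyset$ in its boundary; hence $X$ is reducible, a contradiction.

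Two further points in (vi). In the compressibility case you write ``the boundary of a compressing disk must be two-sided''---two-sided \emph{in $X$} is automatic (every loop in an orientable $3$-manifold is two-sided), but that does not rule out $\partial D$ being the core of $M$, which is one-sided in $M$ yet two-sided in $X$. You need the separate geometric argument (as the paper attempts, somewhat imprecisely) that a disk whose interior is disjoint from $M$ cannot have the core of $M$ as its boundary, because the local half-twist of $M$ around its core leaves no room for $D$ to attach consistently. In the boundary-compressibility case, the conclusion ``which together with orientability of $\partial X$ contradicts $M$ being non-orientable'' does not actually yield a contradiction: $A=\partial N(M)$ is boundary parallel, but the product region it cobounds is the solid torus on the \emph{other} side of $A$ (not $N(M)$, since $A$ winds twice around the core of $N(M)$ and so is not longitudinal there). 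Gluing $N(M)$ to that product region shows $X$ is a solid torus, whose boundary is compressible---this, not any orientability mismatch, is the contradiction, and it is what the paper's ``$X$ is the union of two solid tori'' is pointing at.
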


\begin{proof}
Because $X$ has incompressible boundary, the boundary of a properly embedded disk bounds a disk in $\partial X$.   The union of these disks is a sphere that, because $X$ is irreducible, bounds a ball, yielding~(i).

For (ii), suppose that some boundary curve of connected $F$ bounds a disk in $\partial X$. Then, among disks in $\partial X$ bounded by boundary curves of $F$, an innermost such disk can be pushed slightly into the interior of $X$ while keeping its boundary in $F$.  The boundary curve is either trivial in $F$, in which case $F$ is a disk, or essential in $F$, in which case $F$ is compressible.

Concerning (iii), let $D$ be a compressing disk for an annulus $A$.  Then $\partial D$ separates $A$ into two annuli $A'$ and $A''$.   So $D \cup A'$ and $D \cup A''$ are properly embedded disks, each with one boundary curve of $A$.  Because $\partial X$ is incompressible,  the boundary curves are both
 trivial in~$\partial X$.

As for (iv),
let $B$ be a boundary compressing disk for an annulus $A$.  Then $\partial N(A \cup B)$, the boundary of a regular neighborhood of their union,  has two components, an annulus isotopic to $A$ and a disk.  By (i), the disk co-bounds a ball with a disk in $\bd X$.  But then the union of $N(A \cup B)$ with the ball is a solid torus, across which $A$ is parallel to an annulus in~$\partial X$.

Concerning (v),
if $P$ is a projective plane, then $\partial N(P)$, the boundary of its regular neighborhood, is a sphere which separates $P$ from $\partial X$.   Then $X$ is reducible, for the sphere cannot bound a ball---no ball has interior boundary or contains an embedded projective plane.

Finally, in (vi), let $M$ be a a M\"obius band. Suppose first that $M$ is
compressible and let $D$ be a compressing disk for  $M$.  Then $\partial D$ cannot meet $M$ in a core curve of $M$ for this would imply that the core curve is orientation reversing in $X$.   So $\partial D$ is a 2-sided curve in $M$
and separates it into an
annulus and a narrower M\"obius band $M'$.  Then the union $M' \cup D$ is an
embedded  projective plane contradicting~(v).

Suppose a M\"obius band $M$ is boundary compressible.  Then $\partial N(M)$ is a boundary compressible annulus $A$. By (iv), $A$ is boundary parallel,
 and co-bounds an solid torus with an annulus in the boundary.  But the parallel region cannot contain the M\"obius band $M$, and hence $X$ is the union of two solid tori, $N(M)$ and the solid torus parallel region.
\end{proof}

We say that a a pair of surfaces,  $F$ and $G$, \emph{intersect essentially} if each component of the curve $F \cap G$ is essential in both $F$ and $G$ (they are allowed to be disjoint).  It is well known that essential surfaces can be arranged to intersect essentially:

\begin{lemma}
\label{lemMakeIntersectionEssential}
Let $F$ and $G$ be properly embedded essential surfaces in an irreducible manifold with incompressible boundary.   Then $G$ can be isotoped so that they intersect essentially.
\end{lemma}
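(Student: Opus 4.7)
The plan is to prove Lemma~\ref{lemMakeIntersectionEssential} via a standard innermost-disk / outermost-arc argument, using irreducibility of $X$, incompressibility of $\partial X$, and essentiality of $F$ and $G$ as the three sources of contradiction that allow us to simplify the intersection.

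First I would choose a representative in the isotopy class of $G$ (keeping $F$ fixed) that minimizes $|F\cap G|$, and suppose for contradiction that some component $c$ of $F\cap G$ is inessential in, say, $G$. I treat two cases depending on whether $c$ is a loop or an arc.

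If $c$ is a loop, then $c$ bounds a disk in $G$; among all disks in $G$ bounded by components of $F\cap G$, pick an innermost one $D$. By the innermost choice, the interior of $D$ is disjoint from $F$, so $\partial D$ is a loop in $F$ whose bounded disk $D$ lies on one side of $F$. Since $F$ is incompressible, either $\partial D$ is inessential in $F$, in which case it bounds a disk $D'\subset F$; then $D\cup D'$ is a sphere which, by irreducibility of $X$, bounds a ball, and an isotopy of $G$ across this ball removes $\partial D$ (and possibly more curves) from $F\cap G$, contradicting minimality. Or $\partial D$ is essential in $F$, but then $D$ would be a compressing disk for $F$, contradicting essentiality of $F$ unless $F$ is a disk; however if $F$ were a disk, Proposition~\ref{propSurfaceFacts}(i) would make $F$ boundary-parallel, again violating essentiality.

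If $c$ is an arc, a parallel argument applies with ``outermost disk'' and boundary compression. Among arc components of $F\cap G$ that cut off a disk in $G$ together with an arc in $\partial G$, choose an outermost such disk $D\subset G$. Then $\partial D = f\cup x$ with $f\subset F$ an arc and $x\subset \partial X$ an arc, and the interior of $D$ is disjoint from $F$. If $f$ is essential in $F$, then $D$ is a boundary compressing disk for $F$, contradicting essentiality. If $f$ is inessential in $F$, it co-bounds a disk $D'\subset F$ with an arc $x'\subset \partial X$; then $D\cup D'$ is a disk properly embedded in $X$ with boundary $x\cup x'\subset\partial X$, and by Proposition~\ref{propSurfaceFacts}(i) it co-bounds a ball with a disk in $\partial X$. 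Isotoping $G$ across this ball eliminates $c$ from the intersection, again contradicting minimality.

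Since the roles of $F$ and $G$ are symmetric in the above arguments (both are essential), the same reasoning rules out inessential intersection components in $F$. Hence every component of $F\cap G$ is essential in both surfaces, which is what we needed. The main subtle point is keeping the innermost/outermost selection consistent so that the disk used in the isotopy genuinely has disjoint interior from $F$, and verifying that none of the special cases in Proposition~\ref{propSurfaceFacts} (disks, M\"obius bands, projective planes) cause the argument to degenerate; once that bookkeeping is done, the isotopy reductions are standard.
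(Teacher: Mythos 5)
Your overall strategy is the same as the paper's: minimize the number of curves of $F\cap G$, suppose some component is inessential, and use an innermost‐disk/outermost‐arc argument with irreducibility and boundary incompressibility to get a contradiction. The two sources of contradiction you identify (essential boundary would give a (boundary) compressing disk for $F$; inessential boundary gives a sphere or properly embedded disk that bounds a ball, across which $G$ can be pushed) are exactly the ones the paper uses.

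However, there is a real gap in the isotopy step, and it comes from choosing the innermost disk in the wrong surface. You take $D\subset G$ innermost \emph{in $G$}, so $\operatorname{int}(D)$ is disjoint from $F$, and then $D'\subset F$ is the disk in $F$ bounded by $\partial D$. But $D'$ need not be innermost in $F$: its interior may still contain other curves of $F\cap G$. At each such curve $G$ crosses $D'\subset\partial B$, so portions of $G\setminus D$ lie \emph{inside} the ball $B$ bounded by $D\cup D'$. Pushing $D$ across $B$ would then force $D$ to pass through those sheets of $G$, so ``an isotopy of $G$ across this ball'' is not a legitimate isotopy of $G$ as stated. The same issue arises in your outermost‐arc case. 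The paper avoids this by taking $D$ innermost/outermost in $F$ (so $\operatorname{int}(D)\cap G=\emptyset$): then incompressibility of $G$ produces $D'\subset G$, and since $G$ can meet $\partial B=D\cup D'$ only along $D'$ itself, $G\setminus D'$ lies entirely outside $B$, making the push of $D'$ across $B$ unobstructed (and in fact simultaneously eliminating all intersection curves in $\operatorname{int}(D')$). To repair your proof, either swap which surface you take the innermost/outermost disk in, or, when $D'$ has curves in its interior, pass to a curve that is innermost in $F$ and rerun the argument; either way you end up with the paper's setup. A side remark: your parenthetical ``(and possibly more curves)'' is not accurate in your own configuration (pushing $D\subset G$ removes exactly $\alpha$); it would be accurate in the paper's configuration, which is another sign of the mismatch in orientation.
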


%
\begin{proof}
Assume that we have isotoped $G$ to minimize the number of curve components
in $F \cap G$.  We will show by contradiction that $F$ and $G$ intersect essentially. 

We first note that if there is an intersection curve that is inessential in $F$, then there is an intersection curve that is inessential in $G$ and vice-versa:   If an intersection curve bounds a disk in $F$, choose one whose disk is innermost.   Since $G$ is incompressible,  this disk is not a compressing disk for $G$ and it follows that its boundary, an intersection loop, is inessential in $G$.  The same observation applies to inessential intersection arcs. 

Then, assuming that some intersection loop is trivial, we can pass to one that is innermost on $F$, i.e., choose $\alpha$ to be an intersection loop that bounds a disk $D \subset F$ whose interior is disjoint from $G$. Since $G$ is not compressible, $\alpha$ also bounds a disk $D' \subset G$.  The union $D \cup D'$ is a sphere that,
because $X$ is irreducible, bounds a ball.  And there is an isotopy of $G$ that is restricted to a neighborhood of $D'$, and that pushes $D'$ across the ball and past $D$. This isotopy of $G$ eliminates $\alpha$ and any other intersection curves in the interior of $F \cap D'$,
and it does not introduce any new intersection curves since $\alpha$ was innermost.


Now assume some intersection arc is trivial in one of the surfaces, and as noted, we can let $\alpha$
denote such an arc that is outermost in $F$.  That is, $\alpha$ cuts off a disk $D \subset F$ whose interior is disjoint from $G$ and whose boundary meets $\partial X$ in an arc.     And $\alpha = D \cap G$ cuts off a, not necessarily outermost, disk $D' \subset G$ that also meets $\partial X$ in an arc.

 The union $D \cup D'$ is a disk with its boundary in $\partial X$ that,
because $\partial X$ is incompressible, bounds a disk $D'' \subset \partial X$.  Since $X$ is irreducible, $D \cup D' \cup D''$ is a sphere bounding a ball.   Moreover, there is an isotopy of $G$ that pushes a neighborhood of $D'$ past $D$ and outside the ball.
\end{proof}

\subsection{Almost meridional surfaces}

Suppose that $X$ is an irreducible manifold with incompressible boundary that is embedded in $S^3$.   We recall that
an essential curve $\mu \subset \partial X$ is a \emph{meridian} if
it bounds a disk in $S^3 \setminus X$.   A properly embedded surface is
\emph{meridional} if each of its boundary curves is a meridian,
and \emph{almost meridional} if all but exactly one of its boundary curves
is a meridian.

Let $D$ be a boundary compressing disk for an orientable surface  $P$.  Then $\partial N(P \cup D)$ is a surface with at least two components.  One component is isotopic to $P$; let $P'$ be the union of the other components.  Then $P'$ is said to be the result of \emph{boundary compressing} $P$ along $D$.

\begin{lemma}
\label{lemCompressAlmostMeridional}
Suppose that a manifold $X$ is   embedded  in $S^3$.   If $P$ is a connected   almost meridional planar surface properly embedded in $X$, then any surface $P'$ obtained by boundary compressing  $P$  contains an almost meridional component.
\end{lemma}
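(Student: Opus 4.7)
My plan is to analyze the boundary of the compressed surface via a case analysis. Label the boundary components of $P$ as $\gamma_0, \gamma_1, \ldots, \gamma_{b-1}$, with $\gamma_1, \ldots, \gamma_{b-1}$ the meridians and, if $P$ is not actually meridional, $\gamma_0$ the unique non-meridian. The boundary compression along $D$, with $\partial D = f \cup x$ where $f \subset P$ is the essential arc and $x \subset \partial X$, acts on $\partial P$ by a band surgery along $x$: if the two endpoints of $f$ lie on a single boundary component $\delta$ of $P$, then $\delta$ is split into two curves $\delta', \delta''$; if they lie on distinct components $\delta_1, \delta_2$, these are merged into a single curve $\gamma^*$. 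I would do the case analysis on these two possibilities.

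In the first case, with both endpoints on a single component $\delta$, planarity of $P$ forces $f$ to be separating, so $P'$ has two planar components $P_1', P_2'$; the band surgery replaces $\delta$ by $\delta', \delta''$, each lying in the boundary of one of the $P_i'$, and the remaining boundary components of $P$ are partitioned between $P_1'$ and $P_2'$. If $\delta = \gamma_0$, then each $P_i'$ has boundary consisting of meridians together with one new curve (from splitting $\gamma_0$), hence at most one non-meridian boundary, and is therefore almost meridional. If $\delta$ is itself a meridian, then $\gamma_0$ lies in the boundary of exactly one of $P_1', P_2'$; the other component then has boundary consisting of meridians together with one new split curve, and is again almost meridional.

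In the second case, with endpoints on distinct components $\delta_1, \delta_2$, planarity forces $f$ to be non-separating, so $P'$ is connected with $\partial P' = (\partial P \setminus \{\delta_1, \delta_2\}) \cup \{\gamma^*\}$. If one of $\delta_1, \delta_2$ equals $\gamma_0$, then $\partial P'$ consists of $\gamma^*$ together with meridians, giving at most one non-meridian boundary curve, so $P'$ is almost meridional. The substantive case is when both $\delta_1, \delta_2$ are meridians, bounding respective disks $E_1, E_2 \subset Y := S^3 \setminus X^\circ$. Here my plan is to show that $\gamma^*$ also bounds a disk in $Y$, constructed by joining $E_1$ and $E_2$ via a thin band along $x$ pushed slightly off $\partial X$ into $Y$. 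Provided $\gamma^*$ is essential in $\partial X$, this exhibits $\gamma^*$ as a meridian, so $P'$ is again almost meridional.

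The main obstacle I anticipate is the degenerate sub-case in which $\gamma^*$ is inessential on $\partial X$: here $\delta_1, \delta_2$ cobound an annulus on $\partial X$ whose interior contains $x$, and $P'$ acquires a trivial boundary curve. The plan in this situation is to invoke Proposition~\ref{propSurfaceFacts}(ii) applied to $P'$: an innermost disk on $\partial X$ bounded by such a trivial boundary curve can be pushed slightly into $X$ and used to cap off $P'$, producing a planar surface whose boundary is $\gamma_0$ together with the remaining meridians of $P$---a surface that is manifestly almost meridional. Verifying that this capping operation yields the conclusion of the lemma (or, equivalently, that $P'$ itself already contains an almost meridional component in the intended sense) is the technical point I expect will require the most care.
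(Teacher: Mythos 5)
Your overall plan — case analysis on how the boundary‑compressing arc $x$ meets $\partial P$, using a band‑surgery/pair‑of‑pants picture, and the key observation that two meridians among the three relevant loops force the third to be a meridian — is the same as the paper's. However, there is a recurring gap: in several sub‑cases you conclude ``almost meridional'' from ``at most one non‑meridian boundary curve.'' The paper's definition requires \emph{exactly one} non‑meridian, so this inference is invalid: a surface with zero non‑meridians is \emph{meridional}, not almost meridional. The ``two implies three'' principle, which you only invoke in the merge‑two‑meridians case, is in fact the mechanism by which the paper rules out the zero‑non‑meridian possibility in every case, and you need it there too.

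Concretely: in the split case with $\delta=\gamma_0$, you must argue that both of the new curves $\delta',\delta''$ cannot be meridians (else the pair‑of‑pants relation would force $\gamma_0$ to be a meridian, a contradiction); then the component containing a non‑meridian new curve is almost meridional, while the other component may well be meridional. In the split case with $\delta$ a meridian, the two new curves are either both meridians or both non‑meridians (again by the pair‑of‑pants relation); you treat only the latter possibility, but in the former, the component you focus on (the one without $\gamma_0$) is meridional and the almost meridional component is instead the one containing $\gamma_0$. In the merge case with one of $\delta_1,\delta_2$ equal to $\gamma_0$, you similarly need to argue that $\gamma^*$ is \emph{not} a meridian — which again follows from ``two implies three'' since $\gamma_0$ is not — rather than merely asserting ``at most one.'' Your worry about $\gamma^*$ being inessential (and hence failing to be a meridian by the paper's definition) is a genuine subtlety that the paper's own proof also glosses over; but note that your proposed fix via Proposition~\ref{propSurfaceFacts}(ii) does not apply as stated, since that proposition assumes $X$ irreducible with incompressible boundary, whereas Lemma~\ref{lemCompressAlmostMeridional} assumes only that $X$ is embedded in~$S^3$.
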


\begin{proof}
Let $P'$ be obtained from $P$ by boundary compressing along the  disk $D$.
What happens to $\partial P$?   The disk $D$ meets at most two boundary components of $\partial P$.  Any component not met by $D$ has two parallel copies in $\partial N(P \cup D)$, one for $P$ and one for $P'$, so those are unchanged.   Let $\beta$ be the one or two loops meet by the arc $x = D \cap \partial X$.   Since $D$ lies on one side of the the 2-sided planar surface $P$, when $\beta$ is a single loop, $x$ approaches it twice from the same side.  It follows that $\partial N(x  \cup \beta)$ is a \emph{pair of pants}, i.e., an $S^2$ with three holes
bounded by loops.
 One of these loops belongs to $P$ and two to $P'$, or vice-versa.

If any two of these three loops are meridians,  then so is the third, since
it bounds a disk, namely the union of the pants and the two disks
pushed slightly into $S^3 \setminus \mathrm{interior}(X)$.

We apply this ``two meridians implies three meridians'' principle to show that $P'$ has an almost meridional component, regardless of how the boundary
compressing disk meets the boundary components of $P$.

If the boundary compressing disk meets the non-meridional component twice, then the compression eliminates the non-meridional curve, and creates two new curves, each belonging to a separate component of $P'$.   At  least one of the new curves is not meridional, and hence its component is almost meridional.

If the boundary compressing disk meets a meridian and  the non-meridian, then the compression does not separate $P$, and trades these curves for a new non-meridional curve.  Thus $P'$ is almost meridional.

If the boundary compressing disk meets two distinct meridians, then they are eliminated and  a new one is created.  The connected surface $P'$ is almost meridional.

If the boundary compressing disk meets a single meridian twice, then $P'$ has two components, each with one of the two new curves, either both meridional or both non-meridional.   If both are meridional, then the component with the original non-meridian on its boundary is almost meridional.  If both are non-meridional, then the component without the original non-meridian is almost meridional.   One of the two components of $P'$ is almost meridional.
\end{proof}

\begin{lemma}
\label{lemEssentialAlmostMeridional}
Suppose that $X$, an irreducible manifold with incompressible boundary,  is embedded in $S^3$.
If $X$ contains an incompressible, almost meridional planar surface, then $X$ contains an essential almost meridional planar surface. \end{lemma}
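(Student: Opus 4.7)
The plan is to iteratively boundary compress an incompressible almost meridional planar surface, using Lemma~\ref{lemCompressAlmostMeridional} to preserve the almost meridional property and the Euler characteristic as a strictly monotone progress measure. Start with the hypothesized $P$ that is incompressible, almost meridional, and planar. If $P$ is already essential, we are done. Otherwise, since $P$ is incompressible and has nonempty boundary (setting aside the degenerate case of $P$ being a trivial disk cobounding a ball with a $\partial X$-disk), the only obstruction to essentiality is a boundary compressing disk $D$. Perform the compression to obtain $P'$, and by Lemma~\ref{lemCompressAlmostMeridional} select a component $P_1' \subset P'$ that is almost meridional. A standard argument (any compressing disk for $P_1'$ can be isotoped off a neighborhood of $D$ to yield a compressing disk for $P$) shows that $P_1'$ is incompressible, and $P_1'$ is planar as a subsurface of $P'$.

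The key claim is that $P_1'$ is never a disk. If the arc $f = D \cap P$ has both endpoints on the same boundary curve of $P$, then $f$ separates $P$ into pieces $P_1, P_2$; if either piece were a disk, its boundary would consist of $f$ together with an arc of $\partial P$, contradicting the essentiality of $f$. If instead $f$ has endpoints on two distinct boundary curves of $P$, then $P'$ is connected and obtained by a band sum on those two curves; for $P'$ to be a disk, $P$ would need exactly two boundary components, a meridian $\mu$ and the non-meridional $\beta$, and incompressibility of $\partial X$ would force the new single boundary of $P'$ to be trivial in $\partial X$. Then $\mu$ and $\beta$ would cobound an annulus in $\partial X$, hence be isotopic in $\partial X$, and since the property of bounding a disk in $S^3 \setminus X$ is invariant under isotopy within $\partial X$, $\beta$ would then be a meridian, contradicting the almost meridional hypothesis.

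It follows that $\chi(P_1') \ge \chi(P) + 1$ in every case: directly when $P'$ is connected, and from $\chi(P_1') + \chi(P_2') = \chi(P) + 1$ together with $\chi(P_2') \le 0$ (by the same no-disk argument applied to $P_2'$) when $P'$ is disconnected. Since the Euler characteristic of a planar surface that is neither a disk nor a sphere is at most $0$, the iteration terminates after finitely many steps at some $P^*$ admitting no further boundary compression. The final $P^*$ is then incompressible, boundary incompressible, planar with nonempty boundary, and not a disk; that is, essential.

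The main obstacle I expect is the band-sum disk case above: the argument hinges on the fact that a band sum of two disjoint essential curves on $\partial X$ being trivial forces the summands to be isotopic in $\partial X$, together with the isotopy invariance of the meridian property. Verifying these facts carefully, along with confirming that the degenerate case of $P$ itself being a trivial disk does not arise in the relevant application, are the points that require the most attention.
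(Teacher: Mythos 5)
Your proof is correct and follows essentially the same strategy as the paper: sequentially boundary compress, use Lemma~\ref{lemCompressAlmostMeridional} to track an almost-meridional component, and argue the iteration terminates at a surface that is not a disk. You supply details that the paper's terse proof leaves implicit — in particular, that boundary-compressing a planar incompressible surface yields incompressible components, and that the almost-meridional component never degenerates to a disk at any stage (not only at the end, which is the only point the paper addresses with its remark about incompressible boundary) — so the substance is the same but your write-up is more careful.
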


\begin{proof}
An incompressible almost meridional surface can be sequentially boundary compressed until it is incompressible and boundary incompressible.  By the prior lemma, each surface in the sequence, hence the final one,  has an almost meridional component.   This final component is not a disk because $X$ is boundary incompressible. Hence it is an essential almost meridional planar surface.
\end{proof}

\subsection{Strongly irreducible surfaces}

A two-sided surface properly embedded in $X$ is \emph{bi-compressible} if it has a \emph{compressing pair} $(D_+,D_-)$, a pair of disks, each a compressing or boundary compressing disk, one for each side of the surface.  The pair is \emph{simultaneous} if $\partial D_+ \cap \partial D_- = \emptyset$.

A surface is \emph{weakly reducible} if it is simultaneously bi-compressible using compressing disks only.   A \emph{strongly irreducible} surface is one that is bi-compressible using compressing disks but not simultaneously so.   A surface is \emph{boundary weakly reducible} if it is simultaneously bi-compressible using any combination of compressing disks and boundary compressing disks.  A surface is \emph{boundary strongly irreducible} if it is bi-compressible, using any combination of compressing or boundary compressing disks,  but not simultaneously so.

Some of our results assume that a surface is both strongly irreducible \emph{and} boundary strongly irreducible.   It may seem that the strongly irreducible hypothesis is vacuous.   But this is not the case---it guarantees that the surface has at least one (non-boundary) compressing disk for each side.


\begin{lemma}[\cite{bachmanDerbyTalbotSedgwick}, Lemma~3.8]
\label{lemSIBSIhasEssentialBoundary}
In an irreducible manifold with incompressible boundary, the boundary of a strongly irreducible surface is essential in the boundary of the manifold.
\end{lemma}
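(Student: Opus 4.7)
The plan is to argue by contradiction: assume some component $\gamma$ of $\bd F$ is inessential in $\bd X$, and build a simultaneous compressing pair for $F$, contradicting strong irreducibility.

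First I would choose $\gamma$ together with a disk $E \subset \bd X$ satisfying $\bd E = \gamma$ and chosen so that $E$ is \emph{innermost}, i.e., $\mathrm{int}(E) \cap \bd F = \emptyset$; this can always be arranged by passing to a suitable subdisk. Now pick an embedded collar $\bd X \times [0, \epsilon]$ of $\bd X$ in $X$ compatible with $F$, in the sense that $F \cap (\bd X \times [0, \epsilon]) = \bd F \times [0, \epsilon]$, and set $D_1 := E \times \{\epsilon/2\}$. Then $D_1 \subset X$ is a properly embedded disk whose boundary $\bd D_1 = \gamma \times \{\epsilon/2\}$ lies in the interior of $F$ (inside the collar of $\bd F$ in $F$), and $\mathrm{int}(D_1) \cap F = \emptyset$ by the innermost choice of $E$. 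Since $F$ is strongly irreducible it is in particular bi-compressible, so it is not a disk; hence $\gamma \times \{\epsilon/2\}$ is essential in $F$ and $D_1$ is a genuine compressing disk. Moreover, $D_1$ sits on a well-defined side of $F$, determined by which side of $\gamma$ in $\bd X$ the disk $E$ occupies; call this side $A$.

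By bi-compressibility, $F$ admits a compressing disk $D_2$ on the side $B$ opposite to $A$. I claim that $(D_1, D_2)$ can be made simultaneous, that is, $\bd D_1 \cap \bd D_2 = \emptyset$. Since $\bd D_2$ is compact and lies in $F \setminus \bd F$, it is at positive distance from $\bd F$ inside $F$; shrinking $\epsilon$ after $D_2$ has been fixed, one can ensure that the collar $\bd F \times [0, \epsilon] \subset F$ is disjoint from $\bd D_2$. Then $\bd D_1 = \gamma \times \{\epsilon/2\}$ lies in this collar and hence is disjoint from $\bd D_2$, yielding the required simultaneous compressing pair on opposite sides of $F$ and contradicting strong irreducibility.

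The main obstacle is verifying that $\bd D_1$ is essential in $F$ and that $D_1$ determines a bona fide side of $F$; both are handled by noting that a bi-compressible surface is never a disk and that $D_1$ stays inside the chosen collar. Once these are in place, the disjointness of $\bd D_1$ and $\bd D_2$ follows immediately from the collar argument above, and no appeal to irreducibility of $X$ or incompressibility of $\bd X$ is needed for this particular step (those hypotheses are used elsewhere in the surrounding theory).
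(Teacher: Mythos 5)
The paper does not give a proof of this statement; it is quoted as Lemma~3.8 of \cite{bachmanDerbyTalbotSedgwick}, so there is no internal proof to compare with. Your argument is the natural one and is essentially sound: push an innermost disk $E\subset\bd X$ along a collar to get a compressing disk $D_1$ on the side of $F$ facing $E$, pair it with a compressing disk $D_2$ on the opposite side (which exists since strong irreducibility includes bi-compressibility using compressing disks on both sides), and shrink the collar to make the boundaries disjoint, producing a simultaneous compressing pair that contradicts strong irreducibility. Your observation that irreducibility of $X$ and incompressibility of $\bd X$ are not invoked in this particular argument is also accurate.

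One point needs a touch more care. From the fact that $F$ is bi-compressible (hence not a disk) you conclude that $\bd D_1 = \gamma\times\{\epsilon/2\}$ is essential in $F$. What the argument actually requires is that the \emph{component} of $F$ containing $\gamma$ is not a disk: if $F$ were disconnected and the innermost curve $\gamma$ bounded a disk component of $F$, then $\bd D_1$ would bound a disk in $F$ and $D_1$ would fail to be a compressing disk. This is automatic once $F$ is assumed connected, which is the standard convention and is in any case satisfied by the connected planar surface $P$ to which the paper applies the lemma, but the connectedness hypothesis should be made explicit in the argument.
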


We state here a special case of Lemma~4.2 of \cite{BachmanStabilizingAndDestabilizing}.  This

\begin{lemma}[Lemma 4.2 of \cite{BachmanStabilizingAndDestabilizing}]
\label{lemSurfacesMeetEssentially}
Let $F$ be an essential surface and $G$ a surface that is strongly irreducible and boundary strongly irreducible.  Then $G$ may be isotoped so that $F$
and $G$ intersect essentially.
\end{lemma}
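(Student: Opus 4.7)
The proof is a standard but nontrivial innermost/outermost disk argument combining essentiality of $F$, (boundary) strong irreducibility of $G$, and the irreducibility and boundary incompressibility of $X$. The plan is to isotope $G$ so as to minimize the complexity $c(F,G) := \bigl(\#\{\text{loop components of } F\cap G\},\ \#\{\text{arc components of } F\cap G\}\bigr)$ in lexicographic order, and then to argue by contradiction that every remaining component of $F\cap G$ is essential on both $F$ and $G$.

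Loop components are dealt with first. If some loop $\alpha\subset F\cap G$ is trivial on $G$, pick an innermost disk $D'\subset G$ it bounds; since $F$ is incompressible, $\partial D'$ must also bound a disk $D\subset F$ (otherwise $D'$ would compress $F$). Then $D\cup D'$ is an embedded sphere which, by irreducibility of $X$, bounds a ball, and isotoping $G$ across this ball strictly reduces $c$, contradicting minimality. Hence no remaining loop is trivial on $G$. Suppose instead some loop $\alpha$ is trivial on $F$, and take $\alpha$ innermost on $F$, bounding $D\subset F$ with $\mathrm{int}(D)\cap G=\emptyset$. By the previous step, $\alpha$ must be essential on $G$, and pushing $D$ slightly off $F$ produces a compressing disk $\widetilde D$ for $G$ on one specific side, determined by the local geometry at $\alpha$.

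Arc components are handled symmetrically via outermost arcs. An outermost arc trivial on $G$ yields (together with the disk on $F$ provided by essentiality of $F$, and using boundary incompressibility of $X$) a disk in $X$ with boundary in $\partial X$, which bounds a disk in $\partial X$ and hence a ball in $X$; an isotopy of $G$ across this ball reduces $c$. The remaining inessential arcs are therefore trivial on $F$ but essential on $G$, and each outermost such arc provides a boundary compressing disk $\widetilde D$ for $G$ on one specific side.

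The main obstacle is the final step: turning the (boundary) compressing disk $\widetilde D$ just produced into a contradiction with $G$ being strongly irreducible and boundary strongly irreducible. The idea is to use the hypothesis to supply a (boundary) compressing disk $E$ for $G$ on the \emph{opposite} side, then isotope $E$ to minimize $|E\cap F|$ using that $F$ is incompressible and boundary incompressible, so that $E\cap F$ has no inessential components in $E$ or $F$; a further innermost/outermost analysis of $E\cap F$ then shows that $E$ may be modified (away from $\partial E$) so as to be disjoint from $\widetilde D$, violating the ``not simultaneously'' clause of (boundary) strong irreducibility. The bookkeeping here—ensuring in particular that the cleanup of $E\cap F$ is compatible with disjointness from $\widetilde D$, that compressing and boundary-compressing disks can be treated on equal footing, and that the two sides of $G$ are tracked correctly—is the technical core of the argument, and follows the line of Lemma~4.2 of \cite{BachmanStabilizingAndDestabilizing}.
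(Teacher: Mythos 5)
The paper does not prove this lemma at all: it is quoted as a black box from Bachman's work, and the authors explicitly remark that ``Bachman does not give a proof but claims it to be a direct generalization of [Corol.~3.8]'' and that Bachman ``has also provided us with an unpublished manuscript with a proof.'' So there is no internal proof in the paper to compare against; even the cited source does not contain one. Your preliminary reductions (minimize a lexicographic complexity, eliminate loops inessential in $G$ via an innermost-disk swap using incompressibility of $F$ and irreducibility of $X$, then extract a compressing or boundary-compressing disk $\widetilde D$ for $G$ from an innermost/outermost trivial component on $F$) are standard and correct, but they are precisely the easy part.

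The gap is in your final step, which you yourself describe as the ``technical core'' and ultimately defer to Bachman's Lemma~4.2 --- the same reference the paper cites without proof. Concretely: you propose to take a compressing/boundary-compressing disk $E$ for $G$ on the side opposite $\widetilde D$, minimize $|E\cap F|$, and ``modify $E$ (away from $\partial E$) so as to be disjoint from $\widetilde D$.'' This cannot work as stated. Since $E$ and $\widetilde D$ lie on opposite sides of the two-sided surface $G$, their interiors are automatically disjoint, and $E\cap\widetilde D$ reduces to $\partial E\cap\partial\widetilde D$ inside $G$; a modification fixing $\partial E$ cannot change this intersection, yet $\partial E\cap\partial\widetilde D=\emptyset$ is exactly the ``simultaneous'' condition you need to violate. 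Nor does the innermost-disk cleanup of $E\cap F$ obviously succeed: the ball across which you would push an innermost subdisk of $E$ is bounded by a subdisk of $E$ (on one side of $G$) together with a subdisk of $F$, and the latter may be crossed by $G$; pushing $E$ across such a ball can reintroduce intersections of $E$ with $G$, so the isotoped $E$ need not remain a compressing disk on the correct side. Closing this requires a genuinely different idea (a no-nesting / sweep-out style argument, as in Scharlemann and in Bachman's topological-index framework), not an innermost-disk cleanup, and that content is absent from your sketch.
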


%
%

Let us remark that Bachman does not give a proof but
claims it to be a direct generalization of \cite[Corol.~3.8]{BachmanTopologicalIndexTheory}. He has also provided us with an unpublished manuscript
with a proof.


\section{Theory of normal curves and surfaces in a marked triangulation}
\label{s:almostnormal-expl}
\label{s:normal-surfaces}

%
%


In this section we introduce a mild generalization of the theory
of normal curves and surfaces.

\begin{defn}
A  \emph{marked triangulation} is a pair $(\T,M)$ consisting of a  triangulation $\T$ of a $2$- or $3$-manifold along with a \emph{marking} $M \subset \T^1$, a finite set of points  along the  edges of $\T$.
\end{defn}

 If $M = \emptyset$, then $(\T,M)$ is a triangulation in the usual sense and we will usually omit $M$ and refer directly to $\T$.   Similarly, when $M = \emptyset$, we will describe objects as being normal rather than $M$-normal, and note that our definitions restrict to the standard ones.

An arc in a triangle is \emph{$M$-normal} if its endpoints lie in distinct edges of the face and it misses $M$.   A properly embedded curve $\alpha$ in the boundary surface is \emph{$M$-normal} if it is the union of $M$-normal arcs.   The length of $\alpha$, $\ell(\alpha) = |\alpha \cap \T^1|$, is its number of intersections with the 1-skeleton.

There are several types of elementary surfaces contained in a tetrahedron $\Delta$.  An \emph{$M$-normal disk} is a disk in $\Delta$ whose boundary is an $M$-normal curve of length 3 or 4 in $\partial \Delta$.   We also consider two types of \emph{$M$-exceptional pieces}:  An octagon is a disk in $\Delta$ whose boundary is an $M$-normal curve of length 8 in $\partial \Delta$.   A \emph{tube} is an unknotted annulus in $\Delta$ whose boundary consists of two $M$-normal curves whose total length is at most~8.
\immfigw{almostnormal}{3in}

An \emph{$M$-normal surface} is a properly embedded surface that is the union of $M$-normal disks.   An \emph{almost $M$-normal surface}  is a properly embedded surface that is the union of a single $M$-exceptional piece and a collection of $M$-normal disks.

The weight of an (almost) $M$-normal surface $A$ is $\wt(A) = |A \cap \T^1|$, the number of intersections with the 1-skeleton.   Its length is the length of its boundary: $\ell(A) := \ell(\partial A)$.

An \emph{$M$-normal isotopy} is a normal isotopy that does not pass through any point in $M$.   An \emph{$M$-type} is the equivalence class of an $M$-normal arc in a face, or, an $M$-normal disc or $M$-exceptional piece in a tetrahedron.  Two types are \emph{$M$-compatible} if they have disjoint representatives.  A pair of curves or surfaces are \emph{$M$-compatible} if each pair of types they possess are $M$-compatible.   That is, a pair of curves is $M$ compatible if, for each face of the triangulation, their arcs in that face are pairwise disjoint after an $M$-normal isotopy.   An analogous statement holds for surfaces.

We note that $M$-compatibility is a local condition; in general
it may not be possibly to make $M$-compatible curves or surfaces
globally disjoint by an $M$-normal isotopy.

The \emph{$M$-normal vector} or \emph{$M$-normal coordinates} of an $M$-normal curve, surface, or almost $M$-normal surface $A$ is a uniquely determined vector $\vv_M(A)$, indexed over the set of normal types and with each entry recording the number of $M$-normal objects of the index type.

If $A,B,C$ are $M$-normal surfaces such that
$\vv_M(C)=\vv_M(A)+\vv_M(B)$, then $C$ is an \emph{$M$-normal
sum} of $A$ and $B$, and we write $C=A+B$. The same definition applies
if $A$ is $M$-normal and $B$ and $C$ almost $M$-normal,
or if $A,B,C$ are $M$-normal curves.

We note that not every two $M$-normal surfaces, for example, can be
normally added---this is possible exactly if they
are $M$-compatible.

 If $A$ and $B$ are $M$-compatible, then one can construct
an $M$-normal sum as follows.
 In each face or tetrahedron $\Delta$, the $M$-normal pieces $A \cap \Delta$
and $B \cap \Delta$ can be $M$-normally isotoped to be disjoint,
and then attached across each facet of $\Delta$  to the pieces in an
adjacent face/tetrahedron.    This produces a properly embedded
$M$-normal curve, $M$-normal surface, or almost $M$-normal surface,
respectively, which is the $M$-normal sum.

However, in our considerations,
we will mostly use a different geometric construction of
an $M$-normal sum, where we assume that the curves or surfaces
in question intersect minimally, in a suitable sense, but then
we do not isotope them to be disjoint as above, but rather
they stay in place and we deal with their intersections as well;
see Section~\ref{secGeomSum} below.

It is well known that Euler characteristic, weight and length are all additive with respect to normal sum, and this works without change
for the $M$-normal case.
 If $A$ and $B$ are compatible (almost) $M$-normal curves or surfaces then the following hold:
\begin{enumerate}
\item $\chi(A+B) = \chi(A) + \chi(B)$
\item $\wt(A+B) = \wt(A) + \wt(B)$
\item $\ell(A+B) = \ell(A) + \ell(B)$.
\end{enumerate}

An (almost) $M$-normal curve or surface is \emph{fundamental} if it cannot be expressed as the sum of other (almost) $M$-normal curves or surfaces.  Every (almost) $M$-normal curve/surface is a non-negative integer combination
of fundamentals.

\begin{figure}[ht]
\begin{center}
\includegraphics{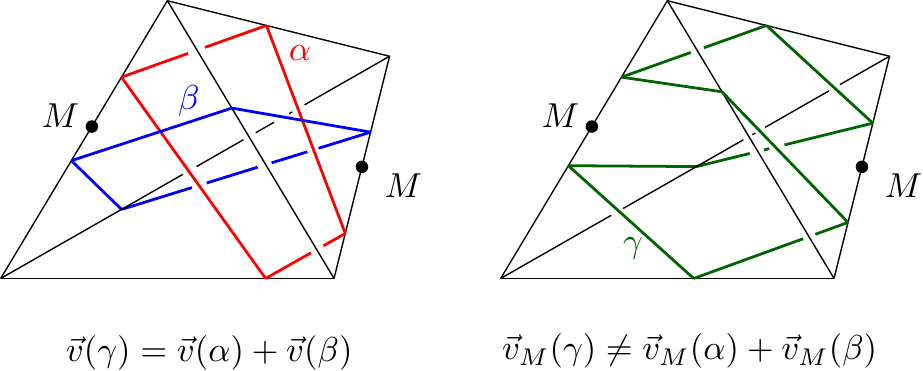}
\end{center}
\caption{A length 8 curve that is not the sum of two length 4 curves in the marked triangulation.}
\label{figInCompatible}
\end{figure}

Here $M$-normal curve theory differs from standard normal curve theory.   While all normal curves are compatible, $M$-normal curves have distinct compatibility classes, and this increases the number of fundamentals.   In  Figure \ref{figInCompatible}, we see the boundary of a tetrahedron with two marked points, one on each of a pair of opposite edges.    Let $\gamma$ be the length 8 $M$-normal curve that meets each of the  sub-edges once.   As a normal curve $\gamma$ is not fundamental---it is the sum of the two distinct length 4 curves $\alpha$ and $\beta$. But in the marked triangulation these curves are incompatible and $\gamma$ is fundamental.

If $\alpha$ is an (almost) $M$-normal curve or surface, then $\vv_M(\alpha)$ is a solution to a set of \emph{matching equations}:  For a triangulated surface, this set consists of  one equation for each sub-edge in the interior of the surface.   It sets equal the sum of those coordinates meeting the sub-edge from one side to the sum of those meeting it on the other side.  In a triangulated 3-manifold, the set of matching equations consists of one equation for each $M$-normal arc type contained in an interior face.  This equation sets equal the sum of the coordinates for elementary types using the arc type on one side to the sum of those using it on the other side.

We say that a vector $\vv_M$ of the correct dimension is \emph{$M$-admissible} if all its coordinates are non-negative, it satisfies the matching equations, and is \emph{self-compatible}, i.e., it does not possess non-zero coordinates for any pair of non-$M$-compatible types.   If $\vv_M$ is an $M$-admissible vector, there is an (almost) $M$-normal curve/surface $\alpha$ for which $\vv_M = \vv_M(\alpha)$.

The following proposition is a straightforward generalization of
a well known fact from normal surface theory to $M$-normal surfaces;
see \cite{hass-lagarias-pippenger}
for a nice exposition.

\begin{proposition}\label{propFundamentals}
Given a $3$-manifold with a marked triangulation $\T_M$,
the set  $\F$ of fundamental $M$-normal surfaces
is computable, and both $|\F|$ and the maximum
weight $\wt(F)$ of an $F\in\F$ are bounded by a computable
function of $t$ and~$m$.
\end{proposition}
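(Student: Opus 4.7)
The plan is to reduce the statement to the classical effective theory of Hilbert bases for rational polyhedral cones, with an extra stratification step to handle the self-compatibility condition that distinguishes marked from standard normal-surface theory.

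First, I would bound the total number $N$ of $M$-normal types. Each edge of $\T$ is cut by $M$ into at most $m+1$ sub-edges, so each face carries $O(m^2)$ arc types and each tetrahedron carries at most polynomially many $M$-normal disk types (a disk being determined by its boundary, an $M$-normal curve of length $3$ or $4$ in $\partial\Delta$). Thus $N$ is bounded by a computable polynomial in $t$ and $m$. The matching equations, one per $M$-normal arc type in each interior face, form an integer linear system $A\vv = 0$ whose coefficients are in $\{-1,0,1\}$, so the non-negative real solutions are the points of a pointed rational polyhedral cone $\mathcal{K} \subset \R_{\geq 0}^N$.

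Second, I would encode the local self-compatibility condition. The set of $M$-admissible vectors is not all of $\mathcal{K}$: as in Figure~\ref{figInCompatible}, for every incompatible pair of types $(a,b)$ an admissible vector must have $v_a = 0$ or $v_b = 0$. Let $\Sigma$ be the finite family of maximal subsets of mutually $M$-compatible types, and for $\sigma \in \Sigma$ let $L_\sigma := \{\vv \in \R^N : v_a = 0 \text{ for all } a \notin \sigma\}$. Then the integer $M$-admissible vectors form $\bigcup_{\sigma \in \Sigma}(\mathcal{K} \cap L_\sigma) \cap \mathbb{Z}^N$. The incompatibility graph on the $N$ coordinates is read off directly from $(\T,M)$ by inspecting each face and tetrahedron, and $|\Sigma| \leq 2^N$ is bounded by a computable function of $t$ and $m$.

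Third, I would observe that if $A$ is an $M$-normal surface and $\vv_M(A) = \vv_M(B) + \vv_M(C)$ with $B,C$ also $M$-normal, then non-negativity forces $\mathrm{supp}(B), \mathrm{supp}(C) \subseteq \mathrm{supp}(A)$. Hence $A$ is fundamental if and only if $\vv_M(A)$ is a Hilbert-basis (indecomposable) element of the integer monoid $(\mathcal{K} \cap L_\sigma) \cap \mathbb{Z}^N$ for any $\sigma \in \Sigma$ containing $\mathrm{supp}(A)$. The classical effective bounds on Hilbert bases of rational cones (see \cite{hass-lagarias-pippenger} for a concise exposition) give a computable $W = W(t,m)$ such that every Hilbert-basis element of any $\mathcal{K} \cap L_\sigma$ has $\ell_\infty$-norm at most $W$, and these elements can be enumerated by any standard Hilbert-basis algorithm. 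Taking the union over $\sigma \in \Sigma$ produces $\F$, yielding $|\F| \leq |\Sigma|(W+1)^N$ and $\wt(F) \leq 4NW$ for every $F \in \F$ (each normal type contributes at most $4$ intersections with $\T^1$).

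The main subtlety, and the only real departure from the classical theory of normal surfaces, is precisely the self-compatibility condition, which makes the set of admissible vectors a non-convex union of faces of $\mathcal{K}$ rather than $\mathcal{K}$ itself: the example of Figure~\ref{figInCompatible} shows that without per-class stratification one would decompose the fundamental curve $\gamma$ as $\alpha+\beta$ and incorrectly dismiss it. Once the stratification by $\Sigma$ is in place, the remainder is purely effective integer linear algebra, and both the computability of $\F$ and the quantitative bounds follow by substituting the polynomial bound on $N$ into the standard Hilbert-basis estimates.
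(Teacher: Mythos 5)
Your proposal is correct and follows essentially the same route as the paper: bound the number $N$ of $M$-normal types polynomially in $t$ and $m$, set up the matching equations as an integer linear system, and invoke the effective Hilbert-basis theory from \cite{hass-lagarias-pippenger}. The paper's proof is terser — it simply cites ``reasoning as in [Sec.~6, hass-lagarias-pippenger], using a Hilbert basis of the appropriate integral cone'' — whereas you spell out explicitly the stratification by maximal compatible supports $\sigma\in\Sigma$ and the support-inclusion observation $\mathrm{supp}(B),\mathrm{supp}(C)\subseteq\mathrm{supp}(A)$ that reduces indecomposability in the marked setting to the classical Hilbert-basis notion on each face $\mathcal{K}\cap L_\sigma$. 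That elaboration is worthwhile, since marked compatibility introduces genuinely new incompatible pairs beyond the usual quad-type constraint, as your Figure~\ref{figInCompatible} discussion notes; the paper elides this.

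One small omission: the paper's proof counts not just $M$-normal disk types but also the exceptional (octagon and tube) types, since the proposition is later applied to almost $M$-normal surfaces (see the definition of $\lmax$ in Section~\ref{sec:proof-main}). Your count covers only the normal disk types. This is easily repaired — the exceptional types contribute another polynomially bounded family (the paper's count gives $42t(m+1)^8$ in total), with the extra bookkeeping that a sum may contain at most one summand with positive exceptional coordinate — but as written your enumeration does not produce the almost normal fundamentals that the later sections require.
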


The bound on $|\F|$ and
$\wt(F)$ has the form $\exp(p(t,m))$, where $p(t,m)$ is a suitable polynomial.

\begin{proof} It is well known that without the marking,
there are $7t$ normal disk types, $3t$ exceptional octagons and $25t$ exceptional tubed pairs of disks. Moreover, the presence of a tubed pair of disks
may split one type of normal disks into two,
but certainly we have no more than $42t$ types in total.

The points of $M$ divide each edge into at most $m+1$ subarcs.
In order to specify an $M$-normal type of a triangle, for example,
we need to specify the subarc containing each of the three vertices,
which leads to the bound $(m+1)^3$. The worst bound is obtained for
tubes and octagons, with $(m+1)^8$, so a rough bound for
the total number of $M$-types is
$42t(m+1)^8$.

A similar way of counting applies to the number of \emph{matching equations},
which represent compatibility of the coordinates of the $M$-normal vector
across the pieces of the edges of $\T$ delimited by the points of $M$.
Indeed, the matching equations correspond to
$M$-arc types.   There are at most $4t$ interior faces,
 each with $3$ underlying normal arc types.
 A given $M$-arc type is thus determined by this normal type and by
the sub-arcs it meets, and so there are at most $12t(m+1)^2$ matching
equations.

Then, reasoning as in \cite[Sec.~6]{hass-lagarias-pippenger}, using a Hilbert basis of the appropriate
integral cone, we obtain the bounds of the claimed form.
\end{proof}



\subsection{Snug pairs of curves and surfaces, Haken sums, and normal sums}
\label{secGeomSum}

The normal sum $F+G$ of a pair of (almost) $M$-normal curves or surfaces $F$ and $G$ has been defined, if they are $M$-compatible, to be an $M$-normal
surface whose $M$-normal vector is the sum of the $M$-normal vectors
of $F$ and $G$, $\vv_M(F+G) = \vv_M(F) + \vv_M(G)$.

It is desirable to show that qualities of the sum, such as essentiality or minimality, also apply to the summands.   Here we describe a well known geometric interpretation of the sum that makes this possible; also see, for example, \cite{Jaco:An-algorithm-to-decide-if-a-3-manifold-is-a-Haken-manifold-1984,Jaco:Algorithms-for-the-complete-decomposition-of-a-closed-3-manifold-1995}. We also present some
related material.

\heading{Snug pairs. }
We begin with a definition of a ``placement with
no unnecessary intersections'' for a pair of curves or surfaces.

\begin{defn}
A pair $(F,G)$ of properly embedded curves or surfaces is \emph{snug}
if it is transverse and the number of components of the intersection
$F\cap G$
is minimized over pairs $(F',G')$, where $F'$ and $G'$ are isotopic to $F$
and $G$, respectively.
The pair $(F,G)$ is \emph{locally snug} if $F \cap G$ is disjoint from the 1-skeleton $\T^1$, and, they are snug in
the interior of each simplex of the triangulation (here we only allow isotopies moving each intersection of $F$ or $G$ with a face only within that face).
\end{defn}

If $F$ and $G$ are locally snug $M$-normal surfaces then it follows that:
\begin{enumerate}
\item each pair of $M$-normal arcs, one from $F$ and one from $G$, meets in 0 or 1 points;
\item each pair of $M$-normal disks, one from $F$ and one from $G$,
 meets in 2 or fewer arcs, and the union of the arcs has at most one
endpoint in any face;
\item no loop of $F\cap G$ lies inside a tetrahedron.
\end{enumerate}

Any pair of compatible $M$-normal curves or surfaces can be made locally snug by $M$-normal isotopies that first make their intersections with edges disjoint and then ``straighten'' them so that: normal arcs are straight, normal triangles are flat, and normal quads are the union of two flat triangles.   We do not define locally snug when $F$ is an almost normal surface and $G$ is a normal surface, for in that case we require only the definition of the normal sum $F+G$  and not its geometric interpretation.


%

\heading{Haken sum and normal sum of curves. }
Now, for a while, we deal only with curves, and we develop a geometric
interpretation of their normal sum. Here we consider only
unmarked triangulations, i.e., $M=\emptyset$.

Let $D$ be a regular neighborhood of an intersection point $x$ of a pair of
transverse curves $\alpha$ and $\beta$.  We can remove the intersection by
deleting the arcs in the interior of the disk and then attaching $\alpha$ to
$\beta$ along a pair of antipodal sub-arcs of $\partial D$.
Thus, we replace the
``$\times$'' in $\alpha \cup \beta$ with either ``$)($'' or ``$\asymp$''.
 This is called an
\emph{exchange} or a \emph{switch} at $x$.  A curve is said to be a \emph{Haken sum}
$\alpha \hs \beta$ of $\alpha$ and $\beta$ if it is obtained by an exchange at
each of their intersection points.  Of course, $\alpha \hs \beta$ is dependent
on the direction of the switches and is therefore not well determined.

If, however, $\alpha$ and $\beta$ are locally snug normal curves, then each
intersection point is of the form $x = \alpha' \cap \beta'$ where $\alpha'$ and
$\beta'$ are normal arcs in some face.   Then $\alpha'$ and $\beta'$ meet at least one common edge $e$ of the face.   The  \emph{regular exchange} is the exchange that does not produce an \emph{abnormal arc}, a non-normal arc with both endpoints attached to~$e$; see Fig.~\ref{figgeometricSum} top.

\begin{figure}[tb]
\begin{center}
\includegraphics{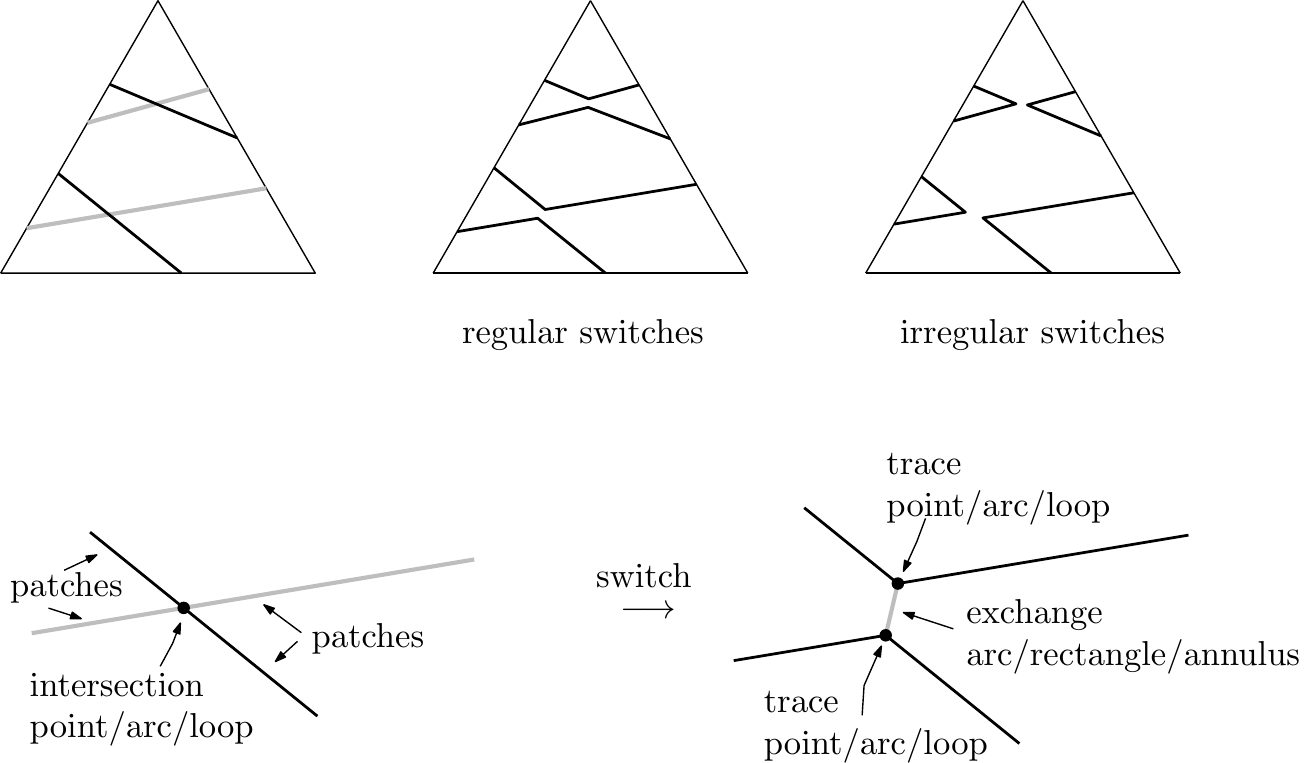}
\end{center}
\caption{Switches for curves and surfaces}
\label{figgeometricSum}
\end{figure}

As we will see, the normal sum of $\alpha+\beta$ of locally snug curves
can be obtained by doing all the regular exchanges.

\begin{lemma}
\label{l:regular=>normal}
Let $\alpha$ and $\beta$ be locally snug normal curves.  Then
the Haken sum $\alpha\hs\beta$ obtained by making all the regular
exchanges is the normal sum $\alpha+\beta$;
i.e., $\vv(\alpha+\beta) = \vv(\alpha) + \vv(\beta)$.
\end{lemma}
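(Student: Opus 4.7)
The plan is to verify the identity $\vv(\alpha\hs\beta)=\vv(\alpha)+\vv(\beta)$ by a purely local analysis in each face of the triangulation: I will show that at every intersection point the regular exchange swaps two crossing normal arcs for two new normal arcs of exactly the same pair of normal types. Once this local preservation of types is established, additivity of the normal vector is automatic, and the uniqueness of a normal curve with a given normal vector (up to normal isotopy) identifies the Haken sum produced by the regular exchanges with the normal sum $\alpha+\beta$.

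First I would fix a face $T$ (a triangle with edges $e_1,e_2,e_3$) and consider an intersection point $x=\alpha'\cap \beta'$, where $\alpha'\subset\alpha\cap T$ and $\beta'\subset\beta\cap T$ are normal arcs. Since each is normal, each connects two distinct edges of $T$. Because $\alpha$ and $\beta$ are locally snug, $\alpha'$ and $\beta'$ meet in exactly one point, which forces them to have \emph{different} normal types; hence their type pairs share exactly one common edge, say $e_2$. Writing $\alpha'$ as an arc from $a_1\in e_1$ to $a_2\in e_2$ and $\beta'$ as an arc from $b_2\in e_2$ to $b_3\in e_3$, the two possible exchanges produce the arc pairs
\[
\{a_1b_2,\,a_2b_3\}\qquad\text{or}\qquad \{a_1b_3,\,a_2b_2\}.
\]
The second option yields the arc $a_2b_2$ with both endpoints on $e_2$, which is abnormal; the first yields one arc of type $(e_1,e_2)$ and one of type $(e_2,e_3)$, both normal. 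By definition, then, the regular exchange is the first one, and it produces exactly the pair of types already present in $\{\alpha',\beta'\}$.

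Performing the regular exchange at every intersection point simultaneously, the result $\alpha\hs\beta$ is a disjoint union of properly embedded arcs and loops (the local patterns $)($ and $\asymp$ patch consistently across edges because no exchange is performed there), and by the local analysis above, inside every face of the triangulation every normal arc type has the same count in $\alpha\hs\beta$ as in $\alpha\cup\beta$. In particular $\alpha\hs\beta$ contains no abnormal arc, so it is a normal curve, and its normal vector satisfies $\vv(\alpha\hs\beta)=\vv(\alpha)+\vv(\beta)$.

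The only subtle step is the claim that two locally snug crossing normal arcs must have distinct normal types sharing a single edge; this follows directly from the snugness conditions, which guarantee that two arcs of the same type in a face can be normally isotoped to be disjoint and that two arcs of different types meet in at most one point. Given this, the preceding analysis gives a normal curve with normal vector $\vv(\alpha)+\vv(\beta)$, and since this vector characterizes a normal curve up to normal isotopy, $\alpha\hs\beta$ is the normal sum $\alpha+\beta$, as required.
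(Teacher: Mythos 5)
Your local computation at a single crossing is correct, and it is the right picture to have in mind, but the step from there to the global conclusion has a gap. When you write $\alpha'$ and $\beta'$ as normal arcs from $a_1\in e_1$ to $a_2\in e_2$ and from $b_2\in e_2$ to $b_3\in e_3$ and conclude that the regular exchange produces ``two new normal arcs'' of types $(e_1,e_2)$ and $(e_2,e_3)$, you are implicitly assuming that $x$ is the only crossing that $\alpha'$ and $\beta'$ participate in. If $\alpha'$ or $\beta'$ crosses other arcs as well, then performing the single switch at $x$ does not yet produce two arcs; it produces curves that still contain crossings, and the arcs of the full Haken sum $\alpha\hs\beta$ weave through several patches of both $\alpha$ and $\beta$. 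Their endpoints (hence their types) are therefore not determined by the per-crossing analysis alone. Consequently the sentence ``by the local analysis above, inside every face every normal arc type has the same count'' does not follow, and the subsequent deduction ``in particular $\alpha\hs\beta$ contains no abnormal arc'' inverts the actual logical dependence: once one knows there are no abnormal arcs and no loops in a face, additivity of the type counts follows from additivity of the edge-intersection numbers, but ruling out abnormal arcs is precisely the content one must prove.

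The paper closes this gap with a minimal-counterexample induction on $|\alpha\cap\beta|$ within a face: it locates an \emph{outermost half-bigon} bounded by sub-arcs of $\alpha$, $\beta$ and an edge, performs the regular exchange at that one crossing, and observes that this produces curves $\alpha'$, $\beta'$ that are \emph{normally isotopic} to $\alpha$, $\beta$, still locally snug, and with one fewer crossing, while the remaining regular exchanges are unchanged. The ``outermost'' choice is exactly what guarantees that this one exchange can be carried out in isolation, so that your local type-preservation computation is applicable verbatim at that step, and the induction then finishes. So your idea is essentially the inductive step; what is missing is the selection of an outermost crossing (or some equivalent device) to make the one-crossing-at-a-time analysis legitimate, together with the observation that peeling off one such exchange leaves the rest of $\alpha\hs\beta$ unaffected. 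Without that, the claim that all regular exchanges done simultaneously yield a normal curve with additive type counts is asserted rather than proved.
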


\begin{proof}
We show that the result holds in each face of the triangulation.   In an abuse of notation, let $\alpha$ and $\beta$ be restriction of the curves to a particular face.   For contradiction suppose that they are a counterexample that minimizes $|\alpha \cap \beta|$.   Then $\alpha$ and $\beta$ are not disjoint, for in that case, the  union is normal and normal vectors add.

Since they intersect in a face, we can identify an outermost half-bigon bounded by a sub-arcs of $\alpha$ and $\beta$ and an edge of the face; see Figure \ref{figBigons}.   The regular exchange trades these sub-arcs and results in a pair of normal curves, $\alpha'$ normally isotopic to $\alpha$ and $\beta'$ normally isotopic to $\beta$,  that are locally snug but with fewer intersections.  By assumption, these $\alpha'$ and $\beta'$ satisfy the conclusion, hence so do $\alpha$ and $\beta$.
\end{proof}

\begin{lemma}
\label{lemHakenSum}
Let $\alpha \hs \beta$ be a Haken sum of locally snug properly embedded curves.   Then $\alpha \hs \beta$ is normal if and only if $\alpha$ and $\beta$ are normal and all switches are regular,  i.e., $\alpha \hs \beta = \alpha+\beta$.
In addition, if $\alpha$ and $\beta$ are normal and $\alpha \hs \beta$ contains
at least one irregular switch, then $\alpha \hs \beta$ contains an abnormal
arc.
\end{lemma}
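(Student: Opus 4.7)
The $(\Leftarrow)$ direction of the iff follows immediately from Lemma~\ref{l:regular=>normal}: if $\alpha$ and $\beta$ are normal and every switch is regular then $\alpha\hs\beta=\alpha+\beta$, which is normal by construction. The substantive content is the ``additional'' statement, which directly yields the $(\Rightarrow)$ direction of the iff when $\alpha,\beta$ are both normal. The remaining case, when $\alpha$ or $\beta$ itself carries an abnormal arc, I plan to reduce to an innermost/parity argument: choose an innermost abnormal arc $\alpha_0\subset\alpha$ on an edge $e$ in some face $F$, restrict to the disk $D\subset F$ bounded by $\alpha_0$ and a sub-arc of $e$, note that by innermostness $\alpha\cap D=\alpha_0$, and argue from the parity of per-edge terminals on $\partial D$ (two from $\alpha_0$ plus the $\beta$-terminals on the $e$-sub-arc) that after the exchanges at $\alpha_0\cap\beta$ inside $D$ some arc of $\alpha\hs\beta$ must have both endpoints on $e$.

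For the additional statement, fix an irregular switch $x$ at $\alpha'\cap\beta'\subset F$ with shared edge $e$, and compare the given Haken sum with the all-regular Haken sum $\gamma_0$, which is normal by Lemma~\ref{l:regular=>normal}. Label the four rays at $x$ as $1a,1b$ on $\alpha'$ and $2a,2b$ on $\beta'$, with $1a,2a$ on the $e$-side. The key trace claim I intend to establish is: in $\gamma_0$, tracing from $x$ along either $1a$ or $2a$ terminates at a point on $e$. Granted the trace claim, flipping the pairing at $x$ from the regular $(1a,2b),(1b,2a)$ to the irregular $(1a,2a),(1b,2b)$ joins the two $e$-termini into a single arc of $\alpha\hs\beta$ with both endpoints on $e$, i.e., an abnormal arc. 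I will handle multiple irregular switches by induction on their number in $F$: a further flip either leaves the produced abnormal arc untouched, or shifts its endpoints within $e$ keeping it abnormal, or replaces it with another abnormal arc arising from the same local analysis at the next irregular switch.

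The heart of the argument, and the main obstacle, is the trace claim. I plan to prove it by induction on the number of intersections the trace crosses before exiting $F$. At each intermediate regular switch $w'$ with shared edge $e_{w'}$, the regular pairing at $w'$ matches ``heading-to-$e_{w'}$'' rays with ``heading-to-non-$e_{w'}$'' rays. When $e_{w'}=e$, the invariant of heading toward $e$ is preserved directly by the pairing. When $e_{w'}\ne e$, a naive trace would seem to wander off $e$; my plan is to rule such wandering out using the locally snug hypothesis together with the combinatorial rigidity of three normal chords pairwise crossing in a triangle (their six boundary endpoints must interleave in a specific cyclic pattern on $\partial F$) and the fact that $\alpha$ and $\beta$ are each individually embedded, so same-curve arcs in $F$ are disjoint. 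Turning this rigidity into a clean inductive step that certifies the trace returns to $e$ is the main technical hurdle.
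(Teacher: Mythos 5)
Your $(\Leftarrow)$ direction and the overall reduction structure match the paper exactly. For the case where $\alpha$ or $\beta$ fails to be normal, your innermost-abnormal-arc-plus-parity argument is essentially the paper's: the paper takes an outermost abnormal arc $\alpha'$ (the one co-bounding the smallest disk with a sub-edge), observes by snugness that $\beta$ meets the disk only in $n$ arcs running from $\alpha'$ to the edge, and counts $n+2$ edge-endpoints against $n+1$ resulting arcs to force an abnormal arc. You do omit one case the paper handles in a sentence: $\alpha$ or $\beta$ could also contain a loop in a face, which local snugness makes disjoint from the other curve, so it survives any Haken sum and witnesses non-normality.

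For the additional statement (both normal, some irregular switch $\Rightarrow$ abnormal arc) you take a genuinely different route, and this is where the real gap lies. The paper performs the switches one at a time, always at the corner of an \emph{outermost half-bigon}: a regular switch there pops the half-bigon, yielding collections normally isotopic to $\alpha$ and $\beta$ with one fewer crossing, while an irregular switch there immediately closes off an abnormal arc which, by outermostness, is disjoint from everything remaining and survives all further switches. This cleanly handles any number and pattern of irregular switches. Your route instead compares the given Haken sum to the all-regular sum $\gamma_0$ and hinges on the "trace claim," which you explicitly flag as the main technical hurdle without proving it. The claim is in fact true, but proving it is not a triviality: one must rule out the trace veering toward a third edge at an intermediate crossing whose shared edge is not $e$, which requires a nesting argument showing that the corner regions cut off by the relevant $\alpha$- and $\beta$-arcs are either disjoint or correctly nested so that such crossings cannot occur on the traced segment. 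Beyond that, your treatment of multiple irregular switches is hand-wavy: a second flip may sit at a crossing lying on one of the traces you already used, so the trace (computed in the fixed $\gamma_0$) is no longer the trace in the curve you are actually analyzing, and "leaves it untouched / shifts endpoints / replaces it" is asserted rather than argued. The paper's re-ordering by outermost half-bigons sidesteps both issues entirely, and is the idea you would need to recover to close the gap.
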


\begin{proof} ($\Leftarrow$)  This is by Lemma~\ref{l:regular=>normal}.   ($\Rightarrow$)  We show that if either $\alpha$ or $\beta$ is not normal then neither is $\alpha \hs \beta$ for any Haken sum of the curves.

Suppose then that  $\alpha' \subset \alpha$ is an outermost non-normal arc, one that co-bounds a disk with a sub-arc of an edge $e' \subset e$.    If $\beta$ meets the disk, it meets it in a collection of $n$ arcs, each with one endpoint in $\alpha'$ and one endpoint in $e'$,
because $\alpha'$ is outermost and $\alpha$ and $\beta$ are snug.
Let $D$ be a regular neighborhood of the disk.  Then, regardless of the switches, $\alpha \hs \beta$ meets $D$ in a collection of $n+1$ arcs that have $n+2$ endpoints along the edge and $n$ endpoints not on the edge.  It follows that at least one arc meets the edge in 2 points and is not normal. A symmetric argument applies if the outermost non-normal arc belongs to $\beta$.  Nor can either $\alpha$ or $\beta$ possess a loop in a face.  Local snugness implies that any loop is disjoint from the other curve and survives any Haken sum.

We now know that $\alpha$, $\beta$ are normal. To conclude the proof, it is
sufficient to show that $\alpha \hs \beta$ contains an abnormal arc if at least
one switch is irregular (this contradicts the normality of
 $\alpha + \beta$, and
thus proves the last claim of the lemma). In
an abuse of notation, let $\alpha$ and $\beta$ refer to the collection of
normal arcs in a particular face.

We perform the specified switches in order
according to the following scheme: If $\alpha \cap \beta \neq \emptyset$,
 then $\alpha$ and $\beta$
form an outermost half bigon $B$ with an edge as in Figure~\ref{figBigons}.
The regular switch
produces collections $\alpha'$ and $\beta'$ that are normally isotopic to
$\alpha$ and $\beta$, but with one fewer intersections.  An irregular switch
produces a disjoint abnormal arc that survives any and all additional
exchanges.  If each exchange is regular, we can continue and the process
produces a disjoint union $\alpha \sqcup \beta.$    If any exchange is not
regular, the resulting curve contains an abnormal arc.
\end{proof}

\begin{figure}[tb]
\begin{center}
\includegraphics{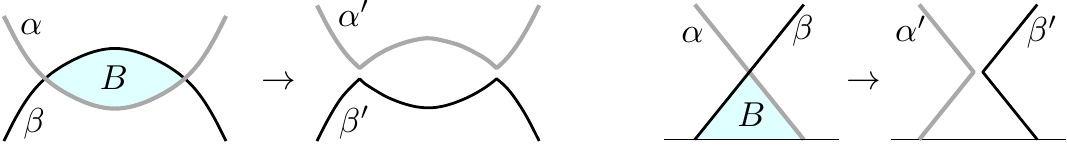}
\end{center}
\caption{Exchanges at the corners of a bigon and a half-bigon.}
\label{figBigons}
\end{figure}

\heading{Normal sign. }
When $\alpha$ and $\beta$ lie in an oriented surface, for example the boundary
of an oriented manifold, we can define the \emph{normal sign} of each point of
$\alpha \cap \beta$.   Viewing $\alpha$ as horizontal and $\beta$ as vertical,
the regular exchange at the point connects a pair of quadrants.  The point has
\emph{positive sign} if the exchange connects the southwest quadrant to the
northeast quadrant, and it has \emph{negative sign} if it connects the northwest to
the southeast; see Figure~\ref{f:exchanges}.   This is equivalent to the definition given in \cite{bachmanDerbyTalbotSedgwick}.
The definition depends on the ordering of the pair of
curves and on an orientation on the surface: reversing the order or the orientation reverses every sign.

\begin{figure}[ht]
\begin{center}
\includegraphics{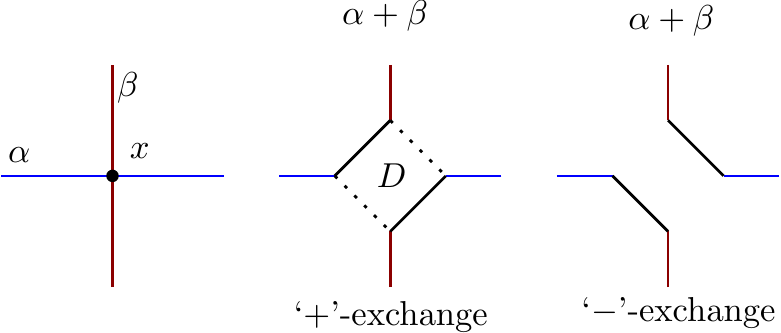}
\end{center}
\caption{Exchanges with positive and negative sign.}
\label{f:exchanges}
\end{figure}

\heading{Normal sum of surfaces. } Similar to the case
of curves above, one can also construct the normal sum $F+G$ of normal surfaces
geometrically, using suitable switches.  We assume that $F$ and $G$
are locally snug.

We construct $F+G$ by specifying its intersection with the 1-, 2-, and 3-skeleta
of the triangulation, respectively.   First, we let the intersection of
$F+G$ with the 1-skeleton to be the union of the intersection points from $F$
and those from $G$.

Second, in each face we perform regular switches on all intersecting
pairs of arcs $(f,g)$, where $f$ comes from $F$ and $g$ from $G$.

Finally, we construct the normal sum $F+G$ in the interior of each
tetrahedron $T$.   As discussed earlier, each normal disk is either a flat triangle or a quadrilateral made of 2 flat triangles.   It follows that every intersection between normal disks from compatible surfaces is either 1 or 2
arcs,  not necessarily straight.  Compatibility ensures that the regular switches prescribed at the endpoints of each arc are consistent with each other and can be extended across the entire arc of intersection.  The normal sum $F$ and $G$ is the result of performing such regular switches along every such arc of intersection.

Note that any intersection arc between normal disks can be extended from a tetrahedron through a face to a neighboring tetrahedron.   In its entirety this \emph{intersection curve} between $F$ and $G$ is either a loop, or an arc with both endpoints in $\partial X$.   Compatibility ensures that the regular switches in each face and through the interior of each tetrahedron agree.  Thus we can regard the switch as a regular switch along the entire intersection curve.

\heading{Exchange arcs and surfaces, trace curves.}
Here we introduce some additional terminology.
First, we consider a regular switch of two curves.
Inside the neighborhood where the regular switch was performed,
 we identify an \emph{exchange arc} that connects  the points of
 the newly formed arcs
corresponding to the former intersection points;
see Figure~\ref{figgeometricSum}.

Next, we consider two locally snug normal surfaces $F$ and $G$.
A \emph{patch} is a component of $F \cup G \setminus F \cap G$.  Regular switches reconnect the patches, and \emph{trace curves} are the seams between patches after performing regular switches along all intersection curves; see
Figures~\ref{figgeometricSum} and~\ref{figExchangeBand}.

\begin{figure}[h]
\begin{center}
\includegraphics[width=5in]{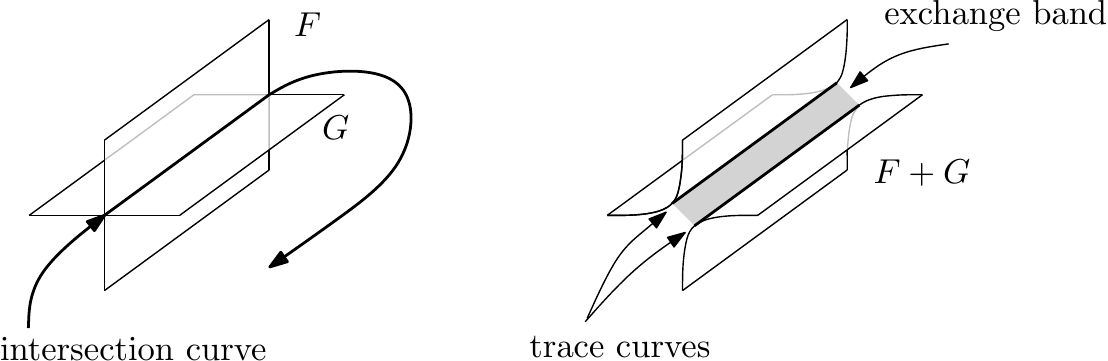}
\end{center}
\caption{The exchange between surfaces along an intersection curve.}
\label{figExchangeBand}
\end{figure}

If the intersection curve $\alpha$ is an arc, then, after performing a regular switch, we can identify an \emph{exchange rectangle}, a rectangle whose top and bottom, say, are bounded by trace arcs and whose left and right sides are \emph{exchange arcs} lying in $\partial X$.

If $\alpha$ is a loop, then our assumption that $X$ is orientable means that a regular neighborhood of $\alpha$ is a solid torus, not a solid Klein bottle.  Again, since $X$ is orientable, $\alpha$ is either orientation preserving in both $F$ and $G$, or, orientation reversing in both $F$ and $G$.   In the former case, there is an \emph{exchange annulus}, a zero-weight annulus bounded by the trace curves and with core $\alpha$.  In the latter case, there is a single trace curve which bounds an \emph{exchange M\"{o}bius band} (we will be able to
exclude this case in our proofs, though).

As observed in \cite{Hatcher-bsurf} and, in the context of normal surfaces, in \cite{Jaco:Decision-problems-in-the-space-of-Dehn-fillings-2003}, every intersection arc between surfaces connects intersection points of the boundary curves that have opposite normal sign:

\begin{lemma}
\label{lemPlusToMinus}
Let $A = B + C$ be a normal sum of surfaces in an orientable manifold $X$ with an induced orientation on $\partial X$.  Then every arc in $B \cap C$ joins a pair of points in $\partial B \cap \partial C$ with opposite normal sign.
\end{lemma}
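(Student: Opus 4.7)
The plan is to work in a product neighbourhood of the intersection arc, use the fact that the outward-normal-induced orientation of $\partial X$ on the two fibre disks (at the two endpoints of the arc) differs by a reflection, and combine this with the observation that the regular switch consistently connects the same pair of opposite wedges along the entire arc. The key ingredient is how orientation reversal acts on the SW--NE versus NW--SE diagonals.

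First I would set up coordinates. Let $\alpha$ be an arc component of $B\cap C$ with endpoints $p,q\in \partial B\cap \partial C$. Since $B$ and $C$ are locally snug and meet transversally along $\alpha$, there is a product neighbourhood $N\cong [0,1]\times D^2$ of $\alpha$, with coordinates $(t,u,v)$, such that $B\cap N=[0,1]\times\{v=0\}$ and $C\cap N=[0,1]\times\{u=0\}$. Then $N\cap \partial X$ is the disjoint union of the fibre disks $\{0\}\times D^2$ (containing $p$) and $\{1\}\times D^2$ (containing $q$), and inside each of them $\partial B$ and $\partial C$ appear as the two perpendicular diameters.

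Next I would identify the exchange rectangle. The regular switch along $\alpha$ implementing the normal sum $A=B+C$ produces an exchange rectangle $R\subset N$ whose boundary consists of two trace arcs on $A$ and two exchange arcs in $\partial X$. By construction $R$ lies inside precisely one of the two pairs of opposite three-dimensional wedges cut from $N$ by $B\cup C$; without loss of generality, say the pair $Q_{++}\cup Q_{--}$ labelled by the signs of the product coordinates $(u,v)$. Hence the exchange arcs $R\cap (\{0\}\times D^2)$ and $R\cap (\{1\}\times D^2)$ each connect the product quadrants $Q_{++}$ and $Q_{--}$ in their respective fibre disks---the same pair in product coordinates at both endpoints of $\alpha$.

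The main step, and the one I expect to be the crux, is the orientation comparison. Fix an orientation of $X$; without loss of generality $(\partial_t,\partial_u,\partial_v)$ is positive, and orient $\partial X$ by the outward-normal-first convention. At the $p$-fibre the outward normal is $-\partial_t$, giving induced orientation represented by $(-\partial_u,\partial_v)$; at the $q$-fibre it is $+\partial_t$, giving induced orientation $(\partial_u,\partial_v)$. These two orientations on the fibre disk are opposite as orientation classes. With the horizontal axis ($\partial B$) and the vertical axis ($\partial C$) fixed by the product structure, reversing the surface orientation interchanges the SW--NE and NW--SE diagonals (this is the geometric content of the fact that a $180^\circ$ rotation preserves both diagonals but an orientation-reversing isometry compatible with the fixed horizontal/vertical axes swaps them). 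Therefore the pair of product quadrants $\{Q_{++},Q_{--}\}$ represents the SW--NE diagonal at one endpoint (contributing normal sign $+1$) and the NW--SE diagonal at the other (contributing normal sign $-1$), so the signs are opposite; had $R$ instead lain in $Q_{+-}\cup Q_{-+}$, both signs would flip and still be opposite. The one subtlety to verify along the way is that the switch at the boundary points $p$ and $q$ determined by the no-abnormal-arc rule is exactly the one induced by the interior regular switch along $\alpha$, which is guaranteed by the compatibility property ensuring that interior and boundary regular switches agree along every intersection curve of a normal sum.
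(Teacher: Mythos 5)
The paper does not actually prove Lemma~\ref{lemPlusToMinus}: it states the result and cites Hatcher and Jaco--Sedgwick, so there is no in-paper proof to compare against. Your blind proof is correct, and it is essentially the standard argument one finds in those sources: take a product neighbourhood $[0,1]\times D^2$ of the interior intersection arc in which $B$ and $C$ are the two coordinate planes, observe that continuity of the regular switch forces the exchange band to occupy a fixed pair of opposite product quadrants along the whole arc, and then note that the induced (outward-normal-first) orientation of $\partial X$ restricts to \emph{opposite} orientations on the two fibre disks, so that one fixed product-quadrant pair is read as the $SW$--$NE$ diagonal at one end and as the $NW$--$SE$ diagonal at the other. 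Two small points worth tightening in a final write-up: first, the exchange arc passes through the intersection point itself, so it lies in the \emph{closure} of the chosen quadrant pair rather than strictly inside one pair of open wedges; second, it is worth saying explicitly why the normal line bundles of $B$ and $C$ along $\alpha$ are trivial (namely $\alpha$ is a contractible arc and $X$ is orientable), since this is exactly what fails when the intersection component is a loop and the exchange surface may be a M\"obius band. Neither point is a gap in the argument, just places where the phrasing could be made airtight.
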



\section{Complexity and tight curves}\label{s:tightsnug}

In this section,  we consider properly properly embedded curves in a triangulated surface.   We assume that they are transverse to the 1-skeleton but, a priori, they are not assumed to be normal.

Fix, once and for all, an ordering of all normal arc types of the triangulated surface.  For this purpose we \emph{do not} take into account any marking present.  As in the previous section, a normal curve $\alpha$ determines a vector $\vv(\alpha)$ which records the number of normal arcs of the indexed type.  Order these normal vectors lexicographically.

Recall that the length of a properly embedded curve $\alpha$ is the number of intersections with the 1-skeleton, $\ell(\alpha) = |\alpha \cap \T^1|$.     We say that a curve is \emph{least length} if it minimizes length over all curves to which it is isotopic.

\begin{lemma}
A least length essential curve is normal.
\end{lemma}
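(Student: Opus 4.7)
The plan is to argue the contrapositive by a standard innermost-disk/push-across argument: if $\alpha$ is essential but not normal, I produce either a trivial component (contradicting essentiality) or an isotopy that strictly decreases $\ell(\alpha)$ (contradicting least length). Since $\alpha$ may be assumed transverse to $\T^1$ by a generic perturbation, failure of normality forces one of two local configurations in some triangle $T$ of $F$: either (i) $\alpha\cap T$ contains a loop lying in the interior of $T$, or (ii) some arc component $a$ of $\alpha\cap T$ has both endpoints on a single edge $e$ of $T$.

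Case (i) is immediate, since such a loop bounds a disk in $T\subset F$ and is therefore a trivial loop in $F$. For case (ii), the arc $a$ together with a sub-arc $e'\subset e$ bounds a disk $D\subset T$, and I would choose the pair $(a,D)$ to be \emph{innermost}, meaning $D$ is minimal (say, in the number of components of $\alpha\cap T$ meeting its interior). Any such meeting arc $b$ is disjoint from $a$ and therefore must have both endpoints on $e'$, and then $b$ would cut off a strictly smaller such disk inside $D$; hence for an innermost choice, $\mathrm{int}(D)\cap\alpha=\emptyset$.

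It remains to split on whether $e$ is an interior edge of $\T$ or lies in $\bd F$. If $e$ is interior, a standard isotopy supported in a small neighborhood of $D$ pushes $a$ just across $e$, destroying the two points of $a\cap e$ and introducing no new intersections with $\T^1$; this yields a curve isotopic to $\alpha$ of length $\ell(\alpha)-2$, contradicting least length. If instead $e\subset\bd F$, then both endpoints of $a$ already lie in $\bd F$, so $a$ is an entire arc component of $\alpha$ and $D$ exhibits $a$ as co-bounding a disk in $F$ with a sub-arc of $\bd F$; thus $a$ is a trivial arc, again contradicting essentiality. I do not foresee any substantive obstacle: the only thing to check carefully is that the push-across isotopy in the interior-edge case really is a valid isotopy of properly embedded curves with the claimed effect on $\ell$, which is routine given the innermost choice of $D$.
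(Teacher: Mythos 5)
Your proof is correct and is essentially a detailed expansion of the paper's one-line argument: the paper also disposes of interior loops via triviality and of abnormal arcs via either a length-reducing push-across (interior edge) or triviality (boundary edge). You have simply filled in the innermost-disk bookkeeping that the paper leaves implicit.
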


\begin{proof}
A loop in face demonstrates that the curve is not essential and any abnormal arc is either inessential or yields an isotopy reducing the length.
\end{proof}

If $\alpha$ is a normal curve, then we define its \emph{complexity} to be the pair consisting of its length and its normal vector,
\[
\compl(\alpha):=(\ell(\alpha),\vv(\alpha)).
\]
We reiterate that we do not take into account any marking $M$ in the definition of complexity.   If $\alpha$ is not normal, we define its complexity to be $\compl(\alpha) = (\ell(\alpha),\vec 0)$.   Complexities will also be ordered lexicographically.

\begin{defn}
A  curve $\alpha$ is \emph{tight} if it minimizes the
complexity $\compl(\alpha)$ over all curves to which it is isotopic.
\end{defn}

The interior of a connected inessential curve can be made disjoint from the 1-skeleton, so a tight inessential loop has $\compl = (0,\vec 0)$ and a tight inessential arc has $\compl = (2,\vec 0)$.

\begin{lemma}
\label{lemTightUnique}
A tight essential curve is normal and unique up to normal isotopy.
\end{lemma}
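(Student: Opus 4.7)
The plan is to first deduce normality from minimizing the length component of the complexity, and then use the standard fact that a normal curve is determined up to normal isotopy by its normal coordinates to get uniqueness.

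First I would show that a tight essential curve $\alpha$ is normal. By definition, $\alpha$ minimizes $\compl(\alpha) = (\ell(\alpha), \vv(\alpha))$ lexicographically among all curves in its isotopy class, so in particular it minimizes $\ell(\alpha)$. The preceding lemma (``a least length essential curve is normal'') then gives that $\alpha$ is normal.

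Next I would prove uniqueness. Let $\alpha$ and $\beta$ be two tight essential curves in a common isotopy class. By the first step, both are normal, and by tightness they have equal complexity, so in particular $\vv(\alpha) = \vv(\beta)$. It then suffices to invoke the standard fact from normal curve theory: a normal curve in a triangulated surface is determined up to normal isotopy by its normal coordinate vector. I would justify this face by face. Inside each triangle the three normal arc types are realized by three disjoint parallel families whose combinatorial picture is determined up to normal isotopy by the three arc counts (nested parallel bundles at each vertex). These face-level arrangements glue uniquely across each edge of the triangulation, because the endpoints of the normal arcs along the edge must match in order on the two sides. Hence $\alpha$ and $\beta$ are related by a normal isotopy.

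I do not expect a serious obstacle here. The only mild subtlety is making sure that ``unique up to normal isotopy'' really follows from equality of the normal coordinate vector rather than merely from equality of length; once the face-by-face argument is spelled out, together with the matching along edges, the uniqueness statement follows at once.
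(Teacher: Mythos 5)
Your proof is correct and follows the same approach as the paper: deduce normality from the preceding ``least length essential curve is normal'' lemma, then observe that two tight curves in the same isotopy class have equal complexity, hence equal normal vectors, hence are normally isotopic. The paper states the last implication in one line as a known fact; you merely spell out the face-by-face argument it is taking for granted.
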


\begin{proof} Indeed, the complexities of two normal curves are equal if and only if their normal vectors are identical.
\end{proof}

\begin{lemma}
\label{lemHakenSumComplexity}
Let $\alpha \hs \beta$ be a Haken sum of locally snug properly embedded curves.  Then $\compl(\alpha \hs \beta) \leq \compl(\alpha)+\compl(\beta)$ with equality holding if and only if $\alpha, \beta$ and $\alpha \hs \beta$ are all normal, or, all not normal.
\end{lemma}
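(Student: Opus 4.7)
The plan is to perform a short case analysis driven by Lemmas~\ref{l:regular=>normal} and~\ref{lemHakenSum}, after first establishing that length is additive under Haken sum. I would begin by observing that local snugness places every point of $\alpha\cap\beta$ in the interior of a face, and every switch (regular or irregular) is carried out in a small disk neighborhood disjoint from $\T^1$; thus no intersection of $\alpha\cup\beta$ with an edge is created or destroyed, and $\ell(\alpha\hs\beta)=\ell(\alpha)+\ell(\beta)$. Consequently the first (length) coordinates of $\compl(\alpha\hs\beta)$ and $\compl(\alpha)+\compl(\beta)$ always match, and the lemma reduces to comparing the second (normal-vector) coordinates in the lexicographic order.

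Next I would split into four subcases according to which of $\alpha$, $\beta$ are normal and whether every switch is regular. If $\alpha$ and $\beta$ are both normal and all switches are regular, Lemma~\ref{l:regular=>normal} yields $\vv(\alpha\hs\beta)=\vv(\alpha)+\vv(\beta)$, so $\alpha\hs\beta$ is normal and the complexities are \emph{exactly equal}. If $\alpha$ and $\beta$ are both normal but at least one switch is irregular, the last sentence of Lemma~\ref{lemHakenSum} produces an abnormal arc in $\alpha\hs\beta$, so $\alpha\hs\beta$ is not normal and $\compl(\alpha\hs\beta)=(\ell,\vec 0)$ is strictly less than $(\ell,\vv(\alpha)+\vv(\beta))$. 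If at least one of $\alpha,\beta$ is not normal, then the ``only if'' direction of Lemma~\ref{lemHakenSum} forces $\alpha\hs\beta$ to be not normal as well; from here, if exactly one summand is not normal then the other contributes a nonzero normal vector to $\compl(\alpha)+\compl(\beta)$ which strictly dominates the left-hand vector $\vec 0$, whereas if neither summand is normal both sides equal $(\ell,\vec 0)$ and we again get equality.

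Combining the four cases establishes the inequality in general and pinpoints equality exactly to the two extremes stated in the lemma: either all three of $\alpha$, $\beta$, $\alpha\hs\beta$ are normal (covered by Lemma~\ref{l:regular=>normal}), or none of them is (covered by the vector coordinates vanishing on both sides). No serious obstacle is anticipated; every step is essentially a bookkeeping invocation of the two prior lemmas, and the only mildly delicate point is the length-additivity observation, which really does require local snugness---without it, a switch could in principle coincide with or sweep across an edge of $\T^1$, spoiling the tally of edge-crossings.
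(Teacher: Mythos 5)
Your proof is correct and follows essentially the same route as the paper's: reduce to the normal-vector coordinate via length-additivity (switches happen away from $\T^1$ thanks to local snugness), then invoke Lemmas~\ref{l:regular=>normal} and~\ref{lemHakenSum}. The paper organizes the case split by whether $\alpha\hs\beta$ is normal rather than by the normality of the summands and the regularity of switches, but by Lemma~\ref{lemHakenSum} these are equivalent bookkeeping choices.
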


\begin{proof}
The curve $\alpha \hs \beta$ is constructed by performing an exchange at every
intersection point of $\alpha \cup  \beta$.   This is done away from the
1-skeleton, so we have $\ell(\alpha \hs \beta) = \ell(\alpha)+\ell(\beta)$.
Thus any difference in complexity is determined solely by the normal vectors of
the curves.   If $\alpha \hs \beta$ is  normal,  then by the previous two
lemmas $\vv (\alpha +\beta) = \vv (\alpha) + \vv( \beta)$ and equality holds.  If $\alpha \hs \beta$ is  not normal, then its normal vector is $\vec 0$.  Then complexity is additive when both $\alpha$ or $\beta$ are not normal, and sub-additive otherwise.
\end{proof}

If a tight curve is written as a sum, then the exchange arcs are essential in the complement of the curve:

\begin{lemma}
\label{lemExchangeArcNotParallel}
Suppose that a tight normal curve is written as a sum $\alpha + \beta$ of two
normal curves.  Then no exchange arc co-bounds a disk with a sub-arc of the curve.
\end{lemma}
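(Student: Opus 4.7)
The plan is to show that the existence of such a disk would allow us to isotope $\gamma := \alpha + \beta$ to a curve of strictly smaller length, contradicting the tightness of $\gamma$. Suppose for contradiction that some exchange arc $e$, corresponding to an intersection point $x \in \alpha \cap \beta$, together with a sub-arc $s \subset \gamma$ bounds an embedded disk $D$ in the surface. We may assume that the interior of $D$ is disjoint from $\gamma$, by passing to an innermost such disk (using that $\gamma$ is embedded and that $e$ meets $\gamma$ only at its two endpoints).

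Two observations drive the argument. First, the exchange arc has length zero: since $\alpha$ and $\beta$ are locally snug, the intersection point $x$ lies in the interior of a single face $F$, and the neighborhood $N_x$ where the switch is performed may be taken so small that it lies in the interior of $F$ and is disjoint from the $1$-skeleton $\T^1$. The exchange arc $e$, and a sufficiently small pushoff $\tilde e$ of $e$ into $D$, both lie inside $N_x$, so $\ell(e) = \ell(\tilde e) = 0$.

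Second, the sub-arc $s$ has $\ell(s) \geq 2$. By the very construction of the exchange arc, its endpoints $p_1, p_2$ lie one on each of the two distinct local arcs of $\gamma \cap N_x$ produced by the regular switch. Taking the normal arcs of $\gamma$ in $F$ in their canonical straight form, each such arc meets the convex disk $N_x$ in at most one component; hence the two local arcs of $\gamma \cap N_x$ are contained in two distinct normal arcs of $\gamma \cap F$, say $\sigma_1 \ni p_1$ and $\sigma_2 \ni p_2$. Since $\sigma_1$ and $\sigma_2$ are distinct components of $\gamma \cap F$, they are disjoint; therefore any sub-arc of $\gamma$ from $p_1$ to $p_2$ must leave $F$ through some edge of $F$ and re-enter $F$ through some edge, accumulating at least two intersections with $\T^1$.

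Pushing $s$ across $D$ onto $\tilde e$ produces a curve $\gamma'$ isotopic to $\gamma$ with
\[
\ell(\gamma') \;=\; \ell(\gamma) - \ell(s) + \ell(\tilde e) \;\leq\; \ell(\gamma) - 2,
\]
so $\compl(\gamma') < \compl(\gamma)$ lexicographically, contradicting the tightness of $\gamma$. The main technical obstacle is the second observation: checking rigorously that $p_1$ and $p_2$ lie on \emph{distinct} normal arcs of $\gamma$ in $F$ requires unpacking the geometric definitions of the regular switch and the exchange arc, together with the fact that a straight normal arc meets a small convex disk in at most one subarc.
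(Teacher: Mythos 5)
Your proof hinges on Observation~2, the claim that the sub-arc $s$ satisfies $\ell(s)\geq 2$, and the argument you give for it does not hold up. You pass to ``the normal arcs of $\gamma$ in $F$ in their canonical straight form'' and then invoke the fact that a straight segment meets the small convex disk $N_x$ in at most one component. But straightening is a normal isotopy of $\gamma$, and such an isotopy is free to move arcs of $\gamma$ entirely out of $N_x$. The two ``arms'' of the regular switch---the two components of $\gamma\cap N_x$---are the curvy arcs produced by the exchange, not straight segments, and nothing ties them to where the straightened arcs end up. So the convexity argument shows something about the straightened curve, not about the arms $p_1$ and $p_2$ actually lie on, and you have not ruled out that the two arms belong to the \emph{same} normal arc of $\gamma\cap F$. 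In that scenario $s$ stays inside the face, $\ell(s)=0$, and your final length count yields $\ell(\gamma')=\ell(\gamma)$, giving no contradiction.

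This is not a hypothetical worry: it is exactly the case the paper's proof is built to handle. The paper performs an \emph{irregular} exchange at the intersection in question, producing a Haken sum $\alpha\hs\beta$ that splits into a trivial loop (isotopic to $\partial D = e\cup s$) plus a component isotopic to $\alpha+\beta$, with the same total length. Tightness of $\alpha+\beta$ then forces the trivial loop to have length zero---in other words, the paper \emph{deduces} $\ell(s)=0$ rather than excluding it---and the contradiction comes from a different source: Lemma~\ref{lemHakenSum} guarantees an abnormal arc in $\alpha\hs\beta$, which must live in the length-preserving component, contradicting its normality. Your proof skips this second mechanism entirely, so the $\ell(s)=0$ case is a genuine gap. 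If you want to salvage your more direct approach, you would need an actual proof that the two arms of a regular exchange always lie on distinct normal arcs of the sum in that face; the straightening argument as written does not provide one.
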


\begin{proof}
Perform an irregular exchange only at the intersection corresponding to this
exchange arc.   The new curve is a Haken sum $\alpha \hs \beta$ with one
component a trivial loop, the rest isotopic to $\alpha + \beta$, and the same
total length.
It follows that the trivial loop has zero length otherwise $\alpha + \beta$
would not be tight. Therefore $\alpha + \beta$ and the second component of
$\alpha \hs \beta$ are normally isotopic.
However, by Lemma~\ref{lemHakenSum} there is an abnormal
arc in the second component of $\alpha \hs \beta$, a contradiction.
\end{proof}

\begin{lemma}
\label{lemBigonCurves}
Let $B$ be a bigon or half-bigon bounded by a pair of locally snug normal curves
$\alpha$ and $\beta$; see Figure \ref{figBigons}.  Let $\alpha'$ and $\beta'$ be the pair of isotopic curves obtained by corner exchange(s) that trade the sides of $B$.  Then one of the following holds:

\begin{enumerate}
\item[\rm(1)] $\compl(\alpha')=\compl(\alpha), \compl(\beta)=\compl(\beta')$, thus $\alpha'$ and $\beta'$ are normally isotopic to $\alpha$ and $\beta$, respectively, and $|\alpha' \cap \beta'|<|\alpha \cap \beta|$;
\item[\rm(2)] $\compl(\alpha')<\compl(\alpha)$;
\item[\rm(3)] $\compl(\beta')<\compl(\beta)$.
\end{enumerate}
\end{lemma}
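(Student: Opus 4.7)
The plan is to control the lengths of $\alpha'$ and $\beta'$ first, then their normal vectors. Because $(\alpha,\beta)$ is locally snug, every corner of $B$ lies in the interior of a face, so the corner exchanges preserve intersections with the $1$-skeleton. Consequently $\ell(\alpha')+\ell(\beta')=\ell(\alpha)+\ell(\beta)$; if the $\alpha$- and $\beta$-sides of $\partial B$ have different numbers of intersections with $\T^1$, then one of $\ell(\alpha'),\ell(\beta')$ is strictly smaller than its unprimed counterpart, and (since complexity compares lexicographically with length first) case~(2) or case~(3) holds. We may therefore reduce to the situation $\ell(\alpha')=\ell(\alpha)$ and $\ell(\beta')=\ell(\beta)$.

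Next I would show that in this reduced situation $\alpha'$ and $\beta'$ are normal and the identity
\[
\vv(\alpha)+\vv(\beta)=\vv(\alpha')+\vv(\beta')
\]
holds. The argument is face-by-face. In a face $F$ containing a corner $x$ of $B$, the two sides of $B$ exit $F$ through a common edge $e$ (the one shared with the adjacent face of $B$), so $\alpha$'s and $\beta$'s arcs in $F$ have types $[e_{i_\alpha},e]$ and $[e_{i_\beta},e]$; the bigon trade (which at $x$ connects each ``outside'' piece to the opposite ``inside'' piece) reconnects them into two arcs of the same types, and by Lemma~\ref{lemHakenSum} this is precisely the regular exchange, so no abnormal arc is created. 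In a face that $B$ traverses without containing a corner, the two sides of $B$ are disjoint normal arcs in $F$, and the trade merely relabels which of $\alpha',\beta'$ owns each of them. Summing the per-face contributions yields the identity above.

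Finally I would conclude by a lexicographic comparison. If $\vv(\alpha')=\vv(\alpha)$, the identity forces $\vv(\beta')=\vv(\beta)$, so $\compl(\alpha')=\compl(\alpha)$ and $\compl(\beta')=\compl(\beta)$; the normal-isotopy assertion in case~(1) then follows from Lemma~\ref{lemTightUnique}, and $|\alpha'\cap\beta'|<|\alpha\cap\beta|$ is immediate because the trade removes the one or two corners of $B$ without creating new intersections. If $\vv(\alpha')<\vv(\alpha)$ lex, case~(2) holds. If $\vv(\alpha')>\vv(\alpha)$ lex, then, because the two coordinate sums agree, the first coordinate on which $\vv(\alpha')$ exceeds $\vv(\alpha)$ is also the first on which $\vv(\beta')$ falls short of $\vv(\beta)$, so $\vv(\beta')<\vv(\beta)$ lex and case~(3) holds.

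The main obstacle is the second step: verifying that the bigon trade acts as a regular exchange at every corner (so that normality is preserved) and bookkeeping how normal-arc types redistribute between the two curves in every face met by $B$, especially those containing a vertex of $\T$ in their intersection with $B$. Once the identity $\vv(\alpha)+\vv(\beta)=\vv(\alpha')+\vv(\beta')$ is established with $\alpha',\beta'$ normal, the remaining lexicographic argument is essentially algebraic.
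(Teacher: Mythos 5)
Your approach mirrors the paper's for the length step and the final lexicographic comparison, but there is a genuine gap in the middle: you assert that every corner exchange prescribed by the bigon trade is a \emph{regular} exchange, and therefore that $\alpha'$ and $\beta'$ are automatically normal once $\ell(\alpha')=\ell(\alpha)$ and $\ell(\beta')=\ell(\beta)$. This is not true. Your justification rests on the claim that ``the two sides of $B$ exit $F$ through a common edge $e$,'' but the bigon or half-bigon $B$ is merely a disk bounded by sub-arcs of $\alpha$ and $\beta$; its interior may contain a vertex of the triangulation, in which case the two sides of $B$ leaving a corner can exit the face through \emph{different} edges of that face. When that happens, the exchange dictated by the bigon (which is fixed by the quadrant $B$ occupies at the corner, not by the arc types) can be irregular, producing an abnormal arc, so one of $\alpha',\beta'$ fails to be normal. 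Your argument then collapses, since the identity $\vv(\alpha)+\vv(\beta)=\vv(\alpha')+\vv(\beta')$ and the appeal to Lemma~\ref{lemHakenSum} both presuppose normality.

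The paper handles this omitted case directly and cheaply: if some exchange is irregular, then (say) $\alpha'$ is not normal, so by the definition of complexity $\compl(\alpha')=(\ell(\alpha'),\vec 0)=(\ell(\alpha),\vec 0)<(\ell(\alpha),\vv(\alpha))=\compl(\alpha)$, giving conclusion~(2) (or~(3) if $\beta'$ is the non-normal one). You should insert exactly this branch between your length-balancing step and your normal-vector additivity step, rather than trying to rule irregularity out; with that addition, the remainder of your argument (regular exchanges preserve normality, normal vectors add, the trade removes the corner intersections, and the lexicographic trichotomy via $\vv(\alpha')+\vv(\beta')=\vv(\alpha)+\vv(\beta)$) agrees with the paper's.
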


\begin{proof}
Let $\alpha,\alpha',\beta$ and $\beta'$ be as indicated in Figure~\ref{figBigons}.  Note that the exchange doesn't add or remove intersections with the 1-skeleton, and so the total length is unchanged.   If the traded arcs differ in length then one curve increases and the other decreases in length, hence in complexity.   In this case, either (2) or (3) holds.   So we continue assuming $\ell(\alpha) = \ell(\alpha')$ and $\ell(\beta)=\ell(\beta')$.

If any exchange is irregular, then one of the curves, say $\alpha'$, is not normal.   Then its complexity $\compl(\alpha') = (\ell(\alpha'),\vec 0) = (\ell(\alpha),\vec 0) < (\ell(\alpha),\vv(\alpha))= \compl(\alpha)$ has decreased, yielding conclusion (2).  Conclusion (3) results when $\beta'$ is not normal.

We are left in the case that the exchange trades length fairly and $\alpha'$ and $\beta'$ are both normal.  Because length and normal vectors are both additive with respect to normal addition, we have $\compl(\alpha)+\compl(\beta)=\compl(\alpha+\beta)=\compl(\alpha')+\compl(\beta')$. If $\compl(\alpha')=\compl(\alpha)$, then $\compl(\beta')=\compl(\beta)$ and by Lemma \ref{lemTightUnique} the trade yields normally isotopic curves, conclusion (1).   Otherwise, either (2) or (3) holds.
\end{proof}

\begin{lemma}
\label{lemAddToSnug}
Let $\alpha$ be a tight essential curve and $\C$ set of pairwise snug,  tight essential curves.  Then, after a normal isotopy of $\alpha$, $\{\alpha\} \cup \C$ is pairwise snug.
\end{lemma}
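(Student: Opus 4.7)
The plan is to choose a representative of $\alpha$ in its normal isotopy class that minimizes its total intersections with the curves of $\C$, and show this representative is snug with every $\gamma \in \C$.  Concretely, among all normal curves normally isotopic to $\alpha$ and transverse to every $\gamma \in \C$, pick one (still denoted $\alpha$) minimizing $\sum_{\gamma \in \C} |\alpha \cap \gamma|$; such a minimum exists since the sum is a non-negative integer.  A preliminary normal isotopy within each face can further make $\alpha$ locally snug with every $\gamma \in \C$, so that Lemma~\ref{lemBigonCurves} applies.

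Suppose, for contradiction, that $\alpha$ is not snug with some $\beta \in \C$.  By the bigon criterion, they bound a bigon or half-bigon $B$.  Applying Lemma~\ref{lemBigonCurves} to the corner exchange at $B$, the tightness of $\alpha$ and $\beta$ rules out the complexity-decreasing cases~(2) and~(3), so case~(1) must hold; the exchange produces a new curve $\alpha'$ normally isotopic to $\alpha$.  I realize this exchange as an isotopy of $\alpha$ alone, sweeping the arc $a = \alpha \cap \partial B$ across $B$ to a copy of $b = \beta \cap \partial B$, while keeping $\beta$ and all other curves of $\C$ fixed.  Then $|\alpha' \cap \beta| < |\alpha \cap \beta|$ (the interior corners of $B$ are removed), and $|\alpha' \cap \gamma| = |\alpha \cap \gamma| - |a \cap \gamma| + |b \cap \gamma|$ for every other $\gamma \in \C$.

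The main step is to verify $|b \cap \gamma| \le |a \cap \gamma|$ for $\gamma \in \C \setminus \{\beta\}$.  Each arc of $\gamma \cap B$ has its endpoints on $\partial B$, decomposed as $a \cup b$ (bigon) or $a \cup b \cup x$ with $x \subset \partial F$ (half-bigon).  The crucial observation is that $\gamma$ cannot have an arc with both endpoints on $b$, and in the half-bigon case cannot have one endpoint on $b$ and one on $x$: either configuration would co-bound a sub-bigon or sub-half-bigon with $\beta$, contradicting the snugness of $\beta$ and $\gamma$ as members of $\C$.  An endpoint count then gives $|b \cap \gamma| \le |a \cap \gamma|$, and summing over $\C$ shows that $\sum_{\gamma \in \C} |\alpha' \cap \gamma|$ strictly decreases, contradicting minimality.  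The main obstacle to handle with care is the half-bigon case, where the isotopy also drags $\alpha$'s boundary endpoint along $\partial F$; one must verify that no new intersections with $\gamma$ arise near $\partial F$, which again follows from the principle that snugness within $\C$ rules out the forbidden sub-configurations.
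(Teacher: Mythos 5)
Your proof is correct and takes essentially the same approach as the paper's: minimize $\alpha$'s total intersections with $\C$ over its normal isotopy class, locate a (half-)bigon with some $\beta\in\C$, invoke Lemma~\ref{lemBigonCurves} (tightness forces conclusion~(1)), and contradict minimality by trading across the (half-)bigon. The only deviation is that the paper first passes to an \emph{innermost} such bigon so that the remaining curves of $\C$ cross it straight and their intersection counts with $\alpha$ are unchanged, whereas you skip that reduction and instead show directly, via the endpoint count $|b\cap\gamma|\le|a\cap\gamma|$ after ruling out the $(b,b)$ and $(b,x)$ arc configurations by the pairwise snugness of $\C$, that the counts cannot increase -- a fine and arguably slightly more explicit variant.
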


\begin{proof}
Normally isotope $\alpha$ to minimize the total of all intersections with $\C$.
By way of contradiction, suppose some pair is not snug, that there is $\beta
\in \C$ for which $|\alpha \cap \beta| > i(\alpha,\beta)$.
Among all such $\beta$ take one that, together with $\alpha$, determines
an innermost bigon; then any other
curves from $\C$ meeting that bigon  meet it in arcs that run straight across.

Apply Lemma \ref{lemBigonCurves}.   Since all curves are tight, we must have
the first conclusion.  But, trading across the bigon reduces intersections
between $\alpha$ and $\beta$ without raising intersections of any other pair---a
contradiction.
\end{proof}
%
%
%
%
%
%

\begin{lemma}
\label{lemCurveSummandsLeastComplexity}
Suppose that a tight essential normal curve is a normal sum $\alpha + \beta$.  Then $\alpha$ and $\beta$ are tight, essential, and after a normal isotopy, snug.
\end{lemma}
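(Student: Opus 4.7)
My plan is to establish the three conclusions (tight, essential, snug) in the order tightness, snugness, essentiality. After first normally isotoping $\alpha$ and $\beta$ to locally snug representatives of their normal isotopy classes, $\gamma=\alpha+\beta$ is realized geometrically as their Haken sum with all regular switches, so Lemma~\ref{lemHakenSumComplexity} applies to any perturbation of the picture.

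For tightness, I would argue by contradiction. Suppose $\alpha$ is not tight; choose $\alpha'$ isotopic to $\alpha$ with $\compl(\alpha')<\compl(\alpha)$. Extend the isotopy from $\alpha$ to $\alpha'$ to an ambient isotopy $\Phi$ of the surface $F$, apply it to the whole configuration, and then normally isotope $\Phi(\alpha)$ and $\Phi(\beta)$ slightly to restore local snugness (which does not change complexities). The resulting curve is a Haken sum of (locally snug representatives of) $\alpha'$ and $\Phi(\beta)$, is isotopic to $\gamma$, and by Lemma~\ref{lemHakenSumComplexity} has complexity at most $\compl(\alpha')+\compl(\Phi(\beta))$. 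If $\Phi$ can be chosen so that $\compl(\Phi(\beta))\le\compl(\beta)$, this gives a curve isotopic to $\gamma$ with complexity strictly less than $\compl(\alpha)+\compl(\beta)=\compl(\gamma)$, contradicting tightness of $\gamma$. The symmetric argument handles $\beta$.

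With $\alpha$ and $\beta$ tight, snugness follows by iteratively applying Lemma~\ref{lemBigonCurves} to any bigon or half-bigon they bound (which must exist if they are not snug, by the bigon criterion). Cases~(2) and~(3) of that lemma would strictly lower $\compl(\alpha)$ or $\compl(\beta)$, contradicting the tightness just proved; so only case~(1) occurs, preserving complexities and strictly reducing $|\alpha\cap\beta|$. After finitely many steps the pair is snug. For essentiality, suppose $\alpha$ has a trivial component $\alpha_0$ bounding a disk $D\subset F$ (or co-bounding $D$ with an arc of $\partial F$). Any outermost arc of $\beta$ in $D$ would, together with a sub-arc of $\alpha_0$, form a bigon or half-bigon between $\alpha$ and $\beta$, contradicting snugness; hence $\alpha_0$ is disjoint from $\beta$ and survives as a trivial component of the regular-switch Haken sum $\gamma$, contradicting essentiality of $\gamma$.

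The main technical obstacle I expect is controlling the effect of the ambient isotopy in the tightness step on $\compl(\beta)$, since a generic ambient isotopy can move $\beta$ across vertices of $\mathcal T$ and thereby change $\ell(\beta)$. If a direct choice of $\Phi$ supported away from $\beta$ except near $\alpha\cap\beta$ does not suffice, I would fall back on a more combinatorial substitute in the spirit of the proof of Lemma~\ref{lemExchangeArcNotParallel}: use the assumed non-tightness of $\alpha$ to locate intersections whose switches can be flipped from regular to irregular, producing a Haken sum with one or more trivial loops plus a component isotopic to $\gamma$ whose total complexity is strictly less than $\compl(\gamma)$, again contradicting tightness.
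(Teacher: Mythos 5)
Your plan identifies exactly the right difficulty---controlling how the isotopy that simplifies $\alpha$ disturbs $\beta$---but neither of your two proposed ways around it is adequate, and this is precisely the step the paper spends the most effort on.

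The primary approach (extend the $\alpha\to\alpha'$ isotopy to an ambient isotopy $\Phi$, apply it to everything, then restore local snugness) fails because there is no reason $\Phi$ can be chosen with $\compl(\Phi(\beta))\leq\compl(\beta)$: a generic ambient isotopy can drag $\beta$ back and forth across edges, and the normal isotopy you apply afterward only repairs local snugness, not length. You flag this yourself. The fallback, however, is not a real fix. Flipping regular switches to irregular ones in the Haken sum $\alpha\hs\beta$ always yields a sum of the \emph{same} curves $\alpha$ and $\beta$, so by Lemma~\ref{lemHakenSumComplexity} you only ever get $\compl(\alpha\hs\beta)\leq\compl(\alpha)+\compl(\beta)$, with equality in the all-normal case. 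This bound compares against the (unreduced) complexity of $\alpha$, not the hypothetical tight $\alpha_t$, so you never manufacture a curve isotopic to $\gamma$ of complexity strictly below $\compl(\gamma)$. The non-tightness of $\alpha$ gives you no recipe for which switches to flip, and the argument in Lemma~\ref{lemExchangeArcNotParallel} (one switch, producing one trivial loop) is a very special situation that does not generalize here.

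The paper resolves this by reversing your order: it proves snugness \emph{first} (after minimizing $|\alpha\cap\beta|$, case~(1) of Lemma~\ref{lemBigonCurves} is excluded outright, so any remaining bigon would lower the complexity of a summand, hence of the sum), then essentiality, and only then tightness. For tightness it introduces a tight $\alpha_t$ isotopic to $\alpha$, isotopes $\alpha_t$ to intersect the whole graph $\alpha\cup\beta$ minimally, and uses snugness of $(\alpha,\beta)$ plus Lemma~\ref{lemBigonCurves} against $\alpha+\beta$ to show that the only innermost (half-)bigons in the complement of $\alpha_t\cup\alpha\cup\beta$ are bounded by $\alpha$ and $\alpha_t$. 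This forces $\alpha$ and $\alpha_t$ to cobound a product region $R$, and---this is the key point your approach is missing---every arc of $\beta$ in $R$ runs straight across. Rerouting $\alpha$ along $\alpha_t$ inside $R$ therefore replaces $\alpha$ by a strictly simpler $\alpha'$ while leaving $\beta$ completely untouched, and gives a graph $\alpha'\cup\beta$ isotopic to $\alpha\cup\beta$. Only then does Lemma~\ref{lemHakenSumComplexity} apply to a Haken sum $\alpha'\hs\beta$ and deliver the contradiction, because the complexity of the second summand genuinely has not changed. So: right diagnosis of the obstacle, but the machinery you propose does not overcome it; the product-region control over $\beta$ is the missing ingredient.
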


\begin{proof}

Normally isotope $\alpha$ and/or $\beta$ to minimize $|\alpha \cap \beta|$.  This does not change their sum.

First we show that the pair is snug:  If not, then some pair of sub-arcs of $\alpha$ and $\beta$ bound a bigon or half-bigon $B$.   Apply Lemma \ref{lemBigonCurves}.   The first conclusion does not hold, so without loss of generality assume that $\compl(\alpha')<\compl(\alpha)$.   Isotope $\alpha'$ back slightly so that $\alpha'$ and $\beta$ still overlap and form a very thin bigon.   Since $\alpha \cup \beta$ and $\alpha' \cup \beta$ are isotopic as graphs, $\alpha + \beta$ is isotopic to some Haken sum
 $\alpha' \hs \beta$.   But by Lemma \ref{lemHakenSumComplexity}, $\compl(\alpha' \hs \beta)\leq \compl(\alpha')+\compl(\beta)<\compl(\alpha)+\compl(\beta)=\compl(\alpha+\beta)$, a contradiction.

It follows that $\alpha$ and $\beta$ are both essential.  If either possesses a component that bounds a disk, then the fact that $\alpha$ and $\beta$ are snug implies that this component misses the other curve,  survives normal addition, and $\alpha+\beta$ contains an inessential component, a contradiction.

It remains to show that  each summand is tight. Without loss of generality, suppose  that $\alpha$ is not tight, that there is a tight curve $\alpha_t$ with lower complexity, $\compl(\alpha_t)<\compl(\alpha)$, that is isotopic to $\alpha$ but not normally so.    Isotope $\alpha_t$ to intersect $\alpha \cup \beta$ minimally.

Then any innermost (half-) bigon in the complement of $\alpha_t \cup \alpha \cup \beta $ is bounded by $\alpha$ and $\alpha_t$, since  it cannot be bounded
by $\alpha$ and $\beta$, which are snug.  And because any patch of $\beta$ is a sub-arc of $\alpha+\beta$, any innermost (half-) bigon bounded by $\beta$ and $\alpha_t$ is also a (half-) bigon bounded by the tight curves $\alpha+\beta$ and $\alpha_t$ which, using Lemma \ref{lemBigonCurves} again, can be eliminated by a normal isotopy of $\alpha_t$.  This contradicts  the minimality of the intersection between $\alpha_t$ and $\alpha \cup \beta$.

Then, sub-curves of $\alpha$ and $\alpha_t$ co-bound a product region $R$
as in Figure~\ref{f:product_regions}.
\begin{figure}[tb]
\begin{center}
  \includegraphics[width=6in]{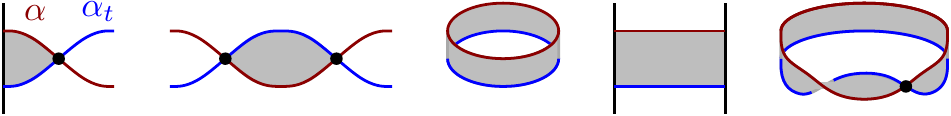}
\end{center}
\caption{Possible product regions derived from isotopy of $\alpha$ and
$\alpha_t$.}
\label{f:product_regions}
\end{figure}

  If they are not snug, $R$ is a bigon or half-bigon. If they are snug,
$R$ is a rectangle when $\alpha$ is an arc, an annulus  when
  $\alpha$ is a two-sided loop,  and a bigon with corners identified when
  $\alpha$ is a one-sided loop.

In all of these cases,
as observed above, no arc of $\beta$ forms a (half-) bigon inside $R$, and must therefore run across $R$ and have an endpoint in both $\alpha$ and $\alpha_t$.

 In the non-snug case, let $\alpha'$ be the curve of less complexity obtained by routing $\alpha$ along $\alpha_t$ when it meets the bigon or half-bigon.  In the snug case, let $\alpha' = \alpha_t$.   In either case, $\compl(\alpha')<\compl(\alpha)$.    Moreover, the complex $\alpha' \cup \beta$ is isotopic to $\alpha \cup \beta$ and because they are isotopic, there are exchanges, not necessarily regular, so that the Haken sum $\alpha' \hs \beta$ is a curve isotopic to $\alpha + \beta$.   But by Lemma \ref{lemHakenSumComplexity}, $\compl(\alpha' \hs \beta) \leq \compl(\alpha')+\compl(\beta) < \compl(\alpha)+\compl(\beta) = \compl(\alpha+\beta)$.  This contradicts the fact that $\alpha + \beta$ is tight.
\end{proof}
%
%
%
%
%
%
%
%

\heading{Rails and fences. } Now we again consider a triangulation
with a marking $M$, and auxiliary curves in it that, unlike $M$-normal
curves, go through the points of~$M$.

A \emph{rail} is a normal arc with its endpoints in $M$, and a \emph{fence} is a normal curve that is the union of rails.
%

We note that  if  a face contains an $M$-normal arc $\alpha$ and a
rail $\mu$ that are locally snug,  then $|\alpha \cap \mu|$ is either $0$ or $1$, depending only on the endpoints of $\mu$ and the $M$-normal type of $\alpha$.


The following lemma can also be considered obvious:

\begin{lemma}
\label{lemMarkedCurveSum}
Intersection number with fences is additive with respect to normal addition of $M$-normal curves:  If $\mu$ is an  fence and $\alpha$ and $\beta$ are $M$-compatible, $M$-normal curves, then $|(\alpha+\beta) \cap \mu| =  |\alpha \cap \mu| + |\beta \cap \mu|.$
\end{lemma}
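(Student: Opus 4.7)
The plan is to reduce the global intersection count to a sum over faces of the triangulation and then, within each face, to a linear combination whose coefficients are the entries of the $M$-normal vector; since the $M$-normal vector is additive under normal sum by definition, additivity of the intersection number will follow.

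First, I would arrange all three curves to be pairwise locally snug by $M$-normal isotopies (for $\alpha$ and $\beta$) and a normal isotopy of $\mu$; such isotopies do not change either $\vv_M(\alpha)$, $\vv_M(\beta)$, $\vv_M(\alpha+\beta)$ nor the intersection numbers $|\alpha\cap\mu|$, $|\beta\cap\mu|$, $|(\alpha+\beta)\cap\mu|$. Next I would observe that no intersection between $\mu$ and an $M$-normal curve occurs on the $1$-skeleton: the only points of $\mu$ on edges of $\T$ are the endpoints of rails, which lie in $M$, whereas $M$-normal arcs avoid $M$ by definition. Therefore every intersection lives in the interior of some face $F$, and we may sum face by face:
\[
|\alpha\cap\mu|=\sum_{F}|\alpha\cap\mu\cap F|,\qquad |\beta\cap\mu|=\sum_{F}|\beta\cap\mu\cap F|,
\]
and analogously for $\alpha+\beta$.

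Fix a face $F$. Inside $F$, the curve $\mu$ consists of a finite collection of rails $r_1,\dots,r_k$, each joining two points of $M$. The curve $\alpha\cap F$ is a disjoint union of $M$-normal arcs, and the number of arcs of each $M$-type $t$ equals the corresponding coordinate $\vv_M(\alpha)_t$ (restricted to $F$); similarly for $\beta$ and $\alpha+\beta$. By the remark preceding the lemma, for each rail $r_j$ and each $M$-type $t$ there is a constant $\epsilon(t,r_j)\in\{0,1\}$, depending only on $t$ and on the endpoints of $r_j$, such that any representative of type $t$ meets $r_j$ in exactly $\epsilon(t,r_j)$ points whenever the two are locally snug. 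Hence
\[
|\alpha\cap\mu\cap F|=\sum_{j}\sum_{t}\epsilon(t,r_j)\,\vv_M(\alpha)_t,
\]
and the same formula holds with $\alpha$ replaced by $\beta$ or by $\alpha+\beta$.

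Since $\vv_M(\alpha+\beta)=\vv_M(\alpha)+\vv_M(\beta)$ by the definition of $M$-normal sum (and since $M$-compatibility of $\alpha$ and $\beta$ ensures that this sum actually represents an $M$-normal curve realised in a locally snug way), summing the displayed equality in $t$ and $j$ gives
\[
|(\alpha+\beta)\cap\mu\cap F|=|\alpha\cap\mu\cap F|+|\beta\cap\mu\cap F|,
\]
face by face. Summing over all faces yields the claim. The only delicate point I expect is bookkeeping to ensure that after making the three curves locally snug the per-face counts really agree with those computed from the $M$-normal vectors; once one observes that the $M$-compatibility of $\alpha$ and $\beta$ lets us simultaneously realise their arcs disjointly in every face, this is immediate.
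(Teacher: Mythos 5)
The paper offers no proof of this lemma at all---it is introduced with ``The following lemma can also be considered obvious''---so there is no route in the paper to compare against. Your argument is the natural one implicit in the paper's preceding remark: the intersection count of a locally snug (arc, rail) pair is $0$ or $1$ and depends only on the $M$-type of the arc and the rail's endpoints, which makes $|\gamma\cap\mu|$ a fixed linear functional of the $M$-normal vector $\vv_M(\gamma)$ (independent of $\gamma$), and additivity of $\vv_M$ under $M$-normal sum then gives the claim. The bookkeeping in your proof is sound; in particular, the observation that intersections with $\mu$ occur only in the interiors of faces (because rail endpoints lie in $M$ while $M$-normal arcs avoid $M$) correctly localizes the count, and arcs of the same $M$-type in a face are disjoint parallel copies, so they contribute independently to the count. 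This is a correct and complete fleshing-out of what the authors left as ``obvious.''
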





\begin{proposition}
\label{propMSummands}
Let $\mu$ be a fence that is a tight essential curve (w.r.t.\ the unmarked
triangulation). Suppose that a sum  $\alpha+\beta$  of $M$-normal curves is tight, essential and snug with $\mu$.  Then
\begin{enumerate}

\item[\rm(1)] $\alpha$ and $\beta$ are both snug with respect to $\mu$;

 \item[\rm(2)] $i(\alpha+\beta,\mu) = i(\alpha,\mu) + i(\beta,\mu)$ where $i(.,.)$
is the geometric intersection number;

 \item[\rm(3)]
 if $\beta$ is two-sided, connected and normally isotopic to $\mu$  then, after a normal isotopy, every point of $\alpha \cap \beta$ has the same normal
sign.
\end{enumerate}
\end{proposition}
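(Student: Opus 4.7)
The plan for parts (1) and (2) is to establish both simultaneously by combining the additivity equation of Lemma~\ref{lemMarkedCurveSum} with the snug hypothesis on $\alpha+\beta$. By Lemma~\ref{lemCurveSummandsLeastComplexity}, the summands $\alpha$ and $\beta$ are themselves tight and essential, so by iterated applications of Lemma~\ref{lemAddToSnug} their isotopy classes admit representatives $\alpha_0,\beta_0$ that are pairwise snug with $\mu$ and with each other. Since $\alpha_0$ and $\beta_0$ are then locally snug and normal, the regular Haken sum $\alpha_0+\beta_0$ coincides with their normal sum (by the curve version of Lemma~\ref{l:regular=>normal}) and has the same unmarked complexity as $\alpha+\beta$; Lemma~\ref{lemTightUnique} makes it normally isotopic to $\alpha+\beta$. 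Hence
\[
i(\alpha+\beta,\mu)\le|(\alpha_0+\beta_0)\cap\mu|=|\alpha_0\cap\mu|+|\beta_0\cap\mu|=i(\alpha,\mu)+i(\beta,\mu).
\]
On the other hand, applying Lemma~\ref{lemMarkedCurveSum} to the original $\alpha,\beta$ together with the snug hypothesis gives the reverse inequality
\[
i(\alpha+\beta,\mu)=|(\alpha+\beta)\cap\mu|=|\alpha\cap\mu|+|\beta\cap\mu|\ge i(\alpha,\mu)+i(\beta,\mu),
\]
and equality throughout forces $|\alpha\cap\mu|=i(\alpha,\mu)$ and $|\beta\cap\mu|=i(\beta,\mu)$, proving (1) together with (2).

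For (3), I would first invoke (1) to arrange $\alpha,\beta,\mu$ pairwise snug. Since $\beta$ is two-sided, connected, and normally isotopic to $\mu$, we have $i(\beta,\mu)=0$, so after a further normal isotopy $\beta\cap\mu=\emptyset$ and the two curves cobound an annulus $A\subset\bd X$. Snugness then forces every arc of $\alpha\cap A$ to traverse $A$ from one boundary component to the other: an arc with both endpoints on a single boundary would form a bigon with $\beta$ or $\mu$. Consequently the points of $\alpha\cap\beta$ correspond bijectively to the points of $\alpha\cap\mu$, and the normal sign of each such point is determined by the direction in which the corresponding arc of $\alpha\cap A$ crosses $A$.

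The hard part is then showing that, after an additional normal isotopy, all arcs of $\alpha\cap A$ can be made to cross $A$ in the same direction. The plan is a contradiction argument: if two consecutive arcs of $\alpha\cap A$ (consecutive when ordered along $\beta$) run in opposite directions, then the pair of regular exchanges in $\alpha+\beta$ at the corresponding intersections has one point of each sign. Tracing the resulting strand of $\alpha+\beta$ locally---it doubles back along the intervening sub-arc of $\beta$---one exhibits an exchange arc at one of these two points that co-bounds a disk with a sub-arc of $\alpha+\beta$, contradicting Lemma~\ref{lemExchangeArcNotParallel}. The delicate piece is verifying that this local trace argument goes through cleanly in the marked setting, in particular keeping track of how the $\alpha$-arcs continue outside $A$ and ruling out spurious configurations near marked points of~$M$.
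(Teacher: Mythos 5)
Your argument for parts (1) and (2) is correct, and it takes a genuinely different route from the paper. The paper establishes (1) first, by a direct bigon-elimination argument (choosing a counterexample minimizing $|\alpha\cap\beta|$, then eliminating an innermost (half-)bigon between either $\mu$ and one of the summands, or between the summands themselves, to reach a contradiction), and then derives (2) as a corollary. You instead establish (1) and (2) simultaneously by sandwiching $i(\alpha+\beta,\mu)$ between two chains of inequalities that must all collapse to equalities; the key points are that Lemma~\ref{lemAddToSnug} lets you find snug representatives $\alpha_0,\beta_0$ normally isotopic to $\alpha,\beta$, that the regular Haken sum $\alpha_0+\beta_0$ has the same normal vector as $\alpha+\beta$ and hence is normally isotopic to it, and that the Haken exchanges happen away from $\mu$. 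Your route is clean and arguably more conceptual; the paper's is more hands-on but does not require Lemma~\ref{lemAddToSnug} or Lemma~\ref{lemCurveSummandsLeastComplexity} for this step. Both are valid.

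For part (3), the initial setup you propose (reduce to the annulus $A$ cobounded by $\beta$ and $\mu$, observe that snugness forces $\alpha\cap A$ to consist of spanning arcs, identify normal signs with crossing directions) matches the paper in spirit. But your final contradiction step is different and, as you acknowledge yourself, underdeveloped. You want to exhibit an exchange arc co-bounding a disk with a sub-arc of $\alpha+\beta$ and invoke Lemma~\ref{lemExchangeArcNotParallel}; the paper instead isotopes $\alpha+\beta$ to $\alpha+\mu$ and looks at the rectangles into which $\alpha$ and $\mu$ cut a thin annular neighborhood $N(\mu)$. If some rectangle is not attached to a neighbor at a corner, then the corresponding sub-arc of $\alpha+\mu$ is trivial in $N(\mu)$ and can be isotoped out, which would give $i(\alpha+\beta,\mu) < |\alpha\cap\mu|$, directly contradicting the equality already derived from part (2) and the snugness hypotheses. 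That argument leans on the count equality from (2), not on Lemma~\ref{lemExchangeArcNotParallel}, and avoids the delicate local tracing you flag as uncertain; in particular the paper's version does not need to keep track of how the $\alpha$-arcs continue outside $A$, which is the step you are most worried about. I would not say your route for (3) is definitely wrong---one can imagine the two arguments reducing to the same underlying picture, with the unattached rectangle playing the role of the disk in Lemma~\ref{lemExchangeArcNotParallel}---but you have not closed the gap, and the paper's rectangle argument is shorter and more robust, so I would recommend switching to it.
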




\begin{proof}
Among counterexamples to  conclusion (1) of the proposition, choose one that minimizes $|\alpha \cap \beta|$.    Then $\alpha,\beta$ and $\mu$ are pairwise locally snug, and we will show that they are in fact pairwise snug.   Suppose not and let $B$ be an innermost (half-) bigon bounded by some pair of the curves.

If $B$ is bounded by $\mu$ and either of the other curves, say $\alpha$, then every sub-arc of $\beta$ in $B$ crosses $B$ and meets both $\alpha$ and $\mu$.    Let $\alpha'$ be the result of rerouting $\alpha$ around $B$ as in Lemma \ref{lemBigonCurves}.  Then $\alpha + \beta$ is isotopic to some Haken sum $\alpha' \hs \beta$ that has fewer intersections with $\mu$.  This contradicts our assumption that $\alpha+\beta$ and $\mu$ are snug.

If $B$ is bounded by $\alpha$ and $\beta$, then every sub-arc of $\mu$ in $B$ crosses $B$ and meets both $\alpha$ and $\beta$.     Let $\alpha'$ and $\beta'$ be the curves given by Lemma \ref{lemBigonCurves}.  Because $\alpha+\beta$ is tight, $\alpha$ and $\beta$ are tight and normally, but not necessarily $M$-normally, isotopic  to  $\alpha'$ and $\beta'$, respectively.   Because the normal sum $\alpha+\beta$ was defined, $\alpha$ and $\beta$ are locally snug.   The isotopy doesn't create intersections, and so $\alpha'$ and $\beta'$ are also locally snug.   Then $\alpha + \beta = \alpha' \hs \beta'$ for some generalized Haken sum of $\alpha'$ and $\beta'$.    By Lemma \ref{lemHakenSum} that sum is a normal sum, $\alpha + \beta = \alpha' + \beta'$.   Note that $\alpha'$ and $\beta'$ are snug with $\mu$ if and only if $\alpha$ and $\beta$ are, as the move did not change the number of times they meet $\mu$.     Since $|\alpha \cap \beta| > |\alpha' \cap \beta'|$ we obtain a contradiction and establish conclusion (1).

Since $\alpha$, $\beta$ and their sum are all snug with respect to $\mu$ and intersections with respect to $\mu$ are additive, we have additivity of geometric intersection number, conclusion~(2).


We now prove the final statement of the proposition. Assume that $\beta$ is
normally isotopic to $\mu$.  Then $i(\beta,\mu)=0$ since $\beta$ is two-sided.  By (2) and the fact that
$\alpha+\beta$ and $\alpha$ are both snug with $\mu$ we have: $|(\alpha+\beta)
\cap \mu| = i(\alpha+\beta,\mu) = i(\alpha,\mu) = |\alpha \cap \mu|$.

\begin{figure}[ht]
\begin{center}
\includegraphics[width=4in]{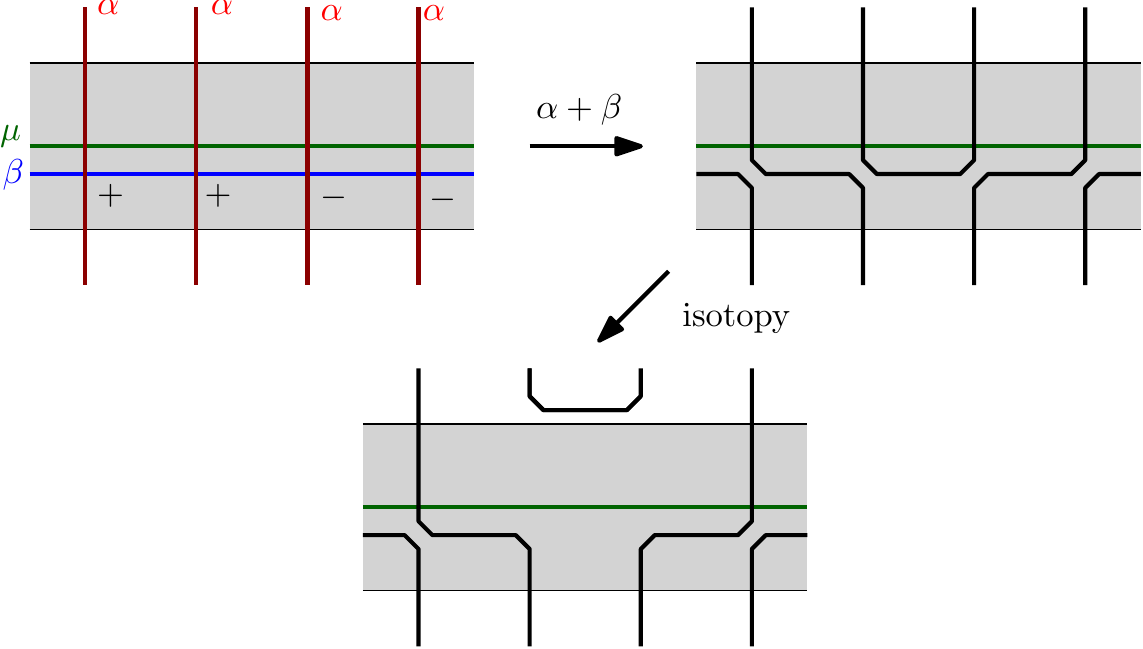}
\end{center}
\caption{Mixed normal signs imply that $i(\alpha+\beta,\beta)<i(\alpha,\beta)$.}
\label{figNotATwist}
\end{figure}

And since $\beta$ is normally isotopic to $\mu$, there is a normal, not necessarily $M$-normal,  isotopy taking $\alpha+\beta$ to $\alpha+\mu$.  Now $\alpha$ cuts across a thin regular neighborhood of $\mu$ in a collection of arcs that span
(cut across)
 the neighborhood.  Together they cut this neighborhood into rectangles; see Figure \ref{figNotATwist}.

Each regular exchange connects a pair of rectangles at a corner of each.   In fact, every rectangle that meets $\alpha \cap \mu$ twice must be attached to another rectangle at one of its corners.  Otherwise, an arc  of $\alpha + \mu$ bounds the unattached rectangle, showing that the arc it is trivial in the  neighborhood of $\mu$ and can be isotoped out of it.   This would imply $i(\alpha+\beta,\mu) < |\alpha\cap\mu|$, contradicting the equality shown earlier.   The fact that each rectangle is attached at exactly one corner implies that as we follow $\mu$ every intersection with $\alpha$ must have the same normal sign. Since $\beta$ is normally isotopic to $\mu$ we have our desired conclusion~(3).
\end{proof}

\section{Normal summands of incompressible annuli}
\label{s:def-0-eff}
\label{sec:summands_of_annuli}

%

We would like to apply two well known results from normal surface theory:   (1)
an essential surface is isotopic to a normal surface, and (2) every summand of
a least weight essential normal surface is also least weight and essential
(Theorem 6.5 of Jaco and Tollefson
\cite{Jaco:Algorithms-for-the-complete-decomposition-of-a-closed-3-manifold-1995}).
But, as will be seen shortly, our notion of surface complexity prioritizes the
reduction of boundary complexity over the reduction of total surface weight.
Thus the results (1) and (2) cannot be applied as stated.

Proposition~\ref{propNormalize} recovers the first result using our notion of
complexity.   Proposition~\ref{propSummandsEssential} gives a weaker version of
the second for incompressible annuli.   While we expect the full version to
hold with our notion of complexity, we prove a restricted version both to
simplify the proof and to incorporate boundary parallel annuli which  are
non-essential.   Our proof follows the strategy of
\cite{Jaco:An-algorithm-to-decide-if-a-3-manifold-is-a-Haken-manifold-1984} and
\cite{Jaco:Algorithms-for-the-complete-decomposition-of-a-closed-3-manifold-1995}.

The \emph{complexity} $\compl(F)$ of a properly embedded surface $F$ is the
triple
\[
\compl(F) = (\compl(\partial F),|F \cap \T^1|,|F \cap \T^2|)=
((\ell(\partial F),\vv(\partial F)),|F \cap \T^1|,|F \cap \T^2|).
\]
We compare complexities lexicographically.
Thus, the complexity of $F$ is measured first by the complexity
of its boundary, then by the \emph{weight} of $F$, $\wt(F) =|F \cap \T^1|$,
and then by the number of components of the intersections
with the $2$-skeleton of~$\T$.

A normal surface is \emph{least complexity} if it minimizes complexity among
normal surfaces to which it is isotopic (but not necessarily normally
isotopic).

A surface is \emph{tight} if it minimizes complexity, ranging over \emph{all}
those surfaces to which it is isotopic.

A tight normal surface is clearly
least complexity, and as  a consequence of Proposition~\ref{propNormalize},
a normal essential surface of least complexity is tight.    But, this does
not hold in general for surfaces that are not essential:  for example,
a normal boundary parallel annulus may be least complexity
but after tightening no longer normal.

%

We first recover normalization of an essential surface.  We will apply this
with surfaces whose boundaries are tight, hence least length.

\begin{proposition}
  \label{propNormalize}
  Suppose that $X$ is a triangulated, irreducible manifold with incompressible
  boundary.  If $F \subset X$ is a tight, properly embedded, essential surface,
  then $F$ is normal.
\end{proposition}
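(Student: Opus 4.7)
My plan is to normalize $F$ in two stages, exploiting the lexicographic priority in $\compl$: first normalize $\partial F$, then normalize the interior of $F$ relative to the already-normal boundary.

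Stage 1 (Normalize $\partial F$). Because $F$ is essential and $\partial X$ is incompressible, $\partial F$ is essential in $\partial X$: an inessential boundary component would, by Proposition~\ref{propSurfaceFacts}(ii), force the connected component of $F$ containing it to be either compressible or a disk, and Proposition~\ref{propSurfaceFacts}(i) rules the disk case out as well, each contradicting essentiality. If $\partial F$ were not tight in $\partial X$, then an ambient isotopy of $\partial X$ strictly reducing $\compl(\partial F)$ would, via the PL isotopy extension theorem, extend to an ambient isotopy of $X$ producing an isotopic surface $F'$ with $\compl(\partial F') < \compl(\partial F)$. Since boundary complexity dominates in $\compl$, this gives $\compl(F') < \compl(F)$, contradicting tightness of $F$. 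Hence $\partial F$ is tight in $\partial X$, and by Lemma~\ref{lemTightUnique} it is normal.

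Stage 2 (Normalize $F$ rel $\partial F$). I would then run Haken's classical normalization argument. Assuming for contradiction that $F$ is not normal, some face or tetrahedron witnesses one of the following violations: (a) a simple closed loop of $F\cap\sigma$ in a face $\sigma$; (b) an abnormal arc of $F\cap\sigma$, with both endpoints on a single edge of $\sigma$; (c) a disk component of $F\cap\Delta$ in a tetrahedron $\Delta$ whose boundary normal curve has length $\neq 3,4$. For (a), an innermost such loop bounds a disk in $\sigma$ and, by incompressibility of $F$, also a disk in $F$; these two disks form a 2-sphere which bounds a ball by irreducibility of $X$, and an isotopy of $F$ across this ball removes the loop, strictly reducing $|F\cap\T^2|$ while fixing $\partial F$. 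For (b), an outermost abnormal arc cobounds a disk with a subarc of the offending edge; combined with incompressibility or, in the case of a boundary face, boundary incompressibility of $F$, one obtains a trivial disk or boundary-compression move realizable as an isotopy rel $\partial F$ that strictly reduces $\wt(F)$. For (c), once (a) and (b) are excluded, the long boundary of a non-normal disk component in $\Delta$ yields either a compressing or a boundary-compressing disk for $F$ inside the ball $\Delta$, directly contradicting essentiality of $F$. Since each move leaves $\partial F$ unchanged, $\compl(\partial F)$ is unaffected and the reduction in $(\wt(F),|F\cap\T^2|)$ strictly decreases $\compl(F)$, contradicting tightness.

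The main obstacle will be Stage 2: verifying that every interior normalization move can be carried out rel $\partial F$, so that Stage 1's work is not undone. This is exactly where the full essentiality hypothesis is needed, since incompressibility and boundary incompressibility of $F$ are what supply the disks in $F$ that pair up with the disks in the triangulation, letting the isotopies be ambient and fix $\partial F$. Once that is in place, the lexicographic structure of $\compl$ does the accounting automatically.
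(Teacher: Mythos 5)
Your approach and the paper's are the same at heart: both apply Haken's normalization via compressing and edge-compressing (abnormal-arc) disks, with the lexicographic complexity $\compl(F)$ providing the terminating contradiction, and both must first use tightness to force $\partial F$ to be normal. Stage 1 is fine and makes explicit something the paper only uses implicitly (that tightness of $F$ forces $\partial F$ to be tight, and hence normal by Lemma~\ref{lemTightUnique}).

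The issue is in Stage~2, and specifically in your blanket claim that ``each move leaves $\partial F$ unchanged.'' That is not true for one of the cases you list. An abnormal arc can lie in an \emph{interior} face $\sigma$ of the triangulation but have both endpoints on a \emph{boundary} edge $e \subset \partial X$ of $\sigma$ (this certainly occurs: an interior face of a triangulated $3$-manifold with boundary can have all of its edges on $\partial X$). The outermost half-disk cut off in $\sigma$, together with a subarc of $e$, is an edge compressing disk $E$ with $E \cap \T^1 \subset \partial X$. Since $\partial F$ is incompressible by assumption and $F$ is boundary incompressible, the arc $f = E \cap F$ is trivial in $F$, and pushing $F$ along $E$ reduces the boundary length $\ell(\partial F)$ by two --- this isotopy \emph{necessarily} moves $\partial F$ (indeed that is exactly what it reduces). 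Calling this ``an isotopy rel $\partial F$ that strictly reduces $\wt(F)$'' conflates two different moves. As written, your concluding accounting sentence is false for this case.

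The gap is easily closed, and the paper's proof closes it exactly this way: do not insist that the moves fix $\partial F$. Either the offending edge is interior --- then the move is rel $\partial F$ and reduces $\wt(F)$ or $|F \cap \T^2|$ --- or the offending edge lies in $\partial X$, in which case the move strictly reduces $\ell(\partial F)$, hence $\compl(\partial F)$, which by the lexicographic priority of $\compl$ still strictly reduces $\compl(F)$ and contradicts tightness. Your Stage~1 conclusion that $\partial F$ is already tight is in fact what makes the second case contradictory, so the ingredient is present in your proof; only the bookkeeping in Stage~2 needs correcting. (A smaller but parallel imprecision appears in your case~(c): an edge compressing disk arising from a long normal curve need not contradict essentiality directly --- its arc in $F$ may be inessential --- and then the contradiction again comes from a complexity reduction rather than from essentiality.)
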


\begin{proof}
To prove $F$ is normal we must show that it meets each tetrahedron $\Delta$ in
a collection of disks whose boundaries are normal curves of length 3 or 4.  We
adopt the view taken in \cite{bachmanDerbyTalbotSedgwick}, showing $F$ meets
each tetrahedron in pieces that are incompressible and edge incompressible.

If any component of $F \cap \Delta$ is compressible in $\Delta$, then, by an
innermost disk argument,  we obtain a compressing disk avoiding all
other components of $F \cap \Delta$, and hence
$F \cap \Delta$ is compressible inside $\Delta$.

Because $F$ is essential, the boundary of any compressing disk $D$ for $F \cap
\Delta$ is trivial in $F$.  Because $X$ is irreducible, compressing along $D$
yields a surface $F'$ that is isotopic to $F$, but for which either $|F \cap
\T^1|$ or $|F \cap \T^2|$ has been reduced, a contradiction.  It follows that
$F \cap \Delta$ is the union of disks.

An \emph{edge compressing disk} for a surface in $\Delta$ is an embedded disk
$E$  whose boundary $\partial E = e \cup f$, consists of two arcs, $e \subset
\T^1$ and $f = E \cap F = \partial E \cap F$;
see~\cite{bachmanDerbyTalbotSedgwick}.

If some component of $F \cap \Delta$ has an edge compressing disk then, by an
innermost disk argument, there is an edge compressing disk $E$ for $F \cap
\Delta$.   If $e \subset \partial X$ then, because $F$ is not boundary
compressible, $f$ is trivial in $F$.   But compressing along $E$ yields an
isotopic  surface $F_0$ ($X$ is irreducible and has incompressible boundary) whose
boundary length is reduced by at least two, contradicting the fact that
$\partial F = \partial F_0$ is least length. And if $e$ lies in an interior
edge, then $E$ can be used to guide an isotopy reducing $|F \cap \T^1|$, also a
contradiction.

Then $F$ meets each face in normal arcs.   For otherwise, there is an arc whose
ends both lie in the same edge, and an outermost such arc bounds an edge compressing
disk.   Then $F$ meets the boundary of each tetrahedron in normal curves.   And
it is well known, see Thompson
\cite{Thompson:ThinPositionrecognitionProblemS3-1994}, that if any such curve has
length greater than 4 we see an edge compressing disk for $F$ in the boundary
of the tetrahedron.
\end{proof}

\heading{0-efficient triangulations.} First we recall the
definition of $0$-efficient triangulations
from~\cite{Jaco:0-efficient-triangulations-of-3-manifolds-2003}. A
triangulation of a manifold $X$ with nonempty boundary
is
\emph{$0$-efficient} if every normal disk is vertex-linking.
(A normal disk is vertex-linking at vertex $v$ if it consists of precisely one
normal triangle from each tetrahedral corner meeting $v$.)

Moreover, if no boundary component of
$X$ is an $S^2$, then $X$ does not contain any normal $2$-spheres
\cite[Prop.~5.15]{Jaco:0-efficient-triangulations-of-3-manifolds-2003}.
In our setting, we use $0$-efficient triangulations only
in the situations without $S^2$ boundary components
(since in the algorithm, we fill each such component with a ball).
Note also that in the proposition below we can assume that $X$ does not contain
$S^2$ boundary components even if do not explicitly claim that $X$ is obtained
in an intermediate stage of the algorithm. Indeed, we assume that $X$ is
irreducible. Then an $S^2$ boundary component implies that $X$ is a ball;
however, the proposition also assumes that $X$ contains an essential annulus or
M\"{o}bius band.

We now establish the second result, that some summand of a non-fundamental
incompressible annulus is an essential annulus.   This applies to boundary
compressible as well as essential annuli.

\begin{proposition}
  \label{propSummandsEssential} Let $X$ be a triangulated, orientable, irreducible manifold
  with incompressible boundary and a $0$-efficient triangulation.   Let  $A$ be an
  incompressible annulus or M\"obius band that has tight boundary and is least
  complexity and normal. Suppose that $A$ can be written as a non-trivial sum
  $A = B + C$ where $B$ is connected and $\partial B \neq \emptyset$.  Then $B$
  is an essential annulus or M\"obius band with tight boundary.
\end{proposition}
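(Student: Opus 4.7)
My plan is to combine additivity of Euler characteristic and boundary length under normal sum with the $0$-efficient triangulation hypothesis to pin down the topology of $B$, and then to use the minimality of $A$ together with the geometric realization of the normal sum to promote incompressibility of $B$ to full essentiality.

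\emph{Topology and boundary of $B$.}
Since $A$ is a connected incompressible annulus or M\"obius band, Proposition~\ref{propSurfaceFacts}(ii) forces every component of $\partial A$ to be essential in $\partial X$, and by hypothesis $\partial A$ is tight. Applying Lemma~\ref{lemCurveSummandsLeastComplexity} to the normal sum $\partial A = \partial B + \partial C$ of curves in $\partial X$, I conclude that $\partial B$ and $\partial C$ are tight and each of their components is essential. Additivity of $\chi$ under normal sum gives $\chi(B) + \chi(C) = \chi(A) = 0$. Now any component of $B$ or $C$ with positive Euler characteristic would have to be either a sphere (excluded by $0$-efficiency, since $X$ has no $S^{2}$ boundary component), a projective plane (excluded by Proposition~\ref{propSurfaceFacts}(v)), or a disk; but by $0$-efficiency every normal disk is vertex-linking, hence has boundary trivial in $\partial X$, contradicting what was just established. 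Thus every component of $B$ and $C$ has non-positive Euler characteristic, and we must have $\chi(B) = \chi(C) = 0$. Since $B$ is connected with $\partial B \neq \emptyset$, $B$ is an annulus or a M\"obius band, and its boundary is tight and essential; because $B$ is a connected non-disk with every boundary component essential, Proposition~\ref{propSurfaceFacts}(ii) makes $B$ incompressible.

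\emph{Essentiality.}
If $B$ is a M\"obius band, Proposition~\ref{propSurfaceFacts}(vi) gives essentiality for free, so I assume $B$ is an annulus and, for contradiction, that it is boundary compressible. Proposition~\ref{propSurfaceFacts}(iv) then makes $B$ boundary parallel: it cobounds a product region $R \cong B \times I$ with an annulus $B^{\ast} \subset \partial X$. The plan is to use this product structure, together with the geometric realization of $A = B + C$ (where $B$ and $C$ are locally snug and $A$ is obtained by regular exchanges along the intersection curves), to produce a surface isotopic to $A$ with strictly smaller complexity $\compl(A)$, contradicting the assumed minimality of $A$. Concretely, by analyzing $C \cap R$ via outermost arcs together with the annulus $B^{\ast}$ on $\partial X$, one can either swap an arc of $C$ across $R$ to decrease $|A \cap \T^{2}|$, or exchange the $B$-piece of $A$ with a push-off of $B^{\ast}$ to decrease $\wt(A)$, in both cases keeping $\ell(\partial A)$ and $\vv(\partial A)$ unchanged.

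\emph{Main obstacle.}
The technical heart of the argument is the boundary incompressibility step. Because our complexity lexicographically prioritizes boundary complexity over weight over $2$-skeleton intersections, the reduction must not come at the price of an increased boundary, and the modified surface has to be isotopic to $A$ and (after normalization) still normal. This forces one to carefully track how the intersection arcs of $B \cap C$ lying inside the product region $R$ interact with the boundary compressing disk for $B$, to verify that the resulting cut-and-paste operation is compatible with the regular exchanges defining $A$, and to check that the outcome actually decreases $\compl$ at the appropriate level. I expect to carry this out along the lines of the classical Jaco--Tollefson strategy for least weight normal surfaces, adapted to our prioritized notion of complexity.
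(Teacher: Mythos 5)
Your first paragraph (topology and boundary of $B$) is essentially the same as the paper's opening lemma and is correct, apart from a small misattribution: you cite Proposition~\ref{propSurfaceFacts}(ii) to get incompressibility of $B$, but (ii) only runs one way; the right references are (iii) (a compressible annulus has trivial boundary, contradicting that $\partial B$ is essential) and (vi) (Möbius bands are essential). The conclusion that $B$ is an incompressible annulus or Möbius band with tight essential boundary is fine.

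The essentiality step, which you yourself flag as the heart of the matter, has a genuine gap, and the specific moves you propose do not work as stated. First, ``exchange the $B$-piece of $A$ with a push-off of $B^{\ast}$'' presupposes that $B$ is a subsurface of $A$; it is not. The normal sum decomposes $A$ into patches of $B$ and patches of $C$ separated by trace curves, and when $B\cap C\neq\emptyset$ there is no single $B$-piece to swap. Second, you do not distinguish between $A$ essential and $A$ boundary parallel, and the proposition only assumes $A$ incompressible, so both must be handled. This distinction matters: when $A$ is a boundary parallel annulus it is only assumed least complexity among \emph{normal} surfaces and need not be tight, so a reduction in interior weight (your $|A\cap\T^2|$ / $\wt(A)$ moves) does not contradict anything unless you can guarantee the resulting surface is again normal; the paper avoids this by using the half-disk/boundary-length version of the argument (Lemma~\ref{lemNoHalfDiskPatches}) in that case, while the interior-weight version (Lemma~\ref{lemTwoDisks}, Lemma~\ref{lemNoDiskPatches}) is only invoked when $A$ is essential. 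Third, the contradiction the paper actually derives when $B$ is boundary parallel is not a straightforward complexity decrease: when $A$ is also boundary parallel it exhibits an exchange rectangle inside the product solid torus that is a compressing disk meeting $A$ and $A_{\partial X}$ each in two essential arcs, contradicting boundary parallelism (Lemma~\ref{l:A_BP_B_essential}); when $A$ is essential it analyzes patches (all intersection curves essential, patches are annuli) and produces a zero-weight exchange annulus whose irregular fold yields an abnormal arc and hence a tightness/weight contradiction (Lemma~\ref{l:AessentialBessential}). All of this requires first fixing the decomposition $A=B+C$ to lexicographically minimize $(|\partial B\cap\partial C|,|B\cap C|)$, which your proposal never sets up but which is used repeatedly to rule out bad configurations. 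So the plan is pointing in a reasonable direction but the core of the proof is missing and the shortcuts you propose would not go through as written.
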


\subsection{Proof of Proposition \ref{propSummandsEssential}}

\heading{Sketch of the proof.}
Our proof is loosely modeled on Jaco and Tollefson's proof of
\cite[Th.~6.5]{Jaco:Algorithms-for-the-complete-decomposition-of-a-closed-3-manifold-1995}. Apart from using slightly different notion of complexity, we also have to add
additional ingredients when $A$ is a boundary parallel annulus.

As we will see, the core of the proof is to show that $B$ is essential. For
contradiction we assume that $B$ is not essential.
The first important step is to find out what are the possible patches when $A$
is decomposed by trace curves from the normal sum $A = B + C$; see
Figure~\ref{f:annulus_into_patches} left.
If $A$ is essential (annulus or M\"{o}bius
band), then disk patches as well as half-disk patches can be ruled out
following~\cite{Jaco:Algorithms-for-the-complete-decomposition-of-a-closed-3-manifold-1995}
(disk patches avoid $\partial X$ whereas half-disk patches contain a single arc
on $\partial X$); see Lemmas~\ref{lemNoDiskPatches}
and~\ref{lemNoHalfDiskPatches}.
After ruling out such patches we can deduce that every intersection
curve is essential in $B$, that is a spanning arc or a core curve. This already
mean that $C$ intersects $B$ in a very specific way and both cases
can be ruled out
along~\cite{Jaco:Algorithms-for-the-complete-decomposition-of-a-closed-3-manifold-1995}; see~Lemma~\ref{l:AessentialBessential}.

\labfig{annulus_into_patches}{
    The annulus $A$ separated by trace curves into patches
      (left); $E$ is an exchange annulus. The right part shows a
      cross-section of $D \cup E \cup D'$ assuming that $D$ and $D'$ are not
      adjacent across $E$. Then $D \cup E \cup D'$ separates,
      after a slight isotopy,
      two components of $A$.}

If $A$ is not essential, then $A$ is a boundary parallel annulus by
Proposition~\ref{propSurfaceFacts}. In this case we do not know how to rule out
disk patches but we still can rule out half-disk patches
(Lemma~\ref{lemNoHalfDiskPatches}); here we use that
simplification of the boundary has higher priority than simplification of
the interior
in our notion of complexity. Since $A$ is boundary parallel, there is an
annulus $A_{\partial X}$ to which $A$ is parallel and together they bound a
solid torus $T$ in $X$. Because there are no half-disk patches, we can show that one of the exchange rectangles for the
sum $A = B + C$ is inside this torus and it meets
 $A$ and $A_{\partial X}$ only in
essential arcs. However, with such a rectangle $A$ cannot be boundary
parallel; see
Lemma~\ref{l:A_BP_B_essential} for details. This finishes the sketch of the
proof and now we provide the details.


Because $A$ is incompressible, $\partial A$ is essential by
Proposition~\ref{propSurfaceFacts}.
Without loss of generality, we will assume that the sum $A=B+C$
lexicographically minimizes $(|\partial B \cap \partial C|, |B \cap C|)$, the
number of boundary intersections and the total number of intersection curves,
over pairs $(B',C')$ where $B'$ and $C'$ are locally snug surfaces isotopic to
$B$ and $C$, respectively.   Since $\partial A (= \partial B + \partial C)$ is
assumed tight, we have, by Lemma \ref{lemCurveSummandsLeastComplexity}, that
$\partial B$ and $\partial C$ are tight, and because $|\partial B \cap \partial
C|$ is minimized, snug.


%

\begin{lemma}
Either the conclusion of Proposition~\ref{propSummandsEssential} holds,
or $B$ is a boundary parallel annulus and every
component of $C$ is an incompressible annulus, M\"obius band, torus or Klein
bottle.
\end{lemma}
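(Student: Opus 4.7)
The plan is to use Euler characteristic additivity together with the structural constraints from Proposition~\ref{propSurfaceFacts} and the $0$-efficient hypothesis. Since $\chi(A)=0$, additivity of $\chi$ under normal sum gives $\chi(B)+\sum_i\chi(C_i)=0$, where the $C_i$ denote the components of $C$. From the paragraph preceding the lemma, we already know that $\partial B$ and $\partial C$ are tight and essential (via Lemma~\ref{lemCurveSummandsLeastComplexity} and the minimality choice for $|\partial B\cap\partial C|$). Consequently, no component of $B$ or $C$ with nonempty boundary can be a disk: otherwise its essential boundary curve would cobound a ball with a disk in $\partial X$ by Proposition~\ref{propSurfaceFacts}(i), forcing it to be trivial in $\partial X$, a contradiction. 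The $0$-efficient hypothesis, combined with the absence of $S^2$ boundary components (filled in Step~\ref{step:holes} of the algorithm), rules out normal $2$-sphere components by~\cite[Prop.~5.15]{Jaco:0-efficient-triangulations-of-3-manifolds-2003}. Hence $\chi(B)\le 0$ and every $\chi(C_i)\le 0$, and the equation $\chi(B)+\sum_i\chi(C_i)=0$ forces equality throughout.

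With these constraints in hand, $B$ is a connected surface of Euler characteristic $0$ with $\partial B\neq\emptyset$, so $B$ is an annulus or a M\"obius band, while each component of $C$ is an annulus, M\"obius band, torus, or Klein bottle. If $B$ is a M\"obius band, Proposition~\ref{propSurfaceFacts}(vi) makes it essential, and the conclusion of Proposition~\ref{propSummandsEssential} holds (tightness of $\partial B$ being immediate from Lemma~\ref{lemCurveSummandsLeastComplexity}). If $B$ is an annulus, essentiality of $\partial B$ together with Proposition~\ref{propSurfaceFacts}(iii) forces $B$ to be incompressible; Proposition~\ref{propSurfaceFacts}(iv) then gives that $B$ is either essential (still the proposition's conclusion) or boundary parallel. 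This establishes the stated dichotomy for $B$.

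It remains to verify, in the boundary-parallel case, that every component of $C$ is incompressible. Annulus components of $C$ have essential boundary and are therefore incompressible by Proposition~\ref{propSurfaceFacts}(iii); M\"obius band components are always essential by~(vi). The remaining case of closed torus and Klein bottle components is what I expect to be the main obstacle, since the boundary essentiality argument no longer applies. My plan there is to suppose such a closed component $C_i$ admits a compressing disk $D$, isotope $D$ to minimize $|D\cap(B\cup C)|$, and apply an innermost-disk analysis to $D\cap A$ inside the geometric sum picture. An innermost subdisk of $D$ either provides a compressing disk for $A$ itself, contradicting the incompressibility of $A$, or, together with a disk patch of $A$, bounds a ball by irreducibility of $X$; in the second case, the ball guides a cut-and-paste surgery producing a new snug decomposition $A=B'+C'$ of strictly smaller $(|\partial B\cap\partial C|,|B\cap C|)$, contradicting the minimality assumed at the outset.
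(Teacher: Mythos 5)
Your first two paragraphs essentially match the paper's own argument: exclude $\chi>0$ summands via $0$-efficiency (no normal spheres) and essentiality of summand boundaries (no disks), conclude every component has $\chi=0$, observe that $B$, being connected with nonempty boundary, must be an annulus or M\"obius band, and then apply Proposition~\ref{propSurfaceFacts} to land in the stated dichotomy. You route the disk exclusion through Proposition~\ref{propSurfaceFacts}(i), whereas the paper observes that in a $0$-efficient triangulation the boundary of a normal disk is a trivial vertex-linking curve that survives normal addition and would appear in $\partial A$; both give the same contradiction with the essentiality of summand boundaries.

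Two points. First, a small gap: ruling out spheres and disks does not by itself give $\chi\le 0$ for every component of $C$, since a projective plane is closed with $\chi=1$. The paper excludes these using $0$-efficiency (the normal double of a normal projective plane would be a normal sphere); you could equally cite Proposition~\ref{propSurfaceFacts}(v), which says no properly embedded surface in an irreducible manifold is a projective plane. Second, your third paragraph chases a claim the lemma does not make: in the statement ``incompressible'' qualifies only ``annulus,'' so as to exclude compressible annuli (which have trivial boundary, ruled out already by your essential-boundary observation). No incompressibility is asserted for the closed torus or Klein bottle components, the paper's proof establishes none, and nothing downstream uses it. You can simply delete that paragraph; after your second paragraph there is nothing left to prove.
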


\begin{proof}
No component of $C$ has Euler characteristic $\chi>0$:  Because $X$ is 0-efficient, no
normal surface is a sphere, nor a projective plane, for then its normal double
would be a normal sphere.  And, also by 0-efficiency, any disk has boundary a
trivial vertex linking curve that survives normal addition, and is present in
$\partial A$---a contradiction.

Then every component has $\chi=0$ and it is an annulus, M\"obius band, torus,
or Klein bottle.   No component is a compressible annulus since these have a
trivial
boundary component (Proposition~\ref{propSurfaceFacts}) and this contradicts
the fact that both summands have essential boundary.

Since $B$ is connected
and $\partial B \neq \emptyset$, $B$ is either an annulus or M\"obius band.
Also by Proposition~\ref{propSurfaceFacts}, a M\"obius band is essential and
satisfies the conclusion of Proposition~\ref{propSummandsEssential}.
So does an annulus, unless it is boundary compressible, and
hence boundary parallel, by Proposition~\ref{propSurfaceFacts}.
\end{proof}


We proceed with the proof of Proposition~\ref{propSummandsEssential}
under the assumption that $B$ is a boundary parallel annulus.

When $A$ is formed as the normal sum $B+C$, it is partitioned
into patches coming from $B$ and $C$, as was discussed in
Section~\ref{secGeomSum}, and we have exchange surfaces attached
to the curves separating the patches;
see Figure~\ref{f:annulus_into_patches} left.

It follows that no exchange surface is a M\"obius band.  As noted in
Section~\ref{secGeomSum}, this occurs only when an intersection loop is
one-sided in both summands.

Define a \emph{half disk} to be a disk that is halfway properly embedded in $X$,
that is, an embedded disk whose boundary meets $\partial X$ in a single arc.
Note that a boundary compressing disk for a surface is a half disk whose
boundary meets the surface in the complementary arc, but the reverse does not
hold in general, for the arc may not be essential in the surface.

An exchange rectangle or annulus $E$ meets four patches of $A$.  A pair $D,D'$
of these patches are said to be \emph{adjacent across $E$}
if they meet opposite
boundary curves $\sigma$ and $\tau$ of $E$, but from the same side
(we again refer to Figure~\ref{f:annulus_into_patches} left).


\begin{lemma}
  \label{lemBoundDisks}
  $\sigma$ bounds (half) disk in $A$ if and only if $\tau$ both bounds a
  (half) disk in $A$. 
\end{lemma}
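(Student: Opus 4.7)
The plan is to exploit that the exchange surface $E$ meets $A$ exactly along $\sigma \cup \tau$, so that a bounding (half-)disk for $\sigma$ in $A$ combines with $E$ to give a (half-)disk for $\tau$ in the ambient $X$, and then to pull that back into $A$ using incompressibility. By the symmetric roles of $\sigma$ and $\tau$ as the two boundary components of $E$, it suffices to prove one direction; so assume $\sigma$ bounds a (half-)disk $D \subset A$.

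Since $\sigma$ and $\tau$ are disjoint curves on the annulus $A$, either $\tau \subset D$ or $\tau \cap D = \emptyset$. In the first case, $\tau$ lies inside the (half-)disk $D$ (with its endpoints, if any, on $\partial D \cap \partial A$), so $\tau$ bounds a sub-(half-)disk of $D$ inside $A$, which is what we want. Assume therefore that $\tau \cap D = \emptyset$. Then $D$ and $E$ share only the arc/loop $\sigma$, so $D \cup E$ is an embedded disk (when $E$ is an exchange annulus) or an embedded half-disk (when $E$ is an exchange rectangle) in $X$. Its boundary is $\tau$, together with, in the arc case, the arc $e_1 \cup \rho \cup e_2 \subset \partial X$, where $\rho = \partial D \cap \partial A$.

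In the loop case this gives an embedded disk in $X$ bounded by $\tau$, and by the incompressibility of $A$ in $X$ (Proposition~\ref{propSurfaceFacts}) the curve $\tau$ bounds a disk in $A$, finishing the loop case.

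The arc case is the main obstacle, because having a half-disk for $\tau$ in $X$ does not automatically yield one in $A$: since $A$ is potentially boundary parallel, it is boundary compressible, so the previous ``transfer via incompressibility'' step has no direct analog. My plan to finish the arc case is to use the specific combinatorial structure of the exchange. Each exchange arc $e_i$ is a short arc on $\partial X$ at an endpoint of the underlying intersection arc $\alpha \subset B \cap C$, lying across the regular-exchange resolution of a crossing of $\partial B$ and $\partial C$ on $\partial X$; in particular, each $e_i$ connects the two resolved local branches of $\partial A = \partial B + \partial C$ at its crossing. Since an arc in the annulus $A$ is trivial precisely when its two endpoints lie on the same boundary circle of $A$, tracking the endpoints of $\sigma$ and $\tau$ on $\partial A$ through the two exchange arcs at the endpoints of $\alpha$ should yield the equivalence: the endpoints of $\sigma$ lie on a single boundary circle of $A$ if and only if those of $\tau$ do. This combinatorial correspondence, together with the first two paragraphs, completes the proof of the arc case.
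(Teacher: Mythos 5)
Your treatment of the loop case and of the essential arc case matches the paper, and the reduction to one direction by symmetry is the same. The gap is in the boundary-parallel arc case, and it is a real one. Your ``combinatorial tracking'' claim---that the two endpoints of $\sigma$ lie on a single component of $\partial A$ if and only if the endpoints of $\tau$ do---is not something that falls out of the exchange-rectangle picture by itself. Going around $\partial E = \sigma \cup e_1 \cup \tau \cup e_2$, the parity of the number of sides whose endpoints lie on different components of $\partial A$ is even, but that constraint alone allows the configuration in which exactly one of the exchange arcs $e_1,e_2$ connects the two boundary circles of $A$ and the other has both endpoints on a single circle; then $\sigma$ is trivial while $\tau$ is essential, and your conclusion fails. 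Your proposal never rules this out, and it also never uses the tightness of $\partial A$, which is exactly the hypothesis the paper needs at this point. The phrase ``should yield the equivalence'' is where the argument stops being a proof.

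The paper's argument supplies precisely the missing control. Because $A$ is boundary parallel, it co-bounds a solid torus $T$ with an annulus $A_{\partial X}\subset\partial X$. The paper first rules out the possibility that $E$ lies outside $T$ (else $H\cup E\cup D_A$ would be a nonseparating disk, compressing $\partial X$), so $E$ is a properly embedded disk in $T$. Then Lemma~\ref{lemExchangeArcNotParallel}---which is exactly where tightness of $\partial A$ enters---forces each exchange arc $e_i$ to span $A_{\partial X}$, i.e., to connect the two components of $\partial A$. Only with this established does the parity argument close: two corners of $E$ on each component when counted through $A_{\partial X}$, but three-and-one when counted through $A$ under your hypothetical $\sigma$-trivial/$\tau$-essential split, a contradiction. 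To repair your argument you would need to add both ingredients (positioning $E$ inside $T$ and invoking Lemma~\ref{lemExchangeArcNotParallel} for the exchange arcs), at which point you are essentially reproducing the paper's proof rather than giving a purely combinatorial alternative.
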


\begin{proof}
We prove that if $\sigma$ bounds a (half) disk in $A$, then $\tau$ bounds a
(half) disk in $A$. The reverse implication is proved by interchanging
$B$ and $C$ and
remarking that in this proof we do not use the extra assumptions on~$B$.

The surface $A$ is either essential or a boundary parallel annulus,
and it is incompressible by the assumptions.

Suppose that $\sigma$ bounds a disk $D$ in $A$.  Then $E \cup D$ is a disk
which, after a slight isotopy, meets $A$ only in $\tau$.  Since $A$ is
incompressible, $\tau$ bounds a disk in $A$ as claimed.

The same argument works when $A$ is essential and, say,  $\sigma$ bounds a half
disk $H$.  Then $E \cup H$ is a half disk which, after a slight isotopy, meets
$A$ only in $\tau$.   Since $A$ is not boundary compressible, $\tau$ bounds a half disk in~$A$.

We conclude by showing that $\tau$ bounds a half disk
when $\sigma$ bounds a half disk and $A$ is a
boundary parallel annulus (assumed to have tight boundary).  To obtain a
contradiction, suppose that $\sigma$ bounds a half disk $H$ but $\tau$ is an
essential arc in $A$.  Then $E \cup H$ is, after a slight isotopy, a boundary
compressing disk meeting $A$ in the arc $\tau$.    Since $A$ is parallel to an
annulus $A_{\partial X} \subset \partial X$, their union bounds a solid torus
in $X$.   The rectangle $E$ is a disk properly embedded in this solid torus.

Indeed, if $E$ is outside the solid torus, consider a boundary compressing disk
$D_A$ for $A$ meeting $A$ in $\tau$ inside the solid torus.
Then the disk $D_A \cup E
\cup H$ meets the core curve of $A_{\partial X}$ exactly once, implying that it
is a non-separating disk and therefore a compressing disk for $\partial X$---a
contradiction.

As soon as we know that $E$ is inside the solid torus, we have that the boundary
of $E$ meets $A_{\partial X}$ in a pair of
exchange arcs that each span $A_{\partial X}$ by Lemma
\ref{lemExchangeArcNotParallel}, and meets the annulus $A$ in one curve
$\sigma$ that is trivial in $A$ and the other $\tau$ that is a spanning arc for
$A$. Therefore, when restricting to $A_{\partial X}$, we get that two corners of
$E$ are in one component of $\partial A$ and the other two in the second one.
When restricting to $A$, we get that three corners are in the same component and
the other corner in the second---a contradiction.
\end{proof}



\begin{lemma}
    \label{lemTwoDisks}
    Suppose $A$ is essential.  If $\sigma$ and $\tau$ bound disks in $A$, then
    they are adjacent across $E$.
\end{lemma}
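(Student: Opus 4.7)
The plan is to argue by contradiction, exploiting the 2-sphere formed from the three disks together with irreducibility of $X$. Suppose $\sigma$ and $\tau$ bound disks $D, D' \subset A$, but $D$ and $D'$ are not adjacent across $E$. First, I observe that $S := D \cup_\sigma E \cup_\tau D'$ is an embedded 2-sphere in $X$. Because $X$ is irreducible, $S$ bounds a 3-ball $B_S \subset X$, which will be the key geometric tool.

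Next I analyze the position of $A$ relative to $S$. Near $\sigma$, the annulus $A$ has two patches on opposite sides of $\sigma$ in $A$, and since $E$ is attached transversally to $A$ along $\sigma$, these two patches lie on opposite sides of $E$ locally in $X$; the same holds at $\tau$. Adjacency across $E$ asks that $D$ (at $\sigma$) and $D'$ (at $\tau$) lie on the same side of $E$. Under the contrary assumption, after a small isotopy pushing the interior of $E$ slightly off $A$, we obtain a sphere $S'$ disjoint from $A$ with the property that the remaining subsurface $F := A \setminus (D \cup D')$ has pieces both inside and outside the ball bounded by $S'$. This is exactly the configuration depicted in the right half of Figure \ref{f:annulus_into_patches}.

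Let $F_0$ be the piece of $F$ trapped in $B_S$, a subsurface of $A$ properly embedded in the ball, with boundary consisting of parallel copies of $\sigma$ and/or $\tau$ lying on $S'$. Each boundary component bounds a disk on $S'$ (a slight push of $D$, $D'$, or a subdisk of $E$). Capping off $F_0$ by these disks yields a closed surface in the ball $B_S$, which is therefore a union of 2-spheres, each bounding a sub-ball of $B_S$. Using these sub-balls one can build an ambient isotopy of $X$ supported near $B_S$ that pushes $F_0$ across $S$ and then off $E$. The effect on $A$ is an isotopy that eliminates at least one of the trace curves $\sigma$ or $\tau$ from $B \cap C$, contradicting the minimality of $(|\partial B \cap \partial C|, |B \cap C|)$ assumed for the sum $A = B + C$.

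The main obstacle is making the ``trapped piece and push across'' step fully precise. In particular, there are two subcases depending on whether the disks $D, D'$ are nested in the annulus $A$ (so that $F$ is a union of two annuli) or disjoint in $A$ (so that $F$ is a pair of pants), and in each subcase one must verify that the isotopy through $B_S$ genuinely reduces $|B \cap C|$ without introducing new intersections elsewhere. The hypothesis that $A$ is essential enters in ruling out degenerate configurations where the trapped subsurface could be absorbed trivially into $S$; the parallel boundary-parallel case (not covered here) is the reason a separate argument is needed in Lemma \ref{l:A_BP_B_essential}.
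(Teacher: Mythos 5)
The idea of forming the sphere $S = D \cup E \cup D'$ and invoking irreducibility to get a ball matches the paper's starting point, but your sketch overcomplicates the disjoint case and has a genuine gap in the nested case $D' \subset D$, which is where the real work is.

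For the disjoint case, the paper's argument is one line: after pushing $E$ slightly off $A$, the fact that $D$ and $D'$ are not adjacent across $E$ means the resulting sphere $S'$ separates the part of $A$ past $\sigma$ from the part of $A$ past $\tau$; but $A$ is a connected annulus or M\"obius band, contradiction. There is no need for a ``trapped piece,'' a push-across isotopy, or an appeal to $|B \cap C|$. (Also, if $A$ is an annulus, $A \setminus (D \cup D')$ is a $4$-holed sphere, not a pair of pants.)

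For the nested case, your ``push the trapped piece across $S$'' plan is not workable as described. The trapped region is $D \setminus D'$, which is part of $A$ (inside the disk $D$), not part of the complementary subsurface, and moving it across $S$ would have to cross the exchange annulus $E$; it is unclear this is an isotopy of $A$ at all. Even granting some isotopy, you would still need to exhibit a new decomposition $A = B' + C'$ with $|B' \cap C'| < |B \cap C|$, which in this paper is always done by finding a \emph{consistent subset} of the exchange set $\E$ --- a structure your sketch does not produce. The paper instead handles the nested case with a completely different mechanism: replacing $D$ by $E \cup D'$ is a disk swap across a ball ($X$ irreducible), so the resulting $A'$ is isotopic to $A$. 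Performing an \emph{irregular} exchange at this one loop (regular exchanges elsewhere) gives $A' \sqcup A''$ with $A''$ a torus, preserving total weight since $E$ has zero weight. By Lemma~\ref{lemHakenSum} the irregular exchange forces an abnormal arc, so $A' \cup A''$ is not normal and can be isotoped to strictly lower weight (or, if the abnormal arc's half-disk meets $\partial X$, strictly shorter boundary). Either way $\compl(A') < \compl(A)$, contradicting tightness of $A$. Your sketch never engages normality, weight, or tightness, which is exactly what this case hinges on.

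Separately, ``a closed surface in the ball $B_S$, which is therefore a union of 2-spheres'' is not a valid inference --- any closed surface embeds in a $3$-ball. What would actually make the capped-off pieces spheres is that subsurfaces of an annulus are planar, not that they sit inside a ball.
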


\begin{proof}

Suppose that $\sigma$ and $\tau$ bound non-adjacent disks $D$ and $D'$.  They
either are disjoint, or one is a sub-disk of the other, say $D' \subset D$.

When disjoint, the union $D \cup E \cup D'$ is a sphere that, after a slight
isotopy, separates components of $A$
(since spheres separate in
irreducible manifolds)---a contradiction.
See Figure~\ref{f:annulus_into_patches} right.

Suppose then that $D' \subset D$; see Figure \ref{figNotAdjacent}. Let $A'$
be the surface obtained from $A$ by removing $D$ and replacing it with $E \cup
D'$.

The union of the disk $D$ and a slight offset of the disk $E \cup D'$
bounds a ball, across which the disks are isotopic ($X$ is irreducible).  So $A'$,
the result of this disk swap, is isotopic to  $A$.   Also note
 that performing
an irregular exchange (fold) at this intersection loop produces a surface with
two components: one is $A'$, and the other, $A''$,  is a torus obtained by
identifying the ends of the annulus $D \setminus D'$.

Because $E$ has zero
weight, we have $\wt(A) = \wt(A') + \wt(A'')$.   But, because this was not a
regular exchange, $A' \cup A''$ is not normal, and there is an abnormal arc
bounding a half disk in some face by Lemma~\ref{lemHakenSum}.   If this half disk meets $\partial X$, then
$A' \cup A''$ either is boundary compressible or is not least length, both
contradictions.

Thus, the half disk lies in the interior and can be used to
guide an isotopy of $A' \cup A''$ that removes  two intersections with the
1-skeleton.   But this implies that $A' \cup A''$ can be isotoped to have
strictly less weight than $A$.   This is a contradiction since the component
$A'$ has lower complexity, but is isotopic to the tight surface $A$.
\end{proof}

\begin{figure}[t]
  \begin{center}
      \includegraphics{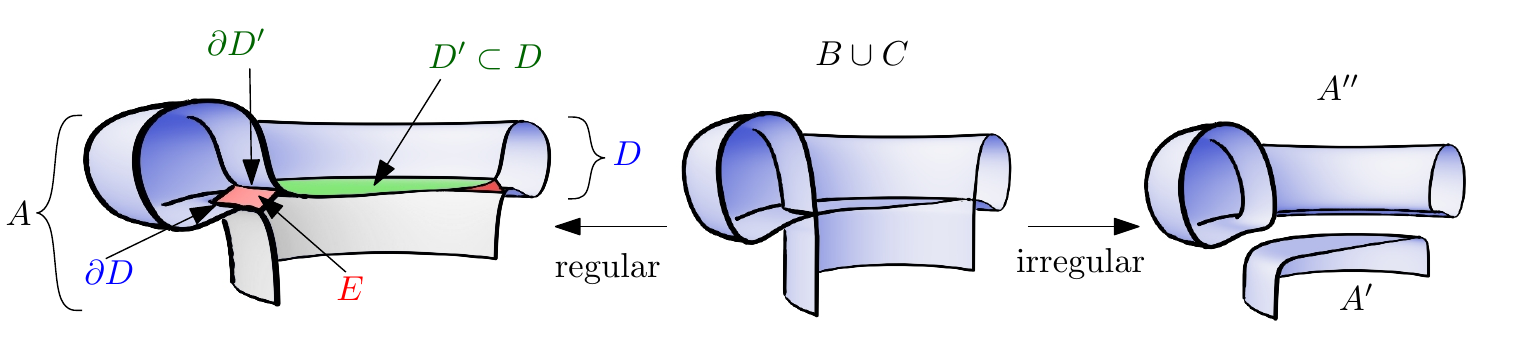}
    \end{center}
    \caption{Non-adjacent disks, $D' \subset D$ (or half disks $H' \subset H$
      by replacing $D$ with $H$ and $D'$ with $H'$). The picture is actually
      drawn for half disks and the disk case is obtained by doubling.
    }
    \label{figNotAdjacent}
\end{figure}

Unfortunately, the above proof contradicts minimal interior weight and does not
apply when $A$ is a (non-essential) boundary parallel annulus, which may not be
normal when tight.   Fortunately, the half disk version contradicts tight
boundary and can be applied when $A$ is essential or a boundary parallel
annulus.

\begin{lemma}
  \label{lemHalfDisksAdjacent}
  If $\sigma$ and $\tau$ both bound half disks in $A$, then they are adjacent.
\end{lemma}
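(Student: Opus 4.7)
The plan is to mimic the proof of Lemma~\ref{lemTwoDisks}, but with half disks $H$ and $H'$ in place of disks $D$ and $D'$ and with an exchange rectangle $E$ in place of an exchange annulus. The crucial advantage of the half-disk version is that all modifications touch $\partial A$: they can be shown to reduce $\ell(\partial A)$ or $\vv(\partial A)$, and thus contradict tightness of $\partial A$ (which holds by hypothesis). This is what makes the argument go through in both cases of the proposition, including the delicate case where $A$ is a boundary parallel annulus and need not itself be tight or even normal after simplification.

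So assume for contradiction that $\sigma$ and $\tau$ are not adjacent across $E$. Then $H$ and $H'$ are either disjoint or nested, say $H' \subsetneq H$. In the disjoint case, the union $\Sigma := H \cup E \cup H'$ is a properly embedded disk in $X$ whose boundary is a simple loop in $\partial X$, obtained by concatenating the boundary arc of $H$ on $\partial X$, one exchange arc of $E$, the boundary arc of $H'$ on $\partial X$, and the other exchange arc of $E$. By part~(i) of Proposition~\ref{propSurfaceFacts}, $\Sigma$ co-bounds a ball $W$ with a disk $\Delta \subset \partial X$. Pushing $\Sigma$ slightly off $A$ through $W$ and using the resulting product region to guide an isotopy, we replace the two boundary arcs of $H$ and $H'$ on $\partial X$ by the two exchange arcs of $E$; since exchange arcs have zero weight, this yields a surface isotopic to $A$ whose boundary has strictly smaller length, contradicting tightness of $\partial A$.

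In the nested case $H' \subsetneq H$, we do the disk swap exactly as in Lemma~\ref{lemTwoDisks}: replace $H$ by $E \cup H'$ to obtain a surface $A'$ which is isotopic to $A$ (the swap is across the ball provided again by Proposition~\ref{propSurfaceFacts}(i)). Performing an irregular exchange at the single intersection curve $\sigma$, while keeping all other exchanges regular, produces the disjoint union $A' \cup A''$ where $A''$ is obtained from the half-annular strip $H \setminus H'$ by capping off along $E$; note $A''$ carries two boundary arcs arising from the exchange arcs of $E$, while two genuine sub-arcs of $\partial A$ on $\partial H$ and $\partial H'$ have been removed. By Lemma~\ref{lemHakenSum}, the resulting surface contains an abnormal arc in some face, bounding a (half) disk. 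If this (half) disk meets $\partial X$, then $A' \cup A''$ has boundary that is either boundary compressible or admits a length-reducing isotopy; using that $A' \cup A''$ and $A$ have the same total boundary (up to the trade along exchange arcs), this forces $\partial A'$ to have strictly smaller complexity than $\partial A$. If the (half) disk is interior, we instead reduce weight via an isotopy while keeping the boundary unchanged; restricting to the component $A'$ (which is isotopic to $A$), we again obtain a surface of strictly smaller complexity than $A$, contradicting tightness of either $\partial A$ or $A$ itself.

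The main obstacle I expect is bookkeeping in the nested case: one must verify carefully that the two pieces $A'$ and $A''$ indeed split the original boundary so that the strict reduction given by the abnormal arc lands in $\partial A'$ rather than being entirely absorbed by $A''$, and that the appeal to tightness can always be applied to the component which is isotopic to $A$. This is a direct adaptation of the scheme in \cite{Jaco:Algorithms-for-the-complete-decomposition-of-a-closed-3-manifold-1995}, but the bookkeeping must be done with the boundary-prioritized complexity used here rather than with weight alone.
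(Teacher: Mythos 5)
Your plan correctly identifies why the half-disk lemma must target boundary length rather than interior weight, but both cases of the proof have issues that the paper's own argument avoids.

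\textbf{The disjoint case.} The paper does not reduce boundary length here at all; it observes that $H \cup E \cup H'$, once pushed slightly off $A$, is a properly embedded disk that \emph{separates pieces of $A$}, which is impossible because $A$ is connected (an annulus or M\"obius band). Your version instead tries to swap $H \cup H'$ for $E$ and claims the result is a surface isotopic to $A$ with shorter boundary. But the non-adjacency of $H$ and $H'$ across $E$ is exactly what prevents this: after the push-off, the ball $W$ bounded by the disk and $\partial X$ has pieces of $A \setminus (H \cup H')$ on both sides of the disk, so the isotopy ``through $W$'' that you invoke is obstructed by $A$ itself. The surgery $(A \setminus (H\cup H')) \cup E$ is not isotopic to $A$ in this situation. (Note that Lemma~\ref{lemTwoDisks} uses the same separation contradiction in its disjoint case; the two lemmas really diverge only in the nested case, where the accounting device switches from weight to boundary length.)

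\textbf{The nested case.} Your outline is close to the paper's, but you introduce a sub-case where the abnormal arc produced by the irregular exchange bounds an interior half disk, and you propose to reduce weight there. This is precisely the move the paper flags as unavailable for boundary-parallel $A$ (which may be least complexity without being normal once you tighten it), and it re-introduces the weakness of Lemma~\ref{lemTwoDisks} that this lemma was designed to avoid. The paper's proof does not face this sub-case at all: since the intersection curve being switched is an \emph{arc} with both endpoints in $\partial X$, the irregular exchange along it forces irregular exchanges at those endpoints, i.e.\ at points of $\partial B \cap \partial C$. Hence $\partial A' \cup \partial A''$ is a Haken sum of $\partial B$ and $\partial C$ with at least one irregular switch, and by Lemma~\ref{lemHakenSum} the abnormal arc is guaranteed to appear in a \emph{boundary face}. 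The contradiction is then purely to tightness of $\partial A$, with no appeal to interior weight. If you keep the interior sub-case, your proof fails for boundary-parallel $A$, which is the case the lemma most needs to cover.
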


\begin{proof}
Suppose to the contrary that $\sigma$ and $\tau$ bound half disks $H$ and $H'$
that are not adjacent across $E$.   The half disks $H$ and $H'$ are either
disjoint, or, say, $H' \subset H$.

If disjoint, then the union $H \cup E \cup H'$ is a properly embedded disk,
that after a slight isotopy,  separates components of $A$,
which is a contradiction.

Now suppose that $H' \subset H$; see Figure \ref{figNotAdjacent}.
Replacing
$H$ with $E \cup H'$ is a disk swap across a ball that produces a surface $A'$
isotopic to $A$. But notice that performing an irregular rather than regular
switch at this intersection curve produces a surface with two components: one
is $A'$, and the other is an annulus $A''$ formed by identifying the ends of the
rectangle $H \setminus H'$.    The irregular switch on the intersection arc
yields irregular switches at the endpoints which are intersections of the
boundary curves.

So while $\ell(\partial A) = \ell(\partial A') +
\ell(\partial A'')$,  the curve $\partial A' \cup \partial A''$ is not normal,
contains an abnormal arc by Lemma~\ref{lemHakenSum},
and so there is an isotopy reducing its length.  Since
$\partial A' \cup \partial A''$ is isotopic to a curve
of length strictly lower than $\partial A$, each of its components has length
strictly lower than $\partial A$, contradicting the minimality of the length of
$\partial A$.
\end{proof}

\begin{figure}[ht]
  \begin{center}

      \includegraphics{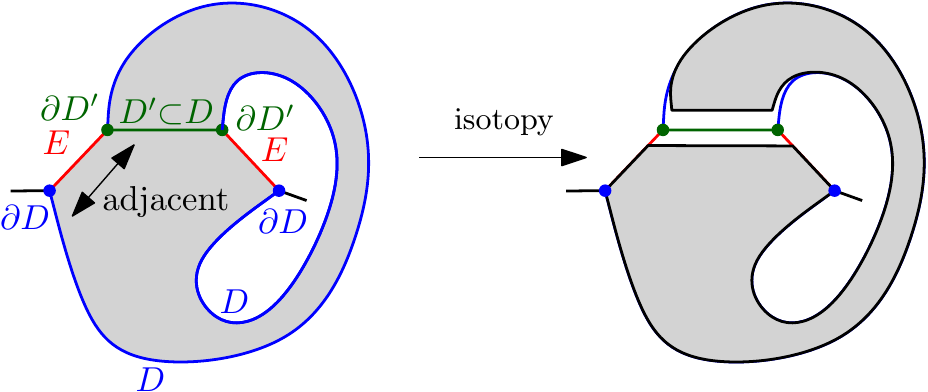}
    \end{center}
    \caption{Adjacent disks; here $D' \subset D$. The union
$D' \cup E \cup D$ bounds a ball
    after slight isotopy.}
    \label{figAdjacentSubset}
\end{figure}

In the next two lemmas we will utilize the following observation:

\begin{obs}\label{o:theobs}
Suppose that
$D$ and $D'$ are disks that are adjacent across the exchange annulus $E$.  If
the disks are disjoint, then $D \cup E \cup D'$ is, possibly after a slight
isotopy, a sphere bounding a ball.
\end{obs}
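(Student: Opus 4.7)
The approach I would take is essentially a direct topological check followed by an appeal to irreducibility. Abstractly, $D \cup E \cup D'$ is a $2$-sphere: the disks $D$ and $D'$ cap off the two boundary circles $\sigma=\partial D$ and $\tau=\partial D'$ of the annulus $E$, so that as an abstract $2$-complex we have $D^2 \cup_\sigma (S^1\times[0,1]) \cup_\tau D^2 \cong S^2$. So the content of the observation is the realisation of this abstract sphere as an embedded sphere inside $X$ (after a small isotopy), together with the conclusion that this sphere bounds a ball.

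Next I would check that the union is embedded in $X$, possibly after a slight normal isotopy. The only potential issue is how the three pieces meet along $\sigma\cup\tau$: the disks $D$ and $D'$ lie in $A$, while $E$ is attached to $A$ precisely along the trace curves $\sigma$ and $\tau$, so the three pieces are mutually tangent along these circles rather than meeting transversely. This is exactly where the hypotheses enter. The disjointness of $D$ and $D'$ rules out any self-intersections of $D\cup E\cup D'$ away from $\sigma\cup\tau$. The adjacency of $D$ and $D'$ across $E$ means, by definition, that $D$ and $D'$ lie on the same local side of $E$ within $A$; consequently a small isotopy supported near $\sigma\cup\tau$, pushing $D$ and $D'$ slightly off $A$ into the side of $A$ opposite to $E$, converts the tangential triple meeting along $\sigma$ and $\tau$ into a genuine transverse embedding. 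The resulting surface $S$ is an embedded $2$-sphere in $X$, isotopic to $D\cup E\cup D'$.

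Finally, because $X$ is irreducible (the standing assumption throughout the section), every embedded $2$-sphere in $X$ bounds a ball. Thus $S$, and hence $D\cup E\cup D'$ after the slight isotopy, bounds a ball in $X$, as claimed. I expect no real obstacle in the argument; the only delicate point is verifying that adjacency together with disjointness is exactly what is needed to make the tangential triple gluing along $\sigma$ and $\tau$ into an embedded sphere, rather than, say, a sphere with a pinch point or two tangent spheres (a configuration that would arise if $D$ and $D'$ were on opposite sides of $E$, or if they had to be routed through one another).
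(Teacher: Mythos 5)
Your proof follows the paper's approach: the paper treats the disjoint case as immediate by irreducibility of $X$, and you simply spell out the implicit verification that $D\cup E\cup D'$ is an embedded sphere (after a small isotopy) before invoking irreducibility. One small caveat: you credit adjacency with making the gluing along $\sigma\cup\tau$ yield an embedded sphere, but in the disjoint case this is already guaranteed without it --- since $D\cap D'=\emptyset$ and the interior of the exchange annulus $E$ misses $A$ (hence misses $D\cup D'$), the three pieces meet only along $\sigma$ and $\tau$ in ordinary corners, regardless of which side of $E$ the disks approach from. The adjacency hypothesis is carried along because it is what Lemmas~\ref{lemBoundDisks} and~\ref{lemTwoDisks} produce, and it becomes genuinely relevant in the supplementary case $D'\subset D$ discussed in the paper's proof, which lies outside the stated observation. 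So your argument is correct, just a touch more cautious than necessary.
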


\begin{proof}
If $D$ and $D'$ are disjoint, we get the observation immediately, since
$X$ is irreducible.  If not, say if $D' \subset D$, then fix $D$ and slightly
isotope the interior of the disk $E \cup D'$ off $D$ to the side of the
exchange annulus; see Figure~\ref{figAdjacentSubset}.   After the isotopy $D
\cup E \cup D'$ is a sphere bounding a ball.
\end{proof}

Similarly, if $H$
and $H'$ are half disks adjacent across the exchange rectangle $E$, then the
union $H \cup E \cup H'$ together with $\partial X$ bound a ball, possibly
after a slight isotopy. Indeed, the union $H \cup E \cup
H'$ is a properly embedded disk, after first perhaps slightly isotoping, say
$E \cup H'$, when $H$ and $H'$ are not disjoint.  Its boundary $\partial H \cap \partial E \cap
\partial H'$, perhaps slightly isotoped,  bounds a disk in $\partial X$ and
together these disks bound a ball in $X$.

We also describe how a surface $A$ obtained as a normal sum $A= B + C$ can be
obtained, under certain additional conditions, as a normal sum $A = B' + C'$
guided by some of the normal exchanges of $A= B + C$.
Let $\E$ be the set of exchange bands for the normal sum $A=B+C$.  Note that
$|\E| = |B \cap C|$.  We say that a subset $\E' \subset \E$ is a {\it
consistent subset} if the induced patches, components of $A \setminus \E'$, can
be bicolored so that two patches have different colors if they either lie on
opposite sides of the same trace curve, or, are adjacent across an exchange
band. (If this happens for two sides of a same patch, then in particular it
cannot be bicolored.)
In this case, we
can see that $\E'$ is the set of exchange bands for a normal sum $A=B'+C'$
where $B'$ and $C'$ are each the union of patches of a single color, connected
across $\E'$. See Figure~\ref{f:pes}.  The same analysis holds for subsets of exchange arcs for a
curve sum $\alpha = \beta + \gamma$.

\begin{figure}
\begin{center}
  \includegraphics{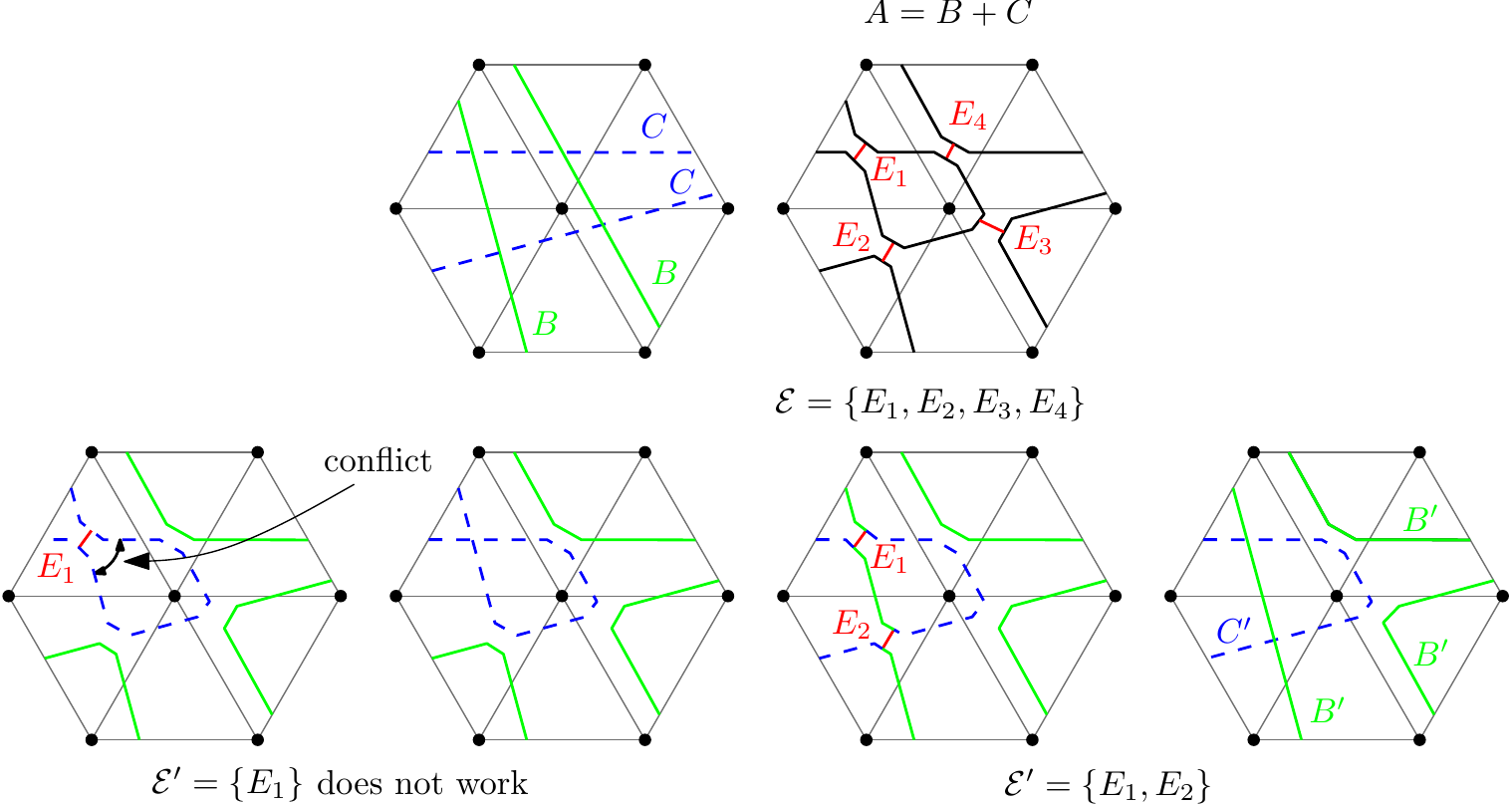}
  \caption{Summing $B$ and $C$ (upper left picture) yields $A$ (upper right picture)
  with the corresponding set of exchange bands/arcs $\E = \{E_1, E_2, E_3,
E_4\}$. Some subset $\E' \subset \E$ may not be consistent (two lower left
pictures); however, if it is consistent (two lower right pictures), then summing $B'$ and
$C'$ yields $A$ again.}
  \label{f:pes}
\end{center}
\end{figure}

\begin{lemma}
  \label{lemNoDiskPatches}
  If $A$ is essential, then there are no disk patches.  Every intersection loop is
  essential in $A$, $B$, and $C$.
\end{lemma}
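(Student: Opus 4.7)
The plan is to prove the ``no disk patches'' claim by contradiction, using a hypothetical disk patch to produce a surface $B'$ isotopic to $B$ with $|B' \cap C| < |B \cap C|$, which contradicts the standing minimality of $(|\bd B \cap \bd C|, |B \cap C|)$ among locally snug pairs isotopic to $(B,C)$. Essentiality of the intersection loops in $A$, $B$, and $C$ will then follow by a direct innermost-disk argument.

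Concretely, suppose $D \subset A$ is a disk patch. Up to swapping $B$ and $C$, we may assume $D$ is a $B$-patch, so $D \subset B$ and $\bd D = \sigma$ is a single trace curve bounding an exchange annulus $E$ whose core is an intersection loop $\alpha \subset B \cap C$; let $\tau$ denote the other trace curve of $E$. Since $\sigma$ bounds $D$ in $A$, Lemma~\ref{lemBoundDisks} yields that $\tau$ also bounds a disk in $A$, and Lemma~\ref{lemTwoDisks} (applicable because $A$ is essential) upgrades this to the existence of a disk $D'$ adjacent to $D$ across $E$. By the two-coloring rule for adjacency, $D'$ has the opposite color to $D$, so $D'$ is a $C$-patch and $D' \subset C$. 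Observation~\ref{o:theobs} then provides a ball $W \subset X$ bounded, possibly after a small isotopy of $E \cup D'$ off $D$, by the sphere $D \cup E \cup D'$.

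I would then isotope $B$ through $W$: push $D$ across $W$ to a small outward parallel copy of $E \cup D'$, yielding $B' \simeq B$. Because $E$ is a zero-weight annulus contained in a tubular neighborhood of $\alpha$ and the push-off is taken slightly off $D' \subset C$, the new surface $B'$ is disjoint from $C$ inside that neighborhood; since the isotopy is supported near $W$, it introduces no intersections elsewhere. Thus $|B' \cap C| \le |B \cap C| - 1$ while $|\bd B' \cap \bd C|$ is unchanged, contradicting the minimality assumption. Once disk patches are excluded, essentiality is immediate: if an intersection loop $\alpha \subset B \cap C$ were inessential in $B$ (respectively $C$), an innermost disk in $B$ (respectively $C$) bounded by a sub-loop of $B \cap C$ would be a disk $B$-patch (respectively $C$-patch), hence a disk patch of $A$; similarly, an inessential trace curve of $\alpha$ in $A$ would bound an innermost disk patch of $A$. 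Both outcomes contradict the first part.

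The main obstacle is controlling the isotopy of $D$ across $W$, so as to ensure that no new intersections between $B'$ and $C$ are created outside $W$ and that $B'$ can be made locally snug with $C$ after a small perturbation. The crux is that $E$ has weight zero with core $\alpha$, so the push-off of $E \cup D'$ remains inside a tubular neighborhood of $\alpha$ and introduces no new transverse intersections with $C$ once it is shifted slightly off $D'$; any residual tangencies in the snug configuration can be removed by a further small perturbation without raising the intersection count. This localized control is what makes the reduction genuinely strict and thereby produces the required contradiction.
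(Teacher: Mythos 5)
Your proof breaks down exactly where the paper's proof does its real work: you assume the adjacent disk $D'$ bounded by $\tau$ is itself a single patch (``so $D'$ is a $C$-patch and $D' \subset C$''). That assumption is unjustified. Lemma~\ref{lemTwoDisks} only guarantees that $\tau$ bounds \emph{some} disk $D' \subset A$ adjacent across $E$; this disk may contain further trace loops and hence be a union of several patches of both colors. When $D'$ is not a single patch, it is not contained in $C$, the two-coloring rule for adjacency cannot be invoked, and the ball-push picture you describe makes no sense because $B'$ would be asked to be pushed onto a surface that already contains $B$-pieces. The paper explicitly notes that if $D'$ \emph{were} a single patch one gets a quick contradiction (via the consistent-subset argument, not a ball-push: removing $E$ from $\E$ gives a consistent subset and a decomposition $A = B' + C'$ with $|B' \cap C'| < |B \cap C|$), and that therefore $D'$ must contain trace loops. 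The substance of the lemma is handling that latter case.

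The paper's treatment of the hard case is entirely absent from your argument: one passes to an innermost trace loop inside $D'$, obtaining a smaller disk patch, iterates, and—since there are finitely many patches—finds a cycle of adjacent disks. The union of the balls cobounded by consecutive adjacent disks is a solid torus; taking $\E'$ to be the exchange annuli along the cycle gives a consistent subset expressing $A = B' + C'$ with $B'$ a normal torus bounding that solid torus. Since adding $B'$ amounts to a fractional Dehn twist supported in the solid torus, it can be undone by an isotopy, so $C'$ is isotopic to $A$ with $\compl(C') = \compl(A) - \compl(B') < \compl(A)$, contradicting the least-complexity (tightness) assumption on $A$, not the $|B \cap C|$-minimality assumption you invoke. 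Finally, for the essentiality conclusion, be careful with the distinction between intersection loops in $B \cap C$ and their associated trace curves in $A$: an innermost inessential intersection loop in $B$ bounds a subdisk of $B$, and it is the corresponding trace curve on the $B$-side that bounds a disk patch of $A$. Your phrasing conflates these, though the intended reduction to ``no disk patches'' is sound once that translation is made explicit.
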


This lemma does not apply to the boundary parallel case, because we cannot
assume that disks are adjacent.

\begin{figure}[ht]
  \begin{center}
  \includegraphics{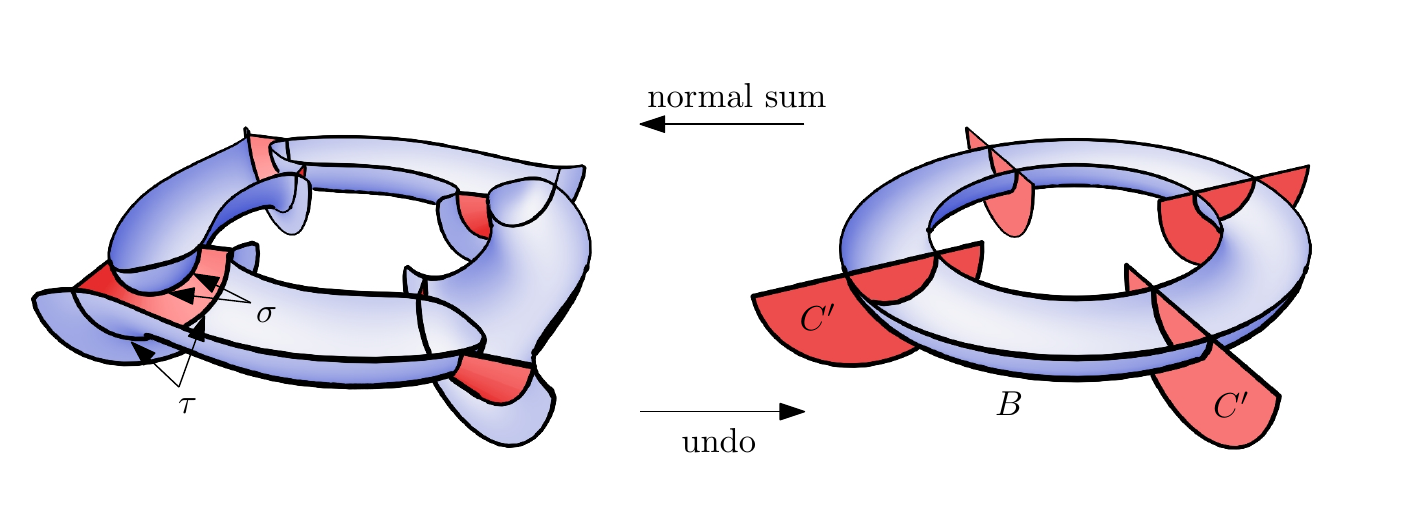}
    \end{center}
    \caption{A cycle of adjacent half disks. Double the figure for a cycle of disks.}
    \label{figCycleAdjacent}
\end{figure}

\begin{proof}
This argument appears in
\cite{Jaco:An-algorithm-to-decide-if-a-3-manifold-is-a-Haken-manifold-1984} and
\cite{Jaco:Algorithms-for-the-complete-decomposition-of-a-closed-3-manifold-1995}.
If there is a disk  patch bounded by a trace curve, then by Lemmas
\ref{lemBoundDisks} and \ref{lemTwoDisks}, it is adjacent across an exchange
annulus $E$ to another disk.   The adjacent disk is not a single patch,since
if it were,
 $\E \setminus E$ would be a consistent subset, and we could expresses $A$ as a sum
$A=B'+C'$, where $B'$ and $C'$ are isotopic to $B$ and $C$, respectively, but
$|B'\cap C'|<|B\cap C|$, a contradiction.  Thus, the adjacent disk contains
trace loops.  Pass to an innermost trace loop bounding a disk patch and repeat.

This process can be continued arbitrarily, and therefore must eventually
repeat.  Thus there is a shortest cycle of adjacent disks, each biting the tail
patch of its predecessor across some exchange annulus; see Figure
\ref{figCycleAdjacent}.  Note that the cycle has length 1 when $D' \subset D$
for some pair of adjacent disks $D$ and $D'$.  By irreducibility, the union of
a pair of adjacent disks and their exchange annulus is a sphere bounding a
ball, and the union of these balls is a solid torus.


Let $\E'$ be the subset of $\E$ along which the cycle of disks is adjacent.
Then $\E'$ is seen to be a consistent subset of $\E$ by coloring all annulus
patches on the boundary of the solid torus with one color, and all other patches with the
other.   This expresses $A$ as  the sum of surfaces $A=B'+C'$ where $B'$ is a
normal torus bounding a solid torus.

Adding $B'$ to $C'$ corresponds to a fractional Dehn twist in $B'$.  Since $B'$
bounds a solid torus, there is an isotopy of the solid torus that undoes the
twist and carries $C'$ to $A$.   So $C'$ is isotopic to $A$ but with strictly
lower complexity $\compl(C')=\compl(A)-\compl(B')$---a contradiction.
\end{proof}

We repeat the above argument to work in the context of trace arcs and half
disks.   This argument does apply to boundary parallel annuli, as their half
disks are known to be adjacent.   The proof will reach
a contradiction to the tightness of~$\partial A$.

\begin{lemma}
  \label{lemNoHalfDiskPatches}
  No trace arc bounds a half disk in $A$.
\end{lemma}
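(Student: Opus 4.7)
The plan is to mirror the cycle argument of Lemma~\ref{lemNoDiskPatches}, but with trace arcs and half disks in place of trace loops and disks: the cycle will assemble into a boundary-parallel annulus bounding a solid torus with part of its boundary on $\partial X$ (rather than a genuine torus bounding a solid torus). The critical trade-off is that while the disk case contradicts the minimality of the interior weight $|A\cap \T^1|$ of $A$ (and so requires $A$ to be essential), the half disk case will contradict the tightness of $\partial A$, which is assumed regardless of whether $A$ is essential or boundary parallel. This is what permits the conclusion to apply to the boundary parallel case that we still need to rule out.

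Suppose for contradiction that a trace arc $\sigma$ bounds a half disk $H$ in $A$. By Lemma~\ref{lemBoundDisks}, the opposite trace arc $\tau$ across the exchange rectangle $E$ also bounds a half disk $H'$ in $A$, and by Lemma~\ref{lemHalfDisksAdjacent} the half disks $H$ and $H'$ are adjacent across $E$. If $H'$ happens to be a single patch, then $\E\setminus\{E\}$ is already a consistent subset, producing a decomposition $A = B' + C'$ with $|B'\cap C'| < |B\cap C|$ and contradicting the assumed minimality of $|B\cap C|$. Otherwise $H'$ contains further trace arcs, and we pick an innermost trace arc $\sigma_1\subset H'$ bounding a half disk in $A$ and iterate. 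Since $\E$ is finite, this process must eventually close up, yielding a shortest cycle of adjacent half disks $H_0,H_1,\ldots,H_{k-1}$ joined by exchange rectangles $E_0,\ldots,E_{k-1}$ (with $k=1$ corresponding to a single pair $H\supsetneq H'$).

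After the regular switches are performed, the union $B' := H_0 \cup E_0 \cup H_1 \cup \cdots \cup H_{k-1}\cup E_{k-1}$ is a properly embedded annulus in $X$ whose boundary lies in $\partial X$; it is parallel to an annulus $A'_{\partial X}\subset \partial X$, and together they bound a solid torus $T$. Bicoloring the patches of $A$ according to whether they lie inside or outside $T$ shows that $\E' = \{E_0,\ldots,E_{k-1}\}$ is a consistent subset of $\E$, exactly as in the disk-cycle passage of Lemma~\ref{lemNoDiskPatches}. Consequently we obtain a normal sum decomposition $A = B' + C'$ in which $B'$ is this boundary parallel annulus.

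A fractional Dehn twist in $T$ (permitted because $B'$ is boundary parallel) extends to an ambient isotopy of $X$ carrying $C'$ to $A$, so $C'$ is isotopic to $A$ and $\partial C'$ is isotopic to $\partial A$ in $\partial X$. Boundary length and boundary normal coordinates are additive under normal sums, giving $\compl(\partial A) = \compl(\partial B') + \compl(\partial C')$; because $\partial B'$ is a non-empty normal curve we have $\ell(\partial B')\geq 1$ and hence $\compl(\partial B') > (0,\vec{0})$. Therefore $\compl(\partial C') < \compl(\partial A)$, contradicting the tightness of $\partial A$. The main obstacle is verifying rigorously that the cycle of half disks really assembles into a boundary parallel annulus, that the corresponding subset $\E'$ is consistent, and handling the degenerate $k=1$ case; but this is a direct parallel of the corresponding passage of Lemma~\ref{lemNoDiskPatches}, with ``disk'' replaced by ``half disk'' and the solid torus now split along $\partial X$.
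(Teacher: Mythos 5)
Your overall strategy is the right one and matches the paper's: run a cycle argument parallel to Lemma~\ref{lemNoDiskPatches}, produce a consistent subset $\E'$ realizing $A = B' + C'$ with $B'$ a boundary parallel annulus, use the fractional Dehn twist to show $C'$ is isotopic to $A$, and contradict the tightness of $\partial A$ (rather than minimal interior weight) so the argument survives when $A$ is itself a boundary parallel annulus. But there is a genuine gap in the passage that sets up the iteration.

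You argue that $H'$ is either ``a single patch'' (contradicting minimality of $|B\cap C|$) or ``contains further trace arcs'' (so you can pass to an innermost one). That dichotomy is incomplete in exactly the case this lemma was built to handle. When $A$ is a boundary parallel annulus, Lemma~\ref{lemNoDiskPatches} does \emph{not} rule out disk patches, so $H'$ can fail to be a single patch while containing \emph{only} trace loops and no trace arcs. In that situation you cannot pick an innermost trace arc in $H'$ and iterate, and the set $\E\setminus\{E\}$ need not be consistent, so neither branch of your argument applies. (For $A$ essential the two notions coincide, since a half disk with no trace arcs is then automatically a single patch by Lemma~\ref{lemNoDiskPatches}; that is precisely why the gap is invisible unless one keeps the boundary parallel case in mind.) The paper sidesteps this by working throughout with \emph{outermost} half disks rather than half disk patches, and proves that the adjacent half disk $H'$ is not outermost by a different mechanism: if $H$ and $H'$ were both outermost, $H\cup E\cup H'$ would be a properly embedded disk with boundary $h\cup e\cup h'\cup e'$ in $\partial X$, and undoing the exchanges $e,e'$ exhibits a (half-)bigon bounded by $\partial B$ and $\partial C$, contradicting the snugness of $\partial B$ and $\partial C$ that was arranged at the start of the proof of Proposition~\ref{propSummandsEssential}. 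That snugness argument is what guarantees $H'$ contains a genuine trace arc, and it is the missing ingredient in your proof. Your final contradiction via $\compl(\partial C') < \compl(\partial A)$ is a correct (and slightly more explicit) packaging of the paper's length contradiction, but the cycle on which it rests is not established without the snugness step.
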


\begin{proof}
When $A$ is a boundary parallel annulus, Lemma \ref{lemNoDiskPatches} does not
apply, and trivial trace loops may be present.  So we will refer to outermost
(in $A$) half disks instead of half disk patches.   An outermost half disk in
$A$  does not contain trace arcs, but, is not a patch if it contains trace
loops.  If there is an outermost half disk  $H$ bounded by a trace arc, then by
Lemmas \ref{lemBoundDisks} and \ref{lemHalfDisksAdjacent}, it is adjacent
across an exchange rectangle $E$ to another half disk $H'$.

The adjacent half disk $H'$ is not outermost:  If $H$ and $H'$ are both
outermost, then $H \cup E \cup H'$ is a properly embedded disk  that meets
$\partial X$ in a disk bounding the curve $h \cup e \cup h' \cup e'$, where $h
= H \cap \partial X$, $h' = H' \cap \partial X$ and $e$ and $e'$ are exchange
arcs, the two components of $E \cap \partial X$.  Since $h$ and $h'$ are
sub-arcs of $\partial B$ and $\partial C$, we can see, by undoing the exchanges
$e$ and $e'$, that $\partial B$ and $\partial C$ bound a bigon and are thus not
snug, a contradiction.

Thus, the adjacent half disk is not outermost and contains at least one  trace
arc.  Pass to an innermost trace loop bounding a outermost half disk and
repeat. This process can be continued arbitrarily, and therefore must
eventually repeat.  Thus, there is a shortest cycle of adjacent half disks, each
biting the tail patch of its predecessor across some exchange rectangle; see
Figure \ref{figCycleAdjacent}.  Note that the cycle has length 1 when $H'
\subset H$ for some pair of adjacent half disks $H$ and $H'$.  By irreducibility and incompressibility of $\partial X$, the union of a pair of adjacent half
disks and their exchange rectangle is a disk that co-bounds a ball with a disk in $\partial X$, and the union
of all these balls is a solid torus meeting $\partial X$ in an annulus.

Let $\E'$ be the subset of $\E$ along which the cycle of half disks is
adjacent.  Then $\E'$ is seen to be a consistent subset of $\E$ by coloring all
rectangle patches on the boundary of the solid torus with one color, and all other
patches with the other.   This expresses $A$ as  the sum of surfaces $A=B'+C'$ where
$B'$ is a normal boundary parallel annulus.

Adding $B'$ to $C'$ corresponds to a fractional Dehn twist in $B'$.  Since $B'$
is boundary parallel, there is an isotopy of the solid torus that undoes the
twist and carries $C'$ to $A$.   So $C'$ is isotopic to $A$ but with shorter
length $\ell(C')=\ell(A)-\ell(B')$, a contradiction.
\end{proof}

We know that $A$ is incompressible, and thus by Proposition~\ref{propSurfaceFacts}
it is either an essential annulus or M\"obius band, or a boundary parallel annulus. We deduce that in
both cases it means that $B$ is essential.

\begin{lemma}
\label{l:A_BP_B_essential}
  If $A$ is a boundary parallel annulus, then $B$ is essential (not a boundary
  parallel annulus).
\end{lemma}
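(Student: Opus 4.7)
The plan is to argue by contradiction. Assume $B$ is a boundary parallel annulus. Since $A$ is also boundary parallel, fix the parallel annulus $A_{\partial X} \subset \partial X$ and the solid torus $T \subset X$ they cobound, so that $\partial T = A \cup A_{\partial X}$. The goal is to locate a properly embedded disk $E \subset T$ whose boundary on $\partial T$ consists of four spanning arcs of annuli---two trace arcs in $A$ and two exchange arcs in $A_{\partial X}$---and then derive a purely topological contradiction to the existence of such a disk inside a solid torus.

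By Lemma~\ref{lemNoHalfDiskPatches}, no trace arc in the sum $A = B + C$ bounds a half disk in $A$. Since $A$ is an annulus, every essential properly embedded arc in $A$ spans between the two boundary components, so every trace arc is spanning. If $B \cap C$ consisted only of loops (equivalently, $\partial B \cap \partial C = \emptyset$), a fractional Dehn twist in the solid torus $T_B$ witnessing the boundary-parallelness of $B$---exactly as in Lemma~\ref{lemNoDiskPatches}---would contradict tightness of $A$. Hence there is at least one intersection arc $\alpha \subset B \cap C$ with an exchange rectangle $E$; this $E$ is a disk with $\partial E \subset A \cup \partial X$ and lies entirely on one side of $A$, either in $T$ or in $X \setminus T$. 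Exploiting the boundary-parallelness of $B$ and the minimality of $(|\partial B \cap \partial C|, |B \cap C|)$, we can find such an $E$ inside $T$. For this $E$, the two exchange arcs lie in $\partial T \cap \partial X = A_{\partial X}$; since $\partial A = \partial B + \partial C$ is tight, Lemma~\ref{lemExchangeArcNotParallel} forbids any exchange arc from cobounding a disk in $\partial X$ with a sub-arc of $\partial A$, and every inessential arc in the annulus $A_{\partial X}$ does exactly this, namely cobounds a disk in $A_{\partial X}$ with a sub-arc of $\partial A_{\partial X} = \partial A$. Hence both exchange arcs are spanning arcs of $A_{\partial X}$.

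It remains to derive the contradiction. Parameterize $\partial T \cong S^1_\lambda \times S^1_\mu$ so that the two components $\ell_1, \ell_2$ of $\partial A$ appear as $S^1_\lambda \times \{0\}$ and $S^1_\lambda \times \{1/2\}$, with $A = S^1_\lambda \times [0, 1/2]$ and $A_{\partial X} = S^1_\lambda \times [1/2, 1]$. Going around $\partial E$ once, each of its four sides is a spanning arc traversing half of $S^1_\mu$, so in the universal cover the total meridional displacement is $\pm 2$, and thus $[\partial E] = (h, \pm 2) \in \pi_1(\partial T) \cong \mathbb{Z}^2$ for some integer $h$. Because $A$ is boundary parallel in $T$, its core is isotopic in $T$ to the core of $T$; hence the $\lambda$-factor is the longitude of $T$, the meridian class of $T$ is $\mu$, and the inclusion $\partial T \hookrightarrow T$ induces $(a, b) \mapsto a$ on fundamental groups. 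For $\partial E$ to bound the embedded disk $E \subset T$, we need $h = 0$; but $\partial E$ is a simple closed curve on the torus $\partial T$, so its class $(h, \pm 2)$ is primitive, forcing $\gcd(h, 2) = 1$ and hence $h$ odd. This contradiction completes the proof. The most delicate step is locating an exchange rectangle inside $T$: a priori all rectangles could lie in $X \setminus T$, and ruling this out depends essentially on the boundary-parallelness of $B$ combined with the minimality of the decomposition.
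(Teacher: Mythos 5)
The final topological contradiction you give---computing that $[\partial E]=(h,\pm 2)$ in $H_1(\partial T)\cong\mathbb{Z}^2$, observing that bounding a disk in $T$ forces $h=0$, and noting that $(0,\pm 2)$ is not primitive---is a valid (and somewhat more explicit) reformulation of what the paper says more tersely, namely that the unique compressing disk of the solid torus $T$ meets each of $A$ and $A_{\partial X}$ in a single essential arc, whereas $\partial R$ meets each twice. Your use of Lemma~\ref{lemNoHalfDiskPatches} to force trace arcs to be spanning and Lemma~\ref{lemExchangeArcNotParallel} to force exchange arcs to be spanning also matches the paper.

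However, there is a genuine gap, which you yourself flag at the end: the assertion that ``we can find such an $E$ inside $T$'' is never proved, and this is precisely the hard part of the argument. A priori all the exchange rectangles for the sum $A=B+C$ could lie in $X\setminus T$, and nothing in ``boundary-parallelness of $B$ combined with minimality of the decomposition'' immediately rules this out. The paper closes this gap with a specific mechanism that your proposal does not reconstruct: since $A$ and $B$ are both boundary parallel annuli with tight boundary, one has $\partial A=2a$ and $\partial B=2b$ for tight essential curves $a,b$; the normal coordinates of $\partial A$ and $\partial B$ are then even, which via Lemma~\ref{lemCurveSummandsLeastComplexity} forces $\partial C=2c$. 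Each of the parallel pairs $2b$ and $2c$ bounds a thin annulus in $\partial X$; pushing these to intersect in squares, the four corners of a square are intersection points of $\partial B\cap\partial C$, and a normal-sign argument (via Proposition~\ref{propMSummands}) shows that the exchange rectangles at corners lying on a common edge emanate to opposite sides of $A$. This is what guarantees one rectangle inside $T$. Without the doubled-boundary observation and the square/normal-sign analysis, the claim that some rectangle is inside $T$ is unsupported, and the rest of your argument hangs on it.

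A smaller point: your dismissal of the loops-only case via ``a fractional Dehn twist in the solid torus $T_B$\ldots exactly as in Lemma~\ref{lemNoDiskPatches}'' is looser than what the paper does. If $\partial B\cap\partial C=\emptyset$ then, since $B$ is an annulus with two boundary curves and $\partial A$ also has two components, in fact $\partial C=\emptyset$; the paper then simply notes $\partial B=\partial A$, so $A$ and $B$ are isotopic and $A$ fails to be of least complexity. Your phrasing suggests a different (and not clearly applicable) mechanism, although the conclusion is the same.
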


\begin{proof}
Suppose that $B$ is a boundary parallel annulus.  If $\partial C = \emptyset$
then $\partial B = \partial A$, and hence $A$ and $B$ are isotopic, contradicting
the fact that $A$ was chosen to have least complexity.

So we have $A = B + C$, where all three have non-empty tight boundary and $A$
and $B$ are both boundary parallel annuli.  Then $\partial A = 2a$ and
$\partial B = 2b$, where $a$ and $b$ are tight essential curves since $A$ and
$B$ have tight boundaries.
Since all normal coordinates of  $\partial A$ and $\partial B$
are even, it follows that $\partial C = 2c$ for some tight essential curve $c$
by Lemma~\ref{lemCurveSummandsLeastComplexity}.

Each pair, $2b$ and $2c$, of parallel curves bounds an annulus in $\partial X$.
Normally isotope $B$ and $C$ so that these annuli are very thin and intersect
in a collection of squares, each contained in a face of $\partial X$.   Pick a
particular square.  Each of its corners is an endpoint of an intersection arc
between $B$ and $C$.  All corners have  the same normal sign.  It follows that
the exchange rectangles for corners on the same edge lie on opposite sides of
$A$; see Figure~\ref{exchange_square}.

Thus, there is an exchange rectangle $R$
properly embedded inside the
solid torus $T$ that is bounded by $A$ and $A_{\partial X} \subset \partial X$,
the annulus into which $A$ is isotopic.   Then $\partial R$ is a compressing
disk for $T$ since it meets $A$ in a pair of arcs that are essential by Lemma
\ref{lemNoHalfDiskPatches} and meets $A_{\partial X}$ in a pair of arcs that
are essential by Lemma \ref{lemExchangeArcNotParallel}.
But this contradicts the fact that $A$ is boundary parallel to $A_{\partial
X}$.  The unique, up to isotopy, compressing disk for $T$ meets $A$ and
$A_{\partial X}$ each in a single essential arc.
\end{proof}

\begin{figure}[ht]
  \begin{center}
      \includegraphics{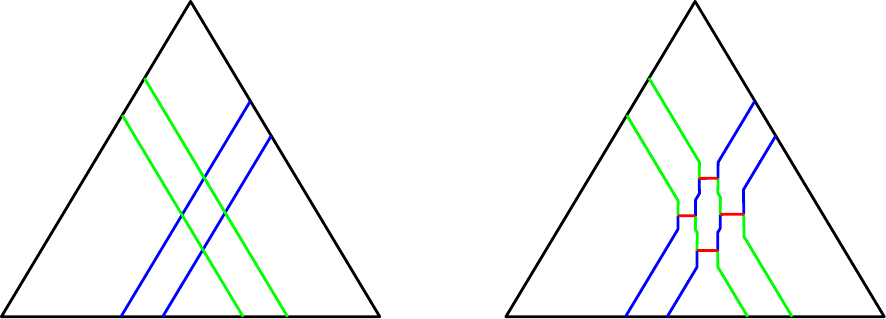}
    \end{center}
    \caption{Annuli between $2b$ and $2c$ intersecting in a square. After
      regular
      exchanges there are four exchange arcs in which exchange rectangles
      start. At
    least one of these rectangles (actually two) have to be inside $T$.}
    \label{exchange_square}
  \end{figure}

\begin{lemma}
\label{l:AessentialBessential}
  If $A$ is essential then $B$ is essential (not a boundary parallel annulus).
\end{lemma}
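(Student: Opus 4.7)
I assume for contradiction that $B$ is a boundary parallel annulus, which by the discussion preceding the lemma is the only remaining case. Then $B$ together with an annulus $A_{\partial X}^B \subset \partial X$ cobounds a solid torus $T_B \subset X$, and, since $B$ has tight boundary, $\partial B = 2b$ for a tight essential curve $b$. Since $A$ is essential, both Lemma~\ref{lemNoDiskPatches} and Lemma~\ref{lemNoHalfDiskPatches} apply: no trace loop of the decomposition $A=B+C$ bounds a disk in $A$ and no trace arc bounds a half-disk in $A$. Combining these with the observation that a $B$-patch bounded by an intersection loop (resp.\ arc) inessential in $B$ would itself be a disk (resp.\ half-disk) patch of $A$, I conclude that every component of $B \cap C$ is essential in the annulus $B$; that is, every intersection loop is a core loop of $B$ and every intersection arc is a spanning arc of $B$. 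Moreover $B \cap C \neq \emptyset$, for otherwise the normal sum would be the disjoint union $B \sqcup C$, contradicting the connectedness of $A$ together with the non-triviality of the sum.

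Next, following the strategy of Lemma~\ref{l:A_BP_B_essential}, I analyze the placement of $\partial C$ relative to the parallel strip $A_{\partial X}^B$ on $\partial X$ and show, via a signed-intersection argument analogous to the exchange-square analysis at the end of the previous lemma, that some exchange band $E$ of the sum $A = B + C$ lies inside $T_B$. In the spanning-arc subcase, $E$ is an exchange rectangle whose boundary meets $B$ in an essential arc (by the above) and meets $A_{\partial X}^B$ in an essential arc (by Lemma~\ref{lemExchangeArcNotParallel}); this is a properly embedded disk in $T_B$ whose boundary travels longitudinally through both $B$ and $A_{\partial X}^B$. Such a disk allows either a boundary-compression of $A$ or a disk-swap isotopy that strictly reduces the complexity $\compl(A) = (\compl(\partial A),|A\cap\T^1|,|A\cap\T^2|)$, contradicting the least-complexity hypothesis on $A$. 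In the core-loop subcase, $E$ is an exchange annulus inside $T_B$ whose core is parallel to the core of $T_B$; then a Dehn twist/disk-swap move along $E$ (in the spirit of the cycle-of-disks argument of Lemma~\ref{lemNoDiskPatches}) produces a surface $C'$ isotopic to $A$ with $\compl(C') < \compl(A)$, again a contradiction.

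The main obstacle is the signed-intersection bookkeeping on $\partial X$ that guarantees some exchange band actually lies inside $T_B$ (rather than on the outside), together with the careful verification that the resulting disk swap or Dehn twist genuinely reduces the lexicographic complexity used here. Because in the present setting $A$ has tight boundary but need not be boundary parallel, one must be attentive to whether the reduction happens in the boundary length/normal coordinates, in the interior weight, or in the face-intersection count; I expect the argument to closely parallel the chain of reductions in Lemmas~\ref{lemNoDiskPatches}--\ref{l:A_BP_B_essential}, with the essentiality of $A$ used precisely at the step where spanning arcs of $B$ meeting spanning arcs of $A_{\partial X}^B$ inside $T_B$ yield the simplifying disk.
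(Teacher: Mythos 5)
Your proof takes a genuinely different route from the paper's, and unfortunately the route has a gap at its central step. You want to transport the ``signed-intersection bookkeeping'' from Lemma~\ref{l:A_BP_B_essential} to locate an exchange band inside the solid torus $T_B$ bounded by $B$ and $A_{\partial X}^B$. But the argument in Lemma~\ref{l:A_BP_B_essential} depends crucially on \emph{both} $\partial B = 2b$ \emph{and} $\partial C = 2c$ being doubled tight curves, which in turn is deduced there from $\partial A = 2a$ having all-even normal coordinates (so that $\partial C = \partial A - \partial B$ is forced to be doubled via Lemma~\ref{lemCurveSummandsLeastComplexity}). That deduction uses that $A$ is boundary parallel, which is exactly what fails in the present lemma: here $A$ is essential, $\partial A$ need not have even coordinates, and $\partial C$ need not be doubled. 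The square decomposition of the thin annular neighborhoods of $2b$ and $2c$ on $\partial X$ — which is what produces pairs of same-signed corners and forces one exchange band to each side of $A$ — is no longer available. Without it you have no control over which side of $B$ the exchange bands lie on, so the conclusion ``some exchange band lies inside $T_B$'' is not established.

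There are two further issues. First, even granting the geometric placement, your spanning-arc subcase claims the rectangle $E$ ``allows a boundary-compression of $A$''; but an exchange rectangle meets $A$ in \emph{two} trace arcs (its top and bottom), not one, so it is not a boundary compressing disk for $A$, and it is not clear what disk-swap you could perform on $A$ using it. Second, and more structurally, the paper treats the spanning-arc and core-loop cases quite differently from you. The paper \emph{rules out spanning arcs outright}: it takes a boundary compressing disk $D$ for $B$ with $\partial D\cap B\cap\partial C=\emptyset$, minimizes $|D\cap C|$, extracts an innermost/outermost piece $D'$, and shows $D'$ is a (boundary) compressing disk for a patch, which (combined with the opening observation that compressible or boundary compressible patches force the existence of a (half)-disk patch) contradicts Lemmas~\ref{lemNoDiskPatches}/\ref{lemNoHalfDiskPatches}. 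Only then, in the core-loop case, does it again minimize a boundary compressing disk of $B$ against $C$, pin down a $(\text{half})\text{bigon}\times S^1$ configuration between annular patches $A_B\subset B$ and $A_C\subset C$, conclude $A_B\cup A_C$ is parallel to a zero-weight exchange annulus, and obtain a contradiction by performing a single irregular exchange in the Haken sum (invoking the tightness/least-complexity hypotheses on $A$). Your unified ``exchange band inside $T_B$'' strategy does not reproduce these steps and, as explained, cannot get off the ground without the parity structure that is particular to the boundary-parallel $A$ case.
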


\begin{proof}

We first note, as in
\cite{Jaco:Algorithms-for-the-complete-decomposition-of-a-closed-3-manifold-1995},
that each disk patch is incompressible and boundary incompressible.  Any
(boundary) compressing disk for a patch has its boundary in the essential
surface $A$ and therefore meets it in a trivial curve.  This, in turn,
 implies the existence of a (half) disk patch.

In contradiction, suppose $B$ is a boundary parallel annulus.    By Lemma
\ref{lemNoDiskPatches}, $B$ and $C$ intersect in curves that are essential in
both.  These curves are either core loops or spanning arcs for $B$.

But they cannot be spanning arcs:  If so, then there is a boundary compressing
disk $D$  for $B$ so that $\partial D \cap B \cap \partial C = \emptyset$.
Choose $D$ to be such a
boundary compressing disk that meets $C$ in the minimal number of curves.   Let
$D'$ be a disk bounded by an innermost loop, outermost arc, or when $C \cap D =
\emptyset$, let $D=D'$.  By minimality $D'$ meets a patch in an essential curve
and is thus a (boundary) compressing disk for a patch.  This contradicts
Observation~\ref{o:theobs}.

We proceed assuming that $B \cap C$ is a collection of loops essential in both
$B$ and $C$.  Then the patches of $B$ and $C$ are annuli because each
component of $C$ has zero Euler characteristic.
As before, choose a boundary compressing disk $D$ for $B$ that meets $C$ in the
minimal number of curves.

By minimality of $D$ and essentiality of patches,
no curve of intersection is a closed loop or an arc with endpoints both in
$\partial X$.  It follows that $\gamma$, an outermost arc of intersection,
co-bounds a bigon or half bigon $D'$, with an arc $\beta \subset B \cap
\partial D $.   (Figure \ref{figBigons}, with different labelling).    By
minimality of $|D \cap C|$, $\beta$ is a spanning (essential) arc for an
annulus patch $A_B \subset B$ and $\gamma$ is a spanning arc for a patch $A_C
\subset C$.  The patches meet in either one (half bigon) or two (bigon)
intersection loops.   Moreover, because $X$ is irreducible and has
incompressible boundary, $A_B$ and $A_C$ are parallel across the solid torus
that
they bound.
Thus, Figure \ref{figBigons} is a cross-section of the total intersection,
$\textit{(half)bigon} \times S^1$.

What does the normal addition $A=B+C$ do with the patches $A_B$ and $A_C$? It
cannot trade them as in the figure, because then, performing only the trade and
no other exchanges produces normal surfaces $B'$ and $C'$ isotopic to $B$ and
$C$ but with fewer intersections, contradicting our assumption that we had
minimized $|B \cap C|$.   Nor, in the half bigon case, can it attach $A_B$ and
$A_C$.  For this means $A$ has a boundary parallel  annulus component.  This
rules out the half bigon case.   And, in the bigon case, it cannot attach $A_B$
to $A_C$ along both curves, for if it did, a component of $A$
would be a torus bounding a solid torus.
So $A_B$ and $A_C$ are attached along one intersection loop,
but not along the other.   But then $A_B \cup A_C$ is parallel to the zero
weight exchange annulus for the intersection loop where they were not attached.

Form a Haken sum $A'$ by performing all regular exchanges, except perform an
irregular exchange (fold) along the curve corresponding to the zero weight
annulus.  This is similar to the situation in Figure \ref{figNotAdjacent},
although the context is a bit different.      Then one component of $A'$ is
isotopic to $A$ and the other, call it $A''$,
 is a torus bounding a solid torus.    But
$A' \cup A''$ is not normal because of the irregular switch.   As in the end of
the proof of Lemma \ref{lemTwoDisks}, we conclude that $A' \cup A''$ is either
boundary compressible, not least length, or, $A$ is not tight.  All are
contradictions.
\end{proof}

This completes the proof of Proposition~\ref{propSummandsEssential}.

%
%
%
%

\section{Constructing an annulus curve $\alpha$}
\label{s:annulus-curve}


As usual, we assume that $X$ is irreducible, orientable with incompressible boundary and presented via a 0-efficient triangulation.


\begin{definition}
An \emph{annulus curve} $\alpha$ is a properly embedded (multicomponent) normal curve in $\partial X$ with the following property: There exists a collection $\A$ of pairwise disjoint properly embedded essential annuli in $X$ such that $\alpha \subseteq \partial\A$ and $\alpha$ represents all normal isotopy classes of boundary components of $\A$ exactly once, i.e., for every annulus $A\in \mathcal{A}$ and every component $\gamma$ of $\partial A$, there is exactly one component of $\alpha$ that is normally isotopic to~$\gamma$.
\end{definition}

The following proposition provides an annulus curve that can be used to track essential annuli.  Each boundary curve of a tight
essential annulus either appears in the curve or meets the curve.

\begin{proposition}
\label{propEssentialAnnuli}
Let $X$ be an irreducible, orientable manifold with incompressible boundary presented via a 0-efficient
triangulation with $t$ tetrahedra.   Then there is a tight normal annulus curve $\alpha$ so that:
\begin{enumerate}

\item[\rm(1)] $\alpha$ is maximal, by which we mean that
 if $A \subset X$ is an essential annulus or M\"{o}bius band whose boundary is  tight and disjoint from $\alpha$, then each boundary component of $\partial A$ is normally isotopic to a component
of~$\alpha$.

\item[\rm(2)] $|\alpha|$, the number of components of $\alpha$, is smaller than
 $4t$.
\item[\rm(3)] $\ell(\alpha)$ is bounded by a computable function of $t$.
\end{enumerate}
\end{proposition}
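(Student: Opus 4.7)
The plan is to build the annulus curve $\alpha$ together with its certifying collection $\A$ by a greedy process. Set $\A_0=\emptyset$ and $\alpha_0=\emptyset$. At step $i$, ask whether there exists an essential annulus or M\"obius band $A\subset X$ with tight boundary such that $\partial A$ is disjoint from $\alpha_i$ and no component of $\partial A$ is normally isotopic to any component of $\alpha_i$. If such an $A$ exists, take one that is in addition \emph{fundamental} as a normal surface (the key technical point below), set $\A_{i+1}=\A_i\cup\{A\}$, and append to $\alpha_i$ those components of $\partial A$ whose normal isotopy class does not yet appear. If no such $A$ exists, stop and output $\alpha$ and $\A$; the stopping condition is exactly the maximality statement~(1).

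The main care is in placing each new annulus so that it is \emph{geometrically} disjoint from everything already chosen, not merely so up to normal isotopy. For the boundary this follows from Section~\ref{s:tightsnug}: by Lemma~\ref{lemAddToSnug} the tight normal curve $\partial A$ can be normally isotoped to be snug with the tight normal curves of $\alpha_i$, and by Lemma~\ref{lemTightUnique} no component of $\partial A$ bounds a bigon or half-bigon with a component of $\alpha_i$, so snug forces disjoint. The annulus $A$ itself can be isotoped to intersect the annuli of $\A_i$ essentially by Lemma~\ref{lemMakeIntersectionEssential}; a standard innermost-loop/outermost-arc argument, using essentiality on both sides, then removes all remaining intersections rel $\partial X$. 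Finally, to always realise $A$ as \emph{fundamental}, suppose some essential annulus or M\"obius band $A'$ with tight boundary has a new boundary component. Write $A'$ as a normal sum of fundamental surfaces; by Proposition~\ref{propSummandsEssential}, each connected boundary-meeting summand is again an essential annulus or M\"obius band, and by Lemma~\ref{lemCurveSummandsLeastComplexity} its boundary is tight. The new boundary component of $A'$ is a curve-normal summand of the fundamental pieces' boundaries, so at least one fundamental summand has a boundary component whose normal isotopy class is still new.

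Termination and the bound (2) come from a Kneser--Haken style count on $\partial X$. The components of $\alpha$ are tight normal essential curves in $\partial X$, pairwise disjoint by construction and pairwise non-normally-isotopic; by Lemma~\ref{lemTightUnique} this is the same as pairwise non-isotopic. In a $0$-efficient triangulation each boundary component of $X$ carries exactly one vertex, and the induced triangulation of $\partial X$ has at most $4t$ triangles, so the Euler characteristic and total genus of $\partial X$ are linear in $t$; a pants-decomposition bound on the number of pairwise disjoint, pairwise non-isotopic essential simple closed curves then yields $|\alpha|<4t$. This simultaneously bounds the number of iterations, proving termination. For (3), each $A\in\A$ is fundamental, so by Proposition~\ref{propFundamentals} its weight, and hence $\ell(\partial A)\le \wt(A)$, is bounded by a computable function of $t$; summing,
\[
\ell(\alpha)\;\le\;\sum_{A\in\A}\ell(\partial A)\;\le\;|\A|\cdot\max_{A\in\A}\ell(\partial A),
\]
which is itself a computable function of $t$.

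The hard part is the simultaneous control in the greedy step: we must place each newly chosen annulus so that its boundary is tight, disjoint from $\alpha_i$ up to normal isotopy and hence actually disjoint, the annulus itself is geometrically disjoint from $\A_i$, and the annulus can be taken fundamental so that the length bound of Proposition~\ref{propFundamentals} applies. The tight/snug machinery of Section~\ref{s:tightsnug} delivers the disjointness properties, while Proposition~\ref{propSummandsEssential} is what makes ``fundamental'' compatible with ``essential annulus or M\"obius band with tight boundary''; together they reduce the construction to a finite search whose termination is governed by Kneser--Haken finiteness on $\partial X$.
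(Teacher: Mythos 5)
Your greedy scheme is the same overall architecture as the paper's, and your count for~(2) (pairwise disjoint, pairwise non-isotopic essential curves in $\partial X$, bounded via a pants decomposition) matches Lemma~\ref{lemCurveBound}. But two of your key steps fail, and both failures are precisely what the paper's Lemma~\ref{lemAddtoAnnuli} is built to circumvent.

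First, you assert that after making the new annulus $A$ intersect $\A_i$ essentially, ``a standard innermost-loop/outermost-arc argument\dots removes all remaining intersections rel $\partial X$.'' That is false: essential annuli with disjoint boundaries in an irreducible manifold with incompressible boundary can be forced to intersect in essential core curves that \emph{no} isotopy eliminates. The innermost/outermost machinery (Lemma~\ref{lemMakeIntersectionEssential}) only removes \emph{inessential} intersection curves; it gives you essential intersection, not empty intersection. The paper's proof faces this head on in the second case of Lemma~\ref{lemAddtoAnnuli}: when $A'$ genuinely meets $\A_0$ in essential core loops, it cuts $A'$ at the intersection loop closest to the new boundary component, cuts the corresponding annulus $\tilde A\in\A_0$ at that loop, and reassembles a \emph{new} annulus $A'''$ (one half of $A'$ glued to one half of $\tilde A$) that is disjoint from $\A_0$, still essential, and still carries the new boundary curve. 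Your argument has no substitute for this surgery.

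Second, your fundamentality step decomposes $A'$ into \emph{unmarked} normal fundamentals and claims a summand $F_i$ with a new boundary component. Even granting that such a summand exists, you have no control on $\partial F_i\cap\alpha_i$: there is nothing in the unmarked theory to prevent a fundamental summand of $A'$ from having boundary that crosses $\alpha_i$, in which case you cannot append it. The paper avoids this by working in the \emph{marked} triangulation $(\T,\alpha_0\cap\T^1)$, treating $\alpha_0$ as a fence, and invoking Proposition~\ref{propMSummands} to transfer disjointness from the sum to the summands. This is also why the paper's length bound in~(3) is necessarily iterated: the bound on $\ell(\partial F_i)$ from Proposition~\ref{propFundamentals} depends on the number of marked points $m=\ell(\alpha_0)$, and each round of the construction increases $m$. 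Your version of~(3), with a uniform single-application bound because ``each $A\in\A$ is fundamental'' in the unmarked sense, would only be valid if the first two gaps were closed without markings, which you have not shown how to do.
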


The bound for $\ell(\alpha)$ we obtain from our proof is an
$O(t)$-times iterated exponential, and this is currently a
bottleneck of the whole algorithm.

The proposition follows in a simple way from the next two lemmas.

\begin{lemma}
\label{lemAddtoAnnuli}
Suppose $\alpha_0$ is a tight, normal annulus curve in the boundary of an
irreducible, orientable manifold with incompressible boundary and  a
0-efficient triangulation. 
Let $A$ be an essential annulus with tight boundary that is disjoint from $\alpha_0$, and such that a component of $\partial A$ is not normally isotopic into  $\alpha_0$.

Then there is a tight normal annulus curve $\alpha$ such that $\alpha_0\subsetneq \alpha$ and $\ell(\alpha)$ is bounded by a computable function of $t$ and $\ell(\alpha_0)$.
\end{lemma}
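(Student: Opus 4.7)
The plan is to apply the $M$-marked normal surface theory of Section~\ref{s:normal-surfaces} with the marking $M := \alpha_0 \cap \T^1$ recording the intersections of $\alpha_0$ with the $1$-skeleton, so that $|M| = \ell(\alpha_0)$. Since $A$ is disjoint from $\alpha_0$, we have $\partial A \cap M = \emptyset$. First I would replace $A$ by an essential $M$-normal annulus or M\"obius band $A^*$ of least $M$-complexity in its isotopy class, obtained via the $M$-adapted version of Proposition~\ref{propNormalize}; this $A^*$ has tight boundary and is still disjoint from $M$, and (by hypothesis transferred through isotopy) at least one component of $\partial A^*$ is not normally isotopic to any component of $\alpha_0$.

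Next, decompose $A^* = \sum_i k_i F_i$ as an $M$-normal sum of fundamental $M$-normal surfaces. By the $M$-normal analog of Proposition~\ref{propSummandsEssential}, each connected summand $F_i$ with $\partial F_i \neq \emptyset$ is an essential annulus or M\"obius band with tight boundary. Since $\partial A^* = \sum_i k_i \partial F_i$ and some component of $\partial A^*$ is not normally isotopic into $\alpha_0$, at least one fundamental summand $A' := F_{i_0}$ must possess such a boundary component $\gamma$. By Proposition~\ref{propFundamentals}, the weight $\wt(A')$---and hence $\ell(\partial A') \leq \wt(A')$---is bounded by a computable function of $t$ and $|M| = \ell(\alpha_0)$.

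To assemble the annulus curve, let $\mathcal{A}_0$ be a disjoint family of essential annuli realizing $\alpha_0$ and adjoin $A'$, isotoped to be disjoint from $\mathcal{A}_0$, producing a new disjoint family $\mathcal{A}$. The isotopy exists because $\partial A' \cap \alpha_0 = \emptyset$ forces every curve in $A' \cap A_0$ (for $A_0 \in \mathcal{A}_0$) to be a closed loop, which by Lemma~\ref{lemMakeIntersectionEssential} may be made essential on both annuli, hence core-parallel; since $\partial A'$ contains a class not in $\partial A_0$, the two annuli are not parallel, so the remaining core-parallel intersections can be removed by standard cut-and-paste along the innermost such circle. Setting $\alpha := \alpha_0 \cup \{\text{tight normal representatives of the new boundary classes of } A'\}$, we obtain a tight normal annulus curve with $\alpha_0 \subsetneq \alpha$ and $\ell(\alpha) \leq \ell(\alpha_0) + \wt(A')$, yielding the required bound.

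The main obstacle is checking that Propositions~\ref{propNormalize} and~\ref{propSummandsEssential} transfer to the $M$-normal setting. For normalization, the argument uses only compression, edge compression, and boundary-length-reducing isotopies; since $M \subset \partial X$ and $\partial A$ already avoids $M$, no reduction step introduces a crossing with $M$. For the summand-essentiality argument, the proof in Section~\ref{sec:summands_of_annuli} is entirely local to simplices (patches, trace curves, and exchange surfaces) and relies on additivity of weight, length, and boundary complexity under normal summation---all of which remain valid for $M$-normal sums by Section~\ref{s:normal-surfaces}, so the proof carries over verbatim.
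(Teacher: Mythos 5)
Your first half tracks the paper closely: you mark the triangulation with $M=\alpha_0\cap\T^1$, tighten $A$ and normalize it via Proposition~\ref{propNormalize}, decompose into $M$-normal fundamentals, use Proposition~\ref{propSummandsEssential} to see each boundary summand is an essential annulus or M\"obius band, and invoke Proposition~\ref{propFundamentals} for the length bound. This is precisely the paper's strategy.

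The gap is in the final step, where you assert that the chosen fundamental summand $A'$ can be isotoped to be \emph{disjoint} from the witnessing collection $\A_0$ because the intersections are essential core curves and ``the remaining core-parallel intersections can be removed by standard cut-and-paste along the innermost such circle.'' This does not work. Cut-and-paste along a core curve of two intersecting annuli is not an isotopy: it produces \emph{new} surfaces with different boundaries, not a repositioned copy of $A'$. Indeed two essential annuli in a $3$-manifold can intersect essentially with no isotopy removing the intersection; non-parallelism of $A'$ and $A_0$ does not imply they can be made disjoint, and an innermost-circle argument only eliminates inessential intersection curves. So in the case $A'\cap\A_0\neq\emptyset$ your proposed family $\A$ is not actually pairwise disjoint, and the construction of the annulus curve $\alpha$ collapses.

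The paper handles this case differently: it does \emph{not} try to make $A'$ disjoint from $\A_0$. Instead it takes the sub-annulus $A''\subset A'$ between the distinguished boundary curve $\alpha'$ and the intersection circle $\alpha''$ closest to $\alpha'$, observes that $\alpha''$ splits the intersected annulus $\tilde A\in\A_0$ into $A_1\cup A_2$, and forms $A'''=A''\cup A_1$ or $A''\cup A_2$ (at least one is essential). This $A'''$ is disjoint from $\A_0$ by construction, has $\alpha'$ as one boundary component, and---crucially---its other boundary component is a component of $\partial\tilde A$, hence already represented in $\alpha_0$. So one sets $\alpha=\alpha_0\cup\alpha'$. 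This is a genuinely different construction from ``isotope $A'$ off $\A_0$,'' and it is what makes the bookkeeping (and the length bound $\ell(\alpha)\leq\ell(\alpha_0)+\ell(\alpha')$) actually go through. A secondary omission: when the summand is a M\"obius band you need to pass to its normal double to obtain an annulus before adjoining it to $\A_0$, as the paper does explicitly.
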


\begin{proof}
The curve $\alpha_0$ can be regarded as a fence in the marked triangulation $(\T,\alpha_0 \cap \T^1)$.  The annulus $A$ has tight boundary disjoint from the fence $\alpha_0$. By isotoping the interior of $A$ (if necessary) while keeping its boundary $\partial A$ fixed,
we may assume that $A$ is tight and hence,  by Proposition \ref{propNormalize}, normal.

Write $A=F_1 + F_2 + \cdots + F_k$, a sum of connected fundamentals for the marked triangulation $(\T,\alpha_0 \cap \T^1)$.
By Lemma~\ref{lemCurveSummandsLeastComplexity} and by Proposition \ref{propMSummands}, respectively,
each boundary  $\partial F_i$ is tight and disjoint from the fence $\alpha_0$.   Since $\ell(\partial F_i)\leq | F \cap \T^1|$, by Proposition \ref{propFundamentals}, $\ell(\partial F_i)$  is bounded by a computable function of $t$ and $\ell(\alpha_0)$, the number of marking points.   By  Proposition \ref{propSummandsEssential}, each $F_i$ with non-empty boundary is an essential annulus or M\"obius band.

Moreover, there must be a summand with a boundary component that is not normally isotopic into $\alpha_0$. Otherwise, each component of the boundary sum $\partial A=\partial F_1+\partial F_2+\cdots+\partial F_k$, would be normally isotopic to a component of the fence $\alpha_0$.  This would imply that the summands are pairwise disjoint after a normal isotopy and, that each component of $\partial A$ is itself normally parallel to a component of the fence $\alpha_0$, contradicting our assumption.

Fix such a summand with a boundary component $\alpha'$ not normally isotopic into $\alpha_0$. We will construct a new annulus curve
$\alpha$ that contains both $\alpha_0$ and $\alpha'$ (which ensures $\alpha_0\subsetneq \alpha$).

Let $A'$ be the summand if it is an essential annulus or twice the summand if
it is a M\"{o}bius band.\footnote{The double of any normal surface $F$ can be
obtained by offsetting two copies, one to each side, of each normal disk of
$F$.  It follows that $2F$ is on the boundary of $F \twist [-1,1]$, an interval
bundle over $F$.  When the manifold is orientable, the bundle is a non-twisted
product if and only if the surface is orientable.   When the summand  $F$ is a
M\"obius band, $2F$ is the annulus $F \twist \{-1,1\}$.  Since every curve in
the boundary of an orientable manifold is two-sided, the boundary of the annulus is two copies of $\alpha'$.}

Let $\A_0$ be a collection of pairwise disjoint essential annuli witnessing that $\alpha_0$ is an annulus curve, i.e., $\alpha_0 \subseteq \partial \A_0$ and $\alpha_0$ represents all normal isotopy classes of boundary components of $\A_0$ exactly once.
By construction of the annulus $A'$, its boundary $\partial A'$ is disjoint from the fence $\alpha_0$ and hence from $\partial \A_0$, and one boundary component of $A'$ is the curve $\alpha'$.

We isotope $A'$, leaving the boundary fixed, to minimize components of intersection $|A' \cap \A_0|$, and we distinguish two cases.

The first case is that $A'$ misses $\A_0$. Then $\A = \A_0 \cup A'$ is a collection of pairwise disjoint properly embedded essential annuli. We define $\alpha$ to be the annulus curve corresponding to $\A$. More precisely, $\alpha'$ is one of the boundary components of $A'$. If the other boundary component is normally isotopic to $\alpha'$ or to some component of $\alpha_0$, we set $\alpha=\alpha_0\cup \alpha'$, and otherwise, we set $\alpha=\alpha_0\cup \partial A'$. Thus, $\alpha$ is an annulus curve, as witnessed by $\A$, $\alpha_0\subsetneq \alpha$, and $\ell(\alpha)$ is bounded by a computable function of $t$ and $\ell(\alpha_0)$ since $\ell(\partial A')$ is.

The second case is that $A'$ intersects $\A_0$. In this case, since $\partial
A$ and $\partial\A_0$ are disjoint, a standard innermost loop argument shows
that all curves of intersection are essential, i.e., core curves in the
annuli; see the proof of Lemma~\ref{lemMakeIntersectionEssential}.

If $A'$ meets $\A_0$, let  $\alpha'' \subset A'$ be the core intersection  curve closest to $\alpha'$, and let $\tilde{A}$ be the corresponding annulus in $\A_0$. Then $\alpha'$ and $\alpha''$ co-bound a sub-annulus $A'' \subset A'$ whose interior misses $\partial \A_0$.

Note that $\alpha''$ splits the annulus component $\tilde{A}$ of $\A_0$ into annuli $A_1$ and $A_2$.   Then $\alpha'$ is a boundary curve of both of the
annuli $A'' \cup A_1$  and $A'' \cup A_2$, both of these are disjoint from $\A_0$, and at least one of them is essential.\footnote{First note that for each of these annuli, any core curve is parallel to $\alpha''$, so a compression disk for either of these annuli would yield a compression disk for $A'$ (and for $\tilde{A}$ as well), a contradiction. Thus, if the annuli are not essential, they both must be boundary compressible. Since we assume that $\partial X$ is incompressible, it follows that each of the two annuli is boundary parallel, and thus
each is of them co-bounds a solid-torus with an annulus in the boundary and is a longitudinal annulus in its respective solid torus (longitudinal
means that it meets each meridional disk of the solid torus
once). Then the union of the two solid tori is a solid torus in which $\tilde{A}=A_1 \cup A_2$ is longitudinal, demonstrating that $\tilde{A}$ is boundary parallel and contradicting the assumption that $\A_0$ consists of essential annuli.}

Suppose w.l.o.g.\ that $A'''=A'' \cup A_1$ is essential. Then $\A=\A_0\cup A'''$ is a collection of pairwise disjoint essential annuli witnessing that $\alpha:=\alpha_0\cup \alpha'$ is an annulus curve (since one boundary component of $A'''$ is $\alpha'$ and the other one is a boundary component of $\tilde{A}\in \A_0$ and hence already represented in $\alpha_0$). Moreover, as in the first case, $\ell(\alpha)$ is bounded by a computable function of $t$ and $\ell(\alpha_0)$ since $\ell(\partial A')$ and hence $\ell(\alpha')$ is.
\end{proof}


\begin{lemma}
\label{lemCurveBound}
Let $\alpha$ be an essential curve embedded in a a connected closed orientable triangulated surface $F$ with $f$ faces, such that no pair of components of $\alpha$ are isotopic. Then $|\alpha| < f$.
\end{lemma}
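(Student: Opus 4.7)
The plan is to prove the claim by bounding both sides in terms of the genus $g$ of $F$: I will derive a lower bound $f \geq 4g - 2$ from Euler's formula applied to the triangulation, and an upper bound $|\alpha| \leq 3g - 3$ (for $g \geq 2$) from a classical cutting argument, and then compare.

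First I will dispatch the lower bound on $f$. Letting $v, e, f$ denote the number of vertices, edges, and faces of the triangulation, the standard edge-face incidence count gives $2e = 3f$ (each triangular face has three edges, and each edge of a closed surface is incident to exactly two face-corners, counted with multiplicity), while Euler's formula $v - e + f = 2 - 2g$ combines with this to yield $v = 2 - 2g + f/2$; since $v \geq 1$, we get $f \geq 4g - 2$. This already handles the cases $g = 0$ (where $|\alpha| = 0$ since $S^2$ admits no essential closed curve, and $f \geq 1$) and $g = 1$ (where any two disjoint essential simple closed curves on the torus are isotopic, so the hypothesis forces $|\alpha| \leq 1 < 2 \leq f$).

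For $g \geq 2$, I will assume $|\alpha| \geq 1$ (otherwise trivial), and cut $F$ along $\alpha$ into components $F_1, \ldots, F_k$, each with nonempty boundary. Writing $b_i$ and $g_i$ for the boundary count and genus of $F_i$, the identities $\chi(F_i) = 2 - 2 g_i - b_i$, $\sum_i b_i = 2|\alpha|$ (each component of $\alpha$ contributes two boundary circles to the cut surface), and $\sum_i \chi(F_i) = \chi(F) = 2 - 2g$ combine algebraically to give $|\alpha| = k + g - 1 - \sum_i g_i \leq k + g - 1$. The key step will then be to show that each $F_i$ has $\chi(F_i) \leq -1$, which sums to $k \leq 2g - 2$, and hence $|\alpha| \leq 3g - 3 < 4g - 2 \leq f$.

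The main obstacle, minor but the only nontrivial point, will be excluding disk and annulus components among the $F_i$. A disk component would make the corresponding component of $\alpha$ null-homotopic, violating essentiality. For an annulus component $F_i$, I will distinguish two cases. If its two boundary circles come from distinct components $\alpha_a$ and $\alpha_b$ of $\alpha$, then $F_i$ is a direct isotopy between $\alpha_a$ and $\alpha_b$ in $F$, contradicting the no-isotopic-pair hypothesis. If both boundary circles come from a single component $\alpha_j$, then $F_i$ together with the annular neighborhood $N(\alpha_j)$ is a closed torus; since $F$ is connected, $F$ itself would be a torus, contradicting $g \geq 2$. Everything else in the argument is routine Euler-characteristic bookkeeping.
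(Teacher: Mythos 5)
Your proof is correct and reaches the same numerical bound $|\alpha|\le 3g-3$ (for $g\ge 2$) as the paper, but by a noticeably different route. The paper first enlarges $\alpha$ to a \emph{maximal} system of disjoint non-isotopic essential curves, invokes the standard fact that such a system decomposes the surface into pairs of pants, each of Euler characteristic $-1$, and then counts pants. You instead cut along $\alpha$ as given, and rule out the low-complexity pieces directly: no piece can be a disk (a boundary component of $\alpha$ would be trivial) or an annulus (either two components of $\alpha$ would be isotopic, or $F$ would be a torus), hence every piece has $\chi\le -1$, which together with $\sum_i b_i=2|\alpha|$ and $\sum_i\chi(F_i)=\chi(F)$ yields the bound. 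Your version is somewhat more self-contained, since it replaces the pants-decomposition theorem with an elementary case check on the cut components, at the cost of a slightly longer Euler-characteristic computation. On the triangulation side, you use $f\ge 4g-2$ where the paper uses $g\le f/4$; these are essentially the same consequence of $v-\tfrac{3}{2}f+f=2-2g$, $v\ge 1$, and either one closes the argument. Your handling of the edge cases $g=0$ and $g=1$ (where the $\chi\le -1$ argument fails for annulus pieces) is also needed and matches the paper's treatment.
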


\begin{proof}

The result holds trivially for a torus, which requires at least two faces to triangulate and allows $\alpha$ to have at most one component.

Now suppose that the genus of the surface is $g\geq 2$. We claim that then $\alpha$ has at most $3g-3$ components. We may assume that $\alpha$ is a maximal collection of non-parallel curves, and hence it decomposes $F$ into $p$ pairs of pants (spheres with 3 holes) \cite[Sec.~8.3.1]{farbMargalit}.  Each pair of pants has Euler characteristic 
$-1$, and because the Euler characteristic of the boundary of a pair of pants is zero, 
the Euler characteristic of the surface is additive over the pants, $\chi(F) = -p = 2-2g$.
Because each curve is on the boundary of two pairs of pants, we have
 $|\alpha| = \frac 3 2 p = 3g-3$.

We have $\chi(F)= 2-2g=f-e+v$, where $e=\frac{3}{2}f$ is the number of edges and $v$ is the number of vertices.   Then $g=\frac{f}{4}+1-v \leq \frac{f}{4}$ and $|\alpha|\leq 3g-3 \leq \frac{3}{4}f < f$, as desired.
\end{proof}

We now complete the proof of the main proposition.

\begin{proof}[Proof of Proposition \ref{propEssentialAnnuli}]
The annulus curve $\alpha$ can be constructed iteratively, starting with $\alpha = \emptyset$.  If, at any stage, the maximality property~(1) is not satisfied, we apply Lemma \ref{lemAddtoAnnuli}  to add a distinct component to $\alpha$.

We claim that the process terminates after adding at most $4t$ components: By Lemma \ref{lemCurveBound}, each boundary component of $X$ contains fewer components of $\alpha$ than it has faces (here, we are using that $\alpha$ is tight, so by Lemma~\ref{lemTightUnique}, the fact that no two components of $\alpha$ are normally isotopic also implies that no two of them are isotopic).
Thus, in total, $\alpha$ has fewer components than $X$ has boundary faces and the number of boundary faces is bounded by $4t$.  This is (2).

By Lemma \ref{lemAddtoAnnuli}, the first component added has length bounded by a computable function of $t$.  Every subsequent component added has length bounded by a computable function of  $t$ and the total length of the preceding components.   Since the number of components is less than $4t$, the total length of the curve is bounded by a computable function of $t$.
\end{proof}

\section{Curves bounding  boundary parallel annuli}

In the previous section we constructed the annulus curve $\alpha$, which will be used to bound the coefficients of essential annulus summands
in the planar (almost) meridional surface~$P$.
In this section we construct $\Gamma$,  a collection of
curves bounding normal boundary parallel annuli.
Later, the curves of $\Gamma$ will act as fences,
and will be used to rule out boundary parallel annulus summands altogether.

\begin{proposition}
\label{propB}
Suppose $X$ is an irreducible, orientable manifold with incompressible boundary and
presented via a 0-efficient
triangulation with $t$ tetrahedra.    Let $\alpha$ be the tight
normal annulus curve given by Proposition \ref{propEssentialAnnuli}.  Then
there is a finite set $\Gamma$ of tight essential curves, possibly
mutually intersecting, such that:

\begin{enumerate}
\item[\rm(1)] If $B$ is a normal boundary parallel annulus with tight boundary that is
  disjoint from $\alpha$, then each boundary component of $B$ is normally
  isotopic either to a component of $\alpha$ or to a curve of~$\Gamma$.
\item[\rm(2)] $|\Gamma|$ and $\max_{\gamma\in \Gamma}\ell(\gamma)$
are bounded by a computable function of~$t$.
\end{enumerate}
\end{proposition}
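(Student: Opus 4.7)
My approach is to mirror the iterative construction of $\alpha$ in Proposition~\ref{propEssentialAnnuli}, now working inside the $\alpha$-marked normal surface theory from Section~\ref{s:almostnormal-expl} with marking $M=\alpha\cap\T^1$. Since $\ell(\alpha)$ is bounded by a computable function of $t$ by Proposition~\ref{propEssentialAnnuli}(3), so is $|M|$, and hence Proposition~\ref{propFundamentals} yields a finite, computable list $\F$ of $\alpha$-fundamental $\alpha$-normal surfaces with $|\F|$ and $\max_{F\in\F}\wt(F)$ both bounded by a computable function of~$t$.

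I would define $\Gamma$ by scanning through $\F$: for each $F\in\F$ and each essential boundary component $c$ of $F$, include in $\Gamma$ the tight normal representative of the isotopy class of $c$ (unique by Lemma~\ref{lemTightUnique}), unless it is already normally isotopic to a component of $\alpha$ or to a previously included element of $\Gamma$. Since $|\F|$ is bounded and each $F$ has boundedly many boundary components, $|\Gamma|$ is bounded by a computable function of $t$; and since each $\gamma\in\Gamma$ arises as the tight representative of a boundary component of some $F\in\F$, we have $\ell(\gamma)\leq\wt(F)$, again bounded, which gives~(2).

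For~(1), let $B$ be an arbitrary normal boundary parallel annulus with tight boundary disjoint from $\alpha$. Among all normal surfaces isotopic to $B$, I would replace it by one of least complexity (still called $B$); since $\partial B$ is tight this preserves the boundary (by uniqueness of tight representatives), and because $\alpha\subset\partial X$ one still has $B\cap\alpha\subseteq\partial B\cap\alpha=\emptyset$, so the new $B$ is $\alpha$-normal. If $B$ is $\alpha$-fundamental, then $B\in\F$ and each component of $\partial B$ (essential by Proposition~\ref{propSurfaceFacts}(ii), since $B$ is incompressible and not a disk) is by construction either normally isotopic to a component of $\alpha$ or already in $\Gamma$. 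Otherwise $B$ decomposes non-trivially as an $\alpha$-normal sum $B=F_1+\cdots+F_k$ with each $F_i$ a connected $\alpha$-fundamental. Since a boundary parallel annulus is incompressible, Proposition~\ref{propSummandsEssential}, applied to each regrouping $(F_i,\sum_{j\neq i}F_j)$, shows that every $F_i$ with non-empty boundary is an essential annulus or M\"obius band with tight boundary. Iterating Proposition~\ref{propMSummands} with $\alpha$ in the role of the fence $\mu$, and using that $\partial B$ is tight, essential, and snug with $\alpha$ (being disjoint from it), additivity of geometric intersection numbers forces $i(\partial F_i,\alpha)=0$ for every $i$, so after a normal isotopy each $\partial F_i$ is disjoint from $\alpha$. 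The maximality of $\alpha$, Proposition~\ref{propEssentialAnnuli}(1), then forces every boundary component of $F_i$ to be normally isotopic to a component of $\alpha$. Since distinct components of $\alpha$ are already pairwise disjoint, the $\partial F_i$'s can be realized as pairwise disjoint normal curves, and so $\partial B$ is normally isotopic to $\bigsqcup_i\partial F_i$; hence each of the two components of $\partial B$ is normally isotopic to a component of $\alpha$.

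The main technical hurdle, just as in the construction of $\alpha$ itself, is to transfer disjointness from the total sum $\partial B$ to each summand $\partial F_i$; this is precisely what the marked theory of Section~\ref{s:almostnormal-expl} is built for, via the additivity of geometric intersection number against a chosen fence (Proposition~\ref{propMSummands}(2)). Once this transfer is available, Proposition~\ref{propSummandsEssential} reduces the non-fundamental case to essential summands where the maximality of $\alpha$ finishes the argument, while the $\alpha$-fundamental case is handled directly by the explicit construction of $\Gamma$.
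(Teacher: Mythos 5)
Your proposal is correct and follows essentially the same route as the paper: mark the triangulation by $\alpha\cap\T^1$, take fundamentals, use Proposition~\ref{propSummandsEssential} for essentiality of summands, additivity against the fence $\alpha$ (the paper cites Proposition~\ref{propMSummands}; Lemma~\ref{lemMarkedCurveSum} would also do) for disjointness, and maximality of $\alpha$ to close the non-fundamental case. The only (harmless) difference is that the paper's $\Gamma$ consists only of boundaries of fundamental boundary-parallel annuli disjoint from the fence, whereas you take boundaries of all fundamentals, giving a larger but still computably bounded $\Gamma$.
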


\begin{proof}
Let $\alpha$ be given by Proposition \ref{propEssentialAnnuli}.   Then $\alpha$
is a fence in the marked triangulation $(\T, \alpha \cap \T^1)$. Let $\Gamma$ be
the set of the boundaries of all boundary parallel annuli that are fundamental in the marked triangulation $(\T, \alpha \cap \T^1)$ and disjoint from the fence $\alpha$.   Then (2) follows from Proposition~\ref{propFundamentals}.

Now we want to verify (1). Let $B$ be a normal boundary parallel annulus with tight
boundary disjoint from the fence $\alpha$. By isotoping $B$ we can assume that
$B$ is least complexity.


If $B$ is a fundamental, then its
boundary has already been included in $\Gamma$ and we are done. If not, then $B$
can be written as a sum of fundamentals for $(\T, \alpha \cap \T^1)$, $B = F_1
+ F_2 + \cdots + F_k$.  By Proposition \ref{propSummandsEssential}, each $F_i$
with boundary is an essential annulus or M\"obius band.  Since $B$ is disjoint
from the fence $\alpha$, each $F_i$ has boundary disjoint from $\alpha$.
Hence, by Proposition \ref{propEssentialAnnuli}, each $F_i$ has boundary
components normally isotopic to $\alpha$.  But as observed in the proof of
Proposition~\ref{propEssentialAnnuli}, this implies that each boundary
component of $B$ is normally isotopic to $\alpha$, as required.
\end{proof}


\section{Planar meridional surfaces}
\label{s:planar-meridional-surf}

In this section we consider a planar (almost) meridional surface
$P$ in $X$ and a collection $\A$ of disjoint essential annuli. The collection
$\bd\A$ of the boundaries of the annuli in $\A$ forms a collection
of disjoint curves (loops) in $\bd X$, and $\bd P$ is another collection
of disjoint loops.

We want to move $P$ by means of
a self-homeomorphism $h\colon X\to X$ in such a way that
the number of intersections of these two collections, $\bd\A$ and
$\bd P$, becomes bounded; more precisely, we need a bound
of the form $C(t)|\A|\cdot|\bd P|$.
This is formulated in
Proposition~\ref{pPMeetsA} below; the self-homeomorphism $h$
is going to be one of two ways of changing the original embedding
of $X$ in $S^3$ in order to get a short meridian.

First we collect
auxiliary results.
We begin with a corollary of the main result of~\cite{matousekSedgwickTancerWagner}, 
which was developed for the purpose of proving a result in the spirit of Proposition~\ref{pPMeetsA}.

\begin{lemma}[{\cite[Cor. 1.6]{matousekSedgwickTancerWagner}}]
\label{l:untangle}
Let $S$ be connected surface, i.e., a connected compact $2$-manifold with boundary,
of genus $g$. Let
$(\alpha_1,\ldots,\alpha_m)$ be a system of disjoint
curves (properly embedded arcs and loops) in $S$, and let
$(\beta_1,\ldots,\beta_n)$ be another such system.
Then there is a homeomorphism $\varphi\colon S\to S$ fixing
$\partial S$ pointwise such that the total number of intersections
of $\alpha_1,\ldots,\alpha_m$ with $\varphi(\beta_1),
\ldots,\varphi(\beta_n)$ is at most $K(g)mn$, where
$K(g)$ is a computable function depending only on $g$ (in fact, $K(g)=O(g^4)$).
\end{lemma}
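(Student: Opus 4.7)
The statement is quoted from~\cite[Cor.~1.6]{matousekSedgwickTancerWagner}; I only outline the plan of proof that would appear there.

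First I would reduce to the case where every $\alpha_i$ and every $\beta_j$ is essential in $S$. A trivial loop bounds a disk in $S$ and a trivial arc co-bounds a half-disk with an arc of $\partial S$; by innermost/outermost-disk arguments within each system, such components can be isotoped rel $\partial S$ into small disjoint neighbourhoods and then made disjoint from any isotope of the other system. I would then extend $(\alpha_i)$ to a \emph{maximal} system $\widetilde A$ of pairwise disjoint essential simple arcs and loops on $S$, adding at most $c_0(g,b)$ further components, a linear function of the genus $g$ and the number $b$ of boundary components. This $\widetilde A$ cuts $S$ into finitely many polygons of bounded combinatorial type. Extend $(\beta_j)$ to $\widetilde B$ analogously.

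Next I would exploit the fact that on a surface of fixed topological type the pure mapping class group fixing $\partial S$ pointwise has only finitely many orbits on maximal systems of pairwise disjoint essential arcs and loops, with orbit count bounded computably in $g$ and $b$ (this is encoded by the finitely many combinatorial types of the associated dual fat-graph / pants marking). Fixing a canonical representative $\widetilde B^{\star}$ in the orbit of $\widetilde B$, one obtains a boundary-fixing homeomorphism $\varphi$ with $\varphi(\widetilde B) = \widetilde B^{\star}$ up to isotopy rel $\partial S$. After a further normal isotopy, $\widetilde A$ and $\widetilde B^{\star}$ can be placed in minimal position (no bigons or half-bigons) by the multicurve version of the bigon criterion.

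Finally, in minimal position each individual pair $\alpha \in \widetilde A$, $\beta^{\star} \in \widetilde B^{\star}$ intersects in at most $O(g^2)$ points: cutting $S$ along $\widetilde A$ leaves polygonal cells of bounded combinatorial type, and in each cell the arcs contributed by $\beta^{\star}$ in canonical form admit only boundedly many isotopy classes, so $\beta^{\star}$ crosses each $\alpha$ at most $O(g^2)$ times. Summing over the at most $(m + c_0)(n + c_0)$ pairs of curves in $\widetilde A \times \widetilde B^{\star}$ and absorbing the lower-order terms (using $m + n \le 2mn$ and $1 \le mn$ when both systems are nonempty) yields a total intersection count of $O(g^4) \cdot mn$, as required. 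The main obstacle, and the technical heart of~\cite{matousekSedgwickTancerWagner}, is that a \emph{single} $\varphi$ must simultaneously control all $mn$ pairs rather than one pair at a time; this is what forces the canonical form $\widetilde B^{\star}$ to be globally tame with respect to $\widetilde A$, and is where the $g^4$ dependence ultimately arises.
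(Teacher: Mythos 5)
The proposal has genuine gaps at its core. The central step—step (5) in your outline—does not hold as stated, and I don't see how to repair it within the framework you sketch.

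\textbf{Gap 1 (the per-pair bound is unjustified).} You bring $\widetilde B$ into a canonical representative $\widetilde B^{\star}$ of its orbit under the boundary-fixing mapping class group, but you leave $\widetilde A$ where it is. The geometric intersection number $i(\widetilde A,\widetilde B^{\star})$ is an isotopy invariant of the \emph{pair}, and once $\widetilde B^{\star}$ is fixed, one can still choose $\widetilde A$ (an arbitrary maximal system) so that a single pair $\alpha\in\widetilde A$, $\beta^{\star}\in\widetilde B^{\star}$ has arbitrarily large intersection number. On the once-punctured torus, for instance, take $\widetilde B^{\star}$ a fixed slope-$0$ curve and let $\widetilde A$ contain a slope-$p$ curve; the minimal intersection is $|p|$, unbounded. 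Your justification that ``cutting along $\widetilde A$ leaves polygonal cells of bounded combinatorial type, and $\beta^{\star}$ has only boundedly many \emph{isotopy classes} of arcs per cell'' confuses the number of isotopy classes with the number of parallel copies. Nothing in the canonicalization of $\widetilde B^{\star}$ prevents a single $\beta^{\star}$ from entering and leaving a given complementary polygon of $\widetilde A$ many times; these parallel arcs join up into a nontrivial curve globally. To make this style of argument work one would at minimum also have to normalize $\widetilde A$ to a canonical position (using the observation that bounding $|\widetilde A \cap \varphi(\widetilde B)|$ is equivalent, after conjugation, to bounding $|\widetilde A_0 \cap \eta(\widetilde B)|$ for a canonical $\widetilde A_0$); you do not do this, and this conjugation is a genuinely different and necessary idea.

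\textbf{Gap 2 (the absorption step fails).} You extend each system to a maximal system by adding $c_0(g,b)$ components, which you correctly note is linear in the genus $g$ \emph{and} the number $b$ of boundary components. The statement to be proved, however, has $K(g)$ depending only on $g$. Your final step sums over $(m+c_0)(n+c_0)$ pairs and claims to absorb lower-order terms into $O(g^4)\cdot mn$ ``using $m+n\le 2mn$ and $1\le mn$.'' This only works when $c_0$ is bounded in terms of $g$ alone. When $b$ is large compared to $m$ and $n$ (e.g.\ $m=n=1$, $b$ arbitrary), the quantity $(m+c_0)(n+c_0)\approx c_0(g,b)^2$ swamps $mn$, and the argument produces a bound depending on $b$, not the claimed $K(g)mn$. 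So the completion-to-maximal-system strategy, at least as written, does not yield a bound of the advertised form.

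For what it's worth, the cited paper does not canonicalize both systems via mapping-class-group orbits on maximal arc systems; that route inherits exactly the difficulties above. Its argument works more directly with the pieces obtained by cutting along one system and controls the combinatorics of the other system within those pieces by explicit shortening moves, which is how it avoids any dependence on $b$ and any blow-up from auxiliary curves.
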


We remark that for our further approach a bound of the form
$K(g,m)n$ would also be sufficient. Such a bound, even independent of $g$, was obtained independently
by Geelen, Huynh, and Richter \cite{Geelen:Explicit-bounds-for-graph-minors-2013}, but only under the additional assumption that the union of the $\beta_i$ does not separate~$S$. Thus, we cannot directly use their result here;
the extra assumption could probably removed, but it is easier to use the bounds
from~\cite{matousekSedgwickTancerWagner}.

%

We also need the following, probably standard, lemma.

\begin{lemma}
  \label{l:loops}
  Let $G = (V,E)$ be a graph with $n>2$ vertices embedded in $S^2$,
possibly with loops and multiple edges. Let us assume that
no two parallel edges (connecting the same two vertices)
and no two parallel loops (attached to the same vertex)
are isotopic by an isotopy fixing the end-vertices and
avoiding the other vertices.
 We also assume that there is no contractible  loop $\ell$;
that is, both the interior and exterior of each loop contain a vertex.
 Then $|E|\le3n - 6$.
\end{lemma}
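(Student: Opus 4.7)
The plan is to combine Euler's formula $V - E + F = 1 + c$ for a graph embedded in $S^2$ (where $c$ is the number of connected components) with the face-degree identity $\sum_f \deg(f) = 2|E|$. If every face had degree $\ge 3$, these would give $|E|\le 3n - 6$ immediately; the difficulty is that the hypotheses do permit faces of degree $1$ or $2$, namely when isolated vertices of $G$ sit inside such faces and act as obstructions that prevent the bounding edges or loops from being eliminated by isotopy or by contractibility.

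The key intermediate step is to show that each face of degree $\le 2$ contains at least one \emph{isolated} vertex of $G$ in its interior; since every isolated vertex lies in a unique face, this yields $|F_1| + |F_2| \le i$, where $F_j$ denotes the number of degree-$j$ faces and $i$ is the number of isolated vertices of $G$. First dispatch $|E|=0$ trivially and observe that $|F_0| = 0$ once $|E|\ge 1$. A face $F$ of degree $1$ has a boundary walk of length $1$, so it traverses a single loop $\ell$; then $F$ is the region on one side of $\ell$ that contains no other edges of $G$, and the no-contractible-loop hypothesis supplies a vertex inside $F$, which must be isolated. A face $F$ of degree $2$ arises from one of three kinds of closed boundary walk of length $2$: two parallel non-loop edges forming a bigon, two loops at a common vertex forming a bigon, or a single pendant edge traversed twice. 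In the first two cases, if the bigon were empty of vertices, the two parallel edges (or loops) could be isotoped to each other through this empty region, fixing the shared endpoints and avoiding all other vertices of $G$, contradicting the non-isotopy hypothesis; so a vertex, necessarily isolated, lies inside. In the pendant case, demanding that the face have degree exactly $2$ forces both endpoints of the edge to be leaves and forces $G$ to contain no other edges at all; hence $G$ is a single pendant plus $n - 2 \ge 1$ isolated vertices, all of which lie in $F$.

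To finish, write $c = i + k$ with $k \ge 1$ the number of non-isolated components and apply the bound $3F - 2|E| = \sum_f (3 - \deg(f)) \le 2|F_1| + |F_2| \le 2i$ together with $F = 1 + c + |E| - V$ (Euler) to deduce $|E| \le 3V - 3 - 3c + 2i = 3n - 3 - 3k - i \le 3n - 6$.

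The main obstacle I anticipate is the careful case analysis of degree-$2$ faces, and in particular the pendant subcase, where one must justify that a boundary walk of length exactly $2$ along a single edge forces both endpoints to be leaves and $G$ to contain no other edges, so that the hypothesis $n > 2$ can be invoked to supply the required isolated vertex.
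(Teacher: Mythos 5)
There is a genuine gap in your key intermediate claim, namely that every face of degree at most $2$ contains an isolated vertex. Your case analysis of degree-$2$ faces implicitly assumes the boundary of such a face is a \emph{single} closed walk of length $2$, but for a disconnected graph embedded in $S^2$ a face can have several boundary components, and this possibility is missed. Concretely: take two nested loops $\ell_1$ at vertex $v_1$ and $\ell_2$ at vertex $v_2$, with an isolated vertex $w_1$ strictly inside $\ell_1$ and an isolated vertex $w_2$ strictly outside $\ell_2$. All hypotheses hold ($n=4>2$, no parallel edges and no two loops at a common vertex, both sides of each loop contain a vertex), yet the annular face between $\ell_1$ and $\ell_2$ has two boundary walks of length $1$ each, hence degree $1+1=2$, and contains no vertex of $G$ whatsoever. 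Chaining $m$ nested loops at $m$ distinct vertices produces $m-1$ such degree-$2$ annuli while still keeping only $i=2$ isolated vertices, so $|F_1|+|F_2|\le i$ fails arbitrarily badly; already for $m=2$ your derived bound $|E|\le 3n-3-3k-i$ gives $|E|\le 1$ while $|E|=2$. (The lemma's conclusion $|E|\le 3n-6=6$ is of course still true here---your intermediate inequality is simply too strong.) Your analysis of degree-$1$ faces and of single-walk degree-$2$ faces (the bigon and pendant subcases) is correct; the missing case is the annular one.

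The paper's argument avoids this entirely by a reduction rather than a discharging scheme: it removes one loop at a time (and similarly one of any parallel pair) and adds a compensating edge between a vertex found on each side of the removed curve, then applies the simple planar bound after all loops and multiplicities are eliminated. The key flexibility is that the compensating vertex need not be isolated nor lie in the interior of a face---in the example above, for $\ell_1$ the paper would pick $w_1$ on the inside and $v_2$ on the outside, where $v_2$ lies on the \emph{boundary} of the annular face and belongs to a different component. Precisely this kind of vertex is invisible to a discharging argument that counts only isolated vertices in face interiors. A repair along your lines would need to charge degree-$\le 2$ faces to something more plentiful (e.g.\ connected components, or vertices of loops), but the straightforward ``isolated vertices'' target does not work.
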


\begin{proof}
  If $G$ contains neither loops nor multiple edges, then this is just
the usual bound for the number of edges of a simple planar graph.
 It remains to resolve loops and multiple edges.

  First let $\ell$ be a loop with an endpoint $v$.  It splits $S^2$ into two
  regions $X$ and $Y$. Let $F_X$ resp. $F_Y$ be the face of $G$ inside $X$
  resp. $Y$ bounded by $\ell$. Then $F_X$ has to contain a vertex $v_X\ne
  v$, for otherwise, $\ell$ can be contracted to $v$ or it
  is isotopic to another loop with endpoint $v$.

  Similarly, we have a vertex $v_Y$ in $F_Y$. Note
  that $v_X$ and $v_Y$ are not connected with an edge. We can
  remove $\ell$ from the graph and connect $v_X$ and $v_Y$ with an edge,
  keeping the graph embedded in $S^2$ and satisfying the isotopy assumptions.
  This way we can remove all loops without increasing the number of edges;
  see Figure~\ref{f:loops}.

 Similarly, if we have two parallel edges, we can remove one of them and add a
  new edge as compensation, reducing the number of pairs of parallel edges.
  In this way, we get a simple planar graph and the desired bound.
\end{proof}

\begin{figure}[t]
\begin{center}
  \includegraphics{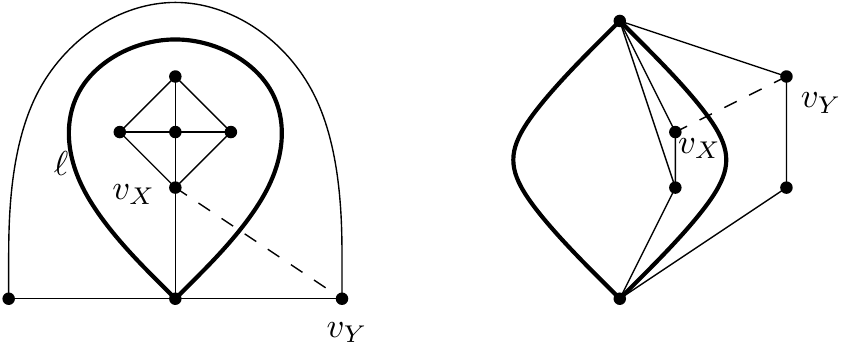}
\end{center}
\caption{Removing loops and parallel edges.}
\label{f:loops}
\end{figure}

\begin{proposition}
\label{pPMeetsA}
Let $X$ be an orientable, irreducible manifold with incompressible boundary.
Let $P \subset X$ be a properly embedded planar surface
that is either essential, or  strongly irreducible and boundary strongly irreducible. Let $\A \subset X$ be a collection of pairwise disjoint essential annuli.   Then there is a homeomorphism $h\colon X \to X$ so that $|\partial h(P) \cap \partial \A|  < C|\A|\cdot|\partial P|$, where $C=C(t)$ is a computable function of the number $t$ of tetrahedra in the triangulation of~$X$.
\end{proposition}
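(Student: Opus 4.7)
My plan has three stages: first isotope $P$ so that $P\cap\A$ is essential, then apply the untangling Lemma~\ref{l:untangle} on $\partial X$ to find a self-homeomorphism $\varphi$ of $\partial X$ with few intersections against $\partial\A$, and finally realize $\varphi$ as the boundary restriction of a self-homeomorphism $h$ of $X$.

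\textbf{Stage~1 (essential intersection).} Since $\A$ consists of essential annuli and $P$ is either essential or strongly irreducible and boundary strongly irreducible, I invoke Lemma~\ref{lemMakeIntersectionEssential} or Lemma~\ref{lemSurfacesMeetEssentially} to ambient-isotope $P$ in $X$ so that $P\cap A$ is essential in both $P$ and $A$ for every $A\in\A$. In each annulus $A$, the disjoint essential curves of $P\cap A$ are then mutually parallel, being either parallel core loops (contributing nothing to $\partial P\cap\partial A$) or parallel spanning arcs (each contributing exactly two points). By Proposition~\ref{propSurfaceFacts}, $\partial P$ and $\partial\A$ are collections of essential simple closed curves on $\partial X$, and components of $\partial X$ that are spheres carry no curves of $\partial\A$.

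\textbf{Stage~2 (untangling on $\partial X$).} View $\partial X$ as a closed orientable $2$-manifold of total genus $g\le O(t)$, since the triangulation has at most $4t$ boundary triangles. The systems $\partial\A$ (with at most $2|\A|$ components) and $\partial P$ (with $|\partial P|$ components) lie on $\partial X$ as disjoint systems of essential simple closed curves. Applying Lemma~\ref{l:untangle} component-wise to $\partial X$ (for closed components the ``fixing $\partial S$ pointwise'' condition is vacuous; for components arising after auxiliary cuts along any simple closed curve in the complement of $\partial\A\cup\partial P$, the condition is automatic) yields a self-homeomorphism $\varphi$ of $\partial X$ satisfying
\[
|\partial\A\cap\varphi(\partial P)|\le K(g)\cdot|\partial\A|\cdot|\partial P|\le C(t)\cdot|\A|\cdot|\partial P|,
\]
where $C(t)$ absorbs the factor $2$ coming from $|\partial\A|\le 2|\A|$ and the computable bound $K(g)=O(g^{4})=O(t^{4})$.

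\textbf{Stage~3 (extension to $X$, the main obstacle).} The chief difficulty is that not every $\varphi\in\mathrm{Mod}(\partial X)$ extends to a self-homeomorphism of $X$. I plan to exploit the flexibility in Lemma~\ref{l:untangle} by first performing the untangling on each component of $\partial X\setminus N(\partial\A)$ with its boundary treated as the first system $\alpha$ and fixed pointwise, so that the resulting $\varphi$ is the identity on a neighborhood of $\partial\A$. The map $\varphi$ then extends by the identity across a regular neighborhood $N(\A)\subset X$, and it remains to extend across each component of $X\setminus N(\A)$. For that extension, I would decompose the boundary restriction of $\varphi$ on each piece into Dehn twists along essential disks/annuli produced by irreducibility of $X$ and incompressibility of $\partial X$; Lemma~\ref{l:loops} then bounds the combinatorial complexity of the decomposition and keeps the overall constant $C(t)$ computable. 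With $h$ so constructed, $\partial h(P)=\varphi(\partial P)$, and the bound from Stage~2 yields the desired inequality $|\partial h(P)\cap\partial\A|<C(t)\cdot|\A|\cdot|\partial P|$.
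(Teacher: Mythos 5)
The idea of invoking Lemmas~\ref{lemMakeIntersectionEssential}/\ref{lemSurfacesMeetEssentially} to make $P\cap\A$ essential, and of using the untangling Lemma~\ref{l:untangle} with a computable genus bound, matches the paper's skeleton. But your Stage~3 has a genuine gap that the paper avoids by choosing a different surface on which to untangle.

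You apply Lemma~\ref{l:untangle} directly to $\partial X$ and then try to extend the resulting $\varphi\in\mathrm{Homeo}(\partial X)$ to a self-homeomorphism $h$ of $X$. As you note, not every mapping class of $\partial X$ extends to $X$, and your proposed fix does not close this. Fixing a neighborhood of $\partial\A$ pointwise and extending by the identity across $N(\A)$ only pushes the problem one step inward: you are still left with pieces $Y$ of $X\setminus N(\A)$ and an arbitrary boundary-fixing homeomorphism $\varphi'$ of $\partial Y\cap\partial X$ that you must extend across $Y$. The claim that $\varphi'$ decomposes into Dehn twists along curves bounding disks or co-bounding annuli in $Y$ has no support and is generally false; and the role you assign to Lemma~\ref{l:loops} (bounding the ``complexity of the decomposition'') is not what that lemma does---it is a purely planar Euler-formula bound on edges of a graph on $S^2$. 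Moreover, Lemma~\ref{l:untangle} gives no control over which mapping class the homeomorphism $\varphi$ represents, so there is no way to steer it into an extendable class by that route.

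The paper sidesteps the extension obstruction entirely by never applying the untangling lemma to $\partial X$. Instead, it first observes (using essentiality of $P\cap\A$ and Lemma~\ref{l:loops} applied to the planar graph whose vertices are the components of $\partial P$ and whose edges are parallel classes of arcs) that the arcs of $P\cap\A$ fall into at most $3|\partial P|$ parallel classes, and chooses one band $B_j\subset P$ per class. Then $\A\cup\BB$ carries an $I$-bundle structure, which extends to a regular neighborhood $M:=N(\A\cup\BB)$, an $I$-bundle over a base surface $S$; the core curves $\alpha_i\subset A_i$ and $\beta_j\subset B_j$ live in $S$. The untangling lemma is applied in $S$ (whose genus is bounded by $O(t)$ via a double-covering argument $\tilde S=M\cap\partial X\to S$). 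The key point your proposal misses is that a self-homeomorphism of $S$ fixing $\partial S$ pointwise always extends to a bundle automorphism $h$ of $M$ that is the identity over $\partial S$ (the paper handles the twisted case carefully by cutting along an arc), and $h$ then extends to $X$ by the identity outside $M$. There is no extension obstruction because the surface on which the untangling is performed bounds an $I$-bundle embedded in $X$; this is precisely the structural input your direct attack on $\partial X$ lacks.
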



\begin{proof}
Using either Lemma \ref{lemMakeIntersectionEssential} or Lemma \ref{lemSurfacesMeetEssentially},  
we may isotope $P$  so that its
intersection with $\A$ is essential, that is every component of $P \cap \A$ is
a curve that is essential in both $P$ and $A$.  This implies that the result
holds when $P$ is a disk,
 for then $P$ contains no essential curves, and thus $P \cap
\A$ is empty.   We proceed assuming $|\partial P|>1$.

In $\A$ every
intersection arc is a spanning arc and every intersection loop is the core
curve of an annulus.  This is illustrated below in the left picture, while the
right picture shows the intersection curves in~$P$:
\immfig{bandsInP}

Say that two arcs belong to the same \emph{parallel class} if they are isotopic
in $P$.  If $|\partial P|=2$, then $P$ is an annulus and there is at most one
parallel class of intersection arcs.   When $|\partial P|>2$, form a planar
graph by treating each boundary component as a vertex and each parallel class
of arcs as an edge. The number of edges, hence parallel  classes, is bounded by
$3(|\partial P|-2)$ by Lemma~\ref{l:loops}.  We can cover the cases when $P$ is
an annulus or disk by reducing this last bound slightly.  In all cases the
number of parallel classes of arcs is bounded by $3(|\partial P|-1)<3|\partial
P|$.

A \emph{band} in $X$ is an embedded, but not properly embedded, rectangle
meeting $\partial X$ in precisely its top and bottom sides.  For each parallel
class of intersection arcs in $P$, we may choose a band $B_j$ that is a
sub-surface of $P$, contains all intersection arcs in the class, and
meets no other curves of intersection.   Then $\BB$, the union of all such
bands,  has at most $3|\partial P|$ components and contains all arcs, but
no loops, of the intersection $P \cap \A$.



Next, let us draw the core curve $\alpha_i$ for every annulus in $A_i\in\A$,
and  a curve $\beta_i$ parallel to the top and bottom
sides (those in $\bd X$) in the middle of each band $B_j$.
Let us think of these $\alpha_i$ and $\beta_j$ as being (locally) horizontal
and lying in the same level; then, again locally, $A_i$ is a
vertical ``wall'' through $\alpha_i$ and $B_j$ is a vertical ``wall''
through $\beta_j$. We have the $A_i$ and $B_j$ fibered with segments, as in the
left picture, and so the union $\A\cup\BB$ has the structure
of an $I$-bundle $M_0$ over $(\bigcup_i\alpha_i)\cup(\bigcup\beta_j)$,
where $I$ is the interval $[-1,1]$; see the left picture below:
\immfig{iband}
As the picture illustrates, some of the $A_i$ or $B_j$ may be twisted
between the intersections with the others.

Next the $I$-bundle structure on $\A\cup\BB$ can be extended to a sufficiently
small regular neighborhood $N(\A\cup\BB)$. Indeed, we can consider the regular
neighborhood as the star of $\A\cup\BB$ (say in the second barycentric
subdivision of some triangulation); see~\cite[Chapter
3]{Rourke:Introduction-to-piecewise-linear-topology-1972}. Therefore,
$N(\A\cup\BB)$ has locally structure as product of $\A\cup\BB$ with $I$.


We obtain an $I$-bundle
$M$ over a base surface $S$ forming a narrow ribbon along the
$\alpha_i$ and the $\beta_j$. This is illustrated locally in the right
picture above.


%
%

The plan is now to use Lemma~\ref{l:untangle} (untangling curves in a
surface) for the systems of curves $\alpha_i$ and $\beta_j$ within $S$,
which yields a self-homeomorphism $\varphi\colon S\to S$ fixing $\bd S$
pointwise, such that the number of intersections of the
$\alpha_i$ with the $\varphi(\beta_j)$ is suitably bounded.
Then we want to extend $\varphi$ to a bundle self-homeomorphism
$h\colon M\to M$ that is the identity over $\bd S$
(i.e., on the vertical walls bounding $M$ in the picture). After that,
$h$ can be extended identically to $X\setminus M$ and we will be done.

There are two issues to be handled. First, in order to
use Lemma~\ref{l:untangle}, bound the genus of
each component of $S$ by a computable function of $t$; we will actually
obtain an $O(t)$ bound.

To this end, we observe that $S$ is double-covered by
a surface $\tilde S := N(\A \cup \BB) \cap \partial X$.
Let $K$ be a component of
$S$ and $\tilde K$ be the corresponding double cover of $K$ in $\tilde S$.
In particular, $\chi(\tilde K) = 2\chi(K)$. For a
surface $F$, we let $b(F)$
denote the number of boundary components, and define $g_e(F) := 2 - \chi(F) -
b(F)$. If $F$ is connected, this value is known as the Euler genus of $F$. Then we get
\[
g_e(K) = 1 + g_e(\tilde K)/2 - b(K) + b(\tilde K)/2 \leq 1 + g_e(\tilde K)/2.
\]

Let $\tilde Q$ be a component of $\tilde K$ ($\tilde K$ has two components if
$K$ is orientable). Since $\tilde Q \subseteq \partial X$, we
have $g_e(\tilde Q) \leq g_e(X^{\partial}_Q)$ where $X^{\partial}_Q$ is the component of $\partial X$
containing $\tilde Q$. The Euler genus of $X^{\partial}_Q$ is bounded by $O(t)$,
since $\bd X^{\partial}_Q = \emptyset$ and $|\chi(X^{\partial}_Q)|=O(t)$ (note that the
 number of triangles, edges and vertices in triangulation of $X$ are all
 bounded by $O(t)$).

Altogether $g_e(\tilde Q) = O(t)$, and
since $\tilde K$ has at most
two components, $g_e(K)=O(t)$.
Since the genus of a surface is at most twice the Euler
genus, we also obtain $g(K)=O(t)$.

By applying Lemma~\ref{l:untangle} as announced above,
working in each component $K$ of $S$ separately and then
summing up, we obtain a self-homeomorphism $\varphi$ of $S$,
fixed pointwise on $\bd S$, such that the total number of
intersections of the $\alpha_i$ with the $\varphi(\beta_j)$
is at most $C_0(t)|\A|\cdot|\BB|\le 3 C_0(t)|\A|\cdot|\partial P|$,
where $C_0(t)$ is a computable function of~$t$.

It remains to deal with the second and last issue, namely, showing
that $\varphi$ extends to a bundle self-homeomorphism $h\colon M\to M$ that
 is identical over $\partial S$.
Here we may assume w.l.o.g.\ that $S$, and hence $M$, are connected.

By the assumption, $M$ is embedded in $X$, and so it is orientable.
It follows that if the surface $S$ is orientable, then
  $M$ is actually the product $S \times [-1,1]$, and the extension
of $\varphi$ to $h$ is obvious.

So let $S$ be non-orientable; then $M$ is non-trivially twisted and there
are no global coordinates. For a subset $S'$ of $S$ we will use a notation $S'
\twist I$ for the subbundle of $M$ consisting of points of $M$ that project to
$S'$. (In particular, we also regard $M$ as $S \twist I$.)

For any connected non-orientable
surface $S$ there is a non-separating arc $\sigma \subset S$ with both
endpoints on the same boundary component
for which $S_{\sigma}$, which is $S$ cut along $\sigma$,
is an orientable surface. We also let $f_\sigma\colon S_\sigma\to S$
be the map gluing $S_\sigma$ back to~$S$.\footnote{It is not difficult to find
such an arc $\sigma$ in the projective plane or the Klein bottle with a single
hole. Any other
nonorientable surface with nontrivial boundary can be obtained by adding handles (not across the desired
arc $\sigma$) and holes to one of the two
surfaces above (adding a handle increases the non-orientable genus by $2$).}

Thus, after cutting $M$ along $\sigma\times I$, we obtain a product
bundle $M_\sigma$, homeomorphic to $S_\sigma\times I$.
In the boundary of $M_\sigma$  we
see two rectangle scars from cutting along $\sigma \times I$.   We get
the twisted bundle $M$ back by gluing $M_\sigma$ to itself along the
rectangles so that the top of one is glued to the bottom of the other.
%

Now the given homeomorphism $\varphi\colon S\to S$ also takes $\sigma$
 to a curve $\varphi(\sigma)$ that has
the same separation properties. We define $S_{\varphi(\sigma)}$ and
$f_{\varphi(\sigma)}$ in the same way as $S_\sigma$ and $f_\sigma$ above.
The homeomorphism $\varphi$ also
induces a homeomorphism $\varphi'\colon S_{\sigma} \to S_{\varphi(\sigma)}$
satisfying $f_{\varphi(\sigma)} \circ \varphi' = \varphi \circ f_{\sigma}$.

The homeomorphism $\varphi'$ can be extended to a homeomorphism of the product
bundles $h'\colon S_\sigma \times I\to S_{\varphi(\sigma)} \times I$ in
two ways (by either keeping $I$ or swapping it). By gluing back along the
rectangular scars, $h'$ induces a homeomorphism $h\colon M\to M$.

Recall that $M$ is orientable since it embeds into $X$.
Since we had two choices for $h'$ we select one for which $h$ is an orientation
preserving automorphism. It follows that whenever $K$ is a boundary component
of $S$, then $h$ is the identity on $K \twist I$ (it cannot flip $I$ here
since such a flip would reverse the orientation on $\partial M$, hence on $M$).
\end{proof}



\section{Proof of the short-meridian theorem}
\label{sec:proof-main}

We already have almost all of the ingredients ready to prove
Theorem~\ref{t:short-meridian}, following the outline
from Section~\ref{s:outline}.

We assume that $X$ is irreducible, has incompressible boundary
(which we may assume to be nonempty), embeds in $S^3$, and has a 0-efficient triangulation with $t$ tetrahedra.
Note that the second conclusion of the following lemma may
require a re-embedding of $X$ into $S^3$.

\begin{lemma}
\label{lemWitness}
$X$ contains a planar surface $P$ so that:

\begin{enumerate}
\item[\rm(1)] $P$ is essential, or, strongly irreducible and boundary strongly irreducible, and
\item[\rm(2)]
 $\partial P$ is meridional or almost meridional in some embedding of $X$ in $S^3$.  \label{iMerid}
\end{enumerate}
\end{lemma}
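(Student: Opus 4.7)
The plan is to invoke a theorem of Li on thin position for planar surfaces in graph exteriors, after first normalizing the embedding so that $S^3 \setminus X$ is a disjoint union of handlebodies. The lemmas already established in this section, particularly Lemmas~\ref{lemCompressAlmostMeridional} and~\ref{lemEssentialAlmostMeridional}, then serve to upgrade the output of Li's theorem into the precise form required by the statement.

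First, by hypothesis $X$ embeds in $S^3$. By the theorem of Fox \cite{Fox:On-the-imbedding-of-polyhedra-in-3-space-1948}, we may modify this embedding, without changing the homeomorphism type of $X$, so that the closure of each component of $S^3 \setminus X$ is a handlebody. Consequently $X$ is the exterior in $S^3$ of a finite graph $G$, and the meridians in $\partial X$ for this embedding are exactly the essential curves in $\partial X$ that bound disks in the regular neighborhood of $G$.

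Next, apply Li's planar surface theorem \cite{Li:Thin-position-and-planar-surfaces-for-graphs-in-the-3-sphere-2010} to the graph $G$. This provides a planar surface $P \subset X$, with $\partial P$ meridional or almost meridional, that is stuck in one of a short list of positions: $P$ is either essential; or strongly irreducible and boundary strongly irreducible; or merely incompressible and almost meridional. In the first two cases $P$ already witnesses both (1) and (2).

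In the remaining case we apply Lemma~\ref{lemEssentialAlmostMeridional} directly: an incompressible almost meridional planar surface can be iteratively boundary compressed, and by Lemma~\ref{lemCompressAlmostMeridional} each such compression preserves the existence of an almost meridional component. The procedure strictly simplifies the surface and hence terminates at a surface that is incompressible, boundary incompressible, and still contains an almost meridional component; that component is not a disk because $\partial X$ is incompressible, hence is essential and almost meridional as needed. The main obstacle is dealing with Li's extra third case, which is precisely what Lemmas~\ref{lemCompressAlmostMeridional} and~\ref{lemEssentialAlmostMeridional} were designed to accomplish, so the present lemma is essentially a short synthesis of those two lemmas with the external theorems of Fox and Li.
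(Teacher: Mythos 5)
Your proof is correct and follows essentially the same route as the paper: invoke Fox's theorem to make the complement a union of handlebodies, apply Li's planar surface theorem to the resulting graph exterior, and then use Lemma~\ref{lemEssentialAlmostMeridional} (built on Lemma~\ref{lemCompressAlmostMeridional}) to reduce Li's third case (incompressible, almost meridional) to the essential case.
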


\begin{proof}
Since $X$ embeds in $S^3$, we can apply the result of Fox
\cite{Fox:On-the-imbedding-of-polyhedra-in-3-space-1948} that shows $X$ may be embedded so that $S^3 \setminus  interior (X)$ is a collection of handlebodies.

  Then we may view $X$ as the exterior, $X= S^3 \setminus N(\Gamma)$, where $\Gamma$ is a graph consisting of a spine of each   handlebody.   In this context, we may apply Theorem 3 of Li \cite{Li:Thin-position-and-planar-surfaces-for-graphs-in-the-3-sphere-2010} that states that $X$ contains a planar surface that is either: (1)  meridional, strongly irreducible, and boundary strongly irreducible, or (2) almost meridional and essential , or (3) non-separating, almost
meridional, and incompressible.  By Lemma~\ref{lemEssentialAlmostMeridional},
case (3) reduces to   case (2), and the lemma follows.
\end{proof}

\begin{lemma}
Let $P$ be a surface satisfying the conditions of Lemma~\ref{lemWitness},
 and  let $h\colon X \to X$ be a homeomorphism.
Then $h(P)$ satisfies the conditions of Lemma~\ref{lemWitness}
for some re-embedding of~$X$.
\end{lemma}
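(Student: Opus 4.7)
The plan is to handle the two conditions separately, noting that the first one is an intrinsic property of the pair $(X, P)$ while the second one depends on the embedding of $X$ in $S^3$, which we are free to change.

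For condition~(1), every property in the definition of ``essential'' (incompressible, boundary incompressible, not a sphere bounding a ball, not a trivially boundary-parallel disk) and in the definition of ``strongly irreducible and boundary strongly irreducible'' is phrased entirely in terms of $X$ and the embedded surface, using compressing disks, boundary compressing disks, and bi-compressing pairs. Any self-homeomorphism $h\colon X \to X$ sends such disks (for $P$) bijectively to the corresponding disks (for $h(P)$) via $D \mapsto h(D)$. Hence $h(P)$ satisfies~(1) if and only if $P$ does. In particular, $h(P)$ is still a planar surface and satisfies condition~(1).

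For condition~(2), let $\phi_0\colon X \hookrightarrow S^3$ be an embedding witnessing condition~(2) for $P$, so that each boundary component of $\phi_0(\partial P)$ (with possibly one exception) bounds a disk in $S^3 \setminus \phi_0(X)$. Define the new embedding
\[
\phi_1 := \phi_0 \circ h^{-1}\colon X \hookrightarrow S^3.
\]
Then $\phi_1(X) = \phi_0(X)$ as subsets of $S^3$, and in particular the complements agree: $S^3 \setminus \phi_1(X) = S^3 \setminus \phi_0(X)$. Moreover,
\[
\phi_1(h(P)) = \phi_0\bigl(h^{-1}(h(P))\bigr) = \phi_0(P),
\]
so the boundary components of $\phi_1(h(P))$ are exactly the boundary components of $\phi_0(P)$, all but at most one of which bound disks in $S^3 \setminus \phi_0(X) = S^3 \setminus \phi_1(X)$. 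Hence $h(P)$ is meridional (respectively almost meridional) in the embedding $\phi_1$, and condition~(2) is satisfied for this re-embedding of~$X$.

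There is no real obstacle here: the statement is essentially a naturality observation, saying that the two hypotheses of Lemma~\ref{lemWitness} are invariant under the combined operation of applying a self-homeomorphism of $X$ and pre-composing the embedding with its inverse. The point of recording this lemma is that later, when we apply the homeomorphism $h$ produced by Proposition~\ref{pPMeetsA} in order to control $|\partial P \cap \partial \A|$, we are allowed to replace $P$ by $h(P)$ and the original embedding by $\phi_0 \circ h^{-1}$ without losing any of the structural hypotheses needed in the remainder of the proof of Theorem~\ref{t:short-meridian}.
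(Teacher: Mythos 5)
Your proof is correct and takes essentially the same approach as the paper: condition~(1) is invariant because $h$ carries (boundary) compressing disks to (boundary) compressing disks, and condition~(2) is handled by the same re-embedding $\phi_0\circ h^{-1}$ that the paper uses. Your explicit observation that $\phi_1(h(P))=\phi_0(P)$ as subsets of $S^3$ is a slightly cleaner way of stating the paper's argument, but the content is identical.
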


\begin{proof}
Because the homeomorphism $h$ maps any disk in $X$ to a disk  in $X$, $P$ is essential if and only if $h(P)$ is essential; and, $P$ is strongly irreducible and boundary strongly irreducible if and only if $h(P)$ is strongly irreducible and boundary strongly irreducible.

Let $e\colon X \to S^3$  be the embedding for which $P$ is (almost)
meridional.   Then $r := e \circ h^{-1}\colon X \rightarrow S^3$
is a re-embedding of
$X$, and any component $e(\mu) \subset e(\partial P)$ bounds a disk in $S^3
\setminus X$ if and only if $r(h(\mu))$ bounds a disk in $ S^3 \setminus
(r\circ h(X))$.   Then  $P$ is (almost) meridional in the original
embedding if and only if $h(P)$ is meridional in the re-embedding.
\end{proof}

We can thus place additional constraints on $P$.
Let $\alpha$ be the tight essential annulus curve given by
Proposition~\ref{propEssentialAnnuli} and let $\Gamma$ be the set of tight essential curves bounding boundary parallel annuli given by Proposition~\ref{propB}.

\begin{assume}
\label{aMinimize}
Among planar surfaces $P$ satisfying the conclusion of Lemma \ref{lemWitness} choose $P$ to minimize,  in this order:
\begin{enumerate}[\rm(1)]
\item $| \partial P \cap \alpha|$;
\item $\compl(\partial P)$, and hence $\ell(\partial P)$; and
\item $|\partial P \cap (\alpha \cup \Gamma)|$.
\end{enumerate}
\end{assume}

The next lemma shows that $P$'s intersections with $\alpha$ are bounded by a
linear function of  $\chi(P)$ for $t$ fixed.

\begin{lemma}\label{lemPSnugTight}Under Assumption \ref{aMinimize}, we have
\begin{enumerate}[\rm(1)]
\item $| \partial P \cap \alpha | \leq C_0(t)|\partial P|$, where $C_0(t)$
depends on $t$, the number of tetrahedra;

\item $\partial P$ is tight and essential; and
\item $\{\partial P\} \cup \{\alpha\} \cup \Gamma$ is pairwise snug.
\end{enumerate}
\end{lemma}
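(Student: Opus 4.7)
For (1), I would invoke Proposition~\ref{pPMeetsA} with the collection $\A$ of pairwise disjoint essential annuli that witnesses $\alpha$ in Proposition~\ref{propEssentialAnnuli}. This yields a homeomorphism $h\colon X\to X$ with $|\partial h(P)\cap\partial\A|<C(t)|\A|\cdot|\partial P|$. By the lemma just before Assumption~\ref{aMinimize}, $h(P)$ again satisfies the conclusions of Lemma~\ref{lemWitness} (possibly after re-embedding $X$ in $S^3$), so it is a legitimate candidate in the minimization. Hence
\[
|\partial P\cap\alpha|\;\leq\;|\partial h(P)\cap\alpha|\;\leq\;|\partial h(P)\cap\partial\A|\;<\;C(t)|\A|\cdot|\partial P|,
\]
using $\alpha\subseteq\partial\A$ for the middle inequality and Assumption~\ref{aMinimize}(1) for the first. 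Finally, the iterative construction in Lemma~\ref{lemAddtoAnnuli} adds one annulus to $\A$ per step and strictly enlarges $\alpha$, so together with Proposition~\ref{propEssentialAnnuli}(2) we obtain $|\A|\leq|\alpha|<4t$, giving $C_0(t)=4t\cdot C(t)$.

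For (2), essentiality of $\partial P$ is immediate from the dichotomy in Lemma~\ref{lemWitness}: if $P$ is essential then $\partial P$ is essential by boundary incompressibility; if $P$ is strongly irreducible and boundary strongly irreducible then Lemma~\ref{lemSIBSIhasEssentialBoundary} applies. For tightness, suppose that $\partial P$ is not tight and let $\gamma\subset\partial X$ be a tight curve isotopic to $\partial P$. By Lemma~\ref{lemAddToSnug}, a normal isotopy makes $\gamma$ snug with $\alpha$, so $|\gamma\cap\alpha|=i(\gamma,\alpha)=i(\partial P,\alpha)\leq|\partial P\cap\alpha|$. Extending the isotopy from $\partial P$ to $\gamma$ in $\partial X$ to a surface isotopy of $P$ in $X$ produces a new $P'$ satisfying Lemma~\ref{lemWitness} with $|\partial P'\cap\alpha|\leq|\partial P\cap\alpha|$ and $\compl(\partial P')=\compl(\gamma)<\compl(\partial P)$, strictly improving the tuple of Assumption~\ref{aMinimize} and giving a contradiction.

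For (3), I first apply Lemma~\ref{lemAddToSnug} iteratively to replace $\alpha$ and the members of $\Gamma$ by normally isotopic tight representatives that are pairwise snug. It remains to show that $\partial P$ is snug with every $\beta\in\{\alpha\}\cup\Gamma$. Assume the contrary; by the bigon criterion there is an innermost bigon or half-bigon $B$ bounded by subarcs of $\partial P$ and $\beta$, and by innermost-ness the remaining curves of $\{\alpha\}\cup\Gamma$ meet $B$ only in arcs crossing it straight across. Apply Lemma~\ref{lemBigonCurves} to $(\partial P,\beta)$: both curves are tight, which rules out conclusions (2) and (3), forcing conclusion (1)—the swap yields curves normally isotopic to $\partial P$ and $\beta$ but with one fewer mutual intersection, while preserving intersections with every other curve. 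Extend the boundary swap to an isotopy of $P$ in $X$; the new surface $P'$ still satisfies Lemma~\ref{lemWitness}. If $\beta=\alpha$ this reduces $|\partial P\cap\alpha|$, contradicting Assumption~\ref{aMinimize}(1); if $\beta\in\Gamma$ then $|\partial P\cap\alpha|$ and $\compl(\partial P)$ are unchanged but $|\partial P\cap(\alpha\cup\Gamma)|$ strictly decreases, contradicting Assumption~\ref{aMinimize}(3).

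The main technical issue is bookkeeping: every simplifying move—the homeomorphism from Proposition~\ref{pPMeetsA}, the tightening isotopy in the boundary, and the bigon swaps—must produce a surface that is still a valid candidate for Assumption~\ref{aMinimize}, and the move must strike at the correct coordinate of the lexicographic tuple. Item (1) is where this is most delicate, because $h$ may force a genuine re-embedding of $X$ into $S^3$; it is precisely the preceding lemma (invariance of the Lemma~\ref{lemWitness} conditions under homeomorphism plus re-embedding) that makes this legitimate.
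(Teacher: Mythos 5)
Your proposal is correct and follows essentially the same route as the paper: apply Proposition~\ref{pPMeetsA} with the annulus collection $\A$ witnessing $\alpha$ to produce a candidate with controlled $|\partial P \cap \alpha|$ (using the homeomorphism-invariance lemma preceding Assumption~\ref{aMinimize}), combine $\alpha\subseteq\partial\A$ with $|\A|\leq|\alpha|<4t$ for the bound, and then exploit the lexicographic minimization together with tightening, Lemma~\ref{lemAddToSnug}, and the bigon/corner-exchange mechanism to force (2) and (3). The only stylistic difference is that for item (3) you re-derive the innermost-bigon argument underlying Lemma~\ref{lemAddToSnug} directly, whereas the paper simply cites that lemma to conclude that tightness and pairwise snugness can be achieved simultaneously and hence hold for the minimizer.
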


\begin{proof}
We have $\alpha \subset \partial \A$, where $\A$ is a collection of pairwise
disjoint essential annuli.  We choose $\A$ to minimize $|\A|$ subject to $\alpha
\subset \partial \A$.  Then $|\A| \leq |\alpha|$, because each $A \in \A$ must
contribute at least one unique component to $\alpha$.

By Proposition~\ref{pPMeetsA}, there is a homeomorphism of $X$ so that the image of the planar
surface, call it $P$, satisfies $|\partial P \cap \partial \A| \leq C(t)|\A|\cdot|\partial P|$
for a suitable $C(t)$.   Now Proposition~\ref{propEssentialAnnuli}
guarantees that  $|\alpha|\leq 4t$, and hence $|\partial P \cap \partial \A| \leq
4C(t)t|\partial P|$ by Assumption~\ref{aMinimize}.

Because $P$ is either essential or strongly irreducible,
$\partial P$ consists of essential curves
(Lemma~\ref{lemSIBSIhasEssentialBoundary}).
 Thus, $\partial P$ can be tightened; this may possibly
 increase $|P \cap \partial \A|$.

However, since $\alpha$ and $\Gamma$ are tight,
using Lemma~\ref{lemAddToSnug} repeatedly, we can make $\{\partial P\} \cup
\{\alpha\} \cup \Gamma$ pairwise snug within their normal isotopy classes.
In particular, after this step $|P \cap \partial \A| = i(\partial P, \partial
\A)$ where $i(...)$ is the geometric
intersection number. This again guarantees that $|P \cap \partial \A|$ is
minimized.

Therefore, we can simultaneously achieve
$\partial P$ tight and $\{\partial P\} \cup
\{\alpha\} \cup \Gamma$ pairwise snug.
Hence both of these properties hold under
Assumption~\ref{aMinimize}.
\end{proof}


\begin{lemma}
\label{lemNormalizeP}$P$ can be isotoped, without changing $\partial P$, to be normal or almost normal.
\end{lemma}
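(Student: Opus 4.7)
The plan is to dispose of the two cases of Lemma~\ref{lemWitness} separately: when $P$ is essential I would apply Proposition~\ref{propNormalize}, and when $P$ is strongly irreducible and boundary strongly irreducible I would invoke the almost-normalization theorem of Bachman, Derby-Talbot and Sedgwick~\cite{bachmanDerbyTalbotSedgwick} (extending Stocking~\cite{stocking-trams}).

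In the essential case I would isotope $P$ in the interior of $X$, keeping $\partial P$ pointwise fixed, so as to lexicographically minimize the pair $(|P \cap \T^1|, |P \cap \T^2|)$ among all surfaces reachable by such boundary-fixing isotopies; call the result $P'$. Because $\partial P$ is already tight by Lemma~\ref{lemPSnugTight}, the complexity $\compl(P') = (\compl(\partial P), |P' \cap \T^1|, |P' \cap \T^2|)$ is simultaneously minimized over all surfaces isotopic to $P$, so $P'$ is tight in the sense of Section~\ref{sec:summands_of_annuli}. Proposition~\ref{propNormalize} then immediately yields that $P'$ is normal, and the isotopy from $P$ to $P'$ fixes $\partial P$ by construction.

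In the strongly irreducible and boundary strongly irreducible case I would appeal to a relative version of~\cite{bachmanDerbyTalbotSedgwick}, namely that such a surface can be isotoped to an almost normal one by an isotopy fixing $\partial P$ pointwise. This is plausible precisely because $\partial P$ is already tight and $P$ is boundary strongly irreducible, so no boundary-reducing move on $P$ is ever needed: every simplification in the Bachman--Derby-Talbot--Sedgwick sweepout/thin-position argument is an interior compression or an interior isotopy of $P$, each supported away from~$\partial X$.

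The main obstacle, and the only nontrivial work, is justifying this relative version. I would do so by re-running the argument of~\cite{bachmanDerbyTalbotSedgwick} using a sweepout whose generic level surfaces all coincide with $P$ in a small collar of $\partial X$ (and hence share the boundary $\partial P$); the thin-position induction then only ever invokes interior compressions along disks (ruled out in the limit by strong irreducibility) or interior isotopies, and both kinds of moves are boundary-fixing. Since this relative adaptation is routine, I would simply state it as a direct consequence of~\cite{bachmanDerbyTalbotSedgwick} and cite their paper, referring the reader to their unpublished manuscript (as the excerpt already does for Lemma~\ref{lemSurfacesMeetEssentially}) for the technical details.
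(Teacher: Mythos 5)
Your approach matches the paper's: in the essential case tighten $P$ rel $\partial P$ (using the tightness of $\partial P$ from Lemma~\ref{lemPSnugTight}) and invoke Proposition~\ref{propNormalize}; in the strongly irreducible and boundary strongly irreducible case invoke~\cite{bachmanDerbyTalbotSedgwick}. The only difference is in the second case: you propose re-deriving a relative (boundary-fixing) version of their theorem by re-running the sweepout argument, whereas the paper observes that no new relative argument is needed, since the normalization in Proposition~3.1 of~\cite{bachmanDerbyTalbotSedgwick} already assumes a least-length boundary (their Lemma~3.9) and the subsequent steps move only the interior.
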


\begin{proof}
If $P$ is essential, then, since $\bd P$ is tight,
$P$ itself can be tightened without changing $\bd P$.
Then $P$ is normal by Proposition~\ref{propNormalize}.

 If $P$ is strongly irreducible and boundary strongly irreducible,
 then the main result of \cite{bachmanDerbyTalbotSedgwick} states
that $P$ is isotopic to an almost normal surface.   Moreover, in the proof of
Proposition~3.1 of \cite{bachmanDerbyTalbotSedgwick} it is assumed  that
$\partial P$ is least length (see Lemma~3.9), which is satisfied when $\partial
P$ is tight.   The  additional normalization steps taken there
isotope the interior of $P$ without changing its boundary, so $\partial P$
is also fixed in the almost normal case.
\end{proof}

\heading{The average length argument. }
We
mark the triangulation $\T$ of $X$ with marking
$M = (\alpha \cup \Gamma) \cap \T^1$.
Thus, the (almost) meridional, (almost) normal planar surface $P$
can be written as a sum of fundamental (almost) $M$-normal surfaces,
$ P = \sum k_i F_i$, and its boundary is the sum of the boundary
curves of the fundamentals:
\[
\partial P = \sum k_i \partial F_i.
\]

Since $\partial P$ is essential and tight, the boundary of each summand is essential and tight
by Lemma~\ref{lemCurveSummandsLeastComplexity}.
Each $F_i$ falls into at least one of the following categories:
\begin{enumerate}
\item $\bd F_i=\emptyset$; \label{case:emptybd}
\item $F_i$ is almost normal;\label{case:almostn}
\item $\chi(F_i)>0$ and $F_i$ is normal;
\label{case:chi>0}
\item $\chi(F_i)=0$, $F_i$ is normal, and one of the following hold:
\begin{enumerate}
\item $F_i$ is a compressible annulus;\label{case:a}
\item $F_i$ meets $\alpha$;
\label{case:b}
\item $F_i$ is a boundary parallel annulus disjoint from $\alpha$;
\label{case:c}
\item $F_i$ is an essential annulus or M\"{o}bius band
disjoint from $\alpha$. \label{iDisjointAnnuli}
\end{enumerate}
\item $\chi(F_i) < 0$ and $F_i$ is normal.\label{case:nla}
\end{enumerate}

We will bound the total length $\ell(\partial P)$ by bounding the coefficients of each of these types in the boundary sum $\partial P = \sum k_i \partial F_i$.
Obviously, we can ignore summands with empty boundary (Case~\ref{case:emptybd}).
Since there can be at most one exceptional piece in the almost normal
case, we have $k_i \leq 1$ in Case~\ref{case:almostn}.

\begin{lemma}
\label{lemNoPositive}
There are no normal 
summands with $\chi(F_i)>0$.
\end{lemma}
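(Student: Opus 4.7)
The plan is to show directly that a normal surface with positive Euler characteristic cannot occur as a summand, by classifying such surfaces and ruling out each possibility from the standing hypotheses.

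First, I would recall that any connected surface with $\chi > 0$ is homeomorphic to a $2$-sphere, a disk, or a projective plane. The projective plane case is killed immediately by Proposition~\ref{propSurfaceFacts}(v), since $X$ is orientable. The $2$-sphere case is killed by $0$-efficiency together with the fact that $X$ has no $S^2$ boundary components (these have already been filled with $3$-balls in Step~\ref{step:holes} of the algorithm), which by Proposition~5.15 of~\cite{Jaco:0-efficient-triangulations-of-3-manifolds-2003} ensures $X$ contains no normal $2$-spheres.

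This leaves the disk case, which is the only one requiring a small argument. Here I would use the defining property of a $0$-efficient triangulation: every normal disk is vertex-linking. So if $F_i$ is a normal disk summand, then $\partial F_i$ is a vertex-linking loop in $\partial X$, and in particular it bounds a small disk in $\partial X$ around the vertex it links—so $\partial F_i$ is inessential.

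To derive a contradiction, I would invoke Lemma~\ref{lemPSnugTight}(2), which gives that $\partial P$ is tight and essential, and then apply Lemma~\ref{lemCurveSummandsLeastComplexity} to the (underlying) normal sum $\partial P = \sum_i k_i\,\partial F_i$: every summand with $k_i > 0$ and non-empty boundary must itself be essential. Since we just showed $\partial F_i$ is inessential, this contradicts the assumption that $F_i$ is a summand of $P$, completing the proof. I do not expect any real obstacle here; the only thing to be careful about is that Lemma~\ref{lemCurveSummandsLeastComplexity} is phrased for ordinary normal sums of curves rather than $M$-normal sums, but since the $M$-normal sum projects to an ordinary normal sum of the boundary curves, the lemma applies directly after forgetting the marking.
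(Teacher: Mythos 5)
Your proof is correct and follows essentially the same route as the paper: classify the connected surfaces with $\chi>0$ as spheres, projective planes, or disks, and rule out each from the standing hypotheses, concluding with the fact that every summand of the tight essential curve $\partial P$ must itself be essential (Lemma~\ref{lemCurveSummandsLeastComplexity}). Two small remarks. First, for projective planes you attribute the exclusion to orientability, but orientability alone does not exclude embedded projective planes (e.g.\ $\R P^3$ is orientable and contains an $\R P^2$); the operative hypothesis in Proposition~\ref{propSurfaceFacts}(v) is irreducibility, which is what the paper invokes---your citation is to the right proposition, so the argument still goes through, but the parenthetical reason should be corrected. Second, for disks you derive triviality of $\partial F_i$ directly from the vertex-linking property in the definition of $0$-efficiency, whereas the paper appeals to incompressibility of $\partial X$; both are valid and equally short. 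Your remark about passing from the $M$-normal sum to the underlying ordinary normal sum of boundary curves before applying Lemma~\ref{lemCurveSummandsLeastComplexity} is a correct point that the paper leaves implicit.
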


\begin{proof}
Such summands are either spheres, or projective planes, or disks.  As
we mentioned in Section~\ref{s:def-0-eff},
 normal spheres contradict $0$-efficiency of the triangulation of~$X$.

Projective planes are excluded because  $X$ is irreducible.
As for disks, since $X$ has incompressible boundary, they also
 have trivial boundary.   But this contradicts the fact that each summand
has essential boundary.
\end{proof}

This excludes Case~\ref{case:chi>0}, and we proceed
with Case~\ref{case:a}.

\begin{lemma}
\label{lemNoCompressibleAnnulus}
No summand is a compressible annulus.
\end{lemma}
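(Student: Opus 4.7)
The plan is to derive an immediate contradiction from two facts already established in the paper, so this lemma should admit a very short proof and I do not anticipate any real obstacle.

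Suppose for contradiction that some summand $F_i$ is a compressible annulus. First, I would invoke the observation made just above the case analysis: since $\partial P$ is tight and essential and since it decomposes as the $M$-normal sum $\partial P = \sum_j k_j\,\partial F_j$, an application of Lemma~\ref{lemCurveSummandsLeastComplexity} (together with its $M$-normal analogue obtained from Proposition~\ref{propMSummands}) guarantees that the boundary of each summand, and in particular $\partial F_i$, consists of essential curves in $\partial X$.

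Second, I would invoke Proposition~\ref{propSurfaceFacts}(iii), which asserts that in our setting (where $X$ is irreducible and has incompressible boundary) every boundary curve of a compressible annulus is trivial in $\partial X$. Applying this to $F_i$ shows that each of the two components of $\partial F_i$ is inessential in $\partial X$, directly contradicting the previous paragraph. Hence no summand $F_i$ can be a compressible annulus, which is the desired conclusion. The key point is simply to align the two already-proven ingredients; no new technical machinery is required.
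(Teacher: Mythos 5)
Your proof is correct and follows the same two-step argument as the paper: Proposition~\ref{propSurfaceFacts}(iii) gives that a compressible annulus has trivial boundary, and Lemma~\ref{lemCurveSummandsLeastComplexity} gives that every summand of the tight essential curve $\partial P$ has essential boundary, yielding the contradiction.
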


\begin{proof}
Because $X$ has incompressible boundary, a compressible annulus has trivial boundary by Proposition \ref{propSurfaceFacts}.   This would contradict the fact that every summand of $\partial P$ is essential and tight by Lemma~\ref{lemCurveSummandsLeastComplexity}.
\end{proof}

The next lemma supplies a bound for Case~\ref{case:b}, although it does not
need the assumptions of Case~\ref{case:b} in full strength.

\begin{lemma}
\label{lemMeetAlpha}
$\sum k_i < C_0(t)|\partial P|$, where the sum is restricted to those $F_i$
for which $\partial F_i \cap \alpha\ne\emptyset$.
\end{lemma}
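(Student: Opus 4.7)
The plan is to exploit the additivity of intersection numbers with fences (Lemma~\ref{lemMarkedCurveSum}) to transfer the bound $|\partial P \cap \alpha| \le C_0(t)|\partial P|$ from Lemma~\ref{lemPSnugTight}(1) down to the coefficients $k_i$ of the normal-sum decomposition $P = \sum k_i F_i$.

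First, I would verify that the hypotheses of Lemma~\ref{lemMarkedCurveSum} are in force. The annulus curve $\alpha$ is tight, normal, and by construction a fence in the marked triangulation $(\T, M)$ with $M = (\alpha \cup \Gamma) \cap \T^1$. The boundary $\partial P$ is tight, essential, and (after Lemma~\ref{lemNormalizeP}) an $M$-normal curve, and the decomposition $\partial P = \sum k_i \partial F_i$ is a sum of compatible $M$-normal curves. Applying Lemma~\ref{lemMarkedCurveSum} inductively to the sum therefore yields
\[
|\partial P \cap \alpha| \;=\; \sum_i k_i \,|\partial F_i \cap \alpha|.
\]
(Here we use the convention, justified as usual by passing to a snug representative within the normal isotopy classes and noting that for $M$-normal curves the intersection with the fence $\alpha$ depends only on normal types.)

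Next, restrict the sum on the right-hand side to those indices $i$ for which $\partial F_i$ actually meets $\alpha$. For each such $F_i$ we have $|\partial F_i \cap \alpha| \ge 1$, so
\[
\sum_{\partial F_i \cap \alpha \ne \emptyset} k_i
\;\le\; \sum_{\partial F_i \cap \alpha \ne \emptyset} k_i\, |\partial F_i \cap \alpha|
\;\le\; \sum_i k_i\, |\partial F_i \cap \alpha|
\;=\; |\partial P \cap \alpha|.
\]

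Finally, Lemma~\ref{lemPSnugTight}(1) gives $|\partial P \cap \alpha| \le C_0(t)|\partial P|$, and combining this with the previous inequality gives the desired bound (after, if necessary, absorbing the difference between $\le$ and $<$ into a harmless increase of the computable function $C_0(t)$). The only step that requires any real care is the first one: one must be certain that additivity of fence intersections applies to the specific sum decomposition of $P$, which in turn depends on having chosen $\partial P$ tight and pairwise snug with $\alpha$ and $\Gamma$, exactly as guaranteed by Assumption~\ref{aMinimize} and Lemma~\ref{lemPSnugTight}(3).
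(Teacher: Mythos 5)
Your proof is correct and follows essentially the same route as the paper's: use additivity of fence intersections to equate $|\partial P \cap \alpha|$ with $\sum_i k_i\,|\partial F_i \cap \alpha|$, drop to the subsum over those $F_i$ meeting $\alpha$ using $|\partial F_i\cap\alpha|\ge 1$, and finish with Lemma~\ref{lemPSnugTight}(1). The only difference is that the paper invokes Proposition~\ref{propMSummands} for the additivity, while you invoke the more elementary Lemma~\ref{lemMarkedCurveSum}; both are applicable here (the $\partial F_i$ are $M$-normal and mutually $M$-compatible, $\alpha$ is a fence, and the intersection count of an $M$-normal curve with a fence is determined by its normal types), so this is an equally valid and arguably slightly more direct citation.
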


\begin{proof}
Addition of (almost) $M$-normal surfaces implies $M$-normal addition of their
boundary curves which  is additive with respect to intersections with the tight
essential fence  $\alpha$ by Proposition \ref{propMSummands}.   Thus the sum of
the coefficients $k_i$ of fundamentals $F_i$ meeting $\alpha$ is bounded by
the total number of intersection with $\alpha$, which in turn is bounded by
Lemma~\ref{lemPSnugTight}.
\end{proof}

Case~\ref{case:c} can be excluded:

\begin{lemma}
\label{lemNoBPA}
No $F_i$ is a normal boundary parallel annulus disjoint from~$\alpha$.
\end{lemma}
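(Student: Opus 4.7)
The plan is to argue by contradiction. Assume some summand $F_i$ is a normal boundary parallel annulus $B$ disjoint from $\alpha$. By Proposition~\ref{propB}, each of the two components of $\partial B$ is normally isotopic to a curve in $\alpha\cup\Gamma$. Let $A_B\subset\partial X$ be the annulus in $\partial X$ parallel to $B$, and let $T_B\subset X$ be the solid torus they co-bound. The aim is to mimic the strategy from the proof of Lemma~\ref{l:A_BP_B_essential}: locate an exchange rectangle for the sum $P=B+P_0$ that lies properly embedded in $T_B$ and constitutes a compressing disk for $T_B$ meeting each of $B$ and $A_B$ in two essential arcs, contradicting the fact that the unique (up to isotopy) compressing disk for a solid torus meets any of its boundary annuli in just one essential arc.

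Write $P=B+P_0$ as a locally snug normal sum, where $P_0=(k_i-1)B+\sum_{j\ne i}k_jF_j$. Since $P$ is a connected planar surface, $B\cap P_0\ne\emptyset$ (otherwise $P$ would split as a disjoint union). To control the intersection structure I would combine Proposition~\ref{propMSummands}(3) and Lemma~\ref{lemPlusToMinus}: each boundary component $\beta\subset\partial B$ is two-sided and normally isotopic to some fence curve $\mu\in\alpha\cup\Gamma$, so all points of $\beta\cap\partial P_0$ share the same normal sign on $\beta$; and every interior intersection arc joins points of opposite normal sign, forcing each arc component of $B\cap P_0$ to be a spanning arc of the annulus $B$. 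Loop components of $B\cap P_0$ must be ruled out separately: a trivial loop would produce an innermost disk patch, and the cycle-of-adjacent-disks argument from the proof of Lemma~\ref{lemNoDiskPatches} would expose a consistent exchange sub-system whose associated fractional Dehn twist strictly reduces the complexity of $P$, contradicting Assumption~\ref{aMinimize}; core loops are handled in the same spirit by exchange-annulus parallelism.

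With $B\cap P_0$ consisting of at least one spanning arc, I would follow the exchange-rectangle argument at the end of the proof of Lemma~\ref{l:A_BP_B_essential}. The parallel structure of $\partial B=\partial A_B\subset\partial X$ and the single-sign analysis on each $\beta\subset\partial B$ force at least one exchange rectangle $R$ for the sum $P=B+P_0$ to lie properly embedded inside $T_B$ (by the alternation-of-sides principle used there). Then $\partial R\cap B$ consists of two subarcs of spanning arcs of $B$, which are essential in $B$, and $\partial R\cap A_B$ consists of two exchange arcs that are essential in $A_B$ by Lemma~\ref{lemExchangeArcNotParallel}. Hence $R$ is a compressing disk for $T_B$ whose boundary meets each of $B$ and $A_B$ in two essential arcs---the desired contradiction.

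The main obstacle I expect is the rigorous placement of the exchange rectangle $R$ inside $T_B$. The analogous step of Lemma~\ref{l:A_BP_B_essential} exploited the very specific geometry of a boundary parallel annulus being written as a normal sum of boundary parallel annuli; in the present setting the parent surface $P$ is only a planar (almost) meridional surface, so the sign/alternation information must be extracted purely from the fence curves $\alpha\cup\Gamma$ via Proposition~\ref{propMSummands}(3). Verifying this adaptation, together with the complete reduction for loops in $B\cap P_0$, is where the argument is most delicate.
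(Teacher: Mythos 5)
Your proposal and the paper's proof diverge at exactly the point you flag as delicate, and the divergence reveals a gap. The paper's proof is much shorter and uses a direct sign contradiction that you narrowly miss: write $P = B + P'$ and note that $\partial B = 2b$ is \emph{two parallel copies of the same curve} $b$, which is normally isotopic to a fence curve $\mu \in \alpha \cup \Gamma$ by Proposition~\ref{propB}. By Proposition~\ref{propMSummands}(3), every point of $\partial P' \cap b$ has the same normal sign, and since both components of $\partial B$ are parallel copies of $b$ (close to the same fence $\mu$), \emph{all} points of $\partial B \cap \partial P'$ share a single normal sign. Lemma~\ref{lemPlusToMinus} then says every arc of $B \cap P'$ joins two boundary points of \emph{opposite} sign --- a direct contradiction.

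You applied Proposition~\ref{propMSummands}(3) to each component of $\partial B$ separately and got ``each component has constant sign,'' but then concluded that Lemma~\ref{lemPlusToMinus} forces arcs of $B \cap P_0$ to be \emph{spanning} arcs. That deduction is only correct if the two components of $\partial B$ carry \emph{opposite} constant signs; since they are two parallel copies of the same fence-parallel curve, they in fact carry the \emph{same} sign, so the correct conclusion is ``no arcs at all,'' not ``spanning arcs.'' Missing this sent you down a substantially harder road: trying to import the exchange-rectangle-inside-$T_B$ argument of Lemma~\ref{l:A_BP_B_essential}, which was tailored to the very special case where \emph{both} the sum and the summand are boundary parallel annuli with doubled boundary, whereas here $P$ is a planar (almost) meridional surface with an arbitrary boundary pattern. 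In addition, your loop case is underjustified: the cycle-of-adjacent-disks argument (Lemma~\ref{lemNoDiskPatches}) assumes the ambient surface is essential and does not directly cover the strongly-irreducible-and-boundary-strongly-irreducible alternative for $P$, and ``handled in the same spirit'' for core loops is not a proof. The paper's sign argument bypasses all of this.
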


\begin{proof}
If not, then we can write $P = B + P'$, an (almost) $M$-normal sum, where
$B=F_i$ is a boundary parallel annulus.   All summands of $P$ have tight
boundary by Lemma~\ref{lemCurveSummandsLeastComplexity},
so $\partial B = 2b$ a pair  of normally parallel tight essential
curves.   By the construction of $\Gamma$ (Proposition~\ref{propB}),  $b$ is
normally isotopic
to either a component of $\alpha$ or an element of $\Gamma$, and
therefore to a fence.  By  Proposition~\ref{propMSummands}, each point of
$\partial P' \cap b$ has  the same normal sign.   But since $\partial B = 2b$,
all intersections in $\partial  B \cap \partial P'$ have the same normal sign.
This contradicts Lemma~\ref{lemPlusToMinus}.
\end{proof}

Next, we want a bound for Case~\ref{iDisjointAnnuli}.

\begin{lemma}
\label{lemDisjointFromAlpha}
$k_i < C_1(t)|\partial P|$ for each $F_i$ that is a normal  essential annulus or M\"{o}bius band disjoint from~$\alpha$, with a suitable $C_1(t)$.
\end{lemma}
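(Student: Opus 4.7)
The plan is to use a Dehn twist along $F_i$ as a self-homeomorphism of $X$ to contradict the minimality in Assumption~\ref{aMinimize}(2) whenever $k_i$ is too large. First, by the maximality of $\alpha$ (Proposition~\ref{propEssentialAnnuli}(1)) applied to $F_i$---whose boundary is tight by Lemma~\ref{lemCurveSummandsLeastComplexity}---each boundary component of $F_i$ is normally isotopic to a component of $\alpha$. The case of a M\"obius band $F_i$ reduces to the annulus case by passing to its double $2F_i$, so I assume $F_i$ is an annulus with boundary components normally isotopic to $a_1, a_2 \subset \alpha$ (possibly $a_1 = a_2$). Let $\tau\colon X \to X$ denote the Dehn twist along $F_i$, supported in a regular neighborhood $N(F_i)$ chosen disjoint from $\alpha$; this is possible because $\partial F_i$ is only parallel to, not equal to, $a_1 \cup a_2$, so $\tau$ fixes $\alpha$ pointwise.

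By the re-embedding lemma following Lemma~\ref{lemWitness}, $\tau^n(P)$ satisfies the conditions of Lemma~\ref{lemWitness} in some re-embedding of $X$ into $S^3$, for every $n \in \mathbb{Z}$, and is thus a valid candidate under Assumption~\ref{aMinimize}. Since $\tau$ fixes $\alpha$ pointwise, $|\partial \tau^n(P) \cap \alpha| = |\partial P \cap \alpha|$, so Assumption~\ref{aMinimize}(1) is preserved. It therefore suffices to exhibit an $n$ with $\ell(\partial \tau^n(P)) < \ell(\partial P)$, contradicting Assumption~\ref{aMinimize}(2). To this end, I decompose $\partial P = k_i \partial F_i + \partial P'$ (with $P' := \sum_{j \neq i} k_j F_j$) and apply Proposition~\ref{propMSummands}(3) together with Lemma~\ref{lemPlusToMinus}: each component $\beta_j$ of $\partial F_i$ is two-sided and normally isotopic to the fence $a_j \in \alpha$, so all intersections of $\partial P'$ with $\beta_j$ share the same normal sign, and arcs of $P' \cap F_i$ must therefore span $F_i$ (connecting $\beta_1$ to $\beta_2$). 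This sign coherence implies that a single Dehn twist along $F_i$, in the appropriate direction, simultaneously cancels up to $|\partial P' \cap \partial F_i|$ of the $k_i$ parallel copies of $\partial F_i$ against the spanning intersection arcs of $\partial P'$, reducing $\ell(\partial P)$. Provided $k_i \geq |\partial P' \cap \partial F_i|$, a single twist already yields the contradiction.

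Finally, since $\partial F_i$ is $M$-normally isotopic to $a_1 \cup a_2 \subseteq \alpha$, snugness (Lemma~\ref{lemPSnugTight}(3)) yields $|\partial P' \cap \partial F_i| \leq |\partial P \cap \alpha|$, which is at most $C_0(t)|\partial P|$ by Lemma~\ref{lemPSnugTight}(1). Therefore $k_i < C_0(t)|\partial P| + 1$, proving the lemma with $C_1(t) := C_0(t) + 1$. The main obstacle will be making the length-reduction step rigorous: one must track precisely how the Dehn twist rearranges the (almost) $M$-normal decomposition of $P$ and verify that the normal-sign coherence translates into an actual decrease of $\ell(\partial P)$ after re-normalizing the twisted surface in its isotopy class; minor care is also needed to ensure $N(F_i)$ can be chosen so that $\tau$ does not disturb nearby curves of $\Gamma$ either.
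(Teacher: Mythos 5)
Your approach is essentially the paper's: Dehn twist along the annulus $F_i'$ (doubling in the M\"obius case), invoke Proposition~\ref{propMSummands}(3) and Lemma~\ref{lemPlusToMinus} for sign coherence and spanning arcs, and contradict the minimality in Assumption~\ref{aMinimize}(2) while leaving (1) unchanged. The step you flag as unresolved---that the twist actually decreases $\ell(\partial P)$---is exactly what the paper closes by staying entirely at the level of normal vectors: writing $\partial P = \partial P' + k_i'(f_1+f_2)$ and noting that adding $n$ copies of $f_1+f_2$ (with $n$ the number of spanning arcs of $P'\cap F_i'$) realizes one full Dehn twist on $\partial X$, so the twisted $\partial P$ is \emph{normally isotopic} to $\partial P' + (k_i'-n)(f_1+f_2)$, strictly shorter by additivity of length, with no separate re-normalization needed. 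Two small corrections: a single twist cancels $n=\tfrac{1}{2}|\partial P'\cap\partial F_i'|$ copies of $f_1+f_2$, not ``up to $|\partial P'\cap\partial F_i|$'' of them, and the M\"obius doubling sets $k_i'=\lfloor k_i/2\rfloor$, so the final constant must absorb a factor of $2$ rather than being $C_0(t)+1$.
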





\begin{proof}
If $F_i$ is a M\"{o}bius band, let $F_i' := 2 F_i$ and $k_i' := \lfloor k_i/2
\rfloor$, and otherwise,
let $F_i':=F_i, k_i' := k_i$.
Then we write $P=P' + k_i' F_i'$, where $F_i'$ is
an essential annulus.   We wish to show that $k_i' \leq | \partial P \cap
\alpha| - 1$; then the result follows from Lemma~\ref{lemPSnugTight}.

So we proceed by contradiction, assuming  $k_i' \geq |\partial P  \cap
\alpha|$.   Let $f_1$ and $f_2$ be the components of $\partial F'_i$.

Proposition~\ref{propEssentialAnnuli} guarantees that $f_1$ and $f_2$ are each
normally parallel to a component of the fence $\alpha$, and thus, by
Proposition~\ref{propMSummands}, $\partial P'$ meets each component $f_j$
in points with the same normal sign.

Since intersection arcs join intersection points of opposite
sign (Lemma~\ref{lemPlusToMinus}), each arc component of $P' \cap F'_i$ meets
both boundary components of $F'_i$ and is thus a spanning arc of $F'_i$.
There
are $n=\frac 1  2  |P' \cap \alpha|$ such spanning arcs of intersection, and
$\partial P'$ meets, say,  $f_1$ in $n$ positive intersections and $f_2$ in $n$
negative intersections.  From the view of boundary curves $\partial P =
\partial P' + k_i' f_1 + k_i' f_2$, where adding copies of $f_1$ and $f_2$ is a
fractional Dehn twist (with fraction $\frac {k_i'}{ n} $) in each of those
curves.

We have assumed that $k_i' \geq |\partial P \cap \alpha| = |\partial P' \cap \alpha| =2n$,  so the fraction is greater than 1.
Then $\partial P' + (k_i'-n) f_1 + (k_i'-n) f_2$ is homeomorphic to
$\partial P$.  Moreover, the homeomorphism can be extended over the annulus
$F_i'$ to a homeomorphism of $X$ that is a Dehn twist in $F_i'$.
 But this homeomorphism takes $P$ to a surface with shorter length and the same number of intersection with $\alpha$, contradicting our choice in
Assumption~\ref{aMinimize}. (This is another place where we may change
the embedding of~$X$.)
\end{proof}

Finally, it is straightforward to bound those summands in Case~\ref{case:nla}.

\begin{lemma}\label{l:negchi} We have
$\sum k_i \leq -\chi(P) < |\partial P|$, where the sum is restricted to those $F_i$
with $\chi(F_i)<0$.
\end{lemma}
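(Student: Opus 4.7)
The plan is to use additivity of the Euler characteristic under normal sum, together with the fact that all summands with positive Euler characteristic have already been excluded.

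First, since $P$ is planar with $|\partial P|$ boundary components, $\chi(P) = 2 - |\partial P|$, so $-\chi(P) = |\partial P| - 2 < |\partial P|$, which gives the right-hand inequality immediately.

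For the main inequality, I would invoke additivity of Euler characteristic under (almost) $M$-normal sum, which was recorded in Section~\ref{s:normal-surfaces}: $\chi(P) = \sum_i k_i \chi(F_i)$. By Lemma~\ref{lemNoPositive}, no summand $F_i$ has $\chi(F_i) > 0$, so every term in the sum satisfies $\chi(F_i) \le 0$. Splitting the sum according to the sign of $\chi(F_i)$, the $\chi=0$ summands contribute nothing, and therefore
\[
-\chi(P) \;=\; \sum_{i\,:\,\chi(F_i)<0} k_i\,(-\chi(F_i)) \;\geq\; \sum_{i\,:\,\chi(F_i)<0} k_i,
\]
where the final inequality uses $-\chi(F_i) \geq 1$ whenever $\chi(F_i) < 0$ (Euler characteristics of surfaces are integers).

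Combining the two displayed facts yields $\sum_{\chi(F_i)<0} k_i \le -\chi(P) < |\partial P|$, as required. There is no real obstacle here: the lemma is the clean ``accounting'' step promised in the outline of Section~\ref{s:outline}, and all the hard work has already been done in the preceding lemmas which rule out the problematic summand types (spheres, projective planes, disks, compressible annuli, boundary parallel annuli disjoint from $\alpha$, and so on).
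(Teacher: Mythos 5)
Your proof is correct and takes essentially the same route as the paper: use additivity of $\chi$ under $M$-normal sum, note that Lemma~\ref{lemNoPositive} excludes $\chi>0$ summands, discard the $\chi=0$ contributions, and conclude $\sum k_i\le -\chi(P)$ since each remaining $-\chi(F_i)\ge 1$. You also spell out the easy bound $-\chi(P)=|\partial P|-2<|\partial P|$ from $P$ being planar, which the paper leaves implicit.
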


\begin{proof}
We have observed that all summands of $P$ have $\chi \leq 0$.
Those with $\chi=0$ do not contribute to $\chi(P)$, and
so $\chi(P) = \sum k_i \chi(F_i)$ for the summands with $\chi \leq -1$.
It follows that $\sum k_i \leq -\chi(P)$ for these summands.
\end{proof}


We are ready to bound the average length of a component
of~$\bd P$.

\begin{lemma}
We have
\[
\ell(\partial P) \leq L(t)|\partial P|,
\] where $L$ is a computable function of the number of tetrahedra~$t$.
\end{lemma}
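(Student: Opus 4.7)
The plan is to leverage the decomposition $P=\sum_i k_i F_i$ into fundamental $(M\text{-}\mathrm{almost})$-normal summands and the additivity of boundary length with respect to $M$-normal sum. This gives
\[
\ell(\partial P)=\sum_i k_i\,\ell(\partial F_i),
\]
and by Proposition~\ref{propFundamentals} every fundamental summand satisfies $\ell(\partial F_i)\le \lmax(t)$ for some computable function $\lmax$ of $t$ alone. Hence it suffices to show $\sum_i k_i \le L'(t)\,|\partial P|$ for a computable $L'$; then $L(t):=\lmax(t)\,L'(t)$ works.

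I would then restrict attention to summands with $\partial F_i\neq\emptyset$ and split the sum $\sum k_i$ according to the five cases listed above the lemma statement. Cases~\ref{case:chi>0},~\ref{case:a} and~\ref{case:c} contribute nothing, by Lemmas~\ref{lemNoPositive}, \ref{lemNoCompressibleAnnulus} and~\ref{lemNoBPA} respectively. For Case~\ref{case:almostn}, the almost-normal case, at most one summand occurs, contributing at most $1$ to $\sum k_i$. For Case~\ref{case:b} (normal summands meeting $\alpha$), Lemma~\ref{lemMeetAlpha} directly gives $\sum k_i < C_0(t)\,|\partial P|$. For Case~\ref{case:nla} ($\chi(F_i)<0$), Lemma~\ref{l:negchi} yields $\sum k_i\le -\chi(P)<|\partial P|$.

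The one remaining case, Case~\ref{iDisjointAnnuli}, is slightly less immediate because Lemma~\ref{lemDisjointFromAlpha} only bounds each individual coefficient $k_i$, not the sum of them. To turn a per-summand bound into a bound on the sum, I would note that the number of distinct fundamental $M$-normal surfaces is bounded by a computable function $N(t)$ of $t$, again by Proposition~\ref{propFundamentals} (applied to the triangulation marked by $(\alpha\cup\Gamma)\cap\T^1$, whose total number of marking points is itself a computable function of $t$ by Propositions~\ref{propEssentialAnnuli} and~\ref{propB}). Consequently the contribution of Case~\ref{iDisjointAnnuli} to $\sum k_i$ is at most $N(t)\cdot C_1(t)\,|\partial P|$.

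Finally I would combine all cases: using $|\partial P|\ge 1$ to absorb the constant $1$ from the almost-normal case, we obtain $\sum_i k_i \le L'(t)\,|\partial P|$ with
\[
L'(t)\;=\;2+C_0(t)+N(t)\,C_1(t),
\]
and therefore $\ell(\partial P)\le \lmax(t)\,L'(t)\,|\partial P|$, as required. The only subtle point is Case~\ref{iDisjointAnnuli}, where one must bookkeep that the ``$C_1(t)$ per summand'' bound needs to be multiplied by a $t$-bounded count of fundamentals; every other case is either excluded outright or handled by a lemma that already provides an $O(|\partial P|)$ bound on its aggregate coefficient sum.
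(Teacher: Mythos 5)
Your proposal is correct and follows essentially the same route as the paper: both reduce to bounding $\sum_i k_i$ via additivity of $\ell(\partial\cdot)$, bound $\lmax$ by Proposition~\ref{propFundamentals}, and then handle the five cases using Lemmas~\ref{lemNoPositive}, \ref{lemNoCompressibleAnnulus}, \ref{lemNoBPA}, \ref{lemMeetAlpha}, \ref{lemDisjointFromAlpha}, and~\ref{l:negchi}. The only (immaterial) difference is in the final assembly: the paper applies the per-coefficient bound $k_i\le C_2(t)|\partial P|$ uniformly to all contributing summands and multiplies by the count $C_3(t)$ of distinct fundamentals, whereas you more carefully use the aggregate bounds from Lemmas~\ref{lemMeetAlpha} and~\ref{l:negchi} where they are available and only multiply by the fundamental count in Case~\ref{iDisjointAnnuli}, yielding a slightly tighter but functionally equivalent $L(t)$.
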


\begin{proof}
Recall that we wrote $\partial P$ as a sum of boundaries of (almost) normal
$M$-fundamentals for the marked triangulation $(\T,M)$,
 where $M = (\alpha \cup \Gamma) \cap \T^1$.
 Proposition~\ref{propFundamentals} bounds both the weight of any fundamental
 solution and the total number of fundamental solutions by computable functions
 of  $t$ and the number of marked points $m=|M|=\ell(\alpha)+\ell(\Gamma)$.   These lengths are bounded by computable functions of $t$ by Propositions~\ref{propEssentialAnnuli} and~\ref{propB},
respectively.   Thus the weight of any $M$-fundamental solution and the total number of $M$-fundamental solutions are bounded by computable functions of $t$
only.

As in the proof outline in Section~\ref{s:outline},
let $\lmax := \max\{\ell(\partial F_i)\}$,
the maximum taken over all normal or almost normal $M$-fundamental
surfaces $F_i$ (in the marked triangulation of~$X$).
Because the length of a surface's boundary is at most its weight, $\lmax$
is bounded by a computable function of $t$.

Because length is additive, we have
\[
\ell(\partial P) = \sum k_i \ell(\partial F_i),
\]  where the sum is restricted to surfaces $F_i$ with non-empty boundary.

If $F_i$ is one of the four types of fundamentals that contribute to $\ell(\partial P)$, then $k_i \leq C_2(t)|\partial P|$,  where $C_2(t) = \max(C_0(t),C_1(t))$, by Lemmas~\ref{lemMeetAlpha},
\ref{lemDisjointFromAlpha}, and~\ref{l:negchi}
(sometimes the bound is much better).


Since the number of distinct fundamentals is bounded by a computable function of $t$, call it $C_3(t)$, we have $\sum k_i \leq C_3(t)C_2(t)|\partial P|$ over all summands that contribute to~$\partial P$.
The total length is then bounded by $\lmax\cdot C_3(t)C_2(t)|\partial P|$
as the lemma claims.
%
\end{proof}

Theorem \ref{t:short-meridian} now follows.  Because $P$ is meridional or
almost meridional, at least $|\partial P| -1$ of its boundary components are meridians (note that $|\partial P| > 1$ by Lemma~\ref{lemNoPositive}).
Hence the average
length of a meridian is at most

$$\frac {\ell(\partial P)}{|\partial P|-1} \leq
\frac{L(t)|\partial P|}{|\partial P|-1} = \frac{|\partial P|}{|\partial P|-1}L(t) \leq 2L(t).$$

Unlike the children of Lake Wobegon, some meridian must be at most average, and
hence its length is bounded by $2L(t)$, a computable function of $t$.
This completes the proof of Theorem~\ref{t:short-meridian}.


\section{Embedding 3-dimensional complexes}
\label{s:emb33}

In this section we prove Corollary~\ref{c:emb33}: we provide an
algorithm for \EMBED33. It uses the algorithm for \EMBED23,
as well as an $S^3$ recognition algorithm and an algorithm for \EMBED22.

Let $K$ be a $3$-complex for which we want to test embeddability in
$\R^3$. We assume, w.l.o.g., that $K$ is connected.
The idea is to replace every $3$-simplex of $K$ by
a suitable $2$-dimensional
structure so that an embedding of this $2$-structure ensures
the embeddability of the $3$-simplex.

We call a vertex $v$ of $K$ a \emph{cut vertex} if
removing $v$ from $K$ disconnects $K$. We let $K' := (\sd K)^{(2)}$ to be the
$2$-skeleton of the barycentric subdivision of $K$ (see the paragraph below the
description of the algorithm). We will show that if $K$ is
connected and without cut vertices, then $K$ embeds in $\R^3$ if and only
if $K'$ does. And we will also show that the assumption that $K$
does not contain cut vertices is achievable.


\heading{Description of the algorithm (assuming $K$ connected).}
\begin{enumerate}
  \item If $K$ is homeomorphic to $S^3$ (which can be tested,
as in the algorithm for \EMBED23), return FALSE.
  \item\label{st:links} If there is a vertex whose link\footnote{We recall
that the \emph{link} of a vertex $v$ in a simplicial complex
$K$ consists of all simplices $\sigma$ of $K$ that do not contain $v$
and such that $\sigma$ together with $v$ forms a simplex of~$K$.}
is not embeddable in $S^2$, return FALSE.
(The embeddability in $S^2$ can be tested using \cite{grossRosen}
and $S^2$ recognition, for example.) 
  \item\label{st:cutv} If $K$ contains a cut vertex
    $v$, consider two connected induced subcomplexes $K_1$ and $K_2$ of $K$
    such that $K_1 \cup K_2 = K$ and $K_1 \cap K_2 = \{v\}$, $K_1, K_2 \neq  K$. (Note that such $K_1,K_2$ exist:
    after removing $v$ from $K$ we can possibly obtain
    more than two components, but we can merge them into two groups.)
     Run the algorithm for $K_1$ and $K_2$ separately and return
    TRUE if and only if both $K_1$ and $K_2$ embed in $\R^3$.
  \item \label{st:emb23}
Run the algorithm for
\EMBED23 with $K':=(\sd K)^{(2)}$ and return its answer.
\end{enumerate}

\heading{Geometric realizations and the barycentric subdivision.}
In this section we need to carefully distinguish a simplicial complex $K$ and
its \emph{geometric realization} $|K|$. (In this
section we use $|\cdot|$ solely
for geometric realizations, although earlier
it meant the number of connected components.)
Given a complex $K$, we denote its
barycentric subdivision by $\sd K$. See the next picture
for an example of barycentric subdivision, and e.g.
\cite{Mat-top} or almost any textbook on algebraic topology
for a detailed treatment of this notion.
\immfigw{sd}{3in}
Given a subcomplex (or a face) $L$ of $K$ we also denote $\sd L$ the
barycentric subdivision of $L$ regarded as a subcomplex of $\sd K$.
The geometric realizations of $K$ and $\sd K$ can be canonically chosen so that
$|K| = |\sd K|$ (and $|L| = |\sd L|$ for every subcomplex); we assume this
canonical choice.

\heading{Correctness of the algorithm.} Now we argue that the algorithm is
correct modulo two lemmas proved below. In first step we exclude the
case $K = S^3$ and thus, we can freely use that PL embeddability of $K$ in
$\R^3$ is equivalent to PL embeddability of $K$ in $S^3$.

Further, if $K$ PL embeds in $\R^3$ then the links of vertices PL embed in $S^2$,\footnote{Intersecting the
PL embedding of $K$ with a sufficiently small $2$-sphere around the image of a vertex $v$, we get an embedding of the
link of $v$ in~$K$.}  so the answer in Step~\ref{st:links} is correct,
and further we may assume that all the links embed in $S^2$.

The next lemma shows correctness of Step~\ref{st:cutv}.
\begin{lemma}
\label{l:cutvertex}
  Let $K$ be a connected simplicial complex such that link of each vertex
  embeds in $S^2$, and let $K_1$ and $K_2$ be two connected induced
  subcomplexes as in Step~\ref{st:cutv}.
  Then $K$ PL embeds in $S^3$ if and only if $K_1$ and $K_2$ PL embed
  in~$S^3$.
\end{lemma}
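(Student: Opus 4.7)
The forward direction is immediate, since $K_1$ and $K_2$ are subcomplexes of $K$, and so any PL embedding of $K$ in $S^3$ restricts to PL embeddings of them. For the reverse direction, the plan is: starting from a PL embedding $\phi_1\colon K_1 \to S^3$, insert a suitably shrunk and repositioned copy of an embedding $\phi_2\colon K_2 \to S^3$ into a small ``wedge region'' of $S^3$ adjacent to $\phi_1(v)$ that is otherwise disjoint from $\phi_1(K_1)$.

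Concretely, I would take a small PL ball $B_1$ around $\phi_1(v)$ such that $B_1 \cap \phi_1(K_1) = \phi_1(v * L_1)$, the image of the closed star of $v$ in $K_1$, where $L_1 = \mathrm{lk}(v, K_1)$. The complement $B_1 \setminus \phi_1(v * L_1)$ breaks into connected open ``wedge'' regions, one for each face of the graph $\phi_1(L_1)$ on the sphere $\partial B_1$; pick any one such region $R$. (If $L_1 = \emptyset$, then $K_1 = \{v\}$ and the claim is trivial, so we may assume $\partial B_1 \setminus \phi_1(L_1)$ contains a non-empty face.) The key geometric fact is that $R$ is an open connected subset of $\mathrm{int}(B_1)$, disjoint from $\phi_1(K_1)$, with $\phi_1(v)$ lying on its closure.

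Next, I apply a PL similarity of $S^3$ (a scaling composed with a translation, after removing some point of $S^3 \setminus \phi_2(K_2)$ to pass to $\mathbb{R}^3$) to shrink $\phi_2(K_2)$ into a very small PL ball $B^* \subset R$, with the image of $v$ placed near $\phi_1(v)$. I then apply an ambient PL isotopy of $S^3$ supported inside a small open neighborhood, contained in $R \cup \{\phi_1(v)\}$, of a PL arc running from the shrunken image of $v$ to $\phi_1(v)$ (with the arc's interior in $R$). Since the support of this isotopy lies in $R \cup \{\phi_1(v)\}$, which meets $\phi_1(K_1)$ only at $\phi_1(v)$, the isotopy leaves $\phi_1(K_1)$ pointwise fixed while dragging the shrunken copy of $K_2$ so that the image of $v$ lands exactly on $\phi_1(v)$. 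Call the resulting PL embedding of $K_2$ into $R \cup \{\phi_1(v)\}$ by the name $\phi_2'$.

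Setting $\phi|_{K_1} = \phi_1$ and $\phi|_{K_2} = \phi_2'$ then yields a well-defined map $\phi\colon K \to S^3$, since $K_1 \cap K_2 = \{v\}$ and both pieces send $v$ to $\phi_1(v)$. It is a PL embedding because each piece is PL and the two images meet only at $\phi_1(v)$: the image of $\phi_2'$ lies in $R \cup \{\phi_1(v)\}$, while $\phi_1(K_1) \cap B_1 = \phi_1(v * L_1)$ is disjoint from $R$ by construction. The main obstacle in making this plan rigorous is the execution of the ambient PL isotopy: one must verify that it can be chosen PL, with the prescribed support contained in $R \cup \{\phi_1(v)\}$, so as to simultaneously avoid disturbing $\phi_1(K_1)$ and keep the image of $K_2$ inside $R \cup \{\phi_1(v)\}$ throughout. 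This is a standard application of PL isotopy extension together with the fact that $R$ is an open connected region having $\phi_1(v)$ on its frontier.
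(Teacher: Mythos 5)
The forward direction and the geometric intuition for the reverse direction are fine, but the reverse direction has a genuine obstruction precisely at the step you flagged: the ambient PL isotopy you describe cannot exist. Call $h$ its time-1 map. You require $h$ to (a) fix $\phi_1(K_1)$ pointwise, so in particular $h(\phi_1(v)) = \phi_1(v)$, and (b) carry the shrunken image of $v$---a point of $B^* \subset R$, hence distinct from $\phi_1(v)$ since $R$ is disjoint from $\phi_1(K_1)$---onto $\phi_1(v)$. These two requirements make $h$ non-injective. Nor does dropping requirement (a) help: any homeomorphism supported in $R \cup \{\phi_1(v)\}$ automatically fixes $\phi_1(v)$, because $\phi_1(v)$ is a limit of points of $\phi_1(K_1 \setminus \{v\}) \subset S^3 \setminus (R \cup \{\phi_1(v)\})$ on which $h$ is the identity, so continuity forces $h(\phi_1(v)) = \phi_1(v)$. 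For the same reason there is in fact no open neighborhood of your arc contained in $R \cup \{\phi_1(v)\}$ at all: any open set containing $\phi_1(v)$ contains points of $\phi_1(K_1)$ other than $\phi_1(v)$. So both the construction of the support and the resulting homeomorphism fail, and this cannot be patched by ``isotopy extension.''

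The paper sidesteps this by normalizing the two embeddings independently rather than trying to drag a repositioned $K_2$ into place relative to a fixed $K_1$. From the link hypothesis and $K_1 \neq K$ one first deduces that $\mathrm{lk}(v,K_1)$ is a planar graph (not $S^2$), so $f_1(v)$ lies on the topological boundary of $f_1(|K_1|)$ and there is a small geometric $3$-simplex $\sigma$ meeting $f_1(|K_1|)$ only at $f_1(v)$. Applying PL Schoenflies (Corollary~\ref{cor:S3-Schoenflies}) one finds a PL self-homeomorphism $\psi$ of $S^3$ taking $\partial\sigma$ to the equatorial $S^2$, the interior of $\sigma$ to the open upper hemisphere, and $f_1(v)$ to a prescribed point $x\in S^2$; then $g_1 = \psi\circ f_1$ places $|K_1|$ in the closed lower hemisphere meeting $S^2$ only at $x$. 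Repeating this for $K_2$ with the hemispheres swapped gives $g_2$ placing $|K_2|$ in the closed upper hemisphere, also meeting $S^2$ only at $x$, and the two maps now glue along $v \mapsto x$ with no relative isotopy needed. Your wedge-region picture is the right intuition, but the rigorous move is to normalize \emph{each} $K_i$ so that $v$ already lands at the chosen common point, not to push one piece onto the other afterwards.
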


Finally, the correctness of Step~\ref{st:emb23} relies on the next lemma.
\begin{lemma}
\label{l:K'embed}
  Let $K$ be a connected simplicial complex without cut vertices,
and let $K' = (\sd K)^{(2)}$.
  Then the following conditions are equivalent.

  \begin{enumerate}[\rm (i)]
    \item
      $K$ PL embeds in $S^3$;
    \item
      $K$ topologically embeds in $S^3$; and
    \item
      $K'$ topologically embeds in $S^3$.
\end{enumerate}
\end{lemma}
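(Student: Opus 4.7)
The implication $(\mathrm{i})\Rightarrow(\mathrm{ii})$ is immediate since every PL embedding is a topological embedding, and $(\mathrm{ii})\Rightarrow(\mathrm{iii})$ is immediate too: because $|K'|\subseteq|\sd K|=|K|$, restricting a topological embedding of $|K|$ to $|K'|$ yields one of $|K'|$.

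The substantive implication is $(\mathrm{iii})\Rightarrow(\mathrm{i})$. The first step of the plan is to upgrade the given topological embedding of the $2$-complex $K'$ to a PL one, which is possible by the cited result of Bing and Papakyriakopoulos (see the footnote after Theorem~\ref{t:main}); denote this PL embedding by $\phi\colon|K'|\hookrightarrow S^3$. It then suffices to extend $\phi$ to a PL embedding of $|\sd K|=|K|$ by filling in every $3$-simplex $\tau$ of $\sd K$ with a PL $3$-ball. For each such $\tau$ the image $\phi(\partial\tau)$ is a PL $2$-sphere in $S^3$, so by Alexander's PL Sch\"onflies theorem it bounds two PL $3$-balls $B^+_\tau,B^-_\tau$, and the remaining task is to pick one of them, $B_\tau$, for every $\tau$ so that the interiors $\mathrm{int}(B_\tau)$ are pairwise disjoint and disjoint from $\phi(|K'|)$; once such a choice is available, mapping each $\tau$ PL-homeomorphically onto $B_\tau$ in a way that agrees with $\phi$ on $\partial\tau$ extends $\phi$ to the desired PL embedding of $K$.

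The choice of $B_\tau$ is forced locally at every interior point of every $2$-face of $\sd K$: at such a point $p$ lying in a $2$-face $F$ of $\tau$, injectivity of $\phi$ ensures that a small neighborhood of $\phi(p)$ meets $\phi(|K'|)$ only in the disk $\phi(F)$, and the two local sides of the disk bijectively correspond to the (at most two) abstract $3$-simplices of $\sd K$ having $F$ as a face, or to the ``outside'' when $F$ belongs to a single $3$-simplex. Declaring $B_\tau$ to be the PL ball which approaches $\phi(F)$ from the $\tau$-side at every such $p$ and every $2$-face $F$ of $\tau$ reduces the proof to two claims: (a) the local $\tau$-sides along different $2$-faces of $\tau$ always lie in the same one of $B^\pm_\tau$, which follows from connectedness of $\mathrm{int}(\tau)$ in the abstract complex; and (b) for $\tau\neq\tau'$ the chosen balls $B_\tau$ and $B_{\tau'}$ have disjoint interiors. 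I expect (b) to be the main obstacle, and this is precisely the step that uses the no-cut-vertex hypothesis: a coincidence of two distinct abstract $3$-cells in a single complementary region of $\phi(|K'|)$ could only arise through a bottleneck at a cut vertex of $K$, so excluding such vertices rules out the pathology and yields the required disjointness, completing the extension of $\phi$ to the desired PL embedding of~$K$ in $S^3$.
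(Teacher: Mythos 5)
Your (i)$\Rightarrow$(ii)$\Rightarrow$(iii) and the opening moves of (iii)$\Rightarrow$(i) (promote the topological embedding of the $2$-complex to a PL one, then fill tetrahedra using the PL Schoenflies theorem) match the paper. But the substantive parts are gestured at rather than proved, and one crucial device in the paper is absent from your plan.

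The central missing idea is how to \emph{define} the ball $B_\tau$. You propose a ``$\tau$-side'' of $\phi(F)$ at interior points of each $2$-face $F$ of $\tau$, claiming that ``injectivity of $\phi$'' pairs the two local sides of $\phi(F)$ bijectively with the (at most two) $3$-simplices adjacent to $F$. Injectivity gives only that there \emph{are} two local sides; it gives no canonical bijection between them and the abstract tetrahedra. Absent such a bijection, ``the $\tau$-side'' is not defined, and your claim (a) -- that these undefined choices cohere into a single ball $B_\tau$, ``which follows from connectedness of $\mathrm{int}(\tau)$ in the abstract complex'' -- is not an argument: $\mathrm{int}(\tau)$ is not mapped anywhere by $\phi$, so its abstract connectedness tells you nothing about the two complementary regions of $\phi(\partial\tau)$. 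The paper resolves exactly this ambiguity by working with tetrahedra $\tau$ of the \emph{original} complex $K$ and using the embedded barycenter $f'(b_\tau)$ as a pointer: $b_\tau$ is a vertex of $\sd K$, hence lies in $|K'|$, so $C_\tau$ can be defined as the side of $f'(\partial\tau)$ containing $f'(b_\tau)$. This is precisely why the lemma is formulated with $K'=(\sd K)^{(2)}$ rather than $K^{(2)}$. Your switch to tetrahedra of $\sd K$ discards this pointer, since barycenters of $3$-simplices of $\sd K$ are not in $K'$.

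Your claim (b) is acknowledged as the crux and is where you say the no-cut-vertex hypothesis enters, but you give no argument -- only the assertion that a bad coincidence ``could only arise through a bottleneck at a cut vertex.'' In the paper this is the delicate step: one first shows that $f_0(|K^{(2)}|)\cap C_\tau=\emptyset$, by taking a putative component $C_0$ of $|K^{(2)}|\setminus\partial\tau$ mapping into $C_\tau$, using the no-cut-vertex hypothesis to produce a path in $|K^{(2)}|$ joining two vertices $v_1,v_2$ of $\tau$ through $C_0$, and then producing a disk $D\subset(\sd\tau)^{(2)}$ (the plane of symmetry of $v_1,v_2$) whose image under $f'$ separates $\overline{C_\tau}$ into two balls with $f_0(v_1)$ and $f_0(v_2)$ on opposite sides, forcing $f'(|D|)\cap f_0(C_0)\ne\emptyset$ and contradicting injectivity. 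Pairwise disjointness of the $C_\tau$ then follows formally. You would also need to address disjointness of $\mathrm{int}(B_\tau)$ from $\phi(|K'|)$ globally -- the local side choice at $\partial\tau$ does not control $\phi(|K'|)$ in the interior -- but this is exactly what the claim $f_0(|K^{(2)}|)\cap C_\tau=\emptyset$ provides, and your outline omits it. In short: right scaffolding, but the mechanism that makes the choice of balls well-defined (the embedded barycenters) and the argument where the no-cut-vertex hypothesis does its work (the path-and-separating-disk argument) are both missing.
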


\heading{Proofs of the lemmas.}
To finish the proof of correctness of the algorithm, it is sufficient to prove
Lemmas~\ref{l:cutvertex}~and~\ref{l:K'embed}. The proofs rely on the
the $3$-dimensional PL Schoenflies theorem  (see, e.g., \cite[Theorem XIV.1]{Bing:The-geometric-topology-of-3-manifolds-1983}):

\begin{theorem}[PL Schoenflies theorem for $\R^3$]
\label{t:pl_schoenflies}
Let $f\colon S^2 \to \R^3$ be a PL embedding.\footnote{Formally, the standard PL model of $S^d$ is the boundary $\partial \Delta^{d+1}$,
and $f$ is a PL map of the complex $\partial \Delta^3$ in $\R^3$.} Then there is a PL homeomorphism $h\colon \R^3\to \R^3$ such that
$h\circ i$ is a standard embedding of $S^2$ in $\R^3$ (as the boundary of a $3$-simplex $\Delta^3$). Moreover, $h$ can be chosen to be the identity outside any
given open set $U$ that contains the bounded component of $\R^3\setminus f(S^2)$. In particular, the bounded component of $\R^3\setminus f(S^2)$ is a PL ball.
 \end{theorem}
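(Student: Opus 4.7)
The plan is to reduce to the classical unqualified PL Schoenflies theorem and then promote the resulting homeomorphism to one supported inside~$U$. First I would invoke the classical Alexander--Newman PL Schoenflies theorem, as stated for instance in Bing's book, to conclude that for any PL embedding $f\colon S^2 \to \R^3$, the bounded component $B$ of $\R^3 \setminus f(S^2)$ has PL-ball closure $\overline{B}$; equivalently, there is some PL homeomorphism $h_0\colon \R^3 \to \R^3$ carrying $f(S^2)$ onto $\partial \Delta^3$ for a standard $3$-simplex $\Delta^3$. This already gives the main assertion and the final ``in particular'' sentence.

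Next, for the \emph{moreover} part requiring support inside $U$, I would use the PL bicollar theorem for codimension-one PL submanifolds in PL manifolds: since $f(S^2)$ is a tame PL 2-sphere and $U$ is an open neighborhood of $\overline{B}$, one finds a PL 2-sphere $S'\subset U$ bounding a PL ball $B^+$ with $\overline{B}\subset\mathrm{int}(B^+)\subset B^+\subset U$, such that $B^+\setminus \mathrm{int}(\overline{B})$ is PL homeomorphic to $S^2\times [0,1]$ (with $f(S^2)$ corresponding to $S^2\times\{0\}$ and $S'$ to $S^2\times\{1\}$). I would pick a similar standard collared model $\Delta^+\supset \Delta^3$ in $\R^3$ so that $\Delta^+\setminus \mathrm{int}(\Delta^3)$ is a standard PL collar.

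I would then define $h$ piecewise: on $\overline{B}$ take $h$ to be the PL homeomorphism $\overline{B}\to\Delta^3$ obtained from $h_0$ (post-composed with a PL self-homeomorphism of $\R^3$ bringing $h_0(\overline{B})$ to the chosen~$\Delta^3$); on the collar $B^+\setminus \mathrm{int}(\overline{B})$ define $h$ by interpolating through the two collar parametrizations so that it agrees with the inner restriction on $f(S^2)$ and with the identity on $\partial B^+=S'$; and on $\R^3\setminus B^+$ set $h$ equal to the identity. Since all gluings are across PL 2-spheres where the definitions agree, $h$ is a PL homeomorphism, and by construction it is the identity outside $B^+\subset U$.

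The main obstacle is the collar interpolation in the middle region: one must ensure that the ``inner-boundary'' PL homeomorphism $f(S^2)\to\partial\Delta^3$ can be joined to the identity on the ``outer'' sphere $S'$ by a PL isotopy within the annular region, whose track is itself PL. The cleanest way to package this is via the PL isotopy extension theorem (for compact PL submanifolds in a PL manifold): the isotopy from $f(S^2)$ to $\partial\Delta^3$ lifts to an ambient compactly supported PL isotopy of $\R^3$, and by choosing the isotopy to move points only inside $B^+$, its time-$1$ map is the desired $h$. The PL category (as opposed to topological) here is essential, since topological Schoenflies in higher dimensions fails without local flatness, but in the PL setting of ambient dimension $3$ the bicollar and isotopy-extension tools are classical.
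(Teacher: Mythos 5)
The paper does not prove this theorem: it is quoted verbatim as a classical result with a citation to Bing, \emph{The Geometric Topology of 3-Manifolds}, Theorem~XIV.1, and the ``Moreover'' clause (support inside~$U$) is part of what is being cited, not something the authors derive. So there is no proof in the paper to compare against; your job, effectively, was to reprove a result the paper treats as a black box.

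Your reduction --- take the unqualified PL Schoenflies/Newman theorem (bounded complementary region is a PL ball, and there is some global PL homeomorphism $h_0$ sending $f(S^2)$ to $\partial\Delta^3$), then force support into $U$ by gluing the ball homeomorphism to the identity across a concentric collar --- is a perfectly standard route and is essentially sound, but as written it is a plan, not a proof, and the one place you yourself flag as ``the main obstacle'' is where the real content sits. Two things need pinning down. First, you need that $U\supset\overline{B}$, not merely $U\supset B$; otherwise $h$ being the identity outside $U$ could force $h$ to fix points of $f(S^2)$ and the statement degenerates. This is clearly the intended reading, but it should be said, since your construction of $B^+$ with $\overline{B}\subset\operatorname{int}(B^+)\subset B^+\subset U$ uses it. Second, the interpolation on the annular region $B^+\setminus\operatorname{int}\overline{B}\cong S^2\times[0,1]$ requires (a)~a PL annulus theorem (the region between two nested PL $2$-spheres in $\R^3$ is a PL product; this follows from Schoenflies together with the PL collar theorem) and (b)~that an orientation-preserving PL self-homeomorphism of $S^2$ is PL isotopic to the identity (connectivity of $\mathrm{PL}^+(S^2)$, Alexander's trick in this dimension). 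Your alternative packaging via PL isotopy extension has a mild circularity: isotopy extension promotes an isotopy of the sphere with track in $B^+$ to an ambient isotopy supported in $B^+$, but you never actually produce the input isotopy of $f(S^2)$ to $\partial\Delta^3$ confined to $B^+$ --- producing it is again the collar-interpolation step. So the collar-and-cone argument is the one to make precise; citing the two classical PL facts above would close the gap. None of this contradicts the paper, which simply outsources the whole theorem to Bing.
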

This easily implies the following version for $S^3$ (which is also standard but
we did not find a reference exactly in this setting):
\begin{corollary}[PL Schoenflies for $S^3$]
\label{cor:S3-Schoenflies}
If $f\colon S^2\to S^3$ is a PL embedding, then there is a PL homeomorphism $g\colon S^3\rightarrow S^3$ such that $g\circ f$ is the standard inclusion of $S^2$ as the boundary of a hemisphere.
In particular, the closures of both components of $S^3\setminus f(S^2)$ are PL balls with boundary $f(S^2)$.
\end{corollary}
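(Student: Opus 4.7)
The plan is to reduce the $S^3$ statement to the $\R^3$ version (Theorem~\ref{t:pl_schoenflies}) by choosing a point in each of the two components of $S^3\setminus f(S^2)$ and applying a PL ``stereographic'' projection. By Alexander duality (or a direct argument using the bicollared neighborhood of the PL sphere $f(S^2)$), $f(S^2)$ separates $S^3$ into exactly two open components $C_1$ and $C_2$.

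First I would pick a point $p_1\in C_1$ and a PL homeomorphism $\sigma_1\colon S^3\setminus\{p_1\}\to\R^3$ (for instance, radial projection off a small PL sphere centered at $p_1$). Then $f':=\sigma_1\circ f\colon S^2\to\R^3$ is a PL embedding, and the bounded component of $\R^3\setminus f'(S^2)$ is $\sigma_1(C_2)$, since $\sigma_1(C_1\setminus\{p_1\})$ contains a neighborhood of infinity. Applying Theorem~\ref{t:pl_schoenflies} with a bounded open set $U\supset\sigma_1(\overline{C_2})$ yields a PL homeomorphism $h\colon\R^3\to\R^3$, equal to the identity outside $U$, such that $h\circ f'$ is the standard inclusion $\partial\Delta^3\hookrightarrow\R^3$; in particular, $\sigma_1(\overline{C_2})$, and hence $\overline{C_2}$, is a PL $3$-ball. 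Exchanging the roles of $C_1$ and $C_2$ (pick $p_2\in C_2$ and repeat) shows that $\overline{C_1}$ is a PL $3$-ball as well, proving the second assertion of the corollary.

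To produce the global PL homeomorphism $g\colon S^3\to S^3$, I would continue with the $h$ just constructed. Because $h$ is the identity near infinity, it extends by sending $p_1$ to itself to a PL homeomorphism $\tilde h\colon S^3\to S^3$, and $\tilde h\circ f$ then has image $\partial\Delta^3\subset\R^3\subset S^3$. Postcomposing with a fixed PL automorphism $\psi$ of $S^3$ that carries $\partial\Delta^3$ to the equator of the standard hemispherical decomposition of $S^3$ gives the desired $g:=\psi\circ\tilde h$. The existence of $\psi$ rests on the general fact that any PL homeomorphism between the boundary $2$-spheres of two PL $3$-balls extends to a PL homeomorphism of the balls themselves: realize each PL $3$-ball as a cone over its boundary (which is possible up to PL homeomorphism, since a PL ball is PL homeomorphic to a simplex) and cone the boundary map from the apex. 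Applying this simultaneously on both sides of $\partial\Delta^3$ and the equator produces~$\psi$.

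The one technical point I expect to be the main obstacle is this last PL extension step (a ``PL Alexander trick'' in dimension three). It is a classical PL fact, but it does require the explicit cone realization of a PL ball, and one has to take care that the extensions on the two sides of the separating sphere agree on it—this is automatic once a single PL homeomorphism is fixed on the boundary $S^2$ first and then extended separately across each ball. Everything else is essentially bookkeeping to guarantee that each application of Theorem~\ref{t:pl_schoenflies} is confined to a bounded region, so that the resulting PL homeomorphism of $\R^3$ extends to a PL homeomorphism of $S^3$ fixing the chosen point at infinity.
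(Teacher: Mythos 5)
Your proposal is correct and takes essentially the same approach as the paper: delete a region of $S^3$ missed by $f(S^2)$ to reduce to Theorem~\ref{t:pl_schoenflies}, then upgrade the resulting PL homeomorphism of $\R^3$ to one of $S^3$ via a PL ball-extension step (the paper cites Rourke--Sanderson Cor.~3.15, which amounts to your cone argument). The only differences are cosmetic: the paper deletes a closed $3$-simplex of a fine triangulation (so that Newman's theorem yields a compact PL ball and one never handles non-compact PL homeomorphisms), and note that the informal ``radial projection off a small PL sphere centered at $p_1$'' you suggest is not itself a PL map, though the PL identification $S^3\setminus\{p_1\}\cong\R^3$ you need does exist.
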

\begin{proof}[Proof of Corollary~\ref{cor:S3-Schoenflies}]
Choose a sufficiently fine PL triangulation of $S^3$ such that $f(S^2)$ avoids one of the closed $d$-simplices $\sigma$ of $S^3$.
By Newman's theorem~\cite[Cor.~3.13]{Rourke:Introduction-to-piecewise-linear-topology-1972}, the closure of the complement of a PL $3$-ball in $S^3$ is a PL $3$-ball, i.e., PL homeomorphic to a $3$-simplex; in particular, $\overline{S^3\setminus \sigma}$ is PL homeomorphic to a $3$-simplex $\Delta^3_1$.

Fix such such a PL homeomorphism $j\colon \overline{S^3\setminus \sigma}\cong \Delta^3_1\subseteq \R^3$. Then $j\circ f$ is a PL embedding of $S^2$ in $\R^3$. Thus, by Theorem~\ref{t:pl_schoenflies}, there is a PL homeomorphism $h$ of $\R^3$ such that $h\circ j\circ f$ is a standard embedding of $S^2$ as the boundary of some $3$-simplex $\Delta_2^3$. Moreover, the PL homeomorphism $h\circ j$ witnesses that the closure of the component of $S^3\setminus f(S^2)$ avoiding $\sigma$ is a PL ball $B^3\subset S^3$ with boundary $f(S^2)$. Furthermore, there is a PL homeomorphism $k$ from $\Delta^3_2$ to the closed lower hemisphere $H^3_- \subset S^3$ (e.g., with $S^3$ triangulated as the octahedral $3$-sphere).
By \cite[Cor.~3.15]{Rourke:Introduction-to-piecewise-linear-topology-1972}, the PL homeomorphism $k\circ h\circ j\colon B^3 \cong H_-^3$ can be extended to a PL homeomorphism $g\colon S^3\to S^3$, which has the desired property.
\end{proof}

\begin{proof}[Proof of Lemma~\ref{l:cutvertex}]
If $K$ PL embeds in $S^3$, then both $K_1$ and $K_2$ PL embed in $S^3$
since they are subcomplexes of $K$. In sequel we assume that $K_1$ and $K_2$ PL
embed in $S^3$ and we want to prove that $K$ PL embeds in $S^3$.

The idea is very simple, we just want to transform an embedding of $K_1$ and
$K_2$ so that the common vertex $v$ protrude on the boundary of each and thus
they can be joined together; see Figure~\ref{f:K1K2} in one dimension less. It
remains to show that such a transformation can be found.

\begin{figure}[t]
\begin{center}
  \includegraphics{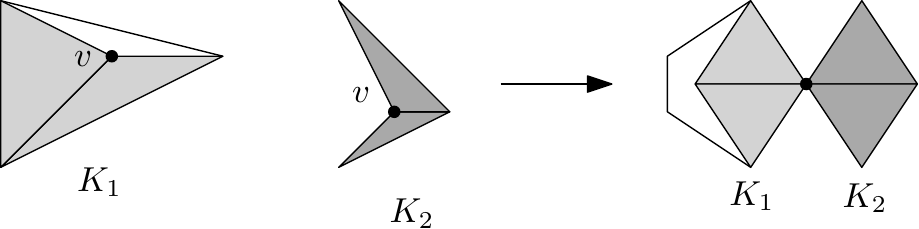}
\end{center}
\caption{Transforming $K_1$ and $K_2$.}
\label{f:K1K2}
\end{figure}

From the assumptions on links of vertices and from $K_1 \neq K$ we deduce
that the link of $v$ in $K_1$ is planar (and thus
different from $S^2$). Indeed, if it were homeomorphic to $S^2$, then the link
of $v$ in $K$ would not embed in $S^2$ since the link of $v$ in $K_2$ must be
nonempty ($K_2 \neq \{v\}$ since $K_1 \neq K$). Similarly we can deduce that
the link of $v$ in $K_2$ is planar.

Let $f_1\colon |K_1| \to S^3$ be a PL embedding. By the previous observation we
deduce that $f_1(v)$ is on the boundary of $f_1(|K_1|)$. Therefore, there is a
geometric simplex $\sigma$ in a small neighborhood of $f_1(v)$ such that
$\sigma \cap f_1(|K_1|) = \{v\}$. Consequently, by the PL Schoenflies theorem
(Corollary~\ref{cor:S3-Schoenflies}), there is a PL automorphism $\psi$ of $S^3$ mapping
$\partial \sigma$ to $S^2 \subset S^3$. In addition, we can assume that it maps
the interior of $\sigma$ to the upper hemisphere of $S^3$ and $f_1(v)$ to a
pre-chosen point $x$ on $S^2$. Altogether, $g_1 := \psi \circ f_1$
is a PL embedding mapping $|K_1|$ to the lower hemisphere of $S^3$ such
that $g_1(|K_1|) \cap S^2 = \{x\}$.

Similarly, we can find a PL map $g_2\colon |K_2| \to S^3$ such that $|K_2|$ is
mapped to the upper hemisphere of $S^3$ and $g_2(|K_2|) \cap S^2 = \{x\}$.
Finally, we can construct the desired PL embedding $g$ of $|K|$ by setting
$g(y) := g_1(y)$ if $y \in |K_1|$ and $g(y) := g_2(y)$ if $y \in |K_2|$.
\end{proof}



\begin{proof}[Proof of Lemma~\ref{l:K'embed}]
Clearly (i)$\Rightarrow$(ii), and (ii)$\Rightarrow$(iii)
 since  $K'$ is a subcomplex of a subdivision of $K$. It remains to show
 (iii)$\Rightarrow$(i).

  Since $K'$ is $2$-dimensional and since topological and PL embeddability coincide for
  embedding $2$-complexes in $S^3$, there is an PL embedding $f'\colon |K'|
  \to S^3$. Let $f_0$ be the restriction of $f'$ to $|K^{(2)}|$
(which is a subspace of $|K'|$).
We want to extend $f_0$ to a PL embedding $f\colon |K|\to S^3$.

  We will describe how to extend $f_0$ to each tetrahedron independently, and
  then we argue that these extensions can be done simultaneously,
which yields the desired $f$. The argument is illustrated
in Figure~\ref{f:extend2to3}.

\begin{figure}
\begin{center}
\includegraphics{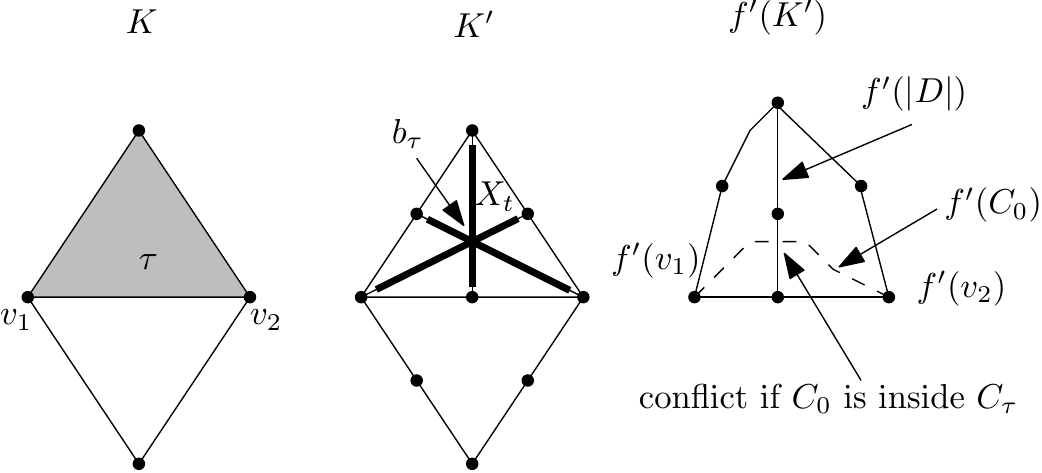}
\caption{$K$, $K'$ and $f(K')$ schematically drawn in the plane.}
\label{f:extend2to3}
\end{center}
\end{figure}

 Let $\tau$ be a tetrahedron of $K$. By the PL Schoenflies theorem
  (Corollary~\ref{cor:S3-Schoenflies}),
  $f_0(\partial \tau)$ splits the sphere $S^3$ in two open components whose
  closures are PL homeomorphic to $B^3$. Let $b_\tau$ be the barycentre of $\tau$ and let
  $C_\tau$ be the component that contains $f'(b_\tau)$. We will argue that
  $f_0(|K^{(2)}|) \cap C_\tau = \emptyset$.

  Recall  that $f_0(\partial \tau)$  is disjoint from $C_\tau$.
  For contradiction, let us assume that there is a component $C_0$ of
  $|K^{(2)}| \setminus \partial \tau$ such that $f_0(C_0) \cap C_\tau \neq
  \emptyset$, that is $f_0(C_0) \subseteq C_\tau$.
  Let $X_\tau := |(\sd \tau)^{(2)}| \setminus |(\sd \partial
  \tau)|$.
  We have $f'(X_\tau)\subseteq C_\tau$ since
  $X_\tau$ is connected.
  Let $V_0$ be the set of
   those vertices $v$ of $\tau$ which belong to the closure of
   $C_0$. We have $|V_0| > 0$ since $K$ is connected. Furthermore, $|V_0|
   > 1$ since $K$ does not contain cut vertices. Therefore $|V_0| \geq 2$ and
   there is a path $P$ inside $|K^{(2)}|$ starting in a vertex $v_1$ of $\tau$,
   ending in a vertex $v_2$ of $\tau$, and
 with interior points in $C_0$.

Let $D$ be
   the subcomplex of $(\sd \tau)^{(2)}$, homeomorphic to a disk,
consisting of the simplices
   in the plane of symmetry of $v_1$ and $v_2$ (considering
   $\tau$ as a regular simplex); it passes  through the other two vertices of
   $\tau$ and the midpoint of $v_1v_2$. By a double application of the PL
   Schoenflies theorem and using that the interior of $|D|$ belongs to
   $X_\tau$ (which maps to $C_\tau$ under $f'$), we have that $f'(|D|)$ splits the closure of $C_\tau$ into two parts, both PL
   homeomorphic to a ball. Since $f_0(v_1)$ and $f_0(v_2)$ are in different
   parts, $f'(|D|)\cap f_0(C_0)\ne\emptyset$, which is impossible
since $f'$
is an embedding.

We have deduced that $f_0(|K^{(2)}|) \cap C_\tau =
\emptyset$ for each tetrahedron $\tau$. By the
PL Schoenflies theorem $f_0$ can be extended to a PL embedding
of $K^{(2)} \cup \{\tau\}$ so that the interior of $\tau$ is mapped to $C_\tau$.
Moreover, if we consider two distinct tetrahedra $\tau_1, \tau_2 \in K$, then
$C_{\tau_1} \cap C_{\tau_2} = \emptyset$. Indeed, if $C_{\tau_1} \cap
C_{\tau_2} \neq \emptyset$, then $C_{\tau_1} \subseteq C_{\tau_2}$ because
$\partial C_{\tau_2} = f_0(\partial \tau_2)$ is disjoint from $C_{\tau_1}$.
Similarly we deduce $C_{\tau_2} \subseteq C_{\tau_1}$ implying $\partial
C_{\tau_1} = \partial C_{\tau_2}$ contradicting the assumption
 that $\tau_1$ and $\tau_2$ are
distinct simplices.

Altogether, we can extend $f_0$ to every tetrahedron of $K$ independently,
obtaining the required PL embedding of $K$.
\end{proof}

\fi

\section*{Acknowledgment}
We thank David Bachman, Ryan Derby-Talbot, and William Jaco for very helpful
conversations. We further thank Tony Huynh for kind answers to our questions,
Helena Nyklov\'{a} for help with many pictures, and
Isaac Mabillard for remarks on a preliminary write-up of the algorithm for
\EMBED33.

\bibliographystyle{alpha}
\bibliography{Postnikov,3Manifolds}
\end{document}